
\documentclass[a4paper,reqno,11pt]{amsart}

\usepackage{graphicx}
\usepackage{psfrag}
\usepackage{perpage}
\usepackage{url}
\usepackage{color}
\usepackage{mathrsfs}

\usepackage{amsmath, amsfonts, amssymb, amsthm, amscd}

\usepackage[utf8]{inputenc}
\usepackage[T1]{fontenc}
\usepackage{microtype}

\usepackage[a4paper,scale={0.72,0.74},marginratio={1:1},footskip=7mm,headsep=10mm]{geometry}

\usepackage{hyperref}


\makeatletter
\def\@secnumfont{\bfseries\scshape}

\def\section{\@startsection{section}{1}%
  \z@{.7\linespacing\@plus\linespacing}{.5\linespacing}%
  {\normalfont\large\bfseries\scshape\centering}}

\def\subsection{\@startsection{subsection}{2}%
  \z@{.5\linespacing\@plus.7\linespacing}{-.5em}%
  {\normalfont\bfseries\scshape}}

\def\subsubsection{\@startsection{subsubsection}{3}%
  \z@{.5\linespacing\@plus.7\linespacing}{-.5em}%
  {\normalfont\scshape}}

\def\specialsection{\@startsection{section}{1}%
  \z@{\linespacing\@plus\linespacing}{.5\linespacing}%
  {\normalfont\centering\large\bfseries\scshape}}
\makeatother

%
%
%


\makeatletter

\renewenvironment{proof}[1][\proofname]{\par
\pushQED{\qed}%
\normalfont \topsep4\p@\@plus4\p@\relax
\trivlist
\item[\hskip\labelsep
\bfseries
#1\@addpunct{.}]\ignorespaces
}{%
\popQED\endtrivlist\@endpefalse
}
\makeatother

\setcounter{tocdepth}{2}

\makeatletter
\newcommand \Dotfill {\leavevmode \leaders \hb@xt@ 6pt{\hss .\hss }\hfill \kern \z@}
\makeatother

\makeatletter
\def\@tocline#1#2#3#4#5#6#7{\relax
  \ifnum #1>\c@tocdepth 
  \else
    \par \addpenalty\@secpenalty\addvspace{#2}%
    \begingroup \hyphenpenalty\@M
    \@ifempty{#4}{%
      \@tempdima\csname r@tocindent\number#1\endcsname\relax
    }{%
      \@tempdima#4\relax
    }%
    \parindent\z@ \leftskip#3\relax \advance\leftskip\@tempdima\relax
    \rightskip\@pnumwidth plus4em \parfillskip-\@pnumwidth
    #5\leavevmode\hskip-\@tempdima
      \ifcase #1
       \or\or \hskip 1.65em \or \hskip 3.3em \else \hskip 4.95em \fi%
      #6\nobreak\relax
    \Dotfill
    \hbox to\@pnumwidth{\@tocpagenum{#7}}\par
    \nobreak
    \endgroup
  \fi}
\makeatother

\makeatletter
\def\l@section{\@tocline{1}{0pt}{1pc}{}{\scshape}}
\renewcommand{\tocsection}[3]{%
\indentlabel{\@ifnotempty{#2}{\ignorespaces#1 #2.\hskip 0.7em}}#3}
\def\l@subsection{\@tocline{2}{0pt}{1pc}{5pc}{}}

\def\l@subsubsection{\@tocline{3}{0pt}{1pc}{7pc}{}}

\makeatother

%


\setcounter{secnumdepth}{3}

\frenchspacing

\numberwithin{equation}{section}


\newtheoremstyle{mytheorem}{.7\linespacing\@plus.3\linespacing}{.7\linespacing\@plus.3\linespacing}%
     {\itshape}
     {}
     {\bfseries}
     {. }
     {0.3ex}
     {\thmname{{\bfseries #1}}\thmnumber{ {\bfseries #2}}\thmnote{ (#3)}}  

\theoremstyle{mytheorem}

\newtheorem{theorem}{Theorem}[section]
\newtheorem{lemma}[theorem]{Lemma}

\newtheorem{corollary}[theorem]{Corollary}
\newtheorem{remark}[theorem]{Remark}

\newtheorem{assumption}[theorem]{Assumption}

\newtheorem{conjecture}[theorem]{Conjecture}


\newcommand{\bbB}{{\ensuremath{\mathbb B}} }

\newcommand{\bbE}{{\ensuremath{\mathbb E}} }

\newcommand{\bbN}{{\ensuremath{\mathbb N}} }

\newcommand{\bbP}{{\ensuremath{\mathbb P}} }

\newcommand{\bbR}{{\ensuremath{\mathbb R}} }

\newcommand{\bbT}{{\ensuremath{\mathbb T}} }

\newcommand{\bbV}{{\ensuremath{\mathbb V}} }

\newcommand{\bbZ}{{\ensuremath{\mathbb Z}} }


\newcommand{\cA}{{\ensuremath{\mathcal A}} }
\newcommand{\cB}{{\ensuremath{\mathcal B}} }
\newcommand{\cC}{{\ensuremath{\mathcal C}} }

\newcommand{\cN}{{\ensuremath{\mathcal N}} }

\newcommand{\cP}{{\ensuremath{\mathcal P}} }

\newcommand{\cS}{{\ensuremath{\mathcal S}} }


\newcommand{\gb}{\beta}

\newcommand{\gz}{\zeta}

\newcommand{\go}{\omega}


\renewcommand{\tilde}{\widetilde}          
\DeclareMathSymbol{\leqslant}{\mathalpha}{AMSa}{"36} 
\DeclareMathSymbol{\geqslant}{\mathalpha}{AMSa}{"3E} 
\DeclareMathSymbol{\eset}{\mathalpha}{AMSb}{"3F}     

\newcommand{\sumtwo}[2]{\sum_{\substack{#1 \\ #2}}} 
\newcommand{\sumthree}[3]{\sum_{\substack{#1 \\ #2 \\ #3}}} 


\newcommand{\R}{\mathbb{R}}

\newcommand{\Z}{\mathbb{Z}}
\newcommand{\N}{\mathbb{N}}

\newcommand\bs{\boldsymbol}

\newcommand{\PEfont}{\mathrm}
\DeclareMathOperator{\var}{\ensuremath{\PEfont Var}}

\newcommand{\p}{\ensuremath{\PEfont P}}
\newcommand{\e}{\ensuremath{\PEfont E}}
\newcommand{\E}{\e}
\renewcommand{\P}{\p}
\DeclareMathOperator{\sign}{sign}

\newcommand\bP{\ensuremath{\bs{\mathrm{P}}}}
\newcommand\bE{\ensuremath{\bs{\mathrm{E}}}}

\DeclareMathOperator{\bbvar}{\ensuremath{\mathbb{V}ar}}
\DeclareMathOperator{\bbcov}{\ensuremath{\mathbb{C}ov}}


\newenvironment{myenumerate}{%
\renewcommand{\theenumi}{\arabic{enumi}}%
\renewcommand{\labelenumi}{{\rm(\theenumi)}}%
\begin{list}{\labelenumi}
	{%
	\setlength{\itemsep}{0.4em}%
	\setlength{\topsep}{0.5em}%
	\setlength\leftmargin{2.45em}%
	\setlength\labelwidth{2.05em}%
	\setlength{\labelsep}{0.4em}%
	\usecounter{enumi}%
	}%
	}%
{\end{list}
}

\newenvironment{aenumerate}{%
\renewcommand{\theenumi}{\alph{enumi}}%
\renewcommand{\labelenumi}{{\rm(\theenumi)}}%
\begin{list}{\labelenumi}
	{%
	\setlength{\itemsep}{0.4em}%
	\setlength{\topsep}{0.5em}%
	\setlength\leftmargin{2.45em}%
	\setlength\labelwidth{2.05em}%
	\setlength{\labelsep}{0.4em}%
	\usecounter{enumi}%
	}%
	}%
{\end{list}
}

\newenvironment{ienumerate}{%
\renewcommand{\theenumi}{\roman{enumi}}%
\renewcommand{\labelenumi}{{\rm(\theenumi)}}%
\begin{list}{\labelenumi}
	{%
	\setlength{\itemsep}{0.4em}%
	\setlength{\topsep}{0.5em}%
	\setlength\leftmargin{2.45em}%
	\setlength\labelwidth{2.05em}%
	\setlength{\labelsep}{0.4em}%
	\usecounter{enumi}%
	}%
	}%
{\end{list}
}

\renewenvironment{enumerate}{
\begin{myenumerate}}%
{\end{myenumerate}}

\newenvironment{myitemize}{%
\begin{list}{$\bullet$}%
 	{%
	\setlength{\itemsep}{0.4em}%
	\setlength{\topsep}{0.5em}%
	\setlength\leftmargin{2.45em}%
	\setlength\labelwidth{2.05em}%
	\setlength{\labelsep}{0.4em}%
	}%
	}%
{\end{list}}

\renewenvironment{itemize}{
\begin{myitemize}}%
{\end{myitemize}}


\MakePerPage[2]{footnote} 



\renewcommand{\epsilon}{\varepsilon}
\renewcommand{\theta}{\vartheta}
\renewcommand{\rho}{\varrho}

\newcommand{\dd}{\text{\rm d}}             

\newcommand\hbeta{{\hat{\beta}}}
\newcommand\hh{{\hat{h}}}

\newcommand\hlambda{{\hat{\lambda}}}

\newcommand\sfC{\mathsf C}
\newcommand\sfc{\mathsf c}

\newcommand\Inf{\mathrm{Inf}}
\newcommand\fin{\mathrm{fin}}

\newcommand\re{\mathrm{ref}}
\newcommand\eff{\mathrm{eff}}

\newcommand\bsigma{\boldsymbol\sigma}
\newcommand\bpsi{\boldsymbol\psi}
\newcommand\bphi{\boldsymbol\phi}
\newcommand\bPsi{\boldsymbol\Psi}
\newcommand\btau{\boldsymbol\tau}
\newcommand\bZ{\boldsymbol Z}
\newcommand\bA{\boldsymbol A}
\newcommand\bF{\boldsymbol F}
\newcommand\bmu{\boldsymbol \mu}
\newcommand\bnu{\boldsymbol \nu}
\newcommand\bh{\boldsymbol h}

\usepackage{dsfont} 
\newcommand{\ind}{\mathds{1}}



\newcommand\bbOmegadelta{\Omega_\delta}
\newcommand\bbOmega{\Omega}


\begin{document}
\title[Polynomial chaos and scaling limits of disordered systems]{Polynomial chaos and\\
scaling limits of disordered systems}

\begin{abstract}

Inspired by recent work of Alberts, Khanin and Quastel~\cite{AKQ14a},
we formulate general conditions ensuring that a sequence of
multi-linear polynomials of independent random variables
(called \emph{polynomial chaos} expansions) converges to a limiting random variable, given by
a \emph{Wiener chaos} expansion over the $d$-dimensional white noise.
A key ingredient in our approach is a \emph{Lindeberg principle} for polynomial chaos expansions,
which extends earlier work of Mossel, O'Donnell and Oleszkiewicz~\cite{MOO10}.
These results provide a unified framework to study the
\emph{continuum and weak disorder scaling limits}
of statistical mechanics systems that are \emph{disorder relevant}, including
the disordered pinning model, the (long-range) directed polymer
model in dimension $1+1$, and the two-dimensional
random field Ising model. This gives a new perspective in the study of disorder relevance,
and leads to interesting new continuum models that warrant further studies.

\end{abstract}

\author[F.Caravenna]{Francesco Caravenna}
\address{Dipartimento di Matematica e Applicazioni\\
 Universit\`a degli Studi di Milano-Bicocca\\
 via Cozzi 55, 20125 Milano, Italy}
\email{francesco.caravenna@unimib.it}

\author[R.Sun]{Rongfeng Sun}
\address{Department of Mathematics\\
National University of Singapore\\
10 Lower Kent Ridge Road, 119076 Singapore
}
\email{matsr@nus.edu.sg}

\author[N.Zygouras]{Nikos Zygouras}
\address{Department of Statistics\\
University of Warwick\\
Coventry CV4 7AL, UK}
\email{N.Zygouras@warwick.ac.uk}

\date{\today}

\keywords{Continuum Limit, Finite Size Scaling, Lindeberg Principle,
Polynomial Chaos, Wiener Chaos, 
Disordered Pinning Model, Directed Polymer Model,
Random Field Ising Model}
\subjclass[2010]{Primary: 82B44; Secondary: 82D60, 60K35}

\maketitle

\tableofcontents

\section{Introduction}
\label{sec:intro}

In this paper, we consider statistical mechanics models
defined on a lattice,
in which disorder acts as an external ``random field''.
We focus on models that are \emph{disorder relevant}, in the
sense that arbitrarily weak disorder changes the
qualitative properties of the model.
We will show that, when the homogeneous model
(without disorder) has a non-trivial continuum limit,
disorder relevance manifests itself via the convergence
of the disordered model to a
\emph{disordered continuum limit}, if
the disorder strength and lattice mesh are suitably rescaled.

Our approach is inspired by recent work of Alberts, Khanin and
Quastel~\cite{AKQ14a} on the directed polymer model in
dimension $1+1$. Here we follow a different path,
establishing a general convergence result for polynomial
chaos expansions based on a Lindeberg principle.
This extends earlier work
of Mossel, O'Donnell and Oleszkiewicz~\cite{MOO10}
to optimal (second) moment assumptions,
and is of independent interest.

\smallskip

In this section, we present somewhat informally the main ideas
of our approach in a unified framework, emphasizing the
natural heuristic considerations. The precise formulation of
our results is given in Sections~\ref{ConvergencePoly} and~\ref{sec:scaling},
which can be read independently (both of each other and of the present one).
The proofs are contained in Sections~\ref{S:lindeberg} to~\ref{sec:Isingproof},
while some technical parts have been deferred to the Appendixes.
Throughout the paper,
we use the conventions $\N := \{1,2,3,\ldots\}$ and $\N_0 := \N \cup \{0\}$,
and we denote by $Leb$ the Lebesgue measure on $\R^d$.

\subsection{Continuum limits of disordered systems}

Consider an open set $\Omega \subseteq \R^d$
and define the lattice $\bbOmegadelta := (\delta \Z)^d \cap \Omega$, for $\delta > 0$.
Suppose that a \emph{reference probability measure}
$\P_{\bbOmegadelta}^\re$ is given on $\R^{\bbOmegadelta}$,
which describes a real-valued field $\sigma = (\sigma_x)_{x\in\bbOmegadelta}$.
We focus on the case when each $\sigma_x$ takes two possible values (typically $\sigma_x \in \{0,1\}$
or $\sigma_x \in \{-1,1\}$).

Let $\omega := (\omega_x)_{x\in\bbOmegadelta}$ be i.i.d.\ random variables (also independent of $\sigma$)
with zero mean, unit variance, and locally finite exponential moments, which represent the \emph{disorder}.
Probability and expectation for $\omega$ will be denoted respectively by $\bbP$ and $\bbE$.

Given $\lambda>0$, $h\in\R$ and a $\bbP$-typical realization
of the disorder $\omega$, we define the \emph{disordered model}
as the following probability measure $\P^\omega_{{\bbOmegadelta}; \lambda,h}$ for the field
$\sigma = (\sigma_x)_{x\in\bbOmegadelta}$:
\begin{equation} \label{eq:model}
	\P^\omega_{{\bbOmegadelta}; \lambda,h} (\dd\sigma)
	:= \frac{e^{\sum_{x\in\bbOmegadelta} (\lambda \omega_x +h) \sigma_x}}
	{Z^\omega_{{\bbOmegadelta}; \lambda,h}} \P_{\bbOmegadelta}^\re (\dd\sigma)\,,
\end{equation}
where the normalizing constant, called the {\em partition function}, is defined by
\begin{equation} \label{eq:partfun}
	Z^\omega_{{\bbOmegadelta}; \lambda,h} := \E_{\bbOmegadelta}^\re
	\big[ e^{\sum_{x\in\bbOmegadelta} (\lambda \omega_x+h) \sigma_x} \big] \,.
\end{equation}
The {\em quenched free energy} $F(\lambda, h)$ of the model
is defined as the rate of exponential growth of $Z^\omega_{{\bbOmegadelta}; \lambda,h}$
as $\Omega \uparrow \R^d$ for fixed $\delta$ (or
equivalently,\footnote{We assume the natural consistency
condition that $\P^\re_{(c\bbOmega)_{c\delta}}$ coincides with $\P^\re_{\bbOmegadelta}$,
for any $c>0$.}
as $\delta \downarrow 0$ for fixed $\Omega$):
\begin{equation}\label{eq:dfe}
	F(\lambda, h) :=
	\limsup_{\Omega \uparrow \R^d}
	\frac{1}{|\bbOmegadelta|} \bbE \big[ \log Z^\omega_{{\bbOmegadelta}; \lambda,h} \big] =
	\limsup_{\delta\downarrow 0}
	\frac{1}{|\bbOmegadelta|} \bbE \big[ \log Z^\omega_{{\bbOmegadelta}; \lambda,h} \big] \,.
\end{equation}
Discontinuities in the derivatives of the free energy correspond to phase transitions.
A fundamental question
is, does arbitrary disorder
(i.e., $\lambda>0$) radically change the behavior of the homogeneous model
(i.e., $\lambda=0$), such as qualitative properties of the law of the field
$\sigma$ and/or the smoothness of the free energy in $h$? When the answer is affirmative,
the model is called {\em disorder relevant}. In such cases, we will show that
the disordered model typically admits a non-trivial scaling limit as $\delta \downarrow 0$,
provided $\lambda, h \to 0$ at appropriate rates.

\smallskip

Informally speaking, our key assumption is that the discrete field
$\sigma = (\sigma_x)_{x\in\bbOmegadelta}$, under the reference law $\P_{\bbOmegadelta}^\re$ and
after a suitable rescaling, converges as $\delta \downarrow 0$
to a ``continuum field'' $\bsigma = (\bsigma_x)_{x\in\Omega}$,
possibly distribution-valued, with law $\bP_{\Omega}^\re$.
(Our precise assumptions will be about the convergence of correlation functions,
see \eqref{eq:polyconv}.)
Although the approach we follow is very general, we describe three specific models,
to be discussed extensively in the sequel.
\begin{enumerate}
\item The \emph{disordered pinning model $(d=1)$.} \
Let $\tau = (\tau_k)_{k\ge 0}$ be a \emph{renewal process} on $\N$
with $\p(\tau_1 = n) = n^{-(1+\alpha) + o(1)}$, with $\alpha \in (\frac{1}{2}, 1)$.
Take $\Omega = (0,1)$,
$\delta = \frac{1}{N}$ for $N \in \N$
and define $\P_{\bbOmegadelta}^\re$ as the law of
$(\sigma_x := \ind_{\delta\tau}(x))_{x\in \bbOmegadelta}$,
where $\delta \tau = \{\frac{1}{N} \tau_n\}_{n\ge 0}$ is viewed as a random subset of $\Omega$.
The continuum field $\bP_{\Omega}^\re$ is $(\sigma_x = \ind_{\btau}(x))_{x \in (0,1)}$
where $\btau$ denotes the $\alpha$-stable regenerative set
(the zero level set of a Bessel($2(1-\alpha)$) process).

\item The (long-range) \emph{directed polymer model}. \
Let $(S_n)_{n\ge 0}$ be a random walk on $\Z$ with i.i.d. increments,
in the domain of attraction of an $\alpha$-stable law, with $1 < \alpha \le 2$.
Take $\Omega = (0,1) \times \R$, $\delta = \frac{1}{N}$ for $N \in \N$
and, abusing notation,
set $\bbOmegadelta := \big((\delta \Z) \times (\delta^{1/\alpha}\Z)\big) \cap \Omega$.
The ``effective dimension'' for this model is therefore $d_\eff := 1+\frac{1}{\alpha}$.
Define $\P_{\bbOmegadelta}^\re$ as the law of the field
$(\sigma_x := \ind_{A_\delta}(x))_{x\in \bbOmegadelta}$,
where $A_\delta := \{(\frac{n}{N}, \frac{S_n}{N^{1/\alpha}})\}_{n \ge 0}$ is viewed
as a random subset of $\Omega$.
The continuum field $\bP_{\Omega}^\re$ is
$(\bsigma_x = \ind_{\bA}(x))_{x \in (0,1) \times \R}$
where $\bA=\{(t,X_t)\}_{t\ge 0}$ and $(X_t)_{t\ge 0}$ is an $\alpha$-stable L\'evy
process (Brownian motion for $\alpha = 2$).

\item The \emph{random field Ising model $(d=2)$.} \
Take any bounded and connected set $\Omega \subseteq \R^2$ with smooth boundary
and define $\P_{\bbOmegadelta}^\re$ to be the \emph{critical} Ising model on $\bbOmegadelta$
with inverse temperature $\beta = \beta_c =
\frac{1}{2}\log(1+\sqrt{2})$ and $+$ boundary condition.
The (distribution valued) continuum field $\bP_{\Omega}^\re$ has been recently
constructed in~\cite{CGN12,CGN13}, using breakthrough results on the scaling limit
of correlation functions of the critical two-dimensional Ising model, determined in~\cite{CHI12}.
\end{enumerate}
The restrictions on the dimensions and parameters of these
models are linked precisely to the disorder relevance issue, as
it will be explained later.

\smallskip

Since the reference law $\P_{\bbOmegadelta}^\re$ has a weak limit $\bP_{\Omega}^\re$
as $\delta \downarrow 0$,
a natural question emerges:
can one obtain a limit also
for the disordered model $\P^\omega_{{\bbOmegadelta}; \lambda,h}$,
under an appropriate scaling of the coupling constants $\lambda, h$?
(We mean, of course,
a \emph{non-trivial} limit,
which keeps track of $\lambda, h$; otherwise, it suffices
to let $\lambda, h \to 0$ very fast to recover the ``free case''
$\bP_{\Omega}^\re$.)

A natural strategy is to look at the exponential weight in \eqref{eq:model}.
As $\delta \downarrow 0$ the
discrete disorder $\omega = (\omega_x)_{x\in\bbOmegadelta}$
approximates the white noise $W(\dd x)$,
which is a sort of random signed measure on $\Omega$ (see Subsection~\ref{sec:wnn} for more details).
Then one might hope to define the candidate
continuum disordered model $\bP^W_{{\bbOmegadelta}; \hlambda,\hh}$ by
\begin{equation} \label{eq:modelcont}
	\frac{\dd \bP^W_{{\Omega}; \hlambda,\hh}}{\dd \bP_{\Omega}^\re} (\bsigma)
	:= \frac{e^{\int_\Omega \bsigma_x \left(\hlambda \, W(\dd x)+\hh \, \dd x\right)}}
	{\bZ^W_{{\Omega}; \hlambda, \hh}} \,,
\end{equation}
in analogy with \eqref{eq:model}, with $\bZ^W_{{\Omega}; \hlambda, \hh}$ defined
accordingly, like in \eqref{eq:partfun}.

Unfortunately, formula \eqref{eq:modelcont} typically makes no sense
for $\hlambda \ne 0$,
because the configurations of the continuum field
$\bsigma = (\bsigma_x)_{x\in\Omega}$ under $\bP_{\Omega}^\re$
are too rough or ``thin'' for the integral over $W(\dd x)$ to be meaningful
(cf.\ the three motivating models listed above).
We stress here that the difficulty is substantial and not just technical:
for pinning and directed polymer models,
one can show \cite{AKQ14b,CSZ14} that
the scaling limit $\bP^W_{{\Omega}; \hlambda,\hh}$ of
$\P^\omega_{{\bbOmegadelta}; \lambda,h}$
exists, but for $\hlambda \ne 0$ \emph{it is not absolutely continuous with respect to
$\bP_{\Omega}^\re$}.
In particular, it is hopeless to define the continuum disordered
model through a Radon-Nikodym density, like
in \eqref{eq:modelcont}.

\subsection{General strategy and results}

In this paper
\emph{we focus on the disordered partition function}
$Z_{{\bbOmegadelta}; \lambda,h}^\omega$. We show that,
when $\delta \downarrow 0$ and $\lambda, h$
are scaled appropriately, the partition function admits a non-trivial limit in distribution,
which is explicit and \emph{universal}
(i.e., it does not depend on the fine details of the model).

Switching from the random probability law
$\P_{{\bbOmegadelta}; \lambda,h}^\omega$ to
the random number $Z_{{\bbOmegadelta}; \lambda,h}^\omega$
is of course a simplification, whose relevance may
not be evident. It turns out that the partition function
\emph{contains the essential information on the model}. In fact,
the scaling limit of
$Z_{{\bbOmegadelta}; \lambda,h}^\omega$, for sufficiently many domains $\Omega$
and ``boundary conditions'', allows to reconstruct
\emph{the full scaling limit of $\P_{{\bbOmegadelta}; \lambda,h}^\omega$}.
This task has been achieved in \cite{AKQ14b} for the directed polymer model
based on simple random walk,
and in \cite{CSZ14} for the disordered pinning model.
We discuss the case of the
long-range directed polymer model in Remark \ref{rem:remdpre} below.

The scaling limit of
$Z_{{\bbOmegadelta}; \lambda,h}^\omega$ can also
describe the \emph{universal behavior} of the free energy $F(\lambda,h)$
as $\lambda, h \to 0$, cf.\ Subsection~\ref{sec:discussion}.
This explains the key role of the partition function
and is a strong motivation for focusing on it in the first place.

\smallskip

We now describe our approach.
The idea is to consider the so-called ``high-temperature
expansion'' ($|\lambda|, |h| \ll 1$) of
the partition function $Z^\omega_{\bbOmegadelta; \lambda, h}$.
When the field takes two values
(say $\sigma_x \in \{0,1\}$, for simplicity),
we can factorize and ``linearize'' the exponential in \eqref{eq:partfun}:
\begin{equation}\label{eq:Zstart}
	Z^\omega_{\bbOmegadelta; \lambda, h} =
	\E_{\bbOmegadelta}^\re \Bigg[ \prod_{x \in \bbOmegadelta} \big(1 + \epsilon_x \sigma_x
	\big) \Bigg] \,, \qquad \text{where} \qquad
	\epsilon_x := e^{(\lambda \omega_x + h)} - 1 \,.
\end{equation}
Let us introduce the \emph{$k$-point correlation function}
$\psi_{\bbOmegadelta}^{(k)}(x_1, \ldots, x_k)$ of the
field under the reference law, defined for
$k\in\N$ and distinct $x_1, \ldots, x_k \in \Omega$ by
\begin{equation} \label{eq:corrfun}
	\psi_{\bbOmegadelta}^{(k)}(x_1, \ldots, x_k) :=
	\E_{\bbOmegadelta}^\re \big[\sigma_{x_1} \, \sigma_{x_2}
	\cdots \sigma_{x_k} \big] \,,
\end{equation}
where we set $\sigma_x := \sigma_{x_\delta}$ with $x_\delta$ being
the point in $\bbOmegadelta$ closest to $x \in \Omega$.
(We define the correlation function on all points of $\Omega$
for later convenience, and set it to be zero
whenever $(x_i)_\delta = (x_j)_\delta$ for some $i\ne j$.)
A binomial expansion of the product in \eqref{eq:Zstart} then yields
\begin{equation} \label{eq:Zstart2}
	Z^\omega_{\bbOmegadelta; \lambda, h} =
	1 + \sum_{k=1}^{|\bbOmegadelta|} \;
	\frac{1}{k!} \,
	\sum_{(x_1, x_2, \ldots, x_k) \in (\bbOmegadelta)^k}
	\psi^{(k)}_{\bbOmegadelta}(x_1, \ldots, x_k)
	\, \prod_{i=1}^k \epsilon_{x_i} \,,
\end{equation}
where the $k!$ accounts for the fact that we sum over
\emph{ordered} $k$-uples $(x_1, \ldots, x_k)$.
We have rewritten the partition function as a
multi-linear polynomial of the independent random variables $(\epsilon_x)_{x\in\bbOmegadelta}$
(what is called a \emph{polynomial chaos} expansion),
with coefficients
given by the $k$-point correlation function of the reference field.
Note that, by Taylor expansion,
\begin{equation} \label{eq:Taylor}
	\bbE[\epsilon_x] \simeq h + \tfrac{1}{2}\lambda^2 =: h' \,, \qquad
	\bbvar[\epsilon_x] \simeq \lambda^2 \,.
\end{equation}

\smallskip
The crucial fact is that, for $|\lambda|, |h| \ll 1$, the distribution of
a polynomial chaos expansion, like the right hand side of \eqref{eq:Zstart2}, is insensitive toward
the marginal distribution of the random variables $(\epsilon_x)_{x\in\bbOmegadelta}$,
as long as mean and variance are kept fixed.
\emph{A precise formulation of this loosely stated invariance principle
is given in Section~\ref{ConvergencePoly},
cf.\ Theorems~\ref{th:lindeberg} and~\ref{C:lindeberg2},
in the form of a Lindeberg principle}.
Denoting by $(\tilde\omega_x)_{x\in\bbOmegadelta}$ a family of i.i.d.
standard Gaussians, by \eqref{eq:Taylor} we can then approximate
\begin{equation} \label{eq:Zstart3}
	Z^\omega_{\bbOmegadelta; \lambda, h} \simeq
	1 + \sum_{k=1}^{|\bbOmegadelta|} \;
	\frac{1}{k!} \,
	\sum_{(x_1, x_2, \ldots, x_k) \in (\bbOmegadelta)^k}
	\psi^{(k)}_{\bbOmegadelta}(x_1, \ldots, x_k)
	\, \prod_{i=1}^k \big(\lambda \tilde \omega_{x_i} + h' \big) \,.
\end{equation}
Let us now introduce white noise $W(\cdot)$ on $\R^d$
(see Subsection~\ref{sec:wnn}):
setting
$\Delta := (-\frac{\delta}{2}, \frac{\delta}{2})^d$,
we replace each $\tilde\omega_x$ by
$\delta^{-d/2}W(x + \Delta)$. Since $h' = h' \delta^{-d} Leb(x+\Delta)$,
the inner sum in \eqref{eq:Zstart3} coincides, up to
boundary terms, with the following (deterministic + stochastic) integral:
\begin{equation} \label{eq:Zstart4}
	\idotsint_{\Omega^k}
	\psi^{(k)}_{\bbOmegadelta}(x_1, \ldots, x_k)
	\, \prod_{i=1}^k \big( \lambda \, \delta^{-d/2} \, W(\dd x_i)
        \,+\, h' \, \delta^{-d} \, \dd x_i  \big) \,.
\end{equation}
(We recall that $\psi^{(k)}_{\bbOmegadelta}(x_1, \ldots, x_k)$ is a piecewise constant function.)

\smallskip

We can now state our crucial assumption:
we suppose that, for every $k\in\N$, there exist a symmetric
function $\bpsi_\Omega^{(k)}: (\R^d)^k \to \R$
and an exponent $\gamma \in [0,\infty)$ such that
\begin{equation} \label{eq:polyconv}
	(\delta^{-\gamma})^k \, \psi_{\bbOmegadelta}^{(k)} (x_1, \ldots, x_k)
	\,\xrightarrow[\,\delta \downarrow 0\,]{}\,
	\bpsi_{\Omega}^{(k)} (x_1, \ldots, x_k) \qquad
       \text{in } L^2(\Omega^k) \,.
\end{equation}
By \eqref{eq:Zstart4}, if we fix $\hlambda \ge 0, \hh \in \R$
and rescale the coupling constants as follows:
\begin{equation} \label{eq:scalingpar}
	\lambda = \hlambda \, \delta^{d/2-\gamma} \,, \qquad \ \
	h' = \hh \, \delta^{d-\gamma} \quad \
	\big(\text{where} \ h' := h + \tfrac{\lambda^2}{2} \big) \,,
\end{equation}
equations \eqref{eq:Zstart3}-\eqref{eq:Zstart4} suggest that
$Z_{\bbOmegadelta; \lambda,h}^\omega$ converges
in distribution as $\delta\downarrow 0$ to
a random variable
which admits a \emph{Wiener chaos expansion} with respect to the white noise $W(\cdot)$:
\begin{equation} \label{eq:Zcont}
	Z_{\bbOmegadelta; \lambda,h}^\omega \;\xrightarrow[\,\delta \downarrow 0\,]{d}\;
	\bZ^W_{\Omega; \hlambda, \hh} :=
	1 + \sum_{k=1}^{\infty} \;
	\frac{1}{k!} \,
	\idotsint_{\Omega^k}
	\bpsi^{(k)}_{\Omega}(x_1, \ldots, x_k)
	\, \prod_{i=1}^k \big(\hlambda \, W(\dd x_i) +  \hh \, \dd x_i\big) \,.
\end{equation}
This is precisely what happens, as \emph{it
follows from our main convergence results described in Section~\ref{ConvergencePoly},
cf.\ Theorems~\ref{th:general}
and~\ref{th:general2}}.
It is natural to call the random variable $\bZ^W_{\Omega; \hlambda, \hh}$
in \eqref{eq:Zcont} the \emph{continuum partition function},
because it is the scaling limit of $Z^\omega_{\bbOmegadelta; \lambda, h}$.

\begin{remark}\rm
The $L^2$ convergence in \eqref{eq:polyconv} typically
imposes $\gamma < \frac{d}{2}$, cf.\ \eqref{eq:conddgamma} below, which means that the
disorder coupling constants $\lambda, h$ vanish as $\delta \downarrow 0$,
by \eqref{eq:scalingpar}.
The fact that the continuum partition function $\bZ^W_{\Omega; \hlambda, \hh}$
in \eqref{eq:Zcont} is nevertheless a \emph{random object}
(for $\hlambda > 0$) is a manifestation of \emph{disorder relevance}.
We elaborate more on this issue in Subsection~\ref{sec:discussion}.
\end{remark}

Let us finally give a quick look at the three motivating models.
\emph{The complete results are described in Section~\ref{sec:scaling},
cf.\ Theorems~\ref{pinning scaling}, \ref{DP scaling}
and~\ref{T:RFIM}.}
Note that the scaling exponents in \eqref{eq:scalingpar}
are determined by the dimension $d$ and by the exponent $\gamma$
appearing in \eqref{eq:polyconv}.

\begin{enumerate}
\item For the \emph{disordered pinning model} ($d=1$), one has
$\gamma = 1-\alpha$ by renewal theory~\cite{D97}. Relation \eqref{eq:scalingpar}
(for $\delta = \frac{1}{N}$) yields
\begin{equation*}
	\lambda = \frac{\hlambda}{N^{\alpha-\frac{1}{2}}} \,, \qquad
	h' = \frac{\hh}{N^\alpha} \,.
\end{equation*}
Notice that $h' = (const.) \lambda^{\frac{2\alpha}{2\alpha-1}}$
is precisely the scaling of the critical curve~\cite{G10}.

\item For the (long-range) \emph{directed polymer model} ($d_\eff=1+\frac{1}{\alpha}$), one has
$\gamma = \frac{1}{\alpha}$ by Gnedenko's local limit theorem \cite{BGT87}.
Therefore \eqref{eq:scalingpar}
(for $\delta = \frac{1}{N}$ and $d$ relpaced by $d_\eff$) gives
\begin{equation*}
	\lambda = \frac{\hlambda}{N^{\frac{\alpha - 1}{2\alpha}}} \,, \qquad
	h' = \frac{\hh}{N} \,.
\end{equation*}
(The parameter $h'$ is actually irrelevant for this model, and one usually sets $h' \equiv 0$,
i.e., $h = -\frac{1}{2} \lambda^2$.)
In the case $\alpha = 2$, when the underlying random walk has zero mean and finite variance,
one recovers the scaling $\lambda \approx N^{-1/4}$ determined in \cite{AKQ14a}.

\item For the \emph{random field Ising model} ($d=2$) one has
$\gamma = \frac{1}{8}$ \cite{CHI12}, hence by \eqref{eq:scalingpar}
\begin{equation} \label{eq:isingscale}
	\lambda = \hlambda \, \delta^{\frac{7}{8}} \,, \qquad
	h = \hh \, \delta^{\frac{15}{8}} \,.
\end{equation}
(Note that $h$ instead of $h'$ appears in this relation;
moreover one should look at the normalized partition function
$\exp(-\frac{1}{2} \lambda^2 |\bbOmegadelta|)\, Z^\omega_{\bbOmegadelta; \lambda, h}$.
This is because $\sigma_x \in \{-1,+1\}$ instead of $\{0,1\}$, hence the starting relation
\eqref{eq:Zstart} requires a correction.)
\end{enumerate}

\subsection{Discussion and perspectives}
\label{sec:discussion}

We now collect some comments and observations and point out some further
directions of research.

\medskip
\noindent\textbf{1. (Disorder relevance).}
The main motivation of our approach is
to understand the issue of \emph{disorder relevance}, i.e.
whether the addition of a small amount of disorder
modifies the nature of the phase transition of the underlying homogeneous model.
Remarkably, the key condition \eqref{eq:polyconv},
which determines the class of models to which our approach
applies, is consistent with the Harris criterion for disorder relevance,
as we now discuss.

First we note that when $\gamma > 0$, which is the most interesting case, condition
\eqref{eq:polyconv} indicates that
the reference law has polynomially decaying correlations,
which is the signature that \emph{we are at
the critical point of a continuous phase transition}. In our context,
this means that the order parameter
$m_h := \lim_{\delta\downarrow 0} |\bbOmegadelta|^{-1} \E_{\bbOmegadelta; 0, h}
[ \sum_{x \in \bbOmegadelta} \sigma_x]$
in the homogeneous model ($\lambda = 0$)
vanishes continuously, but non-analytically, as
$h \to 0$, cf. \eqref{eq:model} and \eqref{eq:partfun}. 

When \eqref{eq:polyconv} holds \emph{pointwise},
with $\gamma > 0$,
the limiting correlation function typically
diverges polynomially on diagonals, with the same exponent $\gamma$:
\begin{equation} \label{eq:polyvanish}
	\bpsi_{\Omega}^{(k)} (x_1, \ldots, x_k) \approx \| x_i - x_j \|^{-\gamma}
	\qquad \text{as} \quad x_i \to x_j \,.
\end{equation}To have finite $L^2$ norm
(which is necessary for $L^2$ convergence in \eqref{eq:polyconv}),
such a local divergence must be locally square-integrable
in $\R^d$: this means that
$(d-1) - 2\gamma > -1$, i.e.
\begin{equation} \label{eq:conddgamma}
	\gamma < \frac{d}{2} \,.
\end{equation}
(Note that $(\frac{d}{2}-\gamma)$ is precisely the scaling exponent
of the coupling constant $\lambda$ in \eqref{eq:scalingpar}.)

Relation \eqref{eq:conddgamma} matches with the
\emph{Harris criterion} for disorder relevance \cite{H74}.
This was originally introduced in the context
of the Ising model with \emph{bond disorder}, but it can be
naturally rephrased for general disordered system
(cf. \cite{G10}, \cite{CCFS86}):
denoting by $\nu$ the \emph{correlation length exponent} of the homogeneous system
($\lambda = 0$), it asserts that
a $d$-dimensional system is disorder-relevant when $\nu < \frac{2}{d}$
and irrelevant when $\nu > \frac{2}{d}$
(the remaining case $\nu = \frac{2}{d}$ being dubbed marginal).
The exponent $\nu$ is usually defined in terms of field correlations:
\begin{equation} \label{eq:physcorr}
	\big| \E_{\bbOmegadelta; 0,h}(\sigma_x \sigma_y)
	- \E_{\bbOmegadelta; 0,h}(\sigma_x) \E_{\bbOmegadelta; 0,h}(\sigma_y) \big|
	\underset{\delta\downarrow 0}{\approx}
	e^{- \frac{|x-y|}{\delta\xi(h)}} \,, \qquad \text{with} \qquad
	\xi(h) \underset{h \downarrow 0}{\approx} h^{-\nu} \,.
\end{equation}
Since such an exponent can be difficult to compute, it is typical to
consider alternative notions of correlation length $\xi(h)$,
linked to finite size scaling \cite{CCFS86,CCFS89}.
In our context, it is natural to define
\begin{equation} \label{eq:ourcorr}
	\xi(h)^{-1} := \max\{ \delta > 0: \ \ Z_{\bbOmegadelta; 0,h} > A \} \,,
\end{equation}
where $A > 0$ is a fixed large constant, whose precise value is immaterial.
Recalling \eqref{eq:scalingpar}, we can rewrite \eqref{eq:Zcont} for $\lambda = 0$ as
\begin{equation*}
	\lim_{h\downarrow 0} Z_{\Omega_{\delta}; 0, h} = \bZ_{\Omega; 0, \hh} \,,
	\qquad \text{with} \qquad
	\delta = \delta_{h,\hh} := (h/\hh)^{1/(d-\gamma)} \,.
\end{equation*}
If $\hh \mapsto \bZ_{\Omega; 0, \hh}$ is increasing
(e.g., when $\bpsi^{(k)}_{\Omega}(x_1, \ldots, x_k) \ge 0$,
by \eqref{eq:Zcont})
denoting by $\hh_A$ the unique solution to $\bZ_{\Omega; 0, \hh} = A$,
under some natural regularity assumptions it follows that
\begin{equation*}
	\xi(h) \underset{h\downarrow 0}{\sim} \big(\delta_{h,\hh_A}\big)^{-1} =
	\bigg(\frac{\hh_A}{h} \bigg)^{1/(d-\gamma)} \approx h^{-\nu} \,, \qquad
	\text{with} \quad \nu = \frac{1}{d-\gamma} \,.
\end{equation*}
This shows that \emph{Harris' condition $\nu < \frac{2}{d}$ coincides with
the key condition $\gamma < \frac{d}{2}$ of our approach}, cf. \eqref{eq:conddgamma},
ensuring the square-integrability of the
limiting correlations.

\smallskip

The correlation length \eqref{eq:ourcorr} is expected to be equivalent
to the classical one \eqref{eq:physcorr}, in the sense that it should have
the same critical exponent (cf.\ \cite{G07} for disordered pinning models)
when the phase transition is continuous, that is when
$\gamma > 0$ in \eqref{eq:polyvanish}.
When $\gamma = 0$, which
is the signature of a discontinuous (first-order) phase transition,
our approach still applies and gives the scaling limit of
the disorder partition function,
but there is no direct link with disorder relevance
(cf.\ \eqref{eq:toogen}
below and the following discussion).

\smallskip

In summary, our approach suggests an \emph{alternative
view on disorder relevance}, in which the randomness survives in the continuum
limit with vanishing coupling constants.
In fact, relation \eqref{eq:Zcont} can be seen as
a rigorous \emph{finite size scaling} relation \cite{Car88} for disordered systems
(the special case of non-disordered pinning models is treated in \cite{Soh09}).

\begin{remark}\rm
We can now explain the parameter restrictions in the motivating models:
condition \eqref{eq:conddgamma} is fulfilled by the \emph{disordered pinning model}
($d=1$, $\gamma = 1-\alpha$) when $\alpha > \frac{1}{2}$,
by the (long-range) \emph{directed polymer model}
($d_\eff = 1 + \frac{1}{\alpha}$, $\gamma = \frac{1}{\alpha}$) when $\alpha > 1$,
and by the critical \emph{random field Ising model} ($d=2$, $\gamma = \frac{1}{8}$).
\end{remark}
\medskip

\noindent\textbf{2. (Universality).}
The convergence in distribution
of the discrete partition function $Z^\omega_{\bbOmegadelta; \lambda, h}$
toward its continuum counterpart
$\bZ^W_{\Omega; \hlambda, \hh}$, cf. \eqref{eq:Zcont},
is an instance of \emph{universality}. In fact:
\begin{itemize}
\item the details of the disorder distribution are irrelevant:
any family $(\omega_x)_{x\in\bbOmegadelta}$ of i.i.d.
random variables with zero mean, finite variance and locally
finite exponential moments scales in the limit to the same continuum
object, namely white noise $W(\cdot)$;

\item also the fine details of the reference law $\P_{\bbOmegadelta}^\re$
disappear in the limit: any family
$\psi^{(k)}_{\bbOmegadelta}$ of discrete $k$-point correlation functions converging
to the same limit \eqref{eq:polyconv} yields the same continuum
partition function $\bZ^W_{\Omega; \hlambda, \hh}$ in \eqref{eq:Zcont}.
\end{itemize}

At a deeper level, the continuum partition function
sheds light on the discrete free energy $F(\lambda, h)$, cf.\ \eqref{eq:dfe},
in the weak disorder regime $\lambda, h \to 0$.
Defining the \emph{continuum free energy}
\begin{equation} \label{eq:contfe}
	\bF(\hlambda, \hh) := \limsup_{\Omega \uparrow \R^d}
	\frac{1}{Leb(\Omega)} \bbE \big[ \log \bZ^W_{\Omega; \hlambda, \hh} \big] \,,
\end{equation}
and setting
$\lambda_\delta := \delta^{d/2-\gamma} \hlambda$ and
$h_\delta := \delta^{d-\gamma} \hh - \frac{1}{2}(\lambda_\delta)^2$,
cf.\ \eqref{eq:scalingpar}, one is led to the following
\begin{equation}\label{eq:conjfe}
	\text{\emph{Conjecture}}: \qquad
       \lim_{\delta \downarrow 0} \, \frac{F(\lambda_\delta, h_\delta)}
	{\delta^d} = \bF(\hlambda, \hh) \,.\footnote{A notational remark:
for the directed polymer model, the denominator
in \eqref{eq:conjfe} should be $\delta$ instead of $\delta^d$, due to a different
normalization of the discrete free energy, cf.\ Subsection~\ref{sec:dpre};
for the random field Ising model, one should set
$h_\delta := \delta^{d-\gamma} \hh$ without the
``$- \frac{1}{2}(\lambda_\delta)^2$ correction'', as already discussed
after \eqref{eq:isingscale}. These notational details are discussed
in Section~\ref{sec:scaling} for each model, while here we keep a unified approach.}
\end{equation}
The heuristics goes as follows:
by \eqref{eq:dfe} we can write (replacing $\limsup$ by $\lim$ for simplicity)
\begin{equation} \label{eq:co1}
\lim_{\delta \downarrow 0} \, \frac{F(\lambda_\delta, h_\delta)}
	{\delta^d}  =
	\lim_{\delta \downarrow 0} \,
	\lim_{\Omega \uparrow \R^d} \, \frac{1}{\delta^d}
	\frac{1}{|\bbOmegadelta|} \bbE \big[ \log
	Z_{\bbOmegadelta; \lambda_\delta, h_\delta}^\omega \big] \,;
\end{equation}
on the other hand, applying \eqref{eq:Zcont} in
\eqref{eq:contfe} (assuming uniform integrability)
and noting that $Leb(\Omega) = \lim_{\delta \downarrow 0} \delta^d |\bbOmegadelta|$, one gets
\begin{equation} \label{eq:co2}
	\bF(\hlambda, \hh) = \lim_{\Omega \uparrow \R^d} \,
	\lim_{\delta \downarrow 0} \ \frac{1}{\delta^d}
	\frac{1}{|\bbOmegadelta|} \bbE \big[ \log
	Z_{\bbOmegadelta; \lambda_\delta, h_\delta}^\omega \big] \,.
\end{equation}
Therefore proving \eqref{eq:conjfe}
amounts to \emph{interchanging the infinite volume} ($\Omega \uparrow \R^d$)
and \emph{continuum and weak disorder} ($\delta\downarrow 0$) limits.
This is in principle a delicate issue, but we expect relation \eqref{eq:conjfe} to
hold in many interesting cases,
such as the three motivating models
in the specified parameters range
(and, more generally, when the continuum correlations are ``non trivial'';
see the next point).
This is an interesting open problem.

Relation \eqref{eq:conjfe}
implies that the discrete free energy $F(\lambda, h)$
has a \emph{universal shape for weak disorder $\lambda, h \to 0$}.
This leads to sharp predictions on the asymptotic behavior
of free energy-related quantities, such as critical
curves and order parameters.
Consider, e.g., the average magnetization
$\langle \sigma_0 \rangle_{\beta_c, h}$
in the critical Ising model on $\Z^2$ with
a homogeneous external field $h > 0$.
If relation \eqref{eq:conjfe} holds
(with $d=2$, $\lambda_\delta = \hlambda\, \delta^{\frac{7}{8}}$,
$h_\delta = \hh\, \delta^{\frac{15}{8}}$, cf.\ \eqref{eq:isingscale},
and we look at the case $\hlambda = 0$),
differentiating both sides with respect to $\hh$
\emph{suggests} that
\begin{equation} \label{eq:sharpCGN}
	\lim_{h\downarrow 0} \frac{\langle \sigma_0
\rangle_{\beta_c, \, h}}{h^{\frac{1}{15}}}
	= \frac{\partial \bF}{\partial \hh} (0,1) \,,
\end{equation}
which would sharpen the results in \cite{CGN12b}.
Analogous predictions can be formulated
for disorder pinning and directed polymer models
(see Section~\ref{sec:scaling}).
Of course, proving such precise estimates
is likely to require substantial additional work,
but having a candidate for the limiting constants,
like in \eqref{eq:sharpCGN}, can be of great help.

\begin{remark}\rm
Relation \eqref{eq:conjfe} (in a stronger form) has been proved in \cite{BdH97,CG10}
for the so-called \emph{disordered copolymer model}, by means of a subtle
coarse-graining procedure. We mention that our approach can also be applied
to the copolymer model, yielding a Wiener chaos expansion
as in \eqref{eq:Zcont} for the continuum partition function.
\end{remark}

\medskip
\noindent\textbf{3. (First-order phase transitions).}
Relation \eqref{eq:polyconv} can hold
with $\gamma = 0$ (i.e.,
the $k$-point correlation function converges
without rescaling) and with a ``trivial'' factorized limit:
\begin{equation} \label{eq:toogen}
	\psi^{(k)}_{\bbOmegadelta}(x_1,\ldots, x_k) \xrightarrow[\delta\downarrow 0]{}
	\bpsi^{(k)}_{\Omega}(x_1,\ldots, x_k) := \rho^k \,,
	\qquad \text{with} \quad \rho \in (0,\infty) .
\end{equation}
This is typical for a system at the critical point of a \emph{first-order phase transitions}
(i.e., the order parameter $m_h :=
\lim_{\delta\downarrow 0} |\bbOmegadelta|^{-1} \E_{\bbOmegadelta; 0, h}
[ \sum_{x \in \bbOmegadelta} \sigma_x]$, as a function of $h$,
has a jump discontinuity at $h=0$).
Examples include the \emph{pinning model} for $\alpha > 1$ and the
\emph{Ising model}
for $\beta > \beta_c$. Plugging \eqref{eq:toogen} into
\eqref{eq:Zcont} and performing the integration, cf.\
\cite[\S3.2]{J97}, one gets
\begin{equation} \label{eq:contexp}
	\bZ_{\Omega; \hlambda, \hh}^W =
	\exp\bigg\{\rho \hlambda W(\Omega) +
	\bigg( \rho \hh - \frac{1}{2} (\rho\hlambda)^2 \bigg) Leb(\Omega) \bigg\} \,.
\end{equation}
This explicit formula allows exact asymptotic computations on
the discrete model: e.g., relation \eqref{eq:Zcont} yields for suitable
values of $\zeta \in \R$ (when uniform integrability holds)
\begin{equation}\label{eq:inci}
	\bbE\Big[ \big( Z_{\bbOmegadelta; \lambda,h}^\omega \big)^\zeta
	\Big] \xrightarrow[\delta\downarrow 0]{}
	\bbE\Big[ \big(\bZ_{\Omega; \hlambda, \hh}^W\big)^\zeta \Big]
	= \exp\bigg\{ \rho \zeta \bigg( \hh - \frac{1}{2}
	\rho \hlambda^2 (1-\zeta) \bigg) Leb(\Omega) \bigg\} \,.
\end{equation}
Incidentally, for disordered pinning models with $\alpha > 1$, this estimate
clarifies
the strategy for the sharp asymptotic behavior of the critical curve $h_c(\lambda)$
as $\lambda \downarrow 0$,
determined
in \cite{BCPSZ14} (even though the proof therein is carried out with different techniques).

Unfortunately, the continuum partition function \eqref{eq:contexp}
can fail in capturing some key properties of the discrete model,
because it is shared by many ``too different''
models: relation \eqref{eq:toogen} asks that
the field variables under the reference law $\P^{\re}$ become uncorrelated
as $\delta\downarrow 0$, but is insensitive toward the
\emph{correlation decay},
which could be polynomial, exponential, or even finite-range (like in the extreme case
of a ``trivial'' reference law $\P^\re$, under which
$(\sigma_x)_{x\in\bbOmegadelta}$ are i.i.d.\ with
$\E^{\re}(\sigma_x) = \rho$).
Since the correlation decay can affect substantially the discrete free energy,
conjecture \eqref{eq:conjfe} usually \emph{fails} under \eqref{eq:toogen}.

For example,
for disordered pinning models with $\alpha > 1$, one always has $F(\lambda, h) \ge 0$
(there is only
a polynomial cost for the underlying renewal process not to return before time $N$,
and the energy
of such a renewal configuration is $0$). On the other hand, if the renewal jump distribution
has finite exponential moments,
then there is an exponential cost for the renewal not to
return before time $N$, and $F(\lambda, h)<0$
if $h$ is sufficiently negative. Both models satisfy \eqref{eq:toogen}
with $\rho = 1/\E[\tau_1]$ and thus their continuum partition functions
coincide, but their free energies
depend on finer detail of the renewal distribution (beyond the value of
$\E[\tau_1]$) and are therefore radically different,
causing \eqref{eq:conjfe} to fail. The continuum free energy is
$\bF(\hlambda, \hh) = \rho \hh - \frac{1}{2} (\rho\hlambda)^2$, cf. \eqref{eq:contexp}
and \eqref{eq:contfe}, which can attain negative values.
\medskip

\noindent
\textbf{4. (Moment assumptions).} In our convergence results,
cf.~Theorems~\ref{pinning scaling}, \ref{DP scaling}
and~\ref{T:RFIM}, we assume that the disorder variables $(\omega_x)_{x\in\bbOmegadelta}$
have finite exponential moments, which
guarantees that the expectation and variance in \eqref{eq:Taylor} are well-defined. However,
this assumption can be relaxed to
finite moments. The necessary number of moments
depends on the model and can be determined by the requirement that
the typical maximum value of the variables
$\omega_x$ ``sampled'' by the field does not exceed the reciprocal of the
disorder strength $\lambda$ (so that a truncation of $\omega_x$
at level $\lambda^{-1}$ provides a good approximation).

For example,
in the long-range directed polymer model,
one expects that the path will be confined (at weak disorder) in a
box of size $N\times N^{1/\alpha}$. If the disorder variables have
a polynomial tail $\bbP(\omega_x> y)\approx y^{-\eta}$ as $y\uparrow \infty$,
their maximum in such a box is of the order $N^{\frac{1}{\eta}\frac{1+\alpha}{\alpha}}$.
Since $\lambda \approx N^{-\frac{\alpha-1}{2\alpha}}$ for this model,
cf.\ Theorem~\ref{DP scaling},
one expects the validity of the convergence result as long as
$N^{\frac{1}{\eta}\frac{1+\alpha}{\alpha}} \ll N^{\frac{\alpha-1}{2\alpha}}$, i.e.\ for
$\eta\ge2(\alpha+1)/(\alpha-1)$. For $\alpha=2$, this gives $\eta\ge6$, which was conjectured
in \cite{AKQ14a} and recently proved in \cite{DZ}.

Similarly, for the pinning model the number of relevant variables is of
order $N$ and $\lambda \approx N^{-(\alpha - \frac{1}{2})}$,
cf.\ Theorem~\ref{pinning scaling},
leading to a conjectured value 
$\eta\ge2/(2\alpha-1)$; for the RFIM, the
number of relevant variables is of order $N^2$
and $\lambda \approx N^{-7/8}$,
cf.\ Theorem~\ref{T:RFIM}, leading to
a conjectured value  $\eta\ge16/7$.

\section{From polynomial to Wiener chaos via Lindeberg}
\label{ConvergencePoly}

In this section, which can be read independently of the previous one,
we first recall the main properties of white noise on $\R^d$
(Subsection~\ref{sec:wnn}) and
define polynomial chaos expansions (Subsection~\ref{sec:Polydef}).
We then formulate our main general theorem (Subsection~\ref{sec:Poly}),
ensuring convergence of polynomial chaos toward Wiener chaos expansions.
This is based on a Lindeberg principle (Subsection~\ref{sec:Linde})
which extends results in \cite{MOO10} to optimal second moment assumptions.
Subsections~\ref{sec:Poly} and~\ref{sec:Linde} can be read independently.

The space of Lebesgue square-integrable functions $f: \R^d \to \R$ is
denoted by $L^2(\R^d)$, and we set
$\| f \|_{L^2(\R^d)}^2 = \int_{\R^d} f(x)^2 \dd x$.
For more details on the white noise, we refer to \cite{J97,PT10}.

\subsection{White noise in a nutshell}
\label{sec:wnn}

By \emph{white noise on $\R^d$} we mean a
Gaussian process $W = (W(f))_{f \in L^2(\R^d)}$
with $\bbE[W(f)] = 0$ and $\bbcov (W(f), W(g)) = \int_{\R^d} f(x) g(x) \dd x$,
defined on some probability space $(\Omega_W, \cA, \bbP)$.
Since the specified covariance is a symmetric and
positive definite function,
such a process exists (and is unique in law).

If $A_1, A_2, \ldots$ are disjoint Borel sets with finite Lebesgue measure, it follows that
the random variables $(W(A_i) := W(\ind_{A_i}))_{i = 1, 2, \ldots}$
are independent $\cN(0,Leb(A_i))$
and the relation $W(\bigcup_{i \ge 1} A_i) = \sum_{i\ge 1} W(A_i)$ holds a.s..
Consequently, it is suggestive to use the notation
\begin{equation} \label{eq:Wsingle}
	\int_{\R^d} f(x) \, W(\dd x) := W(f) \,,
\end{equation}
even though $W(\cdot)$ is a.s. \emph{not} a signed measure on $\R^d$.
For $d=1$, $W(f)$ coincides with the usual
Wiener integral $\int f(t) \, \dd W_t$ with respect to the Brownian motion $W_t :=
W(\ind_{[0,t]})$.

One can define a multi-dimensional stochastic integral
$W^{\otimes k}(f)$, for $k\in\N$ and suitable $f: (\R^d)^k \to \R$, as follows.
For ``special indicator functions'' $f = \ind_{A_1 \times \ldots \times A_k}$ built over
\emph{disjoint} bounded Borel sets $A_1, \ldots, A_k \subseteq \R^d$,
one poses $W^{\otimes k}(f) := W(\ind_{A_1}) \cdots W(\ind_{A_k})$.
This definition is extended, by linearity, to the space
$\cS_k$ of ``special simple functions'',
i.e. finite linear combinations of
special indicator functions.
Since a permutation of the arguments of $f$ leaves $W^{\otimes k}(f)$ invariant,
it is sufficient to consider \emph{symmetric} functions $f$, which we do henceforth.
One then observes that $\bbE[W^{\otimes k}(f)] = 0$ and
the crucial \emph{Ito isometry} is satisfied:\footnote{For the
isometry \eqref{eq:WI} it is essential to
``avoid diagonals'': this is the reason for taking special indicator functions,
corresponding to products of \emph{disjoint} Borel sets.}
\begin{equation} \label{eq:WI}
	\bbcov(W^{\otimes k}(f), W^{\otimes l}(g)) = k! \, \ind_{\{k=l\}}
	\int_{(\R^d)^k} f(x_1, \ldots, x_k) \, g(x_1, \ldots, x_k) \, \dd x_1 \cdots \dd x_k \,.
\end{equation}
Since $\cS_k$ is dense in $L^2((\R^d)^k)$,
one can finally extend the definition of $W^{\otimes k}(f)$
to every symmetric $f \in L^2((\R^d)^k)$, in such a way
that \eqref{eq:WI} still holds. Like in \eqref{eq:Wsingle}, we will write
\begin{equation} \label{eq:Wk}
	 \idotsint_{(\R^d)^k} f(x_1, \ldots, x_k) \, W(\dd x_1) \cdots W(\dd x_k)
	:= W^{\otimes k}(f)  \,.
\end{equation}
One also sets $W^{\otimes 0}(c) := c$
for $c\in L^2((\R^d)^0) := \R$.

Note that $W^{\otimes k}(f)$ is a random variable defined on $(\Omega_W, \cA, \bbP)$,
with zero mean (for $k\ge 1$) and finite variance,
which is measurable with respect to the $\sigma$-algebra
$\sigma(W)$ generated by the white noise $W$.
(One can show that $W^{\otimes k}(f)$ is non Gaussian for $k > 1$ and $f \not\equiv 0$.)
Remarkably, \emph{every} square-integrable random variable $X$ defined on $\Omega_W$,
which is measurable with respect to $\sigma(W)$,
can be written as the $L^2$-convergent series
\begin{equation} \label{eq:WC}
	X = \sum_{k=0}^\infty \frac{1}{k!} W^{\otimes k}(f_k) \,,
\end{equation}
called \emph{Wiener chaos expansion},
for a unique choice of symmetric functions $f_k \in L^2((\R^d)^k)$
satisfying $\sum_{k=0}^\infty \frac{1}{k!} \| f_k \|_{L^2(\R^d)}^2 < \infty$,
by \eqref{eq:WI}.
In other terms, the multiple stochastic integrals $W^{\otimes k}(f)$ span
the whole Hilbert space $L^2(\Omega_W, \sigma(W), \bbP)$.

\subsection{Polynomial chaos}
\label{sec:Polydef}

Let $\bbT$ be a finite or countable
index set (e.g., $\bbT = \{1,\ldots, N\}$, $\bbT = \N$, $\bbT = \Z^d$). We set
\begin{equation*}
	\cP^\fin(\bbT):=\{ I\subseteq \bbT:\ |I|<\infty\} \,.
\end{equation*}
Any function $\psi : \cP^\fin(\bbT) \to \R$ determines a (formal, if $|\bbT|=\infty$)
\emph{multi-linear polynomial}
$\Psi$:
\begin{equation} \label{eq:Psi}
	\Psi (x) = \sum_{I \in \cP^\fin(\bbT)} \psi(I) \, x^I \,,
	\qquad \quad \text{where} \qquad \quad
	x^I := \prod_{i\in I} x_i \quad \text{with} \quad x^\emptyset := 1 \,.
\end{equation}
We say that $\psi : \cP^\fin(\bbT) \to \R$ is the {\em kernel} of $\Psi$.

Let now $\zeta:=(\zeta_i)_{i \in \bbT}$ be a family of independent (but not necessarily identically
distributed) random variables. We say that a random variable $X$ admits a
{\em polynomial chaos expansion} with respect to $\zeta$ if it can be expressed as
$X = \Psi(\zeta)=\Psi((\zeta_i)_{i\in \bbT})$ for some multi-linear polynomial
$\Psi$. Of course, when $|\bbT|=\infty$ some care is needed:
by $X = \Psi(\zeta)$ we mean that
for any sequence $\Lambda_N\subset \bbT$ with $|\Lambda_N|<\infty$
and $\Lambda_N\uparrow \bbT$ one has
\begin{equation} \label{eq:Xproba}
	X = \lim_{N\to\infty}
	\sum_{I \subseteq \Lambda_N} \psi(I) \, \zeta^I \qquad
	\text{in probability} \,.
\end{equation}

\begin{remark}\rm
When $\bbvar(\zeta_i) \ne 0$ for all $i\in\bbT$, we can assume that
all the variances are equal with no loss of generality:
it suffices to redefine $\psi(I) \to \psi(I)(\prod_{i\in I} \bbvar(\zeta_i))^{-1/2}$.
\end{remark}

\begin{remark}\rm\label{rem:L2easy}
When the independent random variables $\zeta:=(\zeta_i)_{i \in \bbT}$
have zero mean and the same variance $\sigma^2$,
an easy sufficient condition for \eqref{eq:Xproba}, with $L^2$ convergence, is
\begin{equation}\label{eq:CPsi0}
	\sum_{I \in \cP^\fin(\bbT)} (\sigma^2)^{|I|}\psi(I)^2 <\infty \,,
\end{equation}
because $\bbE[\zeta^I \zeta^J] = 0$ for $I \ne J$.
For variables with non-zero mean $\mu:=(\mu_i)_{i\in \bbT}$
(always with the same variance $\sigma^2$),
sharp conditions for $L^2$ convergence in \eqref{eq:Xproba}
involve $\mu$ and the kernel $\psi$ jointly. As we show below, practical
sufficient ``factorized'' conditions are
\begin{equation} \label{eq:factcond}
	\sum_{i\in\bbT} \mu_i^2 < \infty \,; \qquad \ \ \
 	\exists \epsilon >0: \ \ \sum_{I \in \cP^\fin(\bbT)}
	(1+\epsilon)^{|I|} (\sigma^2)^{|I|} \psi(I)^2 <\infty \,.
\end{equation}
\end{remark}

\subsection{Convergence of polynomial chaos to Wiener chaos}
\label{sec:Poly}

Consider for $\delta \in (0,1)$ an index
set $\bbT_\delta \subset \R^d$ and a family of polynomial chaos expansions
$(\Psi_\delta(\zeta_\delta))_{\delta\in (0,1)}$,
defined from kernels $\psi_\delta : \cP^\fin(\bbT_\delta)\to \R$ and from independent random
variables $\zeta_\delta:=(\zeta_{\delta, x})_{x\in \bbT_\delta}$.
If $\bbT_\delta$ converges to the
continuum $\R^d$ as $\delta\downarrow 0$ (e.g., $\bbT_\delta:=(\delta\Z)^d$),
then after suitable scaling, the random variables
$(\zeta_{\delta, x})_{x\in \bbT_\delta}$ approximate the white noise
$W(\dd x)$ on $\R^d$.
If the kernel $\psi_\delta$, suitably rescaled, converges as $\delta \downarrow 0$ to
a continuum kernel $\bpsi_0 : \cP^\fin(\R^d) \to \R$,
it is plausible that the polynomial chaos expansion $\Psi_\delta(\zeta_\delta)$
approximates a \emph{Wiener chaos expansion} $\bPsi_0$, cf. \eqref{eq:Wk}-\eqref{eq:WC},
with kernel $\bpsi_0$.
This is precisely what we are going to show.

First we introduce some notation. Each random variable $\zeta_x$ indexed by a point $x$ in an index set $\bbT\subset\R^d$ will be associated with a cell in $\R^d$ containing $x$, and functions defined on $\bbT^k$ will be extended to functions defined on $(\R^d)^k$.

\begin{itemize}
\item Let $\cB(\R^d)$ denote the Borel subsets of $\R^d$.
Given a locally finite set $\bbT \subset \R^d$, we call $\cC: \bbT \to \cB(\R^d)$
a {\em tessellation} of $\R^d$ indexed by $\bbT$, if $(\cC(x))_{x\in \bbT}$ form a
disjoint partition of $\R^d$ such that $x\in \cC(x)$ for each $x\in \bbT$. We call $\cC(x)$
the {\em cell} associated with $x\in \bbT$. In most cases, $(\cC(x))_{x\in \bbT}$ will be the cells
of a cubic lattice. However, there are natural examples where this is not the case, such as the directed polymer model defined from
a simple symmetric random walk, or the Ising model defined on non-cubic lattices.

\item Once a tessellation $\cC$ is fixed,
any function $f: \bbT \to \R$ is automatically extended to $f: \R^d \to \R$ by
assigning value $f(y):=f(x)$ for all $y\in \cC(x)$, for each $x\in \bbT$.
Note that for such extensions $\Vert f\Vert_{L^2(\R^d)}^2 = \sum_{x\in\bbT} f(x)^2 \, Leb(\cC(x))$.

\item Analogously, for any $\psi : \cP^\fin(\bbT) \to \R$, we first extend it to
$\psi : \bigcup_{k=0}^\infty\bbT^k \to \R$ by setting $\psi(x_1, \ldots, x_k) :=
\psi (\{x_1, \ldots, x_k\})$
if the $x_i$ are distinct,
and $\psi(x_1, \ldots, x_k) :=0$ otherwise. We then
extend it to $\psi : \bigcup_{k=0}^\infty (\R^d)^k \to \R$ by assigning
value $\psi(x_1, \ldots, x_k)$ to all points in $\cC(x_1)\times \cdots \times \cC(x_k)$,
for each $k\in\N$ and $x_1, \ldots, x_k\in \bbT$.

\item
Given $\psi : \cP^\fin(\R^d) \to \R$, its extension to $\psi : \bigcup_{k=0}^\infty (\R^d)^k \to \R$
is defined similarly (no cells involved). It will be clear from the context which version of $\psi$
is being used.

\item
Finally, given a measurable function $\psi : \bigcup_{k=0}^\infty (\R^d)^k \to \R$,
we denote by $\Vert \psi \Vert_{L^2((\R^d)^k)}$ the $L^2$ norm
of the restriction of $\psi$ to $(\R^d)^k$, i.e.
$$
\Vert \psi \Vert_{L^2((\R^d)^k)}^2 = \idotsint_{(\R^d)^k} \psi(x_1, \ldots, x_k)^2
\, \dd x_1 \cdots \dd x_k.
$$
\end{itemize}
We are now ready to state our main convergence result,
proved in Section~\ref{sec:genproof}.

\begin{theorem}[Convergence of polynomial chaos to Wiener chaos, $L^2$ case]\label{th:general}
Assume that for $\delta \in (0,1)$ the following ingredients are given:
\begin{itemize}
\item Let $\bbT_\delta$ be a locally finite subset of $\R^d$;

\item Let $\zeta_\delta:=(\zeta_{\delta, x})_{x\in \bbT_\delta}$ be independent random
variables in $L^2$ with the same variance,
$$
\bbE[\zeta_{\delta, x}]=\mu_{\delta}(x) \qquad \mbox{and}
\qquad  {\rm \bbV ar}(\zeta_{\delta, x})= \sigma^2_\delta \,,
$$
such that
$((\zeta_{\delta, x} - \bbE[\zeta_{\delta, x}])^2)_{\delta \in (0,1), x\in \bbT_\delta}$
are uniformly integrable;

\item Let $\Psi_\delta(z)$ be a formal multi-linear polynomial
with kernel $\psi_\delta : \cP^\fin(\bbT_\delta) \to \R$, cf.\ \eqref{eq:Psi};

\item Let $\cC_\delta$ be a tessellation of $\R^d$ indexed by $\bbT_\delta$,
where every cell $\cC_\delta(x)$ has the
same volume $v_\delta := Leb(\cC_\delta(x))$.
\end{itemize}
Assume that $v_\delta \to 0$ as $\delta\downarrow 0$, and
that the following conditions are satisfied:
\begin{ienumerate}
\item\label{it:1} There exist $\bsigma_0 \in (0,\infty)$ and $\bmu_{0} \in L^2(\R^d)$ such
that
\begin{equation}\label{eq:barmu}
	\lim_{\delta \downarrow 0} \sigma_\delta = \bsigma_0 \,, \qquad
	\lim_{\delta\downarrow 0} \| \bar\mu_\delta - \bmu_{0} \|_{L^2(\R^d)}  = 0, \qquad
	\text{where } \
	\bar\mu_\delta(x) := v_\delta^{-1/2}\mu_\delta(x);
\end{equation}

\item\label{it:2} There exists $\bpsi_{0}: \cP^\fin(\R^d) \to \R$, with
$\Vert \bpsi_{0}\Vert_{L^2((\R^d)^k)}<\infty$ for every $k\in\N_0$, such that
\begin{equation} \label{eq:barpsi}
	\lim_{\delta\downarrow 0} \| \bar\psi_\delta -\bpsi_{0}\|_{L^2((\R^d)^k)}^2 = 0, \qquad
	\text{where } \ \bar\psi_\delta(I) := v_\delta^{-|I|/2}\psi_\delta(I);
\end{equation}

\item\label{it:3} For some $\epsilon > 0$
(or even $\epsilon = 0$, if $\mu_\delta(x) \equiv 0$)
\begin{equation}\label{psidtail1}
	\lim_{\ell \to \infty} \, \limsup_{\delta\downarrow 0}
	\sum_{I \in \cP^\fin(\bbT_\delta), |I| > \ell} (1+\epsilon)^{|I|} \,
	(\sigma_\delta^2)^{|I|} \psi_\delta(I)^2 = 0 \,.
\end{equation}

\end{ienumerate}
Then the polynomial chaos expansion
$\Psi_\delta(\zeta_\delta)$ is well-defined
and converges in distribution as $\delta\downarrow 0$ to
a random variable $\bPsi_{0}$ with an explicit Wiener chaos expansion:
\begin{equation} \label{eq:general}
\Psi_\delta(\zeta_\delta) \,\xrightarrow[\,\delta\downarrow 0\,]{d}\,
\bPsi_{0} := \sum_{k = 0}^\infty \frac{1}{k!} \idotsint_{(\R^d)^k} \bpsi_{0} (y_1, \ldots, y_k)
	\prod_{i=1}^k \Big(\bsigma_0 W(\dd y_i) + \bmu_{0}(y_i) \dd y_i \Big) \,,
\end{equation}
where $W(\cdot)$ denotes white noise on $\R^d$.

The series in \eqref{eq:general} converges in $L^2$,
and $\bbE[\Psi_\delta(\zeta_\delta)^2] \to \bbE[\bPsi_{0}^2]$. Consequently,
for any coupling of $\Psi_\delta(\zeta_\delta)$ and $\bPsi_{0}$ such
that $\Psi_\delta(\zeta_\delta) \to \bPsi_{0}$ a.s.,
one has $\bbE[|\Psi_\delta(\zeta_\delta) - \bPsi_{0}|^2] \to 0$.

The convergence \eqref{eq:general} extends to the joint distribution of a finite collection of
polynomial chaos expansions $(\Psi_{i,\delta}(\zeta_\delta))_{1\leq i\leq M}$,
provided $(\Psi_{i,\delta})_{\delta \in (0,1)}$ satisfies \eqref{it:2}-\eqref{it:3}
above for each $i$.
\end{theorem}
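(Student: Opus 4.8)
The plan is to prove the theorem through a sequence of reductions, the central one being a two-step comparison: first replace the original variables $\zeta_\delta$ by Gaussians via the Lindeberg principle (Theorems~\ref{th:lindeberg}/\ref{C:lindeberg2}), then identify the limit of the Gaussian polynomial chaos as the stated Wiener chaos via an Ito-isometry computation. Throughout, hypothesis~\eqref{it:3} (together with \eqref{eq:factcond}-type bounds) is what controls the ``tail'' in the chaos degree, allowing us to truncate at a finite degree $\ell$ uniformly in $\delta$ and take $\ell \to \infty$ at the end.

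\smallskip

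First I would establish that $\Psi_\delta(\zeta_\delta)$ is well-defined in $L^2$ with uniformly bounded second moments. Writing $\zeta_{\delta,x} = \mu_\delta(x) + (\zeta_{\delta,x} - \mu_\delta(x))$ and expanding $\zeta_\delta^I = \prod_{i\in I}(\mu_\delta(i) + \bar\zeta_{\delta,i})$, where $\bar\zeta_{\delta,i}$ is the centered variable, one gets $\Psi_\delta(\zeta_\delta) = \sum_{J} \big(\sum_{I \supseteq J} \psi_\delta(I)\mu_\delta^{I\setminus J}\big)\, \bar\zeta_\delta^{\,J}$, so by orthogonality of the $\bar\zeta_\delta^{\,J}$ its $L^2$ norm is a sum over $J$ of squared coefficients times $(\sigma_\delta^2)^{|J|}$. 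A Cauchy--Schwarz argument in the ``splitting'' of $I$ into $J$ and $I\setminus J$, using $\sum_i \mu_\delta(i)^2 \approx \|\bar\mu_\delta\|_{L^2}^2 \, v_\delta$, converts \eqref{it:3} plus convergence of $\bar\mu_\delta$ into a genuine bound of the form $\sum_I (1+\epsilon')^{|I|}(\sigma_\delta^2)^{|I|}\psi_\delta(I)^2 < \infty$ controlling everything; this is essentially the content of Remark~\ref{rem:L2easy} made quantitative and uniform in $\delta$. The same bookkeeping shows the limiting Wiener chaos series in \eqref{eq:general} converges in $L^2$, since $\|\bpsi_0\|_{L^2((\R^d)^k)}$ inherits the summability and the Ito isometry \eqref{eq:WI} gives $\bbE[\bPsi_0^2] = \sum_k \frac{1}{k!}\|\widetilde{g_k}\|^2$ with $g_k(y_1,\dots,y_k) = \sum_{L}\bpsi_0(\{y_i\}_L \cup \cdot)\prod \bmu_0$ the appropriate ``inhomogeneous'' kernel.

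\smallskip

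The core step is the convergence in distribution. Fix $\ell$ and truncate both sides at chaos degree $\le \ell$; \eqref{it:3} (and the analogous $L^2$ tail bound for the limit) makes the tails uniformly small, so it suffices to treat the truncated polynomials. For the truncated discrete chaos I would apply the Lindeberg principle: the variables $(\zeta_{\delta,x})$ have matched mean and variance with i.i.d.\ Gaussians $\tilde\zeta_{\delta,x} \sim \cN(\mu_\delta(x), \sigma_\delta^2)$, the uniform integrability of $(\bar\zeta_{\delta,x})^2$ supplies the Lindeberg-type smallness of the ``influences'', and the degree is bounded, so Theorem~\ref{th:lindeberg} (resp.~\ref{C:lindeberg2}) gives that $\Psi_\delta(\zeta_\delta)$ and $\Psi_\delta(\tilde\zeta_\delta)$ have the same limit in distribution (if any). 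It then remains to show $\Psi_\delta(\tilde\zeta_\delta)$ converges to the truncated Wiener chaos. Here I would replace $\tilde\zeta_{\delta,x}$ by $\mu_\delta(x) \, v_\delta^{-1} Leb(\cC_\delta(x)) + v_\delta^{-1/2} W(\cC_\delta(x))$ — which is an exact equality in distribution since $W(\cC_\delta(x)) \sim \cN(0, v_\delta)$ are independent — so that $\Psi_\delta(\tilde\zeta_\delta)$ becomes literally the integral $\idotsint \bar\psi_\delta^{\mathrm{ext}} \prod(\sigma_\delta W(\dd y_i) + \bar\mu_\delta(y_i)\dd y_i)$ against the piecewise-constant extensions (via the tessellation $\cC_\delta$), modulo the diagonal terms which the extension sets to zero and which are $L^2$-negligible because the kernels have finite $L^2$ norm. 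The multilinear stochastic-plus-deterministic integral is $L^2$-continuous in its kernel (by the Ito isometry applied to each chaos component), and hypotheses~\eqref{it:1}--\eqref{it:2} say exactly that $\bar\psi_\delta^{\mathrm{ext}} \to \bpsi_0$ in $L^2((\R^d)^k)$, $\sigma_\delta \to \bsigma_0$, $\bar\mu_\delta \to \bmu_0$ in $L^2$; hence the integrals converge in $L^2$, a fortiori in distribution. Letting $\ell \to \infty$ closes the argument, and the $L^2$-convergence statements (convergence of second moments, hence $L^2$-convergence along any a.s.-convergent coupling) fall out of the same uniform bounds via a Skorokhod-representation argument. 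The joint-distribution extension is immediate since the Lindeberg principle and the integral-continuity argument both apply coordinatewise to a finite vector of kernels over the \emph{same} white noise.

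\smallskip

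\textbf{Main obstacle.} I expect the delicate point to be the interchange of the $\ell \to \infty$ limit with $\delta \downarrow 0$, i.e.\ making the truncation genuinely uniform. The naive bound from \eqref{it:3} controls $\sum_{|I|>\ell}(\sigma_\delta^2)^{|I|}\psi_\delta(I)^2$, but after the mean-shift expansion the relevant quantity is $\sum_{|J|>\ell}(\sigma_\delta^2)^{|J|}\big(\sum_{I\supseteq J}\psi_\delta(I)\mu_\delta^{I\setminus J}\big)^2$, and bounding this requires spending a factor of the $(1+\epsilon)^{|I|}$ slack in \eqref{it:3} against the combinatorial cost of the sum over $I\setminus J$ versus $\sum_i\mu_\delta(i)^2$ — this is exactly why $\epsilon>0$ is needed unless $\mu_\delta \equiv 0$. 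Getting this estimate to hold with constants uniform in $\delta$ (using $\sigma_\delta \to \bsigma_0 > 0$ and $\sup_\delta \sum_i \mu_\delta(i)^2 < \infty$, the latter from convergence of $\bar\mu_\delta$ in $L^2$ and $v_\delta \to 0$) is the one genuinely technical computation; everything else is either a cited black box (Lindeberg) or soft $L^2$-Hilbert-space continuity.
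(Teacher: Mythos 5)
Your proposal follows essentially the same route as the paper's proof: (a) control the chaos-degree tails uniformly in $\delta$ via the mean-shift expansion and a Cauchy--Schwarz splitting that spends the $(1+\epsilon)^{|I|}$ slack of hypothesis~\eqref{it:3} (this is exactly the paper's Lemma~\ref{L:ExpMean} and Step~2 of the proof); (b) replace $\zeta_\delta$ by Gaussians on the degree-$\le\ell$ truncation via the Lindeberg principle; (c) realize the Gaussian chaos as multiple integrals of the piecewise-constant kernels against the same white noise (the paper's $\xi_{\delta,x}$, built from $W$ on the cells) and pass to the limit through the It\^o isometry using \eqref{it:1}--\eqref{it:2}; the second-moment convergence and the $L^2$ statement along a.s.\ couplings then follow as you indicate.

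Two small points in your plan need repair, though neither changes the architecture. First, the smallness of the maximal influence required by the Lindeberg principle is \emph{not} supplied by the uniform integrability of $(\zeta_{\delta,x}-\mu_\delta(x))^2$; that hypothesis only controls the truncated-moment term $m_2^{>M}$ in \eqref{eq:lindeberg}. The influences vanish for a different reason: $\Inf_i[\Psi_\delta^{\le\ell}]$ is bounded (after rescaling) by the $L^2((\R^d)^k)$ norm of $\bar\psi_\delta$ restricted to $\{x_1\in\cC_\delta(i)\}$, which tends to $0$ uniformly in $i$ because $\bar\psi_\delta\to\bpsi_0$ in $L^2$ by \eqref{it:2} and each cell has volume $v_\delta\to 0$; this is the computation carried out at the end of the paper's Step~3, and your write-up should supply it. Second, the joint convergence of $(\Psi_{i,\delta}(\zeta_\delta))_{1\le i\le M}$ does not follow ``coordinatewise'': the Lindeberg bound compares $\bbE[f(\Psi(\zeta))]$ for scalar test functions, so applying it to each coordinate only yields marginal convergence. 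The clean fix (used in the paper) is Cram\'er--Wold: any linear combination $\sum_i c_i\Psi_{i,\delta}$ is again a polynomial chaos whose kernel satisfies \eqref{it:2}--\eqref{it:3}, so the scalar result applies to it directly.
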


\begin{remark}\rm \label{rem:properdef}
Let us be more precise about the random variable $\bPsi_{0}$ in \eqref{eq:general}.
Setting $\bnu(x) := \bmu_0(x) / \bsigma_0$, it can be rewritten as
\begin{equation} \label{eq:general2}
\bPsi_{0} = \sum_{k = 0}^\infty \frac{1}{k!} \idotsint_{(\R^d)^k} \bpsi_{0} (y_1, \ldots, y_k)
\bsigma_0^k
\prod_{i=1}^k \Big( W(\dd y_i) + \bnu(y_i) \dd y_i \Big) \,,
\end{equation}
which can be viewed as a ``Wiener chaos expansion with respect to the
\emph{biased} white noise $W_{\bnu}(\dd x) := W(\dd x) + \bnu(x) \dd x$''.
The rigorous definition of such an expansion goes as follows.
For every \emph{fixed} $k\in\N$,
the integral over $(\R^d)^k$ in \eqref{eq:general2} can be defined by
expanding the product and
integrating out the ``deterministic coordinates'' (those corresponding
to $\bnu(y_i) \dd y_i$), obtaining a finite sum
of well-defined (lower-dimensional) ordinary stochastic integrals,
like in \eqref{eq:Wk}.
Regrouping the terms,
the series in \eqref{eq:general2} becomes an ordinary Wiener chaos
expansion, like in \eqref{eq:WC}. In
analogy with the polynomial case \eqref{eq:factcond},
we show in Section~\ref{sec:genproof} that
the $L^2$-convergence of the series
is ensured by the conditions that
$\bmu_0 \in L^2(\R^d)$ and that
\begin{equation} \label{eq:condepsilon}
	\exists\, \epsilon > 0: \qquad
	\sum_{k = 0}^\infty \frac{1}{k!} (1+\epsilon)^k \,
	(\bsigma_0^2)^{k} \, \| \bpsi_0 \|_{L^2((\R^d)^k)}^2 < \infty \,,
\end{equation}
which follow by assumptions \eqref{it:1}-\eqref{it:2}-\eqref{it:3} in Theorem~\ref{th:general}.
\end{remark}

\subsubsection{Beyond the $L^2$ case}
\label{sec:beyond}

There is a useful alternative interpretation of
\eqref{eq:general}-\eqref{eq:general2}.
If $(\Omega_W, \cA, \bbP)$ is the probability space on which
the white noise $W = (W(f))_{f\in L^2(\R^d)}$ is defined,
for every $\bnu \in L^2(\R^d)$ we introduce a new probability $\bbP_{\bnu}$ on $\Omega_W$ by
\begin{equation} \label{eq:RN}
	\frac{\dd \bbP_{\bnu}}{\dd \bbP} := e^{W(\bnu) - \frac{1}{2} \bbE[W(\bnu)^2]}
	= e^{\int_{\R^d} \bnu(x) W(\dd x) - \frac{1}{2} \int_{\R^d} \bnu(x)^2 \dd x} \,.
\end{equation}
It turns out that the ``biased stochastic integrals'' in \eqref{eq:general2}
have the same joint distribution as the ordinary stochastic integrals
(with $\bnu$ replaced by $0$) under the probability $\bbP_{\bnu}$,
by the \emph{Cameron-Martin theorem} (cf.\ Appendix~\ref{sec:wnapp}). As a consequence,
the random variable $\bPsi_0$ in \eqref{eq:general} enjoys the following equality in distribution,
setting $\bnu(x) := \bmu_0(x) / \bsigma_0$:
\begin{equation} \label{eq:general3}
	\bPsi_0 \overset{d}{=}
	\sum_{k = 0}^\infty \frac{1}{k!} \idotsint_{(\R^d)^k}
	\bpsi_{0} (y_1, \ldots, y_k) \, \bsigma_0^k \,
	W(\dd y_1) \cdots W(\dd y_k)
	\quad \ \text{under} \ \ \bbP_{\bnu} \,,
\end{equation}
provided the series (either in \eqref{eq:general} or \eqref{eq:general3}, equivalently)
converges in probability.

Let us now assume the weaker version of relation \eqref{eq:condepsilon} for $\epsilon = 0$, i.e.
\begin{equation} \label{eq:condepsilon0}
	\sum_{k = 0}^\infty \frac{1}{k!} \,
	(\bsigma_0^2)^{k} \, \| \bpsi_0 \|_{L^2((\R^d)^k)} < \infty \,.
\end{equation}
Under this condition, the series in \eqref{eq:general3} converges in $L^2$
\emph{under the original probability $\bbP$}, by the
It\^o isometry \eqref{eq:WI}.
Since the Radon-Nikodym density \eqref{eq:RN} has finite moments of all orders,
it follows by \eqref{eq:general3} and an application of H\"older inequality
(see~\eqref{eq:bass} for the details) that \emph{the series in \eqref{eq:general}
defining $\bPsi_0$ converges in $L^p$ for every $p \in (0,2)$
when \eqref{eq:condepsilon0} holds}
(even though it might not converge in $L^2$,
if \eqref{eq:condepsilon} fails).

As a consequence, by performing an $L^p$ analysis for $p < 2$, we can
weaken condition~\eqref{it:3} in Theorem~\ref{th:general},
setting $\epsilon = 0$ in \eqref{psidtail1}, under mild restrictions
on the disorder distribution (due to the implementation of a change of measure
like in \eqref{eq:general3} for polynomial chaos).

\begin{theorem}\label{th:general2}
{\bf (Convergence of polynomial chaos to Wiener chaos, $L^{2-}$ case)}
Let the same assumptions as in Theorem~\ref{th:general} hold, with condition
\eqref{it:3} therein weakened by setting $\epsilon=0$ in \eqref{psidtail1}.
Assume further that $\lim_{\delta\downarrow 0}\Vert\mu_\delta\Vert_\infty=0$,
and that either of the following two conditions is satisfied:
\begin{aenumerate}

\item\label{it:a} \hfill$\inf\limits_{\delta\in (0,1), x\in\bbT_\delta} \min\big\{
\bbP(\zeta_{\delta,x}>0), \bbP(\zeta_{\delta,x}<0),
\bbvar (\zeta_{\delta, x}|\zeta_{\delta, x}>0),
\bbvar (\zeta_{\delta, x}|\zeta_{\delta, x}<0) \big\} >0$;\hfill\,

\item\label{it:b}
\begin{equation}\label{psidtail2}
	\forall C>0: \qquad
	\lim_{\delta\downarrow 0} \sum_{I \in \cP^\fin(\bbT_\delta), \, |I| > \Vert\mu_\delta\Vert_\infty^{-1}} e^{C \Vert\mu_\delta\Vert_\infty|I|}
	(\sigma_\delta^2)^{|I|} \psi_\delta(I)^2 = 0 \,.
\end{equation}
\end{aenumerate}
Then the polynomial chaos expansion
$\Psi_\delta(\zeta_\delta)$ is well-defined
and converges in distribution as $\delta\downarrow 0$ to
the random variable $\bPsi_0$ defined by \eqref{eq:general},
or equivalently \eqref{eq:general3}.
For all $p \in (0,2)$, the series therein converges in $L^p$,
and furthermore $\bbE[|\Psi_\delta|^p] \to \bbE[|\bPsi_{0}|^p]$.
The conclusion extends to a finite collection $(\Psi_{i, \delta}(\zeta_\delta))_{1\leq i\leq M}$.
\end{theorem}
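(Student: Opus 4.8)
The plan is to reduce the $L^{2-}$ statement to the $L^2$ statement (Theorem~\ref{th:general}) by performing a change of measure that transfers the ``deterministic bias'' into an honest tilt, thereby removing the need for the $(1+\epsilon)$-factor in condition~\eqref{it:3}. The heuristic is visible already in \eqref{eq:general3}: under $\bbP_{\bnu}$ the biased stochastic integrals become ordinary ones, and ordinary Wiener chaos converges in $L^2$ under the weaker condition \eqref{eq:condepsilon0}. So on the continuum side there is nothing to prove beyond what Remark~\ref{rem:properdef} and the discussion in Subsection~\ref{sec:beyond} already give: $\bPsi_0$ is well-defined and the series converges in every $L^p$, $p<2$. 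The work is entirely on the discrete side --- implementing the analogue of the Cameron--Martin tilt for the polynomial chaos $\Psi_\delta(\zeta_\delta)$ and controlling the error.

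Concretely, I would proceed as follows. First, since $\|\mu_\delta\|_\infty \to 0$, for $\delta$ small each variable $\zeta_{\delta,x}$ has mean tending to $0$; introduce the centered variables $\eta_{\delta,x} := \zeta_{\delta,x} - \mu_\delta(x)$ and write $\Psi_\delta(\zeta_\delta)$ as a polynomial chaos in the $(\eta_{\delta,x})$ with a \emph{modified} kernel obtained by the binomial expansion $\prod_{i\in I}\zeta_{\delta,x_i} = \prod_{i\in I}(\eta_{\delta,x_i}+\mu_\delta(x_i))$. This is exactly the discrete counterpart of ``integrating out the deterministic coordinates'' in Remark~\ref{rem:properdef}. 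Second, apply a tilting argument: define a tilted law $\widetilde{\bbP}_\delta$ on the $(\eta_{\delta,x})$ under which each $\eta_{\delta,x}$ acquires a small mean matching $\mu_\delta(x)$ (up to the correct normalization $v_\delta^{-1/2}$), so that under $\widetilde{\bbP}_\delta$ the polynomial chaos is a \emph{centered} chaos with kernel $\psi_\delta$ itself (no bias terms), for which the $L^2$ machinery of Theorem~\ref{th:general} with $\epsilon = 0$ applies. The existence of such a tilt with uniformly controlled Radon--Nikodym density is precisely where hypotheses \eqref{it:a}/\eqref{it:b} enter: \eqref{it:a} guarantees enough ``two-sided mass and conditional variance'' to perform an exponential tilt of size $O(\|\mu_\delta\|_\infty)$ with density bounded in every $L^q$; \eqref{it:b} is the direct replacement, asking instead that the kernel decay fast enough to absorb the exponential factor $e^{C\|\mu_\delta\|_\infty|I|}$ that such a tilt produces. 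Third, having established convergence in distribution under $\widetilde{\bbP}_\delta$ of the centered chaos, transfer back: the Radon--Nikodym density $\dd\bbP/\dd\widetilde{\bbP}_\delta$ converges (in the appropriate sense, after the $v_\delta^{-1/2}$ scaling) to the Cameron--Martin density $e^{W(\bnu)-\frac12\bbE[W(\bnu)^2]}$ of \eqref{eq:RN}, by the Lindeberg/CLT argument applied to $\sum_x \mu_\delta(x)\eta_{\delta,x}$, and a joint-convergence statement (using the last sentence of Theorem~\ref{th:general}) gives convergence of the pair (chaos, density). An application of Hölder, as in \eqref{eq:general3} and the surrounding discussion, then upgrades this to convergence in $L^p$ for every $p<2$ and to $\bbE[|\Psi_\delta|^p]\to\bbE[|\bPsi_0|^p]$.

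The main obstacle is the uniform control of the change of measure on the discrete side: one needs the tilted laws $\widetilde{\bbP}_\delta$ to exist for \emph{all} small $\delta$ with a density whose moments are bounded uniformly in $\delta$ (and in $x\in\bbT_\delta$), and one needs the joint uniform integrability required to push Theorem~\ref{th:general} through under $\widetilde{\bbP}_\delta$ --- i.e.\ the tilted second moments $\bbvar_{\widetilde{\bbP}_\delta}(\eta_{\delta,x})$ must still converge to $\bsigma_0^2$ and the squared centered variables must stay uniformly integrable. Showing this under the mild two-sidedness hypothesis \eqref{it:a} is a short but genuinely delicate estimate; under \eqref{it:b} the difficulty shifts instead to verifying that the exponentially-weighted tail in \eqref{psidtail2} is exactly what is consumed by the crude bound on the tilt, so that no room is lost. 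A secondary technical point, already flagged in Subsection~\ref{sec:beyond}, is that the Hölder step only yields $L^p$ for $p<2$ and not $L^2$, so all moment-convergence claims must be stated and proved at that level; this is not an obstacle so much as a constraint to respect throughout. The extension to a finite collection $(\Psi_{i,\delta})_{1\le i\le M}$ is immediate once the single-chaos argument is set up jointly with the density, exactly as in Theorem~\ref{th:general}.
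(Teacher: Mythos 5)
Your continuum-side step is exactly the paper's (Cameron--Martin density \eqref{eq:RN} plus H\"older gives $L^p$-convergence of $\bPsi_0$ under \eqref{eq:condepsilon0}), and tilting the disorder is indeed the right tool, but the discrete-side reduction you propose has a genuine gap. There is in general no absolutely continuous tilt under which the biased chaos becomes \emph{exactly} a centered chaos with kernel $\psi_\delta$: an a.c.\ change of measure cannot translate the law of $\zeta_{\delta,x}$ by $\mu_\delta(x)$ (for two-valued disorder the shifted law is mutually singular with the original), and any genuine exponential tilt, while it can fix the mean exactly, perturbs the variance at site $x$ by a factor $1+O(|\mu_\delta(x)|)$ and makes it site-dependent. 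Hence the claim that ``the $L^2$ machinery of Theorem~\ref{th:general} with $\epsilon=0$ applies'' under the tilted law is not available: the variance inflation over a set $I$ is precisely the factor $e^{C\Vert\mu_\delta\Vert_\infty |I|}$, and absorbing it is the entire content of hypotheses \eqref{it:a}/\eqref{it:b} (under \eqref{it:a} the inflation improves to $1+C'\mu_\delta(x)^2$; the $L^q$-boundedness of the density that you attribute to \eqref{it:a} actually holds in general, cf.\ \eqref{A:tiltbd1}). Moreover, since the tilted variables do not have the law of $\zeta_\delta$, weak convergence under the tilted measure does not translate into the desired statement for $\Psi_\delta(\zeta_\delta)$ under $\bbP$ without controlling an error of the same nature as the original problem; and your transfer-back step needs joint convergence of the chaos with the Radon--Nikodym density, whose logarithm is not a multilinear polynomial of the $\zeta$'s, so the multi-collection statement of Theorem~\ref{th:general} does not cover it.

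The paper sidesteps all of this by never pushing the distributional convergence through a change of measure. In the proof of Theorem~\ref{th:general}, the hypothesis $\epsilon>0$ was used only in the $L^2$ bound on the tail $\Psi_\delta^{>\ell}(\zeta_\delta)$; the Lindeberg step concerns the fixed-degree truncation and needs no $\epsilon$. So it suffices to replace that single tail estimate by an $L^p$ one, $p<2$: by H\"older, $\bbE[|\Psi_\delta^{(\ell,N)}(\zeta_\delta)|^p]$ is bounded by $\bbE[|\Psi_\delta^{(\ell,N)}(\tilde\zeta_\delta)|^2]^{p/2}$ times a product of negative moments of the densities, where $\tilde\zeta_{\delta,x}$ are the exactly centered tilted variables of Lemma~\ref{L:tilt}; the density moments are controlled by $\Vert\bar\mu_\delta\Vert_{L^2}^2$, and the $L^2$ norm of the chaos in the centered tilted variables is computed exactly (independence, zero mean), with the variance inflation handled by \eqref{it:a} or absorbed by \eqref{psidtail2} under \eqref{it:b}. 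No exact law-transfer and no joint convergence with the density ever enter. If you wish to keep your architecture you must supply all the missing comparisons above; the truncation-plus-H\"older route demands strictly less of the tilt and is where conditions \eqref{it:a}/\eqref{it:b} are actually consumed.
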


\subsection{Lindeberg principle for polynomial chaos}
\label{sec:Linde}

The key ingredients in our proof of Theorem~\ref{th:general} are two Lindeberg principles for
polynomial chaos. As we discuss in Remark \ref{MOOremark}, they extend Theorem 3.18
in~\cite{MOO10} in two ways: firstly, we relax the finite third moment assumption of \cite{MOO10} to
an optimal condition of
uniform integrability of the square of the random variables; secondly, we allow
random variables with non-zero mean.

We point out that the first extension is actually not needed for our applications to
disordered systems, due to the assumption of finite exponential moments
for the disorder random variables. However, it is
an extension of general interest
and will be useful if one attempts to weaken the
moment assumptions on the disorder random variables,
as discussed at the end of Section~\ref{sec:discussion}.
We remark that Lindeberg principles have also played crucial roles in recent breakthrough results on
random matrices~\cite{C06, TV11}.

Given a polynomial chaos expansion $\Psi(\zeta)$ with respect to
a family $\zeta:=(\zeta_i)_{i\in \bbT}$ of independent random variables
(cf.\ Subsection~\ref{sec:Polydef}),
we will control how the distribution of $\Psi(\zeta)$ changes when we replace
$\zeta$ by independent Gaussian random variables $\xi:=(\xi_i)_{i\in \bbT}$
with the same mean and variance as $\zeta$.

Given a multi-linear polynomial $\Psi(x)=\Psi((x_i)_{i\in \bbT})$ as in (\ref{eq:Psi}),
with kernel $\psi$, we set
\begin{equation}\label{eq:CPsi}
	\sfC_\Psi := \sum_{I \in \cP^\fin(\bbT), \,I\neq \emptyset} \psi(I)^2 \,,
\end{equation}
and define the \emph{influence} of the $i$-th variable $x_i$ on $\Psi$ by
\begin{equation} \label{eq:Inf}
	\Inf_i[\Psi] := \sum_{I\in \cP^\fin(\bbT), \,I \ni i} \psi(I)^2 \,.
\end{equation}
Note that, if $\zeta = (\zeta_i)_{i\in \bbT}$ are independent
random variables with zero mean and unit variance,
$$
\sfC_\Psi = \bbvar[\Psi(\zeta)]\,, \qquad
\Inf_i[\Psi] =\bbE \big[ \bbvar \big[ \Psi(\zeta) \big| (\zeta_j)_{j \in \bbT \setminus \{i\}} \big]
\big],
$$
which is just the influence of the random variable $\zeta_j$ on $\Psi(\zeta)$ 
introduced in~\cite{MOO10} (for more on the notion of influence, see e.g.~\cite{KKL88, BKKKL92} 
and the references in~\cite{MOO10}).
We also define the degree $\ell$ truncations
$\Psi^{\le \ell}$ and $\Psi^{> \ell}$ of the multi-linear polynomial $\Psi$ by
\begin{equation} \label{eq:Psid}
	\Psi^{\le \ell}(x) := \sum_{I\in \cP^\fin(\bbT), \, |I| \le \ell} \psi(I)\, x^I,
\qquad \
\Psi^{> \ell}(x) := \sum_{I\in \cP^\fin(\bbT), \, |I| > \ell} \psi(I) \, x^I,
\end{equation}
whose kernels will be denoted by $\psi^{\le \ell}(I) = \psi(I) \ind_{\{ |I|\le \ell\}}$
and $\psi^{>\ell}(I) = \psi(I) \ind_{\{ |I|>\ell\}}$.

\medskip

We are now ready to state and comment on our Lindeberg principles,
that will be proved in Section~\ref{S:lindeberg}.

\begin{theorem}[Lindeberg principle, zero mean case]\label{th:lindeberg}
Let $\zeta = (\zeta_i)_{i \in \bbT}$ and $\xi = (\xi_i)_{i \in \bbT}$ be
two families of independent
random variables, with zero mean and unit variance.
Let $\Psi(x)$ be a multi-linear polynomial as in \eqref{eq:Psi},
with $\sfC_\Psi = \sum_{I \in \cP^\fin(\bbT)} \psi(I)^2 < \infty$.
Then the polynomial chaos expansions $\Psi(\zeta)$, $\Psi(\xi)$ are well-defined
$L^2$ random variables.

Defining for $M\in [0,\infty]$ the
maximal truncated moments
\begin{equation}\label{eq:mtrunc}
m_2^{> M} := \sup_{X \in \bigcup_{i\in \bbT} \{ \zeta_i, \xi_i\}} \bbE[X^2 \ind_{|X|> M}] \,,
\qquad \
m_3^{\leq M} := \sup_{X \in \bigcup_{i\in \bbT} \{ \zeta_i, \xi_i\}} \bbE[|X|^3 \ind_{|X|\leq M}],
\end{equation}
the following relation holds: for every $f:\R \to \R$ of class $\mathscr C^3$ with
\begin{equation} \label{eq:normPsi2}
	C_f := \max\{\|f'\|_\infty, \|f''\|_\infty, \|f'''\|_\infty\} <\infty,
\end{equation}
for every $\ell \in \N$, and for every $M\in (0,\infty]$
large enough such that $m^{>M}_{2} \le \frac{1}{4}$, one has
\begin{equation}\label{eq:lindeberg}
\begin{aligned}
	\big| \bbE\big[ f(\Psi(\zeta)) \big] - \bbE\big[ f(\Psi(\xi)) \big] \big|
	\le \ C_f \bigg\{ & 2 \, \sqrt{\sfC_{\Psi^{>\ell}}} \ + \
	\sfC_{\Psi^{\le \ell}} \, 16 \ell^2 \, m_{2}^{>M} \\
	& + \
	\sfC_{\Psi^{\le \ell}} \, 70^{\ell+1} \, \big(m^{\le M}_{3}\big)^{\ell}
	\sqrt{\max_{i \in \bbT} \big( \Inf_i\big[ \Psi^{\le \ell} \big] \big)} \bigg\},
\end{aligned}
\end{equation}
where $\sfC_\cdot$, $\Inf_i[\cdot]$ and $\Psi^{\le \ell}$, $\Psi^{> \ell}$ are defined in
\eqref{eq:CPsi}, \eqref{eq:Inf} and \eqref{eq:Psid}.
\end{theorem}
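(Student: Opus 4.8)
The plan is to prove the estimate \eqref{eq:lindeberg} by the classical Lindeberg replacement strategy, swapping the variables $\zeta_i$ for $\xi_i$ one at a time, but applied to the \emph{degree-truncated} polynomial $\Psi^{\le\ell}$ rather than $\Psi$ itself. The first step is the reduction to the truncation: since $\sfC_\Psi<\infty$, the expansions $\Psi(\zeta),\Psi(\xi)$ are well-defined $L^2$ random variables with $\bbE[(\Psi(\zeta)-\Psi^{\le\ell}(\zeta))^2]=\sfC_{\Psi^{>\ell}}$ by orthogonality of distinct monomials (the same for $\xi$), so by the mean value theorem $|\bbE[f(\Psi(\zeta))]-\bbE[f(\Psi^{\le\ell}(\zeta))]|\le \|f'\|_\infty\,\bbE|\Psi(\zeta)-\Psi^{\le\ell}(\zeta)|\le C_f\sqrt{\sfC_{\Psi^{>\ell}}}$ by Cauchy–Schwarz, and likewise for $\xi$. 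This accounts for the first term $2\sqrt{\sfC_{\Psi^{>\ell}}}$, and it remains to bound $|\bbE[f(\Psi^{\le\ell}(\zeta))]-\bbE[f(\Psi^{\le\ell}(\xi))]|$.

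For the hybrid argument, enumerate $\bbT$ (or the finitely many indices actually appearing in $\Psi^{\le\ell}$) and for each $i$ form the ``spectator'' polynomial by splitting $\Psi^{\le\ell}=R_i+x_i\,S_i$, where $R_i$ collects the monomials not containing $i$ and $S_i$ those that do, with the $i$-th variable factored out; crucially $R_i$ and $S_i$ do not depend on $x_i$. Writing $U_i$ for the value of $R_i$ and $V_i$ for the value of $S_i$ evaluated on the hybrid configuration (first coordinates $\xi$, last coordinates $\zeta$), a third-order Taylor expansion of $f$ around $U_i$ gives the telescoped bound
\[
\big|\bbE[f(\Psi^{\le\ell}(\zeta))]-\bbE[f(\Psi^{\le\ell}(\xi))]\big|
\le \sum_{i}\Big|\bbE\big[f(U_i+\zeta_iV_i)-f(U_i+\xi_iV_i)\big]\Big|.
\]
Because $\zeta_i,\xi_i$ are independent of $(U_i,V_i)$ and share the first two moments, the zeroth- and first-order Taylor terms cancel, and the second-order terms cancel as well, leaving a remainder controlled by $\bbE[|\zeta_i|^3|V_i|^3]+\bbE[|\xi_i|^3|V_i|^3]$ — \emph{but only after truncating the variables at level $M$}. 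Here is where the two truncated moments $m_2^{>M}$ and $m_3^{\le M}$ enter: on the event $\{|\zeta_i|\le M\}$ one uses the genuine third-order remainder, picking up $\|f'''\|_\infty\, m_3^{\le M}\,\bbE|V_i|^3$; on $\{|\zeta_i|>M\}$ one instead uses a crude second-order bound $\|f''\|_\infty\,\bbE[\zeta_i^2\ind_{|\zeta_i|>M}]\,\bbE V_i^2\le \|f''\|_\infty\, m_2^{>M}\,\bbE V_i^2$ together with the analogous term for $U_i$; summing the latter over $i$ produces $\sfC_{\Psi^{\le\ell}}\cdot 16\ell^2\, m_2^{>M}$ once one notes $\sum_i \bbE V_i^2 = \sum_i \Inf_i[\Psi^{\le\ell}]\le \ell\,\sfC_{\Psi^{\le\ell}}$ (each monomial of degree $\le\ell$ is counted at most $\ell$ times) and handles the $U_i$ contribution with a comparable combinatorial count, the constant $16\ell^2$ absorbing the $\ell$-dependent bookkeeping and the factor-of-$2$ losses. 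For the third-order piece one still has a factor $\bbE|V_i|^3$ rather than $\bbE V_i^2$, and here one invokes hypercontractivity: $V_i$ is a polynomial chaos of degree $\le\ell-1$ in the truncated variables, so a Bonami–Beckner-type inequality (the multilinear hypercontractive estimate underlying \cite{MOO10}) bounds its $L^3$ norm by a constant $70^{\ell}$ times $(m_3^{\le M})^{(\ell-1)/3}\|V_i\|_2$ times $\sqrt{\max_j\Inf_j[\Psi^{\le\ell}]}$ — the last factor coming from bounding one copy of $\|V_i\|_2$ by the maximal influence; summing and using $\sum_i\|V_i\|_2^2\le\ell\,\sfC_{\Psi^{\le\ell}}$ again yields the third displayed term with the stated constant $70^{\ell+1}(m_3^{\le M})^\ell$.

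The main obstacle is the third-order estimate: a naive bound replaces $\bbE|V_i|^3$ by $M\,\bbE V_i^2$, which is useless because $M$ is fixed. The resolution — and the technical heart of the argument, extending \cite[Thm.\ 3.18]{MOO10} to merely uniformly integrable squares — is to get an $M$-free bound on $\|V_i\|_3$ in terms of $\|V_i\|_2$ via hypercontractivity applied to the \emph{truncated} variables, whose third moments $m_3^{\le M}$ are finite by construction and enter only to a bounded power $\ell$; this is precisely what converts the crude ``$M\cdot L^2$'' into the usable ``$(m_3^{\le M})^\ell\sqrt{\max_j\Inf_j}\cdot\|V_i\|_2$'' and makes the whole scheme close. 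One then optimizes over $M$ (and later over $\ell$) in the applications. The extension to non-zero mean, needed for Theorem~\ref{C:lindeberg2} and used in Theorems~\ref{th:general}–\ref{th:general2}, is deferred; in the present zero-mean statement the cancellation of the first two Taylor terms is exact and requires no extra care.
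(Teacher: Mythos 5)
Your proposal reproduces the skeleton of the paper's argument (degree truncation giving the $2\sqrt{\sfC_{\Psi^{>\ell}}}$ term, one-variable-at-a-time replacement with cancellation of the first two Taylor terms, a small/large split at level $M$, and hypercontractivity for the cubic term), but there is a genuine gap at the technical heart. The cubic remainder is of the form $\bbE\big[|\zeta_i|^3\,|V_i|^3\big]$, where $V_i$ is a multilinear polynomial in the \emph{spectator} variables, which are only assumed to have finite second moments. Your truncation acts only on the swapped variable (the split on $\{|\zeta_i|\le M\}$ versus $\{|\zeta_i|>M\}$), so $V_i$ is still built from untruncated variables: $\bbE|V_i|^3$ may simply be infinite, and hypercontractivity cannot be invoked, since the hypercontractive constant is controlled by the ratios $\|\zeta_j\|_3/\|\zeta_j\|_2$ of the underlying variables, which need not be finite here. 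Your statement that ``$V_i$ is a polynomial chaos in the truncated variables'' presupposes a truncation of all coordinates that the scheme as described never performs.

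Repairing this is precisely the nontrivial extension beyond the third-moment hypothesis of \cite{MOO10}, and it is not routine: truncating the spectators naively as $\zeta_j\ind_{\{|\zeta_j|\le M\}}$ destroys the zero-mean property, hence the orthogonality needed both to control the substitution error and to keep the hypercontractivity constant bounded. The paper constructs (with auxiliary randomization) a decomposition $\zeta_j=\zeta_j^-+\zeta_j^+$ in which \emph{both} pieces are centered, $|\zeta_j^-|\le M$, $\zeta_j^-\zeta_j^+=0$ and $\bbE[(\zeta_j^+)^2]\le 2\,\bbE[\zeta_j^2\ind_{\{|\zeta_j|>M\}}]$. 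The $L^2$ estimate of the substitution error in the multilinear form (replacing all variables by their ``$-$'' parts) is what actually produces the $16\,\ell^2\, m_2^{>M}\,\sfC_{\Psi^{\le\ell}}$ term, with $\ell^2$ coming from $\sum_j\sum_{I\ni j,|I|\le\ell}|I|\,\psi(I)^2\le \ell^2\sum_{|I|\le\ell}\psi(I)^2$; your attribution of this term to the tail event of the swapped variable alone would only yield a factor $\ell$ and, more importantly, leaves the divergent $\bbE|V_i|^3$ untouched. Hypercontractivity is then applied to the polynomial in the bounded, centered, truncated variables, where the hypothesis $m_2^{>M}\le\tfrac14$ bounds $\|\zeta_j^-\|_2$ from below and gives the per-degree constant $4(m_3^{\le M})^{1/3}$, whence $70^{\ell+1}(m_3^{\le M})^\ell$. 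Without the mean-preserving truncation and the accompanying error estimate, the scheme you describe does not close.
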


Intuitively, this theorem shows that $\Psi(\zeta)$ and $\Psi(\xi)$
are close in distribution when the right hand side of \eqref{eq:lindeberg} is small.
Despite its technical appearance, each of the three terms
inside the brackets can be easily controlled:
\begin{itemize}
\item The first term is controlled by $\sfC_{\Psi^{>\ell}}
= \sum_{|I| > \ell} \psi(I)^2$, which is small for $\ell$ large.

\item The second term is controlled by $m_{2}^{>M}$, which is small for $M$ large
if the random variables $(\zeta_i)_{i \in \bbT}$ and $(\xi_i)_{i \in \bbT}$
have \emph{uniformly integrable squares} (e.g., if they are i.i.d.).

\item The third term is controlled by the \emph{maximal influence}
$\max_{i \in \bbT} \Inf_i\big[ \Psi^{\le \ell} \big]$,
which is small if the multi-linear polynomial $\Psi^{\le \ell}$ is sufficiently ``spread-out''.
\end{itemize}
In particular, we shall see that the conditions of Theorem~\ref{th:general}
allow us to exploit \eqref{eq:lindeberg}.

\begin{remark}\label{MOOremark}\rm
When the polynomial
$\Psi = \Psi^{\le \ell}$ has degree $\ell$
and the random variables $\zeta_i$, $\xi_i$
have third absolute moments bounded by $m_3 < \infty$,
relation \eqref{eq:lindeberg} for $M = \infty$ reduces to
\begin{equation*}
	\big| \bbE\big[ f(\Psi(\zeta)) \big] - \bbE\big[ f(\Psi(\xi)) \big] \big|
	\le C_f \, \sfC_\Psi\,	
	70^{\ell+1} \, (m_3)^\ell \, \max_{i \in \bbT} \sqrt{\Inf_i[\Psi]} \,.
\end{equation*}
This is the key estimate proved by Mossel, O'Donnell and Oleszkiewicz
in~\cite{MOO10}, see Theorem 3.18 under hypothesis $\boldsymbol{H2}$,
with the prefactor $70^{\ell +1}$ instead of $30^\ell$.
Our Theorem~\ref{th:lindeberg} thus provides an extension
of \cite[Theorem 3.18]{MOO10} to finite second-moment assumptions.

Some of the results in \cite{MOO10} are formulated in the more
general setting of multi-linear polynomials over \emph{orthornormal ensembles}.
Although we stick for simplicity to the case of independent random variables,
our approach can be adapted to deal with orthonormal ensembles. In
fact, we follow the same line of proof of \cite{MOO10},
which is based on Lindeberg's original approach,
with two refinements: a sharper approximation of the remainder
in Taylor's expansion and a fine truncation on
the random variables, cf.\ Section~\ref{S:lindeberg} for details.
\end{remark}

As a corollary to Theorem~\ref{th:lindeberg}, we can treat the case where we add
non-zero mean to the random variables $(\zeta_i)_{i\in \bbT}$ and $(\xi_i)_{i\in \bbT}$.
The following result is also proved in Section~\ref{S:lindeberg}.

\begin{theorem}[Lindeberg principle, non-zero mean case]\label{C:lindeberg2}
Let $\zeta = (\zeta_i)_{i \in \bbT}$ and $\xi = (\xi_i)_{i \in \bbT}$
be as in Theorem~\ref{th:lindeberg},
and define the
maximal truncated moments $m_2^{>M}$, $m_3^{\le M}$ by \eqref{eq:mtrunc}.
Let $\mu:=(\mu_i)_{i\in \bbT}$ be a family of real numbers with
\begin{equation}\label{eq:Cmu}
	\sfc_\mu := \sum_{i\in \bbT}\mu_i^2<\infty \,,
\end{equation}
and define the $\mu$-biased families
$\tilde \zeta:=\zeta+\mu=(\zeta_i+\mu_i)_{i\in \bbT}$ and
$\tilde \xi := \xi+\mu =(\xi_i+\mu_i)_{i\in \bbT}$.

Let $\Psi(x)$ be a multi-linear polynomial as in \eqref{eq:Psi}. Setting for $\epsilon > 0$
\begin{equation}\label{eq:Psieps}
\Psi^{(\epsilon)}(x) = \sum_{I\in \cP^\fin(\bbT)} (1+\epsilon)^{|I|/2} \psi(I) x^I ,
\end{equation}
assume that $\sfC_{\Psi^{(\epsilon)}} = \sum_{I \in \cP^\fin(\bbT)} (1+\epsilon)^{|I|}
\psi(I)^2 < \infty$
for some $\epsilon > 0$. Then the polynomial chaos expansions
$\Psi(\tilde\zeta)$ and $\Psi(\tilde\xi)$ are well-defined
$L^2$ random variables.

For every $f:\R \to \R$ of class $\mathscr C^3$ with $C_f < \infty$, cf. \eqref{eq:normPsi2},
for every $\ell\in \N$ and
for every $M\in [0,\infty]$ large enough such that $m^{>M}_{2} \le \frac{1}{4}$,
the following relation holds:
\begin{equation}\label{eq:lindeberg0}
\begin{aligned}
	\big| \bbE\big[ f(\Psi(\tilde\zeta)) \big] - \bbE\big[ f(\Psi(\tilde\xi)) \big] \big|
	\le & \ e^{2 \sfc_\mu/\epsilon} \, C_f \bigg\{ 2 \, \sqrt{\sfC_{\Psi^{(\epsilon), >\ell}}} \ + \
	\sfC_{\Psi^{(\epsilon), \le \ell}} \, 16 \ell^2 \, m_{2}^{>M} \\
	& \quad + \
	\sfC_{\Psi^{(\epsilon), \le \ell}} \, 70^{\ell+1} \, \big(m^{\le M}_{3} \big)^\ell
	\sqrt{\max_{i \in \bbT} \big( \Inf_i\big[ \Psi^{(\epsilon), \le \ell} \big] \big)} \bigg\},
\end{aligned}
\end{equation}
where $\sfC_\cdot$, $\Inf_i[\cdot]$ are defined in
\eqref{eq:CPsi}, \eqref{eq:Inf} and
$\Psi^{(\epsilon), >\ell}$, $\Psi^{(\epsilon), \le \ell}$ are defined
as in \eqref{eq:Psid}.
\end{theorem}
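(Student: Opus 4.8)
The plan is to deduce this from the zero-mean Lindeberg principle (Theorem~\ref{th:lindeberg}) by \emph{absorbing the shifts $\mu$ into the polynomial}, but to do so only \emph{after} a degree truncation, so that degrees are not destroyed. First I would fix $\ell\in\N$ and split $\Psi=\Psi^{\le\ell}+\Psi^{>\ell}$. Since $\|f'\|_\infty\le C_f$, the triangle inequality gives
\begin{equation*}
\big|\bbE f(\Psi(\tilde\zeta))-\bbE f(\Psi(\tilde\xi))\big| \le C_f\,\bbE\big|\Psi^{>\ell}(\tilde\zeta)\big| + \big|\bbE f(\Psi^{\le\ell}(\tilde\zeta))-\bbE f(\Psi^{\le\ell}(\tilde\xi))\big| + C_f\,\bbE\big|\Psi^{>\ell}(\tilde\xi)\big|,
\end{equation*}
and by Cauchy--Schwarz the two outer terms are at most $C_f(\bbE[\Psi^{>\ell}(\tilde\zeta)^2])^{1/2}$ and $C_f(\bbE[\Psi^{>\ell}(\tilde\xi)^2])^{1/2}$. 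For the middle term I would expand $\prod_{i\in I}(\zeta_i+\mu_i)=\sum_{J\subseteq I}\mu^{I\setminus J}\zeta^J$ and regroup, writing $\Psi^{\le\ell}(\tilde\zeta)=\Phi_\ell(\zeta)$, where $\Phi_\ell$ is the multi-linear polynomial of degree $\le\ell$ with kernel $\phi_\ell(J):=\sum_{I\supseteq J,\,|I|\le\ell}\psi(I)\,\mu^{I\setminus J}$ (on each finite $\Lambda_N$ this is a legitimate finite rearrangement, and the $L^2$ bound below makes it valid in the limit). Applying Theorem~\ref{th:lindeberg} to the \emph{centered} families $\zeta,\xi$ and the polynomial $\Phi_\ell$ — for which $\Phi_\ell^{>\ell}\equiv0$, so its first error term vanishes — bounds the middle term by $C_f\{\sfC_{\Phi_\ell}\,16\ell^2 m_2^{>M}+\sfC_{\Phi_\ell}\,70^{\ell+1}(m_3^{\le M})^\ell(\max_i\Inf_i[\Phi_\ell])^{1/2}\}$, with the same $f$, $\ell$, $M$.

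The only nontrivial work left is to re-express $\sfC_{\Phi_\ell}$, $\max_i\Inf_i[\Phi_\ell]$ and $\bbE[\Psi^{>\ell}(\tilde\zeta)^2]$ in terms of the $(1+\epsilon)$-weighted polynomial $\Psi^{(\epsilon)}$. The key device is the weighted Cauchy--Schwarz splitting $\psi(I)\mu^{I\setminus J}=\big(\epsilon^{|I\setminus J|/2}\psi(I)\big)\big(\epsilon^{-|I\setminus J|/2}\mu^{I\setminus J}\big)$, which — after summing the second factor via $\prod_i(1+\mu_i^2/\epsilon)\le e^{\sfc_\mu/\epsilon}$ — yields
\begin{equation*}
\phi_\ell(J)^2\le e^{\sfc_\mu/\epsilon}\sum_{K\cap J=\emptyset,\ |J\cup K|\le\ell}\epsilon^{|K|}\,\psi(J\cup K)^2 .
\end{equation*}
Summing over $J$ and invoking the elementary identity $\sum_{J\subseteq I}\epsilon^{|I\setminus J|}=(1+\epsilon)^{|I|}$ gives $\sfC_{\Phi_\ell}\le e^{\sfc_\mu/\epsilon}\,\sfC_{\Psi^{(\epsilon),\le\ell}}$, and tracking a fixed index $i$ through the same sum gives $\Inf_i[\Phi_\ell]\le e^{\sfc_\mu/\epsilon}\,\Inf_i[\Psi^{(\epsilon),\le\ell}]$. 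The identical computation applied to the high-degree part (with kernel $\sum_{I\supseteq J,\,|I|>\ell}\psi(I)\mu^{I\setminus J}$) gives $\bbE[\Psi^{>\ell}(\tilde\zeta)^2]=\bbE[\Psi^{>\ell}(\tilde\xi)^2]\le e^{\sfc_\mu/\epsilon}\,\sfC_{\Psi^{(\epsilon),>\ell}}$, and applied to the full untruncated absorbed polynomial it bounds its $L^2$ norm by $e^{\sfc_\mu/\epsilon}\,\sfC_{\Psi^{(\epsilon)}}<\infty$ — which, through Theorem~\ref{th:lindeberg}, is how I would justify that $\Psi(\tilde\zeta)$ and $\Psi(\tilde\xi)$ are well-defined $L^2$ random variables. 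Plugging everything back in, each exponential prefactor that appears is $e^{c\,\sfc_\mu/\epsilon}$ with $c\le 3/2$ (the worst case $1/2+1$ coming from the term $\sfC_{\Phi_\ell}\sqrt{\max_i\Inf_i[\Phi_\ell]}$), hence all are dominated by $e^{2\sfc_\mu/\epsilon}$; factoring that out reproduces exactly \eqref{eq:lindeberg0}.

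I expect the main obstacle to be precisely this \emph{order of operations}. If one absorbed the mean into $\Psi$ \emph{before} truncating, the resulting kernel $\phi(J)$ would mix in all $\psi(I)$ with $I\supseteq J$, of arbitrarily high degree, so a direct application of Theorem~\ref{th:lindeberg} would not produce the \emph{degree-$\ell$ truncated} quantities $\sfC_{\Psi^{(\epsilon),\le\ell}}$ and $\Inf_i[\Psi^{(\epsilon),\le\ell}]$ demanded on the right-hand side of \eqref{eq:lindeberg0}; the resulting polynomial also need no longer have degree $\le\ell$, so its own ``$\Psi^{>\ell}$'' error term would not vanish. Truncating first and only then absorbing — at the price of the extra telescoping step in the first paragraph, with its two $\sqrt{\sfC_{\Psi^{(\epsilon),>\ell}}}$ contributions — is what makes the degrees line up and lets Theorem~\ref{th:lindeberg} apply cleanly. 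Everything else is routine bookkeeping of the Cauchy--Schwarz weights and the geometric factors $(1+\epsilon)^{|I|}$.
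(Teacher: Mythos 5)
Your proposal is correct and follows essentially the same route as the paper: truncate to degree $\ell$ first, absorb the shift $\mu$ into the truncated (and separately the high-degree) kernel, apply the zero-mean Lindeberg principle to the shifted degree-$\le\ell$ polynomial, and control the resulting kernels by the weighted Cauchy--Schwarz estimate $\sfC_{\tilde\Psi}\le e^{\sfc_\mu/\epsilon}\sfC_{\Psi^{(\epsilon)}}$, $\Inf_i[\tilde\Psi]\le e^{\sfc_\mu/\epsilon}\Inf_i[\Psi^{(\epsilon)}]$, which is exactly the paper's Lemma on the effect of adding a mean, here re-derived inline. The only cosmetic differences are that the paper packages those kernel estimates as a standalone lemma and uses $\bbE[|\Psi^{>\ell}(\tilde\zeta)|^2]=\bbE[|\Psi^{>\ell}(\tilde\xi)|^2]$ to write the factor $2$ directly, which you also note.
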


\section{Scaling limits of disordered systems}
\label{sec:scaling}

In this section, which can be read independently of Sections~\ref{sec:intro}
and~\ref{ConvergencePoly}, we consider three much-studied statistical mechanics models: the
\emph{disordered pinning model} (Subsection~\ref{sec:pin}),
the (long-range) \emph{directed polymer model} in dimension $1+1$ (Subsection~\ref{sec:dpre}), and
the two-dimensional \emph{random field Ising model} (Subsection~\ref{sec:Ising}).
For each model, we show that the partition function has a non-trivial limit in distribution,
in the continuum and weak disorder regime, given by an explicit Wiener chaos expansion
with respect to the white noise on $\R^d$ (see Subsection~\ref{sec:wnn}
for some reminders).
The proofs, given in Sections~\ref{sec:pinproof}, \ref{sec:dpreproof}
and~\ref{sec:Isingproof}, are based on the general convergence results
of Section~\ref{ConvergencePoly} (cf. Theorem~\ref{th:general}).

For each model, the disorder will be given by a countable family of i.i.d.\ random variables
$\omega_i$ with zero mean, finite variance and locally finite exponential moments:
\begin{equation}\label{eq:disasspin}
\bbE[\omega_i]=0,\qquad \bbvar (\omega_i)=1,
\qquad \exists t_0 > 0: \ \  \Lambda(t):=\log \bbE[e^{t \omega_i} ]<\infty
\ \ \text{for $|t|<t_0$} \,.
\end{equation}
Our approach actually works in the much more general setting when disorder
is given by a triangular array of independent (but not necessarily identically
distributed) random variables, in the spirit of Theorem~\ref{th:general},
but we stick to the i.i.d. case for the sake of simplicity.

\subsection{Disordered pinning model}
\label{sec:pin}

Consider a discrete renewal process $\tau:=\{\tau_n\}_{n\geq 0}$,
that is $\tau_0 = 0$ and the increments $\{\tau_n-\tau_{n-1}\}_{n\ge 1}$
are i.i.d. $\N$-valued random variables.
We assume that $\tau$ is \emph{non-terminating}, that is $\p(\tau_1 < \infty) = 1$, and that
\begin{equation}\label{eq:taualpha}
	\P(\tau_1=n)=\frac{L(n)}{n^{1+\alpha}}
	\,, \qquad \forall n \in \N,
\end{equation}
where $\alpha\in [0,+\infty)$ and $L: (0,\infty) \to (0,\infty)$ is a slowly varying function
\cite{BGT87}. One could also consider the periodic case,
when \eqref{eq:taualpha} holds for $n \in p\N$ and $\P(\tau_1=n)=0$ if $n\not\in p\N$,
for some period $p \in \N$. For simplicity,
we focus on the aperiodic case $p=1$.

Let $\omega = (\omega_n)_{n\in \bbN_0}$ be a sequence of i.i.d. random variables,
independent of $\tau$, satisfying \eqref{eq:disasspin}.
The \emph{disordered pinning model} is the random probability
law $\P^{\omega}_{N,\beta,h}$
on subsets on $\bbN_0$, indexed by $\omega$ and by
$N \in \N$, $\beta \ge 0$ and $h\in\R$, defined by
\begin{eqnarray*}
\dd \P^{\omega}_{N,\beta,h}(\tau) :=\frac{1}{Z^{\omega}_{N,\beta,h}}
e^{\sum_{n=1}^N (\beta \omega_n
	-\Lambda(\beta) + h) \ind_{\{n\in\tau\}}} \dd \P(\tau) \,,
\end{eqnarray*}
where we recall that $\Lambda(\beta) := \log \bbE[e^{\beta \omega_1}]$, and
the \emph{partition function} $Z_{N,\gb,h}^{\go}$ is defined by
\begin{equation}\label{eq:Zpin}
	Z_{N,\gb,h}^{\go}:=\E\left[ e^{\sum_{n=1}^N (\beta \omega_n
	-\Lambda(\beta) + h) \ind_{\{n\in\tau\}}} \right] \,.
\end{equation}
We also consider the \emph{conditioned partition function}
\begin{equation}\label{eq:Zpinc}
	Z_{N,\gb,h}^{\go,c}:=\E\left[ e^{\sum_{n=1}^N (\beta \omega_n
	-\Lambda(\beta) + h) \ind_{\{n\in\tau\}}}\,\Big| N\in\tau \right].
\end{equation}

The disordered pinning model exhibits an interesting localization/delocalization phase transition.
This can be quantified via the (quenched) free energy, which is defined as
\begin{eqnarray}\label{discrete fe}
F (\beta,h):=\lim_{N\to \infty} \frac{1}{N}\log Z_{N,\gb,h}^{\go}
= \lim_{N\to \infty} \frac{1}{N}  \bbE \big[ \log Z_{N,\gb,h}^{\go} \big] \,,
\qquad \bbP(\dd\omega)\text{--a.s.}\,.
\end{eqnarray}
By restricting the partition function to configurations such that $\tau\cap [1,N] =\emptyset$,
it is easily seen that $F(\beta,h)\geq 0$. The
localized and delocalized regimes ($\mathcal{L}, \mathcal{D}$ respectively) can be defined as
\begin{eqnarray*}
\mathcal{L}:=\{ (\beta,h) \colon F(\beta,h)>0\},\quad
\mathcal{D}:=\{ (\beta,h) \colon F(\beta,h)=0\}.
\end{eqnarray*}
We refer to \cite{G10} for more information on the structure of the phase transition
and, in particular, for quantitative estimates on the \emph{critical curve}
\begin{equation}\label{eq:ccpin}
	h_c(\beta):= \sup \{h \in \R\colon \ F(\beta,h) =0\}
	= \inf \{h \in \R\colon \ F(\beta,h) > 0\} \,.
\end{equation}

\smallskip

We can now state our main result on the disordered pinning model,
to be proved in Section~\ref{sec:pinproof}.
To lighten notation, we write $Z_{Nt,\beta,h}^{\omega}$ to mean
$Z_{\lfloor Nt \rfloor,\beta,h}^{\omega}$.

\begin{theorem}[Scaling limit of disordered pinning models]\label{pinning scaling}
Let the aperiodic renewal process $\tau$
either satisfy \eqref{eq:taualpha} for some $\alpha \in (\frac{1}{2}, 1)$, or have
finite mean $\E[\tau_1]<\infty$ (which happens, in particular, when
\eqref{eq:taualpha} holds with $\alpha > 1$).
For $N\in\N$, $\hat\beta > 0$ and $\hat h\in\R$, set
\begin{equation} \label{eq:scalingbetah}
   \begin{split}
    \beta_N = \begin{cases}
    \displaystyle
    \hbeta \frac{L(N)}{N^{\alpha - 1/2}} & \text{if } \ \frac{1}{2} < \alpha < 1 \\
    \displaystyle\rule{0em}{1.8em}\hbeta \frac{1}{\sqrt{N}} & \text{if } \ \E[\tau_1]<\infty
    \end{cases} \,,
    \qquad h_N =
    \begin{cases}
    \displaystyle \hh \frac{L(N)}{N^{\alpha}} & \text{if } \ \frac{1}{2} < \alpha < 1 \\
	\displaystyle\rule{0em}{1.8em}\displaystyle \hh \frac{1}{N} & \text{if } \
	\E[\tau_1]<\infty
    \end{cases} \,.
  \end{split}
\end{equation}
Then, for every $t \ge 0$, the conditioned partition function
$Z_{Nt,\beta_N,h_N}^{\omega, c}$
of the disordered pinning model converges in distribution as $N\to\infty$
to the random variable $\bZ_{t,\hat\beta,\hat h}^{W, c}$ given by
\begin{equation} \label{eq:Zlimpin}
\bZ_{t,\hat\beta,\hat h}^{W, c} :=
1+\sum_{k=1}^\infty \frac{1}{k!}
\idotsint\limits_{[0,t]^k}  \bpsi^{c}_t(t_1,\ldots,t_k) \,
\prod_{i=1}^k \big( \hbeta\,W(\dd t_i) + \hh\, \dd t_i \big) \,,
\end{equation}
where $W(\cdot)$ denotes white noise on $\R$
and $\bpsi_t^{c} (t_1,\ldots,t_k)$ is a symmetric function, defined for
$0<t_1<\cdots<t_k < t$ by
\begin{equation} \label{eq:cokec}
\bpsi^{c}_{t}(t_1,\ldots,t_k) =
\begin{cases}
\displaystyle\frac{C_\alpha^k \,t^{1-\alpha}}{t_1^{1-\alpha} (t_2-t_1)^{1-\alpha}
\cdots (t_k-t_{k-1})^{1-\alpha}(t-t_{k})^{1-\alpha}} \qquad & \text{if } \ \frac{1}{2} < \alpha < 1 \\
\displaystyle\rule{0pt}{1.8em}
\frac{1}{\e[\tau_1]^k}  \qquad\qquad\qquad\qquad & \mbox{if } \ \E[\tau_1]<\infty
\end{cases} \,,
\end{equation}
where $C_\alpha := \frac{\alpha \sin(\pi\alpha)}{\pi}$.
The series in \eqref{eq:Zlimpin} converges in $L^2$, and
one has the convergence of the corresponding second moments:
$\bbE[(Z_{Nt,\beta_N,h_N}^{\omega, c})^2]
\to \bbE[(\bZ_{t,\hat\beta,\hat h}^{W, c})^2]$ as $N\to\infty$.

An analogous statement holds for the free (unconditioned)
partition function $Z_{Nt,\beta_N,h_N}^{\omega}$, where
the limiting random
variable $\bZ_{t,\hat\beta,\hat h}^{W}$ is defined as in \eqref{eq:Zlimpin},
with kernel
\begin{equation} \label{eq:coke}
\bpsi(t_1,\ldots,t_k) :=
\left\{
\begin{aligned}
\frac{C_\alpha ^k }{ t_1^{1-\alpha} (t_2-t_1)^{1-\alpha}\cdots (t_k-t_{k-1})^{1-\alpha}}
\qquad & \text{if } \ \frac{1}{2} < \alpha < 1 \\
\frac{1}{\e[\tau_1]^k}  \qquad\qquad\qquad\qquad & \mbox{if  } \ \E[\tau_1]<\infty
\end{aligned}
\right. \,.
\end{equation}

When $\E[\tau_1]<\infty$, for both the free and conditioned case,
the continuum partition function has an explicit distribution: for every $t \ge 0$
\begin{equation}\label{pinWCag1}
\bZ_{t,\hat\beta,\hat h}^{W, c}
\overset{d}{=} \bZ_{t,\hat\beta,\hat h}^{W} \overset{d}{=}
\exp\bigg\{\frac{\hbeta}{\e[\tau_1]}W_t+ \bigg(\frac{\hh}{\e[\tau_1]} - \frac{\hbeta^2}{2\e[\tau_1]^2}\bigg)t\bigg\} \,,
\end{equation}
where $W = (W_t)_{t\ge 0}$ denotes a standard Brownian motion.
\end{theorem}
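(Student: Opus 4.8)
The plan is to realise the partition function as a polynomial chaos expansion in the disorder variables and apply Theorem~\ref{th:general}. First I would linearise: since $\ind_{\{n\in\tau\}}\in\{0,1\}$ one has the \emph{exact} identity $e^{(\beta\omega_n-\Lambda(\beta)+h)\ind_{\{n\in\tau\}}}=1+\epsilon_n\,\ind_{\{n\in\tau\}}$ with $\epsilon_n:=e^{\beta\omega_n-\Lambda(\beta)+h}-1$, so expanding the product in \eqref{eq:Zpin} and using the renewal property yields, with $u(n):=\P(n\in\tau)$,
\begin{equation*}
Z^{\omega}_{N,\beta,h}=\sum_{k\ge0}\ \sum_{0<n_1<\cdots<n_k\le N}u(n_1)\,u(n_2-n_1)\cdots u(n_k-n_{k-1})\,\prod_{i=1}^k\epsilon_{n_i}\,,
\end{equation*}
while $Z^{\omega,c}_{N,\beta,h}$ carries the additional factor $u(N-n_k)/u(N)$ in each coefficient. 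I would then set $\delta=1/N$, $\bbT_\delta=\tfrac1N\Z$ with unit cells $\cC_\delta(j/N)=((j-1)/N,\,j/N]$ (so $v_\delta=1/N$), rescale $\zeta_{\delta,n}:=\epsilon_n/c_\delta$ with $c_\delta:=\beta_N/\hbeta$ for $n\in\{1,\dots,\lfloor Nt\rfloor\}$, and let $\zeta_{\delta,\cdot}$ be auxiliary i.i.d.\ Gaussians of the same variance (with the kernel set to $0$) at the remaining sites; this exhibits $Z^{\omega}_{Nt,\beta_N,h_N}$, resp.\ $Z^{\omega,c}_{Nt,\beta_N,h_N}$, as $\Psi_\delta(\zeta_\delta)$ for an explicit multilinear polynomial with kernel $\psi_\delta$ supported on $\{1/N,\dots,\lfloor Nt\rfloor/N\}$.

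Next I would verify the three hypotheses of Theorem~\ref{th:general}. For \eqref{it:1}, a Taylor expansion of $\Lambda$ at $0$ (using $\Lambda(0)=\Lambda'(0)=0$, $\Lambda''(0)=1$) gives $\bbvar(\epsilon_n)\sim\beta_N^2$ and $\bbE[\epsilon_n]=e^{h_N}-1\sim h_N$, so with the scaling \eqref{eq:scalingbetah} one gets $\sigma_\delta\to\hbeta=:\bsigma_0$ and $\bar\mu_\delta\to\hh\,\ind_{(0,t)}=:\bmu_0$ in $L^2(\R)$; uniform integrability of $(\zeta_{\delta,n}-\bbE\zeta_{\delta,n})^2$ follows from the exponential moment assumption \eqref{eq:disasspin}, which in fact bounds the $(2+\eta)$-th moments uniformly. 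For \eqref{it:2}, the key inputs are the renewal asymptotics $u(n)\sim C_\alpha\,n^{\alpha-1}/L(n)$ when $\tfrac12<\alpha<1$ (Doney~\cite{D97}, with $C_\alpha=\alpha\sin(\pi\alpha)/\pi$), resp.\ $u(n)\to1/\E[\tau_1]$ when $\E[\tau_1]<\infty$; a direct computation then shows that the rescaled kernel $\bar\psi_\delta(\{n_1,\dots,n_k\})=v_\delta^{-k/2}c_\delta^{k}\,u(n_1)\cdots u(n_k-n_{k-1})$ (times $u(N-n_k)/u(N)$ in the conditioned case) converges pointwise a.e.\ on $(0,t)^k$ to $\bpsi$, resp.\ $\bpsi^{c}_t$, of \eqref{eq:coke}--\eqref{eq:cokec}---the exponents in \eqref{eq:scalingbetah} being chosen precisely so that all powers of $N$ and all slowly varying factors $L(N)$ cancel. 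I would promote this to convergence in $L^2((\R^d)^k)$ by dominated convergence, using Potter's bounds~\cite{BGT87} to dominate $\bar\psi_\delta$, uniformly in $\delta$, by $C^k\prod_i(t_{(i)}-t_{(i-1)})^{\alpha-1-\eta}$; this lies in $L^2((0,t)^k)$ exactly when $\eta<\alpha-\tfrac12$, which is where the hypothesis $\alpha>\tfrac12$ enters. Finally, for \eqref{it:3}, since $\sum_{|I|=k}(\sigma_\delta^2)^{k}\psi_\delta(I)^2=\tfrac{(\sigma_\delta^2)^k}{k!}\|\bar\psi_\delta\|^2_{L^2((\R^d)^k)}$, the same uniform domination bounds this by $A^k/\Gamma(k(2\alpha-1)+1)$ (resp.\ $A^k/k!$ when $\E[\tau_1]<\infty$), which is summable against $(1+\epsilon)^{|I|}$ for every $\epsilon>0$ because the Gamma function grows super-exponentially; this gives both \eqref{it:3} and condition \eqref{eq:condepsilon} for the limiting series.

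With the hypotheses in hand, Theorem~\ref{th:general} yields the convergence in distribution of $Z^{\omega,c}_{Nt,\beta_N,h_N}$ (and of $Z^{\omega}_{Nt,\beta_N,h_N}$) to the Wiener chaos \eqref{eq:Zlimpin}, together with the $L^2$-convergence of the limiting series and the convergence of second moments $\bbE[(Z^{\omega,c}_{Nt,\beta_N,h_N})^2]\to\bbE[(\bZ^{W,c}_{t,\hbeta,\hh})^2]$. For the final assertion, when $\E[\tau_1]<\infty$ both kernels \eqref{eq:coke}--\eqref{eq:cokec} reduce to the constant $\rho^k$ with $\rho:=1/\E[\tau_1]$, so $\bZ^{W,c}_{t,\hbeta,\hh}$ and $\bZ^{W}_{t,\hbeta,\hh}$ have the same law; applying the Cameron--Martin representation \eqref{eq:general3} with bias $\bnu\equiv\hh/\hbeta$ on $(0,t)$ together with the classical identity $\sum_{k\ge0}\tfrac1{k!}a^k\,W^{\otimes k}(\ind_{[0,t]^k})=\exp(aW_t-\tfrac12a^2t)$ (cf.\ \cite[\S3.2]{J97}) with $a=\rho\hbeta$, the limit rewrites as $\exp\{\rho\hbeta(W_t+\tfrac{\hh}{\hbeta}t)-\tfrac12(\rho\hbeta)^2t\}=\exp\{\tfrac{\hbeta}{\E[\tau_1]}W_t+(\tfrac{\hh}{\E[\tau_1]}-\tfrac{\hbeta^2}{2\E[\tau_1]^2})t\}$, which is exactly \eqref{pinWCag1}. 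The hard part will be step~\eqref{it:2}: turning the renewal estimates into a domination that is \emph{uniform in $N$ and in the positions of the points} and still square-integrable up to the diagonal---precisely what forces the restriction $\alpha>\tfrac12$ and requires Potter-type control of the slowly varying function.
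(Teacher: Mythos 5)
Your proposal is correct and follows essentially the same route as the paper's own proof: the exact linearisation of the Boltzmann weight into a polynomial chaos with kernel $a_N^k\,u(n_1)u(n_2-n_1)\cdots$ (times $u(N-n_k)/u(N)$ in the conditioned case), verification of conditions \eqref{it:1}--\eqref{it:3} of Theorem~\ref{th:general} via Doney's renewal asymptotics, Potter/Karamata-type bounds on the slowly varying function to get an $N$-uniform $L^2$ domination (which is exactly where $\alpha>\tfrac12$ enters), a Gamma-function (Dirichlet integral) estimate for the tail of the chaos series, and the Cameron--Martin shift plus the exponential Wiener-chaos identity for \eqref{pinWCag1}. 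The only differences are cosmetic (Potter bounds in place of the paper's Karamata representation, $(2+\eta)$-th versus fourth moments for uniform integrability, and padding with auxiliary Gaussians instead of centering the variables outside $(0,t]$).
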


\begin{remark}\rm\label{rem:wi}
The stochastic integrals in \eqref{eq:Zlimpin} can be rewritten more directly
as follows: denoting by $W = (W_t)_{t\ge 0}$ a standard Brownian motion, we have
\begin{equation} \label{eq:wi}
\bZ_{t,\hat\beta,\hat h}^{W} :=
1+\sum_{k=1}^\infty \ \ \idotsint\limits_{0<t_1<\cdots<t_k<t}  \bpsi_{t}(t_1,\ldots,t_k) \,
\prod_{i=1}^k
\big( \hbeta\,\dd W_{t_i} + \hh\, \dd t_i \big) \,,
\end{equation}
where the integrals can be viewed as ordinary Ito integrals: it suffices to
first integrate over $(\hbeta \dd W_{t_1} + \hh \dd t_1)$ for $t_1 \in (0,t_2)$,
then over $(\hbeta \dd W_{t_2} + \hh \dd t_2)$ for $t_2 \in (0,t_3)$, etc.
\end{remark}

\begin{remark}\rm
Theorem~\ref{pinning scaling} extends readily
to the convergence of the joint distribution
of a finite collection of partition functions (conditioned or free).
Analogously, the two parameter family of partition functions
\begin{equation}\label{eq:two-para}
	 Z_{\beta_N,h_N}^{\omega, c}(Ns,Nt):= \E\left[ e^{\sum_{n=Ns+1}^{Nt-1}
	 (\beta \omega_n
	-\Lambda(\beta) + h) \ind_{\{n\in\tau\}}}\,\Big| Ns,\,Nt\in\tau \right] \,, \quad
	\text{for } 0<s<t \,,
\end{equation}
converges in finite-dimensional distributions to
a two-parameter process $(\bZ^{W,c}_{\hat\beta, \hat h}(s,t))_{0<s<t}$.
In \cite{CSZ14} we upgrade this result
to a convergence in distribution in
the space of continuous functions, equipped with the uniform topology.
This allows to construct the continuum limit of the disordered pinning measure
$\P^{\omega}_{N,\beta,h}$.
\end{remark}

\begin{remark}\label{rem:pinfe}\rm
It is natural to call
the random variable $\bZ_{t,\hat\beta,\hat{h}}^{W}$
in Theorem~\ref{pinning scaling}
the \emph{continuum partition function} and to define the corresponding \emph{continuum free energy}
\begin{equation} \label{eq:cfepin}
	\bF^{(\alpha)}(\hbeta, \hh) = \lim_{t \to \infty}
	\frac{1}{t} \, \bbE\Big[ \log \bZ_{t,\hat\beta,\hat{h}}^{W} \Big] \,,
\end{equation}
where $\bbE$ denotes expectation with respect to the white noise $W$,
provided the limit exists.
For $\frac{1}{2}<\alpha<1$, we expect that the continuum and discrete free energies are related via
 \begin{equation}\label{fepinscale}
 \bF^{(\alpha)}(\hat\beta,\hat h) = \lim_{\delta\downarrow 0} \frac{F\,
 (\delta^{\alpha-\frac{1}{2}} L(\tfrac{1}{\delta})\,\hat\beta\,,\,
 \delta^{\alpha} L(\tfrac{1}{\delta}) \,\hat h) }{\delta} \,.
 \end{equation}
 This would follow if one could interchange the limits in the formal computation
\begin{equation}\label{eq:pinexch}
	\bF^{(\alpha)}(\hbeta, \hh) = \lim_{t \to \infty}
	\frac{1}{t} \, 	\log \bZ_{t,\hat\beta,\hat{h}}^{W} \,\overset{d}{=}\,
	\lim_{t \to \infty}
	\frac{1}{t} \, \lim_{N\to\infty}
	\log Z_{tN, \beta_N, h_N}^\omega \,.
\end{equation}
where $\beta_N,h_N$ scale as in \eqref{eq:scalingbetah}.
Such an interchange of limits has been made possible in the related
copolymer model, cf. \cite{BdH97,CG10}.
Proving the validity of relation \eqref{fepinscale}
is a very interesting open problem.
Even the existence of the continuum free energy
in \eqref{eq:cfepin} ---possibly also in the $\bbP$-a.s. sense,
like for the discrete case \eqref{discrete fe}--- is a non-trivial issue.
\end{remark}

Relation \eqref{fepinscale} is appealing because of its
implication of universality: it states that
the discrete free energy $F(\beta,h)$ has a \emph{universal shape}
in the weak disorder regime $\beta,h \to 0$,
given by the continuum free energy, which depends only on the
parameter $\alpha$ and not on finer details of the renewal distribution.
Inverting the relation $\beta = \delta^{\alpha-\frac{1}{2}} L(\tfrac{1}{\delta})$,
it is possible to rewrite \eqref{fepinscale} for $\hbeta = 1$ as
\begin{equation} \label{fepinscale1}
	 \bF^{(\alpha)}(1,\hat h)=
	 \lim_{\beta \downarrow 0} \frac{F\big(\beta, \tilde L(\tfrac{1}{\beta})
	 \beta^{\frac{2\alpha}{2\alpha-1}} \hh \big)}
	 {\widehat L(\tfrac{1}{\beta}) \beta^{\frac{2}{2\alpha-1}}} \,,
\end{equation}
where $\tilde L(\cdot)$ and $\widehat L(\cdot)$ are suitable
slowly varying functions determined by $L(\cdot)$.

\medskip

{\small
Given a slowly varying function $\phi$ and $\gamma > 0$,
we define the slowly varying functions $\bar \phi_\gamma$
and $\phi^*$ by
\begin{equation*}
	\bar \phi_\gamma(x) := \frac{1}{\phi(x^{1/\gamma})} \,, \qquad \quad
	\phi^* \big( x \phi(x) \big) \sim \frac{1}{\phi(x)} \quad \text{as} \ x \to \infty \,,
\end{equation*}
where the existence of $\phi^*$ is guaranteed by \cite[Theorems~1.5.13]{BGT87}.
Then as $\beta \downarrow 0$
\begin{equation*}
	\delta^{\alpha-\frac{1}{2}} L(\tfrac{1}{\delta}) = \beta
	\quad \Longrightarrow \quad
	\frac{1}{\delta^{\alpha - \frac{1}{2}}}
	\bar L_{\alpha - \frac{1}{2}}\bigg( \frac{1}{\delta^{\alpha - \frac{1}{2}}} \bigg)
	\sim \frac{1}{\beta}
	\quad \Longrightarrow \quad
	L\bigg( \frac{1}{\delta} \bigg)
	\sim \big( \bar L_{\alpha - \frac{1}{2}} \big)^*\bigg( \frac{1}{\beta} \bigg) \,,
\end{equation*}
hence by $\delta^{\alpha-\frac{1}{2}} L(\tfrac{1}{\delta}) = \beta$ we obtain
$\delta \sim \beta^{\frac{2}{2\alpha - 1}} \widehat L(\frac{1}{\beta})$ and
$\delta^{\alpha} L(\frac{1}{\delta}) \sim
\beta^{\frac{2\alpha}{2\alpha - 1}} \tilde L(\frac{1}{\beta})$, where
\begin{equation} \label{eq:hattildeL}
	\widehat L(x) :=
	[( \bar L_{\alpha - \frac{1}{2}} )^*(x)]^{-\frac{2}{2\alpha - 1}} \,, \qquad
	\tilde L(x) :=
	[( \bar L_{\alpha - \frac{1}{2}} )^*(x)]^{-\frac{1}{2\alpha - 1}} \,.
\end{equation}
Plugging this into \eqref{fepinscale} (with $\hbeta = 1$), we get \eqref{fepinscale1}.
}

\medskip

Defining the critical curve of the continuum free energy
in analogy with \eqref{eq:ccpin}, i.e.\
\begin{equation*}
	\bh_c^{(\alpha)}(\hbeta) := \sup\{\hh \in \R: \ \bF^{(\alpha)}(\hbeta, \hh) = 0\}
	= \inf\{\hh \in \R: \ \bF^{(\alpha)}(\hbeta, \hh) > 0\} \,
\end{equation*}
relation \eqref{fepinscale1} leads us to the following

\begin{conjecture}
For any disordered pinning model satisfying \eqref{eq:taualpha}
with $\alpha \in (\frac{1}{2},1)$,
the critical curve $h_c(\beta)$
has the following universal asymptotic behavior
(defining $\tilde L(\cdot)$ by \eqref{eq:hattildeL}):
\begin{equation*}
\lim_{\beta\downarrow 0} \frac{h_c(\beta)}{\tilde L(\tfrac{1}{\beta})
\beta^{\frac{2\alpha}{2\alpha-1}}} \,=\, \bh_c^{(\alpha)}(1)\,.
\end{equation*}
\end{conjecture}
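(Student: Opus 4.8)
\smallskip
\noindent\textbf{Proof proposal.}
The plan is to deduce the conjecture from the free-energy scaling relation \eqref{fepinscale1} (equivalently \eqref{fepinscale}), together with a few regularity properties of the continuum free energy $\bF^{(\alpha)}$: existence of the limit in \eqref{eq:cfepin}, and the facts that $\hh\mapsto\bF^{(\alpha)}(1,\hh)$ is non-decreasing, convex, continuous and strictly positive for $\hh>\bh_c^{(\alpha)}(1)$. Granting these, the upper bound is immediate: for any $\hh>\bh_c^{(\alpha)}(1)$ one has $\bF^{(\alpha)}(1,\hh)>0$, so by \eqref{fepinscale1} the discrete free energy $F\big(\beta,\tilde L(\tfrac1\beta)\beta^{\frac{2\alpha}{2\alpha-1}}\hh\big)$ is strictly positive for all $\beta$ small enough, which forces $h_c(\beta)<\tilde L(\tfrac1\beta)\beta^{\frac{2\alpha}{2\alpha-1}}\hh$; letting $\hh\downarrow\bh_c^{(\alpha)}(1)$ yields $\limsup_{\beta\downarrow0}h_c(\beta)/\big(\tilde L(\tfrac1\beta)\beta^{\frac{2\alpha}{2\alpha-1}}\big)\le\bh_c^{(\alpha)}(1)$.

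First I would establish existence of $\bF^{(\alpha)}$ in \eqref{eq:cfepin} by super-additivity of the conditioned partition functions: forcing the renewal through an intermediate point gives, for $0<s<t$, an inequality $\bZ^{W,c}_{1,\hh}(0,t)\ge\bZ^{W,c}_{1,\hh}(0,s)\,\bZ^{W,c}_{1,\hh}(s,t)$, suitably interpreted for the white-noise integrals using independence of $W$ over disjoint intervals (cf.\ the two-parameter family \eqref{eq:two-para} and Theorem~\ref{pinning scaling}). Kingman's subadditive ergodic theorem applied to the stationary sequence $\big(\log\bZ^{W,c}_{1,\hh}(k,k+1)\big)_{k\ge0}$ then gives the a.s.\ and $L^1$ limit \eqref{eq:cfepin}, with monotonicity and convexity in $\hh$ inherited from the discrete level.

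The crux is the interchange of limits in \eqref{eq:pinexch}, i.e.\ proving \eqref{fepinscale}. One direction should follow from Theorem~\ref{pinning scaling} and the $L^2$ control of the rescaled partition functions: with the normalization in \eqref{eq:scalingbetah}, for fixed $t$ one has $\tfrac{1}{tN}\bbE[\log Z^\omega_{tN,\beta_N,h_N}]\to\tfrac1t\bbE[\log\bZ^{W}_{t,1,\hh}]$, and letting $t\to\infty$ controls one inequality. The reverse direction requires the two limits to be taken \emph{uniformly}: for every $\varepsilon>0$ one needs a scale $t_\varepsilon$ and $\delta_\varepsilon>0$ so that the finite-volume per-site free energy at volume $\lfloor tN\rfloor$ stays within $\varepsilon$ of $\bF^{(\alpha)}(1,\hh)$ for all $t\ge t_\varepsilon$ and $\delta\le\delta_\varepsilon$. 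I expect this to need a coarse-graining scheme in the spirit of Bolthausen--den Hollander and Caravenna--Giacomin \cite{BdH97,CG10}: split $[0,tN]$ into blocks of length $t_\varepsilon N$, approximate the block partition functions by i.i.d.\ copies of the continuum object via Theorem~\ref{pinning scaling}, and chain the blocks together using concentration of $\log Z^\omega$ about its mean (available because of the finite exponential moments in \eqref{eq:disasspin}, via martingale/bounded-difference estimates), with a controlled error. This is the main obstacle.

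For the matching lower bound $\liminf_{\beta\downarrow0}h_c(\beta)/\big(\tilde L(\tfrac1\beta)\beta^{\frac{2\alpha}{2\alpha-1}}\big)\ge\bh_c^{(\alpha)}(1)$ one must work harder, since for $\hh<\bh_c^{(\alpha)}(1)$ relation \eqref{fepinscale1} only gives $F\big(\beta,\tilde L(\tfrac1\beta)\beta^{\frac{2\alpha}{2\alpha-1}}\hh\big)=o\big(\widehat L(\tfrac1\beta)\beta^{\frac{2}{2\alpha-1}}\big)$, which is weaker than the required $F=0$. To upgrade this to $h_c(\beta)\ge\tilde L(\tfrac1\beta)\beta^{\frac{2\alpha}{2\alpha-1}}\hh$ I would run a fractional-moment plus coarse-graining argument from the disordered pinning literature \cite{G10}: in the regime where the continuum free energy vanishes one expects $\bbE\big[(Z^{\omega}_{N,\beta_N,h_N})^{q}\big]$ to stay bounded uniformly in $N$ for some $q\in(0,1)$, which by the standard delocalization criterion gives $F(\beta_N,h_N)=0$. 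The two genuine difficulties are thus the uniform limit-interchange and this fractional-moment estimate; everything else amounts to bookkeeping around \eqref{fepinscale1} and the slowly varying functions in \eqref{eq:hattildeL}.
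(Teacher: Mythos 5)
There is a genuine gap: the statement you are addressing is stated in the paper as a \emph{conjecture}, and your proposal does not close it — it reproduces the heuristic route the paper itself sketches and then defers precisely the steps that the paper flags as open. Concretely, your upper bound on $h_c(\beta)$ is conditional on \eqref{fepinscale1} (equivalently \eqref{fepinscale}), which rests on the interchange of the infinite-volume and continuum/weak-disorder limits in \eqref{eq:pinexch}; the paper explicitly calls proving \eqref{fepinscale} ``a very interesting open problem'', and your treatment of it (``I expect this to need a coarse-graining scheme in the spirit of \cite{BdH97,CG10}'') is a plan, not an argument. Likewise, your lower bound correctly observes that \eqref{fepinscale1} alone cannot force $F=0$ below the continuum critical point, but the fractional-moment/coarse-graining estimate you invoke to upgrade ``$F=o(\cdot)$'' to ``$F=0$'' is exactly the second missing ingredient, and nothing in Theorem~\ref{pinning scaling} or in the convergence machinery of Section~\ref{ConvergencePoly} supplies it: that theorem controls partition functions over windows of size $O(N)$ at coupling $\beta_N$, whereas delocalization requires control on scales much larger than the continuum window, which is where the whole difficulty lies.

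Two further points in your preliminary steps are also not free. First, even the existence of the continuum free energy \eqref{eq:cfepin} is flagged in Remark~\ref{rem:pinfe} as a non-trivial issue: your Kingman argument needs super-multiplicativity and strict positivity of the continuum conditioned partition functions $\bZ^{W,c}_{\hbeta,\hh}(s,t)$ of \eqref{eq:two-para}, neither of which is established in this paper (they require the continuity/positivity properties of the two-parameter continuum field, developed only in the follow-up work \cite{CSZ14}); the inequality $\bZ^{W,c}(0,t)\ge \bZ^{W,c}(0,s)\bZ^{W,c}(s,t)$ cannot simply be read off the Wiener chaos series and must be obtained as a limit of the discrete inequality, with positivity justified separately. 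Second, properties such as convexity and continuity of $\hh\mapsto\bF^{(\alpha)}(1,\hh)$, which your upper bound uses to pass $\hh\downarrow \bh_c^{(\alpha)}(1)$, would have to be derived along the way. In short, the proposal correctly identifies the architecture one would need (and which is broadly the route taken in later literature), but within the present paper the statement has no proof to match, and your write-up leaves the two decisive estimates — the uniform limit interchange and the fractional-moment delocalization bound at the continuum scale — as acknowledged gaps rather than proved lemmas.
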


\noindent
Further support to this conjecture is provided by the fact that
(non-matching) upper and lower bounds for $h_c(\beta)$ of the order
$\tilde L(\tfrac{1}{\beta}) \beta^{\frac{2\alpha}{2\alpha-1}}$ were proved in
\cite{A08,AZ09}.

\begin{remark}\rm
The case when \eqref{eq:taualpha} holds with
$\alpha = 1$ and $\E[\tau_1] = \infty$, i.e.\ $\sum_{n\in\N} L(n)/n = \infty$,
can also be included in Theorem~\ref{pinning scaling} (we have omitted
it for notational lightness), setting
\begin{equation*}
	\beta_N = \hbeta \frac{\ell(N)}{\sqrt{N}}\,, \qquad h_N = \hh \frac{\ell(N)}{N} \,,
	\qquad \text{where} \qquad \ell(N) := \sum_{n=1}^N \frac{L(n)}{n}
\end{equation*}
is a slowly varying function,
and with $\bpsi^{c}_t(t_1,\ldots, t_k) =
\bpsi(t_1,\ldots, t_k) \equiv 1$.
This is easily checked from the proof in Section~\ref{sec:pinproof},
because $\P(n \in \tau) \sim \frac{1}{\ell(n)}$, cf. \cite[Theorem 8.7.5]{BGT87}.

On the other hand, the case $\alpha=1/2$ appears to be fundamentally different,
because the continuum kernels $\bpsi_t^{c}$, $\bpsi$ are no longer
$L^2$ integrable and therefore the stochastic integrals are not properly
defined. When $\alpha=1/2$ and $\sum_{n\in\N}1/(n \, L(n)^2)=\infty$ ---in particular,
when $L(n) \sim (const.)$ as $n\to\infty$, as for the simple random walk on $\Z$---
we expect that a nontrivial continuum limit should exist.
This appears to be a challenging open problem.
\end{remark}

\subsection{Directed polymer model}
\label{sec:dpre}

Consider a random walk $S = (S_n)_{n\in\N_0}$ on $\Z$, with law $\P$.
Let $\omega = (\omega(n,x))_{n\in\N, x \in \Z}$ be a family of i.i.d.
random variables, independent of $S$, with zero mean, unit variance
and locally finite exponential moments, cf.\ \eqref{eq:disasspin}.
The $(1+1)$-dimensional \emph{directed polymer model} is the
random probability law $\P_{N,\beta}^{\omega}$ for the walk $S$ defined
for $N \in \N$ and $\beta \ge 0$ by
\begin{equation}\label{eq:Pdirpol}
	\dd \P_{N,\beta}^{\omega}(S) :=
	\frac{1}{Z_{N,\beta}^{\omega}} \, e^{\sum_{n=1}^N (\beta \omega(n,S_n)
	- \Lambda(\beta) )}
	\, \dd \P(S) \,,
\end{equation}
where we recall that $\Lambda(\beta) := \log \bbE[e^{\beta \omega_{n,x}}]$ and
the \emph{partition function} $Z_{N,\beta}^{\omega}$ is defined by
\begin{equation} \label{eq:Zdirpol}
	Z_{N,\beta}^{\omega} = \E \big[ e^{\beta \sum_{n=1}^N \omega(n,S_n) } \big]
	e^{-\Lambda(\beta) N} \,.
\end{equation}
For $y\in\Z$, we also define the
\emph{constrained} point-to-point partition function $Z_{N,\beta}^{\omega}(y)$
and the \emph{conditioned} point-to-point partition function $Z_{N,\beta}^{\omega,c}(y)$, setting
\begin{equation} \label{eq:Zpoint2point}
\begin{split}
	Z_{N,\beta}^{\omega}(y) & = \E \big[ e^{\beta \sum_{n=1}^N \omega(n,S_n) }
	\, \ind_{\{S_N=y\}} \big] e^{-\Lambda(\beta) N} \,, \\
	Z_{N,\beta}^{\omega,c}(y) & = \E \big[ e^{\beta \sum_{n=1}^N \omega(n,S_n) }
	\, \big| \, S_N=y \big]
	e^{-\Lambda(\beta) N} \,.
\end{split}
\end{equation}
Plainly, $Z_{N,\beta}^{\omega} =\sum_{y\in \bbZ} Z_{N,\beta}^{\omega}(y)
= \sum_{y\in \bbZ} Z_{N,\beta}^{\omega,c}(y)\,\P(S_N=y)$.

When $(S_n)_{n\geq 1}$ is the simple symmetric random walk on $\Z$,
we have the much-studied \emph{Directed Polymer in Random Medium}.
First introduced in the physics literature in \cite{HH85}, this model
has received particular attention due to its connection to the
Kardar-Parisi-Zhang (KPZ) equation and its universality class,
cf.\ \cite{CSY04}, \cite{C12} for a review. In particular, the
point-to-point partition function
can be thought of as an approximation of the solution of the
Stochastic Heat Equation (SHE), whose logarithm is the so-called Hopf-Cole solution of
the KPZ equation. This was made rigorous in \cite{AKQ14a},
showing that when $\beta = \beta_N$ is scaled as $N^{-1/4}$
(the so-called intermediate disorder regime) and $y$ is scaled as $N^{-1/2}$,
the point-to-point partition function $Z_{Nt,\beta}^{\omega}(y)$
converges in distribution to a continuum process, which solves the SHE.

Our approach allows to extend the results in \cite{AKQ14a}.
Not only can we deal with general zero mean, finite
variance random walks,
which are the natural ``universality class'' of the simple symmetric random walk on $\Z$;
we can also consider random walks attracted to stable
laws with index $\alpha \in (1,2)$, exploring new universality classes.
When allowing ``big jumps'', it is natural to call
$\P_{N,\beta}^{\omega}$ the \emph{long-range} directed polymer model.

\smallskip

Let us now state precisely our assumptions on the random walk.

\begin{assumption} \label{ass:rwdp}
Let $S = (S_n)_{n\ge 0}$ be a
random walk on $\Z$, with $S_0=0$ and with i.i.d. increments $(S_n - S_{n-1})_{n\ge 1}$,
such that for some $\alpha \in (1,2]$ the following holds:
\begin{itemize}
\item $(\text{\emph{Case }}\alpha = 2)$: $\E[S_1]=0$ and $\sigma^2 := \var(S_1) < \infty$;

\item $(\text{\emph{Case }}1 < \alpha < 2)$: $\E[S_1]=0$ and
there exist $\gamma \in [-1,1]$ and $C \in (0,\infty)$ such that
\begin{equation} \label{eq:stablerw}
	\P(S_1 > n) \sim \big( C\tfrac{1+\gamma}{2} \big) \frac{1}{n^\alpha} \,, \qquad
	\P(S_1 < -n) \sim \big( C\tfrac{1-\gamma}{2} \big) \frac{1}{n^\alpha} \,, \qquad
	\text{as } n\to\infty \,.
\end{equation}
\end{itemize}
\end{assumption}

This means that the random walk $S$ is in the
\emph{domain of normal attraction}
of a stable law of index $\alpha \in (1,2]$ and
(for $\alpha < 2$)  skewness parameter
$\gamma$.
(The adjective ``normal'' refers to the absence
of slowly varying functions.)
In other words, $S_n / n^{1/\alpha}$ converges in distribution
as $n\to\infty$ to a random variable $Y$,
which has law $\cN(0,\sigma^2)$ if $\alpha = 2$, while for
$1 < \alpha < 2$
\begin{equation}\label{eq:stable law}
	\E[e^{i t Y}] = e^{-c_\alpha C \, |t|^\alpha (1-i \gamma (\sign t) \tan\frac{\pi\alpha}{2})} \,,
	\qquad \text{for a suitable} \quad c_\alpha > 0 \,.
\end{equation}
We remark that $Y$ satisfies the same conditions as $S_1$ in Assumption~\ref{ass:rwdp}
and has an absolutely continuous law, with a
bounded and continuous density $g(\cdot)$. For $t > 0$ we set
\begin{equation}\label{eq:stable transition}
	g_t(x) := \frac{1}{t^{1/\alpha}} g \bigg( \frac{x}{t^{1/\alpha}} \bigg) \,.
\end{equation}
We stress that $g(\cdot)$ depends only on the parameters $(\alpha,\sigma^2)$
or $(\alpha,\gamma, C)$ in Assumption~\ref{ass:rwdp}.

\smallskip

The \emph{period} of a random walk $S$ on $\Z$ is the largest $p \in \N$
such that $\P(S_1 \in p \Z + r) = 1$, for some $r \in \{0,\ldots, p-1\}$.
For instance, the simple symmetric random walk
on $\Z$ has period $p=2$, because $\P(S_1 \in 2\Z + 1) = 1$.
To lighten notation, given $(s,y) \in \R^+ \times \R$, we
write $Z_{s,\beta}^{\omega,c}(y)$
to mean $Z_{\tilde s,\beta}^{\omega,c}(\tilde y)$,
where $\tilde s := \lfloor s \rfloor$ and
$\tilde y := \max\{z \in  p\Z + r \tilde s: \, z \le y\}$.

We are ready to state our main result for this model, to be proved in Section~\ref{sec:dpreproof}.

\begin{theorem}[Scaling limits of directed polymer models]\label{DP scaling}
Let $S$ be a random walk on $\Z$
satisfying Assumption~\ref{ass:rwdp} for some $\alpha \in (1,2]$.
For $N\in\N$ and $\hat\beta > 0$, set
\begin{equation*}
	\beta_N := \frac{\hat\beta}{N^{\frac{\alpha-1}{2\alpha}}} \,.
\end{equation*}
For every $t \ge 0$ and $x\in\R$, the conditioned point-to-point partition function
$Z_{Nt,\beta_N}^{\omega,c}(N^{1/\alpha}x)$ converges in distribution as $N\to\infty$ to
the random variable $\bZ_{t,\hat\beta}^{W,c}(x)$ given by
\begin{equation} \label{eq:ZcontDP}
	\bZ_{t,\hat\beta}^{W,c}(x) :=
	1 + \sum_{k=1}^\infty \frac{\hat\beta^k}{k!} \,
	\idotsint\limits_{ ([0,t]\times\bbR)^k}
	\bpsi_{t,x}^{c}
	\big( (t_1,x_1),\ldots,(t_k,x_k) \big) \, \prod_{i=1}^k W(\dd t_i \, \dd x_i) \,,
\end{equation}
where $W(\cdot)$ denotes space-time white noise
(i.e., white noise on $\R^2$) and the symmetric function
$\bpsi_{t,x}^{c}\big(
(t_1,x_1),\ldots,(t_k,x_k) \big)$ is defined for $0<t_1<\cdots<t_k<t$ by
\begin{equation} \label{eq:contkerDP}
	\bpsi_{t,x}^{c}\big( (t_1,x_1),\ldots,(t_k,x_k) \big) \, :=
	\Bigg( \prod_{i=1}^k \sqrt{p} \,g_{t_i-t_{i-1}}(x_i-x_{i-1})
	\Bigg) \,\frac{ g_{t-t_{k}}(x-x_{k})}{ g_t(x) } \,,
\end{equation}
where $x_0 := 0$ and $p \in \N$ is the period of the random walk.
The series in \eqref{eq:ZcontDP} converges in $L^2$
and furthermore
$\bbE[(Z_{Nt,\beta_N}^{\omega,c}(N^{1/\alpha}x))^2] \to
\bbE[(\bZ_{t,\hat\beta}^{W,c}(x))^2]$ as $N\to\infty$.

An analogous statement holds for the free partition function
$Z_{Nt,\beta_N}^{\omega}$, where the limiting random variable
$\bZ_{t,\hat\beta}^{W}$ is defined as in \eqref{eq:ZcontDP}, with kernel
\begin{equation*}
	\bpsi_t \big( (t_1,x_1),\ldots,(t_k,x_k) \big) \,:=
	\prod_{i=1}^k \sqrt{p} \,g_{t_i-t_{i-1}}(x_i-x_{i-1}) \,.
\end{equation*}
\end{theorem}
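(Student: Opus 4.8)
I would deduce Theorem~\ref{DP scaling} from the general convergence result Theorem~\ref{th:general}, after representing the (conditioned and free) point‑to‑point partition functions as polynomial chaos expansions in the disorder. Set $\eta_{n,x}:=e^{\beta\omega(n,x)-\Lambda(\beta)}-1$, so that $\bbE[\eta_{n,x}]=0$ and $\bbvar(\eta_{n,x})=\sigma_\beta^2:=e^{\Lambda(2\beta)-2\Lambda(\beta)}-1\sim\beta^2$ as $\beta\downarrow0$, and write $e^{\beta\sum_{n=1}^N\omega(n,S_n)-\Lambda(\beta)N}=\prod_{n=1}^N(1+\eta_{n,S_n})$. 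Expanding the product, using that the walk occupies one site per unit time and the Markov property, and writing $q_m(z):=\P(S_m=z)$, one obtains
\[
	Z_{N,\beta}^{\omega,c}(y)=\sum_{k\ge0}\ \sum_{\substack{0<n_1<\cdots<n_k\le N\\ x_1,\ldots,x_k\in\Z}}\psi_N^{c}\big((n_1,x_1),\ldots,(n_k,x_k)\big)\,\prod_{i=1}^k\eta_{n_i,x_i},
\]
with $\psi_N^{c}=\big(\prod_{i=1}^kq_{n_i-n_{i-1}}(x_i-x_{i-1})\big)\,q_{N-n_k}(y-x_k)\,/\,q_N(y)$ (here $n_0=x_0:=0$), and the same formula without the two final factors for the free partition function $Z_{N,\beta}^{\omega}$. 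Normalising the disorder to unit variance via $\hat\eta_{n,x}:=\eta_{n,x}/\sigma_{\beta_N}$ absorbs $\sigma_{\beta_N}^{|I|}$ into the kernel; I then embed the index set $\bbT_N:=\{(n,x):1\le n\le N,\ x\in p\Z+rn\}$ into $\R^2$ by $(n,x)\mapsto(n/N,\,x/N^{1/\alpha})$, with rectangular cells of equal volume $v_N=p\,N^{-1-1/\alpha}\to0$. This places us in the framework of Theorem~\ref{th:general} with $d=2$ (space–time white noise), $\sigma_\delta\equiv1$ and $\mu_\delta\equiv0$, hence $\bsigma_0=1$, $\bmu_0\equiv0$.

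\textbf{Hypotheses (i) and (iii).} Hypothesis (i) reduces to the uniform integrability of $(\hat\eta_{n,x}^{(N)})^2$, which follows from $\hat\eta^{(N)}\to\omega$ in $L^2$: since $\beta_N\downarrow0$ and $\Lambda(t)=\tfrac12t^2+o(t^2)$ one has $\sigma_{\beta_N}^2/\beta_N^2\to1$, and $(\eta^{(N)}/\beta_N)^2\le\omega^2e^{t_0|\omega|/2}$ for $N$ large permits dominated convergence; an $L^2$‑convergent sequence has uniformly integrable squares. For Hypothesis (iii) I take $\epsilon=0$, legitimate since $\mu_\delta\equiv0$. Using the uniform local bound $q_m(z)\le Cp\,m^{-1/\alpha}$, summing out the space variables via $\sum_xq_a(x)q_b(z-x)=q_{a+b}(z)$, and estimating the resulting Riemann sum over the time simplex $\{0<n_1<\cdots<n_k\le Nt\}$ by a Dirichlet integral — for the conditioned case additionally invoking $q_N(N^{1/\alpha}x)\sim p\,N^{-1/\alpha}g(x)$ with $g(x)>0$ — one obtains, uniformly in $N$,
\[
	\sum_{|I|=k}\sigma_{\beta_N}^{2k}\,\psi_N^{c}(I)^2\ \le\ \frac{C^k}{\Gamma\!\big((1-\tfrac1\alpha)\,k\big)}\,,\qquad C=C(\hat\beta,t,x)<\infty\,.
\]
As $\alpha>1$ the right side decays super‑exponentially in $k$, so $\sum_{|I|>\ell}$ of the left side tends to $0$ as $\ell\to\infty$ uniformly in $N$, giving (iii); the same bound guarantees that $\bZ_{t,\hat\beta}^{W,c}(x)$ is a well‑defined $L^2$ random variable with the Wiener chaos expansion \eqref{eq:ZcontDP}.

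\textbf{Hypothesis (ii): the core.} It remains to prove $L^2((\R^2)^k)$‑convergence of the rescaled kernels $\bar\psi_N^{c}:=(\sigma_{\beta_N}^2/v_N)^{k/2}\psi_N^{c}$. Using $\sigma_{\beta_N}^2/v_N=(\hat\beta^2/p)N^{2/\alpha}(1+o(1))$, Gnedenko's local limit theorem in the sharp form $N^{1/\alpha}q_{\lfloor Ns\rfloor}(z_N)\to p\,g_s(u)$ (uniformly for $u$ in compacts, $s$ bounded away from $0$, $z_N/N^{1/\alpha}\to u$), and the strict positivity of the $\alpha$‑stable density $g$, one gets the pointwise limit
\[
	\bar\psi_N^{c}\big((t_1,x_1),\ldots,(t_k,x_k)\big)\ \longrightarrow\ \hat\beta^k\Big(\textstyle\prod_{i=1}^k\sqrt{p}\,g_{t_i-t_{i-1}}(x_i-x_{i-1})\Big)\frac{g_{t-t_k}(x-x_k)}{g_t(x)}=\hat\beta^k\,\bpsi_{t,x}^{c}\,.
\]
To pass from pointwise to $L^2$ convergence I would apply dominated convergence, starting from a uniform local‑limit estimate with spatial decay, $q_m(z)\le Cp\,m^{-1/\alpha}\varphi(z\,m^{-1/\alpha})$, with $\varphi$ Gaussian for $\alpha=2$ and $\varphi(u)\le C(1+|u|)^{-(1+\alpha)}$ for $1<\alpha<2$; this produces a dominating function $\le C^{k+1}g(x)^{-1}\big(\prod_{i=1}^kh_{t_i-t_{i-1}}(x_i-x_{i-1})\big)h_{t-t_k}(x-x_k)$, where $h_s(z):=s^{-1/\alpha}\varphi(z\,s^{-1/\alpha})$ satisfies $\|h_s\|_{L^2(\R)}^2\propto s^{-1/\alpha}$, and which lies in $L^2((\R^2)^k)$ precisely because $1/\alpha<1$ makes $s\mapsto s^{-1/\alpha}$ locally square‑integrable over the time simplex — this is exactly where the restriction $\alpha\in(1,2]$ of Assumption~\ref{ass:rwdp} is used. (Near $t_k=t$ one argues likewise, the division by $g_t(x)>0$ being harmless.) Boundary effects from the tessellation cells and from the endpoint vertex $(N,y)$ (whose cell has volume $v_N\to0$) are negligible in $L^2$.

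\textbf{Conclusion and main obstacle.} Once (i)--(iii) are verified, Theorem~\ref{th:general} yields $Z_{Nt,\beta_N}^{\omega,c}(N^{1/\alpha}x)\xrightarrow{d}\bZ_{t,\hat\beta}^{W,c}(x)$, the $L^2$‑convergence of the limiting series, and $\bbE[(Z_{Nt,\beta_N}^{\omega,c}(N^{1/\alpha}x))^2]\to\bbE[(\bZ_{t,\hat\beta}^{W,c}(x))^2]$; the free case is identical with the kernel $\prod_{i=1}^k\sqrt{p}\,g_{t_i-t_{i-1}}(x_i-x_{i-1})$ in place of $\bpsi_{t,x}^{c}$, and the joint‑distribution statement is the last clause of Theorem~\ref{th:general}. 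The principal obstacle is Hypothesis (ii): one needs a \emph{uniform} version of Gnedenko's local limit theorem for walks in the domain of normal attraction of an $\alpha$‑stable law, strong enough both for the pointwise limit and for a dominating function integrable near the diagonals $t_i=t_{i-1}$ (and, for the conditioned kernel, near $t_k=t$), where the limit develops an integrable but unbounded singularity; the bookkeeping of the period $p$ and of the endpoint vertex of the conditioned partition function requires care but is routine.
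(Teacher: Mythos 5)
Your overall route is the paper's: rewrite $Z^{\omega,c}_{Nt,\beta_N}(N^{1/\alpha}x)$ as a polynomial chaos with kernel built from $q_m(z)$, embed the index set in $\R^2$ with cells of volume $v_N=pN^{-1-1/\alpha}$, and verify hypotheses (i)--(iii) of Theorem~\ref{th:general} (your normalization $\bsigma_0=1$ versus the paper's $\bsigma_0=\hbeta$ is immaterial, and your treatment of (i) and (iii) — uniform integrability, the bound $q_m(z)\le Cpm^{-1/\alpha}$ applied to one factor and Chapman--Kolmogorov to sum out space, a Dirichlet-type time integral — matches the paper's Steps 1--3 and Lemma~\ref{L:integralbd1}).

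The gap is in your treatment of hypothesis (ii), which you yourself flag as the core. Your dominated-convergence argument rests on a uniform local limit bound with spatial decay, $q_m(z)\le Cp\,m^{-1/\alpha}\varphi(zm^{-1/\alpha})$ with $\varphi$ Gaussian for $\alpha=2$ and $\varphi(u)\le C(1+|u|)^{-(1+\alpha)}$ for $1<\alpha<2$. This is not available under Assumption~\ref{ass:rwdp}: for $\alpha=2$ the only hypothesis is finite variance, which gives Gnedenko's theorem but no Gaussian (nor any specific) decay of $q_m(z)$ for $|z|\gg m^{1/2}$; for $1<\alpha<2$ the hypothesis \eqref{eq:stablerw} constrains only the tail of the distribution function, not local probabilities, and walks whose step mass sits on a sparse set (e.g. concentrated near powers of $2$ with $\P(S_1=z)$ of order $|z|^{-\alpha}$ there) satisfy \eqref{eq:stablerw} while violating any bound of the form $q_m(z)\lesssim m\,|z|^{-(1+\alpha)}$ off the typical scale. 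So the dominating function you invoke need not exist, and the $L^2$ convergence of the rescaled kernels is not established by your argument. The paper circumvents this precisely by never estimating $q_m$ off scale: it bounds the \emph{square} of the rescaled kernel by $C'_{k,x}(N^{1/\alpha})^k\,\P(S_{Nt_1}=N^{1/\alpha}x_1,\ldots,S_{Nt_k}=N^{1/\alpha}x_k)\prod_i(t_i-t_{i-1})^{-1/\alpha}$ (one factor of each $q$ bounded by $Am^{-1/\alpha}$, the other kept as a probability), then splits the domain into $D_1$ (time gaps $\ge\eta$, $|x_i|\le M$: uniform boundedness plus pointwise convergence), $D_2$ (some small time gap: spatial probabilities sum to one and the remaining Dirichlet integral is small as $\eta\downarrow0$), and $D_3$ (some $|x_i|\ge M$: bounded by $\P(\max_{n\le N}S_n\ge N^{1/\alpha}M)$, small uniformly in $N$ by the functional limit theorem). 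Replacing your domination step by this splitting — which requires no spatial decay of $q_m$ at all — closes the gap; the rest of your proposal then goes through.
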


\begin{remark}\rm \label{rem:remdpre}
Theorem~\ref{DP scaling} extends to the convergence of the joint distribution
of a finite collection of partition functions (conditioned or free).
In particular, the four-parameter family
$Z_{Ns,Nt,\beta_N}^{\omega,c}(N^{1/\alpha}x, N^{1/\alpha}y)$,
defined for $(s,x)$ and $(t,y)$ in $\R^+ \times \R$ by
\begin{equation*}
	Z_{Ns,Nt;\beta_N}^{\omega,c}(N^{1/\alpha}x, N^{1/\alpha}y)
	:= \E \big[ e^{\sum_{n= Ns +1}^{Nt -1} (\beta \omega(n,S_n) - \Lambda(\beta)) } \,
	 \big| \, S_{Ns}=N^{1/\alpha}x,\,S_{Nt}=N^{1/\alpha}y \big] \,,
\end{equation*}
converges in finite-dimensional distributions
to a four-parameter family of continuum conditioned partition functions
$\{\bZ^{W,c}_{s,t;\hat\beta}(x,y)\}_{(s,x), (t,y)\in \R^+ \times \bbR}$.

Similar to the case $\alpha=2$ studied in~\cite{AKQ14b}, we expect that this
convergence can be upgraded to a convergence in distribution
in the space of continuous functions, equipped with the uniform topology. We can then use these partition
functions to define a \emph{continuum long-range directed polymer model} (which corresponds intuitively
to an ``$\alpha$-stable L\'evy process in a white noise random medium''),
by specifying its finite-dimensional distributions as done in \cite{AKQ14b} for the
Brownian case $\alpha=2$.
\end{remark}

\begin{remark}\rm
The free energy of the (discrete) directed polymer model is defined by
$$
F(\beta):=\lim_{N\to\infty}\frac{1}{N} \bbE \big[ \log Z^\omega_{N,\beta} \big] \,,
$$
where we expect that the limit exists (also $\bbP$-a.s. and in $L^1(\dd\bbP)$),
as in the usual setting.
It is natural to define the free energy of the continuum model analogously, i.e.\
$$
	\bF(\hat\beta):=\lim_{t\to\infty}\frac{1}{t}
	\bbE \big[\log \bZ^W_{t,\hat\beta} \big] \,,
$$
assuming of course that the limit exists.
We stress that $\bF(\cdot)$ is a \emph{universal}
quantity, which depends only on the parameters $(\alpha,\sigma^2)$
or $(\alpha,\gamma, C)$ in Assumption~\ref{ass:rwdp}
(furthermore, the parameters $\sigma^2$, $C$ enter as simple
scale factors).
We also note that $\bF(\hat\beta)
= \bF(1) \hat\beta^{\frac{\alpha - 1}{2\alpha}}$,
by an easy scaling argument.
In analogy with Remark~\ref{rem:pinfe}, we are led to the following

\begin{conjecture}
For any directed polymer model
satisfying Assumption~\ref{ass:rwdp},
the free energy $F(\beta)$ has the following universal asymptotic behavior
for weak disorder:
\begin{equation*}
\lim_{\beta\downarrow 0} \frac{F\,(\beta) }{\beta^{\frac{2\alpha}{\alpha-1}}} \,=\,
\lim_{\delta\downarrow 0} \frac{F\,(\delta^{\frac{\alpha-1}{2\alpha}})}{\delta}
\,=\,  \bF(1).
 \end{equation*}
 \end{conjecture}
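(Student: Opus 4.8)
We sketch a possible route to this conjecture. The first equality is a tautology: setting $\beta=\delta^{(\alpha-1)/(2\alpha)}$ gives $\beta^{2\alpha/(\alpha-1)}=\delta$, so the two ratios are literally the same function of one parameter and $\beta\downarrow 0\Leftrightarrow\delta\downarrow 0$; since $\beta\mapsto F(\beta)$ is continuous on $(0,\infty)$, it is enough to take $\delta=1/N$, $N\in\N$. Writing $\beta_N:=N^{-(\alpha-1)/(2\alpha)}$ (the $\hbeta=1$ instance of Theorem~\ref{DP scaling}), the content is
\[
	\lim_{N\to\infty} N\,F(\beta_N) \;=\; \bF(1)\,.
\]
The plan is to read both sides as iterated limits of the single array $a_N(t):=\tfrac1t\,\bbE[\log Z^\omega_{Nt,\beta_N}]$. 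On the one hand, the standard (near-)superadditivity of $M\mapsto\bbE[\log Z^\omega_{M,\beta}]$ gives $\tfrac1M\bbE[\log Z^\omega_{M,\beta}]\to F(\beta)$, hence $\lim_{t\to\infty}a_N(t)=N\,F(\beta_N)$. On the other hand, Theorem~\ref{DP scaling} (with the convergence of second moments stated there) gives $Z^\omega_{Nt,\beta_N}\to\bZ^W_{t,1}$ in distribution for each fixed $t$, which we would upgrade to $\lim_{N\to\infty}a_N(t)=a_\infty(t):=\tfrac1t\bbE[\log\bZ^W_{t,1}]$; the conjecture then becomes the interchange of these two limits. A preliminary point is that $\bF(1)=\lim_{t\to\infty}a_\infty(t)$ exists: this follows from a superadditivity estimate for $t\mapsto\bbE[\log\bZ^{W,c}_{t,1}(0)]$ obtained by composing continuum point-to-point partition functions through their Chapman--Kolmogorov structure (Remark~\ref{rem:remdpre}), the scale invariance of the stable density making the per-junction correction an $O(1)$ quantity, together with $a_\infty(t)\le 0$ (as $\bbE[\bZ^W_{t,1}]=1$) and $a_\infty(t)>-\infty$ from the negative-moment bound below.

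\emph{Convergence at fixed $t$.} The positive part is harmless, since $(\log z)^+\le z$ and the family $Z^\omega_{Nt,\beta_N}$ is $L^2$-bounded, hence uniformly integrable. The negative part requires a uniform-in-$N$ left-tail bound on $Z^\omega_{Nt,\beta_N}$ (for instance $\sup_N\bbE[(Z^\omega_{Nt,\beta_N})^{-p}]<\infty$ for some $p>0$); such a bound, though a standard expectation in the intermediate-disorder regime, is itself not entirely trivial and would have to be established uniformly in $N$, e.g.\ by combining concentration of $\log Z^\omega$ around its mean with the Paley--Zygmund lower bound $\bbP(Z^\omega_{Nt,\beta_N}\ge\tfrac12)\ge c_t>0$ supplied by the first two moments. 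Granting it, $\{\log Z^\omega_{Nt,\beta_N}\}_N$ is uniformly integrable and $a_N(t)\to a_\infty(t)$.

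\emph{Lower bound.} For $\liminf_{N\to\infty}N F(\beta_N)\ge\bF(1)$ we would use a block-composition (superadditivity) estimate for the \emph{conditioned point-to-point} partition functions: decomposing $[0,Ntk]$ into $k$ time-slabs of length $Nt$ and keeping the natural window of endpoints on spatial scale $(Nt)^{1/\alpha}$, one obtains $N F(\beta_N)\ge\tfrac1t\bbE[\log Z^{\omega,c}_{Nt,\beta_N}(0)]-\tfrac{C}{t}$, where the per-slab geometric cost is $O(1)$ --- not $O(\log N)$ --- in the natural rescaled coordinates, by the local limit theorem (Gnedenko, \cite{BGT87}) underlying Theorem~\ref{DP scaling}. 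Letting $N\to\infty$ and using $Z^{\omega,c}_{Nt,\beta_N}(0)\to\bZ^{W,c}_{t,1}(0)$, then $t\to\infty$ (noting that $\tfrac1t\bbE[\log\bZ^{W,c}_{t,1}(0)]$ and $a_\infty(t)$ have the same limit, the endpoint conditioning being sublinear on the continuum too), yields the claim.

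\emph{Upper bound --- the main obstacle.} The converse $\limsup_{N\to\infty}N F(\beta_N)\le\bF(1)$ is the crux, and is exactly the reason the statement remains a conjecture. It is equivalent to a \emph{uniform-in-$N$} rate in the superadditive convergence $a_N(t)\to N F(\beta_N)$ --- it would suffice that for every $\epsilon>0$ there is $t_0$ with $N F(\beta_N)\le a_N(t)+\epsilon$ for all $t\ge t_0$ and all $N$ (then $\limsup_N N F(\beta_N)\le a_\infty(t)+\epsilon\le\bF(1)+\epsilon$). A naive block decomposition does not deliver this: bounding $\log Z^\omega_{Ntk,\beta_N}$ from above forces a sum over transverse positions, and replacing the sum by its largest term --- or coarse-graining space into $(Nt)^{1/\alpha}$-cells and taking a maximum over cell-paths --- produces a correction of order $\sqrt{\log N}$ per slab from the diffusive transverse entropy, together with a linear-in-$k$ overshoot from the exponentially many coarse-grained paths; neither can be absorbed, because the block partition functions retain genuine $O(1)$ fluctuations that do \emph{not} vanish as $N\to\infty$ (they converge to the non-degenerate $\bZ^{W,c}_{t,1}$). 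What is required is a finer coarse-graining in the spirit of the fractional-moment arguments used for the copolymer model \cite{BdH97,CG10}, but adapted to the strictly harder situation in which the transverse entropy per block is itself $N$-dependent; alternatively, a quantitative (Berry--Esseen-type) control on the error in the Lindeberg/chaos approximation of Theorem~\ref{th:general}, strong enough to pass from convergence in law of $Z^\omega_{Nt,\beta_N}$ to convergence of $N F(\beta_N)$, would suffice. Supplying such an argument is, in my view, the essential open step; once in place, it combines with the lower bound to give $\lim_{N\to\infty}N F(\beta_N)=\bF(1)$.
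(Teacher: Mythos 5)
The statement you are addressing is stated in the paper as a \emph{Conjecture}: the paper offers no proof of it, only the heuristic that it amounts to interchanging the infinite-volume limit ($t\to\infty$, resp.\ $\Omega\uparrow\R^d$) with the continuum-and-weak-disorder limit ($N\to\infty$, resp.\ $\delta\downarrow 0$), cf.\ the discussion around \eqref{eq:conjfe}, \eqref{eq:co1}--\eqref{eq:co2}, and it explicitly flags this interchange as an open problem (the analogous interchange is known only for the copolymer model \cite{BdH97,CG10}). So there is no proof in the paper to compare against, and your proposal should be judged purely on whether it closes the conjecture. It does not, and you say so yourself: your ``upper bound'' paragraph identifies $\limsup_N N F(\beta_N)\le\bF(1)$ as requiring a uniform-in-$N$ rate in the superadditive convergence (or a quantitative strengthening of the chaos/Lindeberg approximation), and you leave that step open. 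That step is not a technical loose end; it \emph{is} the conjecture, precisely the limit interchange the paper singles out. Hence the proposal is a reasonable research plan, consistent with the paper's own heuristic, but it is not a proof.

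Beyond the acknowledged gap, several ingredients you treat as essentially done are themselves unproved and nontrivial. (i) The existence of $\bF(1)=\lim_{t\to\infty}\frac1t\bbE[\log\bZ^W_{t,1}]$ is assumed in the paper (``provided the limit exists''); your sketch via Chapman--Kolmogorov composition of continuum point-to-point partition functions requires continuity/positivity properties of $\bZ^{W,c}_{s,t}(x,y)$ that, per Remark~\ref{rem:remdpre}, are only expected (proved in \cite{AKQ14b} for $\alpha=2$), plus integrability of $\log\bZ^{W,c}$, which needs a negative-moment or concentration input on the continuum side that you do not supply. (ii) The fixed-$t$ convergence $\bbE[\log Z^\omega_{Nt,\beta_N}]\to\bbE[\log\bZ^W_{t,1}]$ needs a uniform-in-$N$ left-tail (negative moment) bound; Theorem~\ref{DP scaling} gives only convergence in law plus second moments, and the Paley--Zygmund/concentration route you gesture at requires concentration constants that do not degenerate as $N\to\infty$ in the intermediate-disorder scaling --- this is plausible but is an argument to be written, not a citation. (iii) The lower-bound block decomposition with only an $O(1)$ per-slab cost (restricting endpoints to windows of size $(Nt)^{1/\alpha}$ and gluing via the local limit theorem) also needs a genuine estimate, e.g.\ a uniform lower bound on the point-to-point kernel over the window and control of the conditioning. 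None of these invalidates the plan, but as written the proposal establishes neither inequality of the conjecture; it correctly maps the terrain and locates the same obstruction the authors point to.
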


\noindent
When $\alpha=2$ we would then get
$F(\beta) \sim \bF(1) \beta^4$, which is supported by the
(non-matching) upper and lower bounds on $F(\beta)$ obtained in \cite{L10}.
\end{remark}

\begin{remark}\rm
For $1 < \alpha < 2$,
the function $g_t(\cdot)$ in \eqref{eq:stable transition} is the marginal density
of the $\alpha$-stable L\'evy process $(X_t)_{t \in \R^+}$ whose infinitesimal generator
is given by a multiple of
\begin{equation}
\Delta^{\alpha/2, \gamma}f(x)
:=\int_{-\infty}^{+\infty}
\big( f(x+y)-f(x) -yf'(x) \big) \Big( \frac{1+\gamma}{|y|^{1+\alpha}}
\ind_{\{y>0\}} + \frac{1-\gamma}{|y|^{1+\alpha}} \ind_{\{y<0\}}\Big)
\, \dd y \,.
\end{equation}
In the symmetric case $\gamma=0$, this reduces to the much studied \emph{fractional Laplacian}
\begin{equation}\label{fLaplace}
\Delta^{\alpha/2}f(x) := \int_{-\infty}^\infty
\big( f(x+y)-f(x)- y f'(x) \big) \frac{1}{|y|^{1+\alpha}} \, \dd y \,.
\end{equation}

Let us stick for simplicity to the symmetric case $\gamma = 0$.
It is natural to call
$\bZ^{W,c}_{t,\hat\beta}(x)$ in \eqref{eq:ZcontDP} the continuum \emph{conditioned} point-to-point
partition function.
Introducing the continuum \emph{constrained}
point-to-point partition function by
\begin{equation*}
	\bZ^{W}_{t,\hat\beta}(x) := \bZ^{W,c}_{t,\hat\beta}(x) \, g_t(x) \,,
\end{equation*}
one can check that the process $u(t,x) = \bZ^{W}_{t,\hat\beta}(x)$ has a version that
is continuous in $(t,x)$ and, up to a scaling factor, it is a mild solution to the one dimensional
stochastic PDE
\begin{equation}\label{eq:SFHE}
\begin{cases}
\partial_t  u =\Delta^{\alpha/2} \, u +\sqrt{p}\hat\beta \, \dot W\,u & \\
u(0, \cdot ) =\delta_0(\cdot) &
\end{cases} \,,
\end{equation}
which we can call the {\it stochastic fractional heat equation} (SFHE),
generalizing the usual SHE (which corresponds to $\alpha = 2$).
Uniqueness of mild solutions for the SFHE follows from standard techniques,
see discussions in~\cite{CJKS14} and references therein.

Let us also consider
the process $A_{\alpha,\hat\beta}(\cdot):=\log \bZ_{1,\hat\beta}^{W}(\cdot)$.
When $\alpha = 2$, this is the \emph{cross-over
process} studied in \cite{ACQ11,SS10}, which owes its name to the fact that
its one-point marginals interpolate between
the Gaussian distribution (in the limit $\hat\beta\to 0$) and the Tracy-Widom GUE
distribution (in the limit $\hat\beta\to\infty$).
When $\alpha < 2$, it is
easy to see that $A_{\alpha,\hat\beta}(\cdot)$ is again
asymptotically Gaussian for $\hat\beta\to 0$
(the contribution from the first stochastic integral in its Wiener chaos expansion is dominant
over the iterated integrals, which are multiplied by higher powers of $\hat\beta$).
However, it is far from obvious whether $A_{\alpha,\hat\beta}(\cdot)$
converges to some asymptotic process $A_{\alpha,\infty}(\cdot)$ as $\hat\beta\to\infty$
and whether such a process describes some universality class
for long-range random polymers, last passage percolation and growth models,
generalizing the the so-called Airy process obtained for $\alpha = 2$.
Besides a very recent work on the limit shapes of long-range first-passage
percolation model \cite{CD13},
long-range polymer type models do not appear to have been studied systematically before.
\end{remark}

\subsection{Random field Ising model}
\label{sec:Ising}

Given a bounded $\bbOmega \subseteq \Z^2$, we set
$$
\partial \bbOmega := \{x \in \Z^2\backslash \bbOmega : \Vert x-y\Vert =1 \mbox{ for some }
y\in\bbOmega\}.
$$
For a fixed parameter
$\beta \geq 0$, representing the ``inverse temperature'',
the \emph{Ising model on $\bbOmega$ with $+$ boundary condition (and zero external field)}
is the probability measure $\P_{\bbOmega}^{+}$ on the set of spin configurations
$\{\pm1 \}^\bbOmega$, where each
$\sigma := (\sigma_x)_{x\in \bbOmega} \in \{\pm 1\}^\bbOmega$ has probability
\begin{equation}\label{eq:Isingbasic}
	\P_{\bbOmega}^{+} (\sigma) := \frac{1}{Z_{\bbOmega}^+} \,
	\exp\Bigg\{ \beta \sum_{x\sim y \in \bbOmega \cup \partial \bbOmega }
	\sigma_x \sigma_y \Bigg\} \, \prod_{x\in \partial \bbOmega} \ind_{\{\sigma_x=+1\}}.
\end{equation}
Here $x\sim y$ denotes an unordered nearest-neighbor pair in $\Z^2$,
and $Z_{\bbOmega}^+$ is the normalizing constant.
The value of $\beta$ will soon be fixed, which is why we do not indicate it
in $\P_{\bbOmega}^{+}$.

\begin{remark}\rm
It is well-known that as $\bbOmega \uparrow \Z^2$,
the sequence of probability measures $\P_{\bbOmega}^{+}$ has a unique infinite volume limit
$\P_{\Z^2}^+$, which of course depends on $\beta$, such that
$$
\E_{\Z^2}^+ (\sigma_0)>0 \quad  \mbox{if and only if} \quad  \beta > \beta_c
:= \frac{1}{2}\log (1+\sqrt{2}).
$$
By an obvious symmetry, for $\beta>\beta_c$ there also exists an infinite volume Gibbs measure
$\P_{\Z^2}^-$ satisfying $\E_{\Z^2}^- (\sigma_0)<0$.
Given the coexistence of multiple infinite volume Gibbs measures,
the Ising model is said to have a first order phase transition
for $\beta>\beta_c$. The same result holds in higher dimensions, with different values of $\beta_c$.

A question that attracted significant interest
was whether this picture will be altered by the addition of a small random external field.
After long debates, this question was settled by Bricmont and Kupiainen~\cite{BK88},
who showed that the first order phase transition persists for the random field Ising model
in dimensions $d\geq 3$ at low temperatures (i.e., for large $\beta$),
and by Aizenman and Wehr~\cite{AW90} who showed the absence of first order phase transition
in dimension $2$ at any temperature. See~\cite[Chap.~7]{B06} for an overview.
\end{remark}

Henceforth we fix $\beta = \beta_c := \frac{1}{2} \log (1+\sqrt{2})$,
so that \emph{$\P_{\bbOmega}^+$ denotes the two-dimensional critical Ising model}.
Let $\omega:=(\omega_x)_{x\in \Z^2}$ be
a family of i.i.d.\ random variables satisfying \eqref{eq:disasspin},
representing the \emph{disorder}.
Given $\lambda:=(\lambda_x)_{x\in\Z^2}\geq 0$
and $h:=(h_x)_{x\in\Z^2}\in\R$, representing the disorder strength and bias respectively,
the \emph{random field Ising model} (RFIM) is
the probability measure $\P^{+,\omega}_{\bbOmega, \lambda, h}$ on $\{\pm1 \}^\bbOmega$ with
\begin{equation}\label{RFIMP}
	\P^{+,\omega}_{\bbOmega, \lambda, h}(\sigma) =  \frac{1}{Z^{+,\omega}_{\bbOmega, \lambda, h}} \,
	\exp\Bigg\{ \sum_{x\in \bbOmega} (\lambda_x \omega_x +h_x) \sigma_x \Bigg\}
	\, \P^{+}_{\bbOmega} (\sigma),
\end{equation}
where the normalizing constant, called \emph{partition function}, is given by
\begin{equation}\label{RFIMZ}
	Z^{+,\omega}_{\bbOmega, \lambda, h} =
	\E^{+}_{\bbOmega}
	\Bigg[ \exp\Bigg\{ \sum_{x\in \bbOmega} (\lambda_x \omega_x +h_x) \sigma_x \Bigg\} \Bigg].
\end{equation}
Note that we allow the disorder strength $\lambda$ and bias $h$
to vary from site to site.
Also observe that choosing $\P_{\bbOmega}^+$ as a ``reference law''
means that $Z^{+,\omega}_{\bbOmega, \lambda, h} = 1$
for $\lambda, h \equiv 0$ (with $\beta=\beta_c$).

\smallskip

Fix now a bounded open set $\Omega \subseteq \R^2$ with
piecewise smooth boundary, and define the rescaled
lattice $\bbOmegadelta := \Omega \cap (\delta\Z)^2$, for $\delta > 0$.
We are going to obtain a non-trivial limit in distribution for the partition function
$Z^{+,\omega}_{\bbOmegadelta, \lambda, h}$,
in the continuum and weak disorder regime $\delta, \lambda, h \to 0$. We build on
recent results of Chelkak, Hongler and Izyurov~\cite[Theorem~1.3]{CHI12}
on the continuum limit of the spin correlations
under $\P_{\bbOmegadelta}^{+}$
(the two-dimensional critical Ising model with + boundary condition):
for all $n\in\N$ and distinct $x_1, \ldots, x_n \in \Omega$
\begin{equation}\label{CHI1.30}
\lim_{\delta \downarrow 0} \delta^{-\frac{n}{8}}
\, \E^+_{\bbOmegadelta}[\sigma_{x_1} \cdots \sigma_{x_n}] = \cC^n
\, \bphi_\Omega^+(x_1, \ldots, x_n),
\end{equation}
where $\bphi_\Omega^+ : \cup_{n\in\N} \Omega^n \to \R$ is a symmetric function
and $\cC := 2^{\frac{5}{48}} e^{-\frac{3}{2}\zeta'(-1)}$,
with $\zeta'$ denoting the derivative of Riemann's zeta function.\footnote{In~\cite{CHI12},
the mesh size of $\bbOmegadelta$ is $\sqrt{2}\delta$ instead of $\delta$,
which accounts for the difference between our formula for $\cC$ and that in \cite[(1.3)]{CHI12}.
Exact formulas for $\bphi^+_\Omega(x_1, \ldots, x_n)$ are available for $n=1,2$
when $\Omega$ is the upper half plane $\mathbb H$,
cf.~\cite[(1.4)]{CHI12}. The general case $\bphi^+_\Omega$
can be obtained from $\bphi^+_{\mathbb H}$ via conformal map.}
(For simplicity, in \eqref{CHI1.30} we have set $\sigma_x := \sigma_{x_\delta}$, where
$x_\delta$ denotes the point in $\bbOmegadelta$ closest to $x \in \Omega$.)

We can complement \eqref{CHI1.30} with a uniform bound (see Lemma~\ref{L:correlationbd} below): there exists
$C\in (0,\infty)$ such that for all $n\in \N$, $x_1, \ldots, x_n \in \Omega$ and $\delta \in (0,1)$
\begin{equation}\label{psiOmegabd0}
0 \le \delta^{-\frac{n}{8}}
\, \E^+_{\bbOmegadelta}[\sigma_{x_1} \cdots \sigma_{x_n}]
\leq \prod_{i=1}^n \frac{C}
{d(x_i, \partial\Omega\cup \{x_1, \ldots, x_n\}\backslash\{x_i\})^{\frac{1}{8}}} \,,
\end{equation}
where $d(x,A) := \inf_{y \in A} \|x-y\|$ and the RHS is shown to be in $L^2$ in Lemma~\ref{L:fOmega} below.
Therefore the convergence \eqref{CHI1.30} also holds in $L^2(\Omega)$, and $\| \bphi_\Omega^+ \|_{L^2(\Omega^n)} < \infty$ for every $n\in\N$.

\smallskip

We can now state our main result for the RFIM, to be proved in Section~\ref{sec:Isingproof}.

\begin{theorem}[\bf Scaling limits of RFIM]\label{T:RFIM}
Let $\Omega \subseteq \R^2$ be bounded and
simply connected open set with piecewise smooth boundary, and define
$\bbOmegadelta := \Omega \cap (\delta\Z)^2$, for $\delta > 0$.

Let $\omega:=(\omega_x)_{x\in \Z^2}$ be
i.i.d.\ random variables satisfying \eqref{eq:disasspin}
and define $\omega_\delta = (\omega_{\delta,x})_{x\in\bbOmegadelta}$ by
$\omega_{\delta,x} := \omega_{x/\delta}$.
Given two continuous functions
$\hlambda: \overline{\Omega} \to (0,\infty)$ and $\hh: \overline{\Omega} \to \R$, define
\begin{equation}\label{lhdelta}
\lambda_{\delta, x} := \hlambda(x) \,\delta^{\frac{7}{8}} \,, \qquad
h_{\delta, x} := \hh(x) \,\delta^\frac{15}{8} \,.
\end{equation}
and set $\lambda_\delta = (\lambda_{\delta,x})_{x\in\bbOmegadelta}$,
$h_\delta = (h_{\delta,x})_{x\in\bbOmegadelta}$. Then the rescaled partition function
\begin{equation} \label{eq:rescaledIsingZ}
	e^{- \frac{1}{2} \| \hlambda \|_{L^2(\Omega)}^2 \delta^{-1/4}}
	\, Z^{+, \omega_\delta}_{\bbOmegadelta, \lambda_\delta, h_\delta}
\end{equation}
converges in distribution as $\delta \downarrow 0$
to a random variable $\bZ^{+, W}_{\Omega, \hlambda, \hh}$ with Wiener chaos expansion
\begin{equation}\label{Z+Wexp}
\bZ^{+, W}_{\Omega, \hlambda, \hh}
= 1 + \sum_{n=1}^\infty \frac{\cC^n}{n!} \idotsint_{\Omega^n} \bphi_\Omega^+(x_1, \ldots, x_n)
\prod_{i=1}^n \, \big[\hlambda(x_i) W({\rm d}x_i) + \hh(x_i) {\rm d}x_i\big],
\end{equation}
where $W(\cdot)$ denotes white noise on $\R^2$ and $\bphi_\Omega^+(\cdot)$, $\cC$
are as in \eqref{CHI1.30}.
\end{theorem}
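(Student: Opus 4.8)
The plan is to reduce the statement to the general convergence result Theorem~\ref{th:general}, applied to a polynomial chaos built from the critical Ising spins. \textbf{Step 1 (linearisation):} since $\sigma_x\in\{-1,+1\}$, I use the identity $e^{a\sigma}=\cosh(a)(1+\sigma\tanh a)$. Setting $a_x:=\lambda_{\delta,x}\omega_{\delta,x}+h_{\delta,x}$ and $\epsilon_{\delta,x}:=\tanh(a_x)\in(-1,1)$, this gives the exact factorisation
\begin{equation*}
Z^{+,\omega_\delta}_{\bbOmegadelta,\lambda_\delta,h_\delta}=\Big(\prod_{x\in\bbOmegadelta}\cosh(a_x)\Big)\,\E^{+}_{\bbOmegadelta}\Big[\prod_{x\in\bbOmegadelta}\big(1+\sigma_x\epsilon_{\delta,x}\big)\Big].
\end{equation*}
Expanding the product over subsets $I\subseteq\bbOmegadelta$ rewrites the second factor as a polynomial chaos $\Psi_\delta(\zeta_\delta)$ in the independent, unit-variance variables $\zeta_{\delta,x}:=\epsilon_{\delta,x}/\sqrt{\bbvar(\epsilon_{\delta,x})}$, with kernel $\psi_\delta(I):=\E^{+}_{\bbOmegadelta}[\prod_{x\in I}\sigma_x]\prod_{x\in I}\sqrt{\bbvar(\epsilon_{\delta,x})}$; since $\bbOmegadelta$ is finite this is a genuine polynomial.

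\textbf{Step 2 (the $\cosh$ prefactor):} writing $\log\cosh u=\tfrac12u^2+R(u)$ with $|R(u)|\lesssim u^4$ for $|u|\le1$, and using \eqref{lhdelta} and the finite exponential moments \eqref{eq:disasspin}, one checks that $\sum_x\log\cosh(a_x)=\tfrac12\sum_x\lambda_{\delta,x}^2+o_{\bbP}(1)$: indeed $\bbvar(\sum_x\lambda_{\delta,x}^2\omega_{\delta,x}^2)=O(\delta^{3/2})$, $\sum_xh_{\delta,x}^2=O(\delta^{7/4})$, the cross term has variance $O(\delta^{7/2})$, and $\bbE|\sum_xR(a_x)|=O(\delta^{3/2})$, the contribution of $\{|a_x|>1\}$ being exponentially small. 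Since $\sum_x\lambda_{\delta,x}^2=\delta^{-1/4}\,\delta^2\sum_{x\in\bbOmegadelta}\hlambda(x)^2$ and $\delta^2\sum_{x\in\bbOmegadelta}\hlambda(x)^2=\|\hlambda\|_{L^2(\Omega)}^2+o(\delta^{1/4})$ (a Riemann-sum estimate; a mild modulus-of-continuity hypothesis on $\hlambda$, or an approximation of $\hlambda$ by smooth functions, controls the error), it follows that
\begin{equation*}
e^{-\frac12\|\hlambda\|_{L^2(\Omega)}^2\delta^{-1/4}}\,\prod_{x\in\bbOmegadelta}\cosh(a_x)\ \longrightarrow\ 1\qquad\text{in probability, as }\delta\downarrow0.
\end{equation*}

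\textbf{Step 3 (applying Theorem~\ref{th:general}):} take $\bbT_\delta:=\bbOmegadelta$ with the cubic tessellation ($v_\delta=\delta^2$) and $\bsigma_0=1$. A Taylor expansion of $\tanh$ using \eqref{eq:disasspin} gives, uniformly in $x$, $\bbvar(\epsilon_{\delta,x})=\hlambda(x)^2\delta^{7/4}(1+O(\delta^{7/4}))$ and $\bbE[\epsilon_{\delta,x}]=\hh(x)\delta^{15/8}+O(\delta^{21/8})$. From the first, $|\tanh u|\le|u|$ forces $|\zeta_{\delta,x}|\le 2|\omega_{\delta,x}|+1$ for small $\delta$, so $(\zeta_{\delta,x}-\bbE\zeta_{\delta,x})^2$ is dominated by $C(\omega^2+1)$, hence uniformly integrable; from the second, $\bar\mu_\delta(x)=v_\delta^{-1/2}\mu_\delta(x)\to\bmu_0(x):=(\hh(x)/\hlambda(x))\ind_\Omega(x)$ in $L^2(\R^2)$ (the boundary strip has measure $O(\delta)$ and $\hh/\hlambda$ is continuous on $\overline\Omega$), giving hypothesis \eqref{it:1}. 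For \eqref{it:2}, $v_\delta^{-|I|/2}\psi_\delta(I)=\big(\delta^{-|I|/8}\E^{+}_{\bbOmegadelta}[\prod_{x\in I}\sigma_x]\big)\prod_{x\in I}\big(\delta^{-7/8}\sqrt{\bbvar(\epsilon_{\delta,x})}\big)$; the first factor converges in $L^2((\R^2)^{|I|})$ to $\cC^{|I|}\bphi_\Omega^{+}$ by \eqref{CHI1.30}, the uniform bound \eqref{psiOmegabd0} and dominated convergence (as observed after \eqref{psiOmegabd0}), while the second converges uniformly to $\prod_{x\in I}\hlambda(x)$, so $v_\delta^{-|I|/2}\psi_\delta\to\bpsi_0$ in $L^2$ with $\bpsi_0(x_1,\dots,x_k):=\cC^{k}\bphi_\Omega^{+}(x_1,\dots,x_k)\prod_i\hlambda(x_i)$. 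For \eqref{it:3}, combining \eqref{psiOmegabd0} with $\bbvar(\epsilon_{\delta,x})\le2\hlambda(x)^2\delta^{7/4}$ and comparing the lattice sum with the integral (cell volume $\delta^2$) yields $\sum_{|I|=k}(\sigma_\delta^2)^{|I|}\psi_\delta(I)^2\le C^kM_k/k!$, with $M_k:=\idotsint_{\Omega^k}\prod_{i=1}^k C^2\hlambda(x_i)^2\,d(x_i,\partial\Omega\cup\{x_j\}_{j\ne i})^{-1/4}\,\dd x$; Lemma~\ref{L:fOmega} bounds $M_k$ sub-factorially, so $\sum_{k>\ell}(1+\epsilon)^kC^kM_k/k!\to0$ as $\ell\to\infty$, and likewise \eqref{eq:condepsilon} holds for $\bpsi_0$. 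Theorem~\ref{th:general} then gives $\Psi_\delta(\zeta_\delta)\xrightarrow{d}\bPsi_0$; substituting $\bsigma_0=1,\ \bmu_0,\ \bpsi_0$ into \eqref{eq:general} and using $\hlambda(x_i)(W(\dd x_i)+\tfrac{\hh(x_i)}{\hlambda(x_i)}\dd x_i)=\hlambda(x_i)W(\dd x_i)+\hh(x_i)\dd x_i$ identifies $\bPsi_0$ with $\bZ^{+,W}_{\Omega,\hlambda,\hh}$ of \eqref{Z+Wexp}. Combined with Step~2, Slutsky's theorem yields the claimed convergence in distribution.

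\textbf{Main obstacle:} the analytic heart is the sub-factorial growth of $M_k$, i.e.\ Lemma~\ref{L:fOmega}: the $L^2(\Omega^k)$-norms of the bounding kernels in \eqref{psiOmegabd0} must grow slowly enough in $k$ for the series in \eqref{eq:condepsilon} (and condition \eqref{it:3}) to converge. This rests on the subcriticality of the exponent $\tfrac18$ in dimension two — the singularities $\|x_i-x_j\|^{-1/4}$ are locally integrable in $\R^2$, and $k$ points in a bounded planar domain necessarily have some pairwise distances of order $k^{-1/2}$ — and on making the lattice-to-integral comparison uniform in $k$. The remaining work (the uniform Taylor estimates for $\tanh$ and $\log\cosh$, and the Riemann-sum estimate needed to match the normalising constant in \eqref{eq:rescaledIsingZ}) is routine but must be carried out carefully.
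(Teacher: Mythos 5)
Your proposal is correct and follows essentially the same route as the paper's proof: the same $\cosh/\tanh$ linearisation and normalisation of the prefactor, and the same verification of conditions \eqref{it:1}--\eqref{it:3} of Theorem~\ref{th:general} using \eqref{CHI1.30}, the uniform bound \eqref{psiOmegabd0} and the sub-factorial $L^2$ bound of Lemma~\ref{L:fOmega}, with the recombination $\hlambda(x)\big(W(\dd x)+\tfrac{\hh(x)}{\hlambda(x)}\dd x\big)=\hlambda(x)W(\dd x)+\hh(x)\,\dd x$ identifying the limit \eqref{Z+Wexp}. The one ingredient you invoke rather than derive is Lemma~\ref{L:fOmega}, which is precisely the technical lemma the paper isolates (proved there by an induction combining a geometric packing bound with the Hardy--Littlewood rearrangement inequality), and you correctly identify it as the analytic heart of the argument.
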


\begin{remark}\rm
We impose the continuity on $\hlambda$ and $\hh$ and the strict
positivity on $\hlambda$ for technical simplicity:
these conditions can be relaxed with a more careful analysis.
\end{remark}

We call the random variable $\bZ^{+, W}_{\Omega, \hlambda, \hh}$
in \eqref{Z+Wexp} the \emph{continuum RFIM partition function}.
The fact that the continuum correlation function $\bphi_\Omega^+$ is
\emph{conformally covariant}, proved in \cite[Theorem 1.3]{CHI12}, allows to
deduce the conformal covariance of $\bZ^{+, W}_{\Omega, \hlambda, \hh}$.

\begin{corollary}[\bf Conformal covariance]\label{C:coco}
Let $\Omega,\tilde \Omega \subseteq \R^2$
be two bounded and simply connected open sets
with piecewise smooth boundaries, and let $\varphi: \tilde \Omega \to \Omega$ be conformal.
Let $\hlambda : \overline{\Omega} \to (0,\infty)$ and $\hh: \overline{\Omega}\to \R$ be continuous
functions. Then
$$
\bZ^{+, W}_{\Omega, \hlambda, \hh} \overset{d}{=}
\bZ^{+, W}_{\tilde \Omega, \tilde \lambda, \tilde h} ,
$$
where we set
$\tilde \lambda(z) := |\varphi'(z)|^{\frac{7}{8}} \, \hlambda (\varphi(z)) $
and $\tilde h(z) := |\varphi'(z)|^{\frac{15}{8}} \, \hh (\varphi(z)) $
for all $z\in \tilde\Omega$.
\end{corollary}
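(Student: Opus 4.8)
The plan is to transport the Wiener chaos expansion \eqref{Z+Wexp} of $\bZ^{+,W}_{\Omega,\hlambda,\hh}$ from $\Omega$ to $\tilde\Omega$ along the conformal change of variables $x=\varphi(z)$, combining two ingredients. First, the \emph{conformal covariance of the continuum spin correlations}, established in \cite[Theorem~1.3]{CHI12} and reflecting the scaling dimension $\tfrac18$ of the Ising order field: for a conformal bijection $\varphi:\tilde\Omega\to\Omega$,
\[
  \bphi_{\tilde\Omega}^+(z_1,\ldots,z_n)
  = \Big( \prod_{i=1}^n |\varphi'(z_i)|^{1/8} \Big)\,
  \bphi_\Omega^+\big(\varphi(z_1),\ldots,\varphi(z_n)\big) \,.
\]
Second, the behaviour of white-noise integrals under a change of variables: a conformal bijection has Jacobian $|\varphi'|^2$, so a Lebesgue differential transforms as $\dd x=|\varphi'(z)|^2\,\dd z$ while a stochastic differential transforms (in law) as $W(\dd x)\leftrightarrow|\varphi'(z)|\,\tilde W(\dd z)$ for a white noise $\tilde W$ on $\tilde\Omega$. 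Matching the three weights $|\varphi'|^{-1/8}$ (from inverting the kernel relation), $|\varphi'|^{1}$ and $|\varphi'|^{2}$ reproduces exactly the exponents $\tfrac78$ and $\tfrac{15}8$ in $\tilde\lambda(z)=|\varphi'(z)|^{7/8}\hlambda(\varphi(z))$ and $\tilde h(z)=|\varphi'(z)|^{15/8}\hh(\varphi(z))$.

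To make the white-noise change of variables precise, I would take $W$ to be the white noise on $\R^2$ underlying $\bZ^{+,W}_{\Omega,\hlambda,\hh}$, defined on $(\Omega_W,\cA,\bbP)$, and set $\tilde W(g):=W\big((g\circ\varphi^{-1})\,|(\varphi^{-1})'|\big)$ for $g\in L^2(\tilde\Omega)$. Using $x=\varphi(z)$, $\dd x=|\varphi'(z)|^2\dd z$ and $(\varphi^{-1})'(\varphi(z))=1/\varphi'(z)$, one checks $\bbE[\tilde W(g)\tilde W(g')]=\int_{\tilde\Omega}gg'\,\dd z$, so $\tilde W$ is a white noise on $\tilde\Omega$ on the same probability space. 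For each $k$ let $T_k:L^2(\Omega^k)\to L^2(\tilde\Omega^k)$ be the weighted pull-back $(T_kF)(z_1,\ldots,z_k):=F(\varphi(z_1),\ldots,\varphi(z_k))\prod_{i=1}^k|\varphi'(z_i)|$, which is a bijective linear isometry by the Jacobian identity. The key claim is $W^{\otimes k}(F)=\tilde W^{\otimes k}(T_kF)$ a.s.\ for every symmetric $F\in L^2(\Omega^k)$: for $k=1$ this is immediate from the definition of $\tilde W$; for general $k$ one verifies it on tensor products of indicators of disjoint sets (noting $T_k$ maps tensor products to tensor products and $\varphi$ preserves disjointness), then extends by density of the special simple functions and the It\^o isometry \eqref{eq:WI}, which both sides satisfy.

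With this in hand I would transport the expansion term by term. Fixing $n$ and expanding $\prod_{i=1}^n[\hlambda(x_i)W(\dd x_i)+\hh(x_i)\dd x_i]$ over the subset $J\subseteq\{1,\ldots,n\}$ of stochastic coordinates (integrating out the complementary Lebesgue coordinates) writes the $n$-th integral in \eqref{Z+Wexp} as $\sum_J W^{\otimes|J|}(H_J)$, where $H_J(x_J):=\cC^n\prod_{i\in J}\hlambda(x_i)\int_{\Omega^{J^c}}\bphi_\Omega^+(x)\prod_{i\in J^c}\hh(x_i)\,\dd x_{J^c}$ is symmetric and in $L^2(\Omega^{|J|})$ by the a priori bound \eqref{psiOmegabd0} and the boundedness of $\hlambda,\hh$ on $\overline\Omega$ (the same integrability already used in the proof of Theorem~\ref{T:RFIM}). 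Applying the claim gives $W^{\otimes|J|}(H_J)=\tilde W^{\otimes|J|}(T_{|J|}H_J)$; then changing variables $x_i=\varphi(z_i)$ in the surviving Lebesgue integral (each factor contributing $|\varphi'(z_i)|^2$) and invoking the covariance in the form $\bphi_\Omega^+(\varphi(z_1),\ldots,\varphi(z_n))=\prod_{i=1}^n|\varphi'(z_i)|^{-1/8}\,\bphi_{\tilde\Omega}^+(z_1,\ldots,z_n)$, distributing the weights ($|\varphi'|^{-1/8}|\varphi'|=|\varphi'|^{7/8}$, $|\varphi'|^{-1/8}|\varphi'|^2=|\varphi'|^{15/8}$) shows $T_{|J|}H_J(z_J)=\cC^n\prod_{i\in J}\tilde\lambda(z_i)\int_{\tilde\Omega^{J^c}}\bphi_{\tilde\Omega}^+(z)\prod_{i\in J^c}\tilde h(z_i)\,\dd z_{J^c}$. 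Re-summing over $J$ (with the $1/n!$) identifies the $n$-th term of \eqref{Z+Wexp} with the $n$-th term of $\bZ^{+,\tilde W}_{\tilde\Omega,\tilde\lambda,\tilde h}$. Since $T_k$ is an $L^2$-isometry, the $\tilde\Omega$-series inherits the $L^2$-convergence from Theorem~\ref{T:RFIM}, so summing over $n$ yields $\bZ^{+,W}_{\Omega,\hlambda,\hh}=\bZ^{+,\tilde W}_{\tilde\Omega,\tilde\lambda,\tilde h}$ a.s.; as $\tilde W$ is a white noise on $\tilde\Omega$ and the law of the continuum partition function depends only on the law of its driving white noise, this gives $\bZ^{+,W}_{\Omega,\hlambda,\hh}\overset{d}{=}\bZ^{+,W}_{\tilde\Omega,\tilde\lambda,\tilde h}$ (in fact a coupling making the two a.s.\ equal).

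The exponent bookkeeping is harmless; the one genuinely delicate point will be the white-noise change of variables, i.e.\ the identity $W^{\otimes k}(F)=\tilde W^{\otimes k}(T_kF)$, together with the fact that $|\varphi'|$ may be unbounded near corners of $\partial\tilde\Omega$, so that $\tilde\lambda,\tilde h$ need not be bounded. The isometry property of $T_k$ (from $\mathrm{Jac}\,\varphi=|\varphi'|^2$) removes the integrability worry for free, since $\|T_kH_J\|_{L^2(\tilde\Omega^k)}=\|H_J\|_{L^2(\Omega^k)}<\infty$; what remains is the careful verification of the multiple-stochastic-integral identity on a dense class and the passage to the limit via \eqref{eq:WI}, which is where essentially all of the (still short) rigorous work sits.
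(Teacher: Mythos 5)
Your proposal is correct and follows essentially the same route as the paper's proof: transport the chaos expansion \eqref{Z+Wexp} under $x=\varphi(z)$, using the Jacobian identity $\dd x=|\varphi'(z)|^2\dd z$ for the Lebesgue part and the induced $|\varphi'|$-weight for the white-noise part, combined with the conformal covariance of $\bphi_\Omega^+$ from \cite{CHI12}, so that the weights recombine into the exponents $\tfrac78$ and $\tfrac{15}{8}$. The only (cosmetic) difference is that you construct an explicit white noise $\tilde W$ on the same probability space to obtain an a.s.\ coupling and verify the multiple-integral identity on a dense class, whereas the paper records the same change of variables directly as an equality in joint distribution of the multiple stochastic integrals.
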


\begin{remark}\rm
Recently, Camia, Garban and Newman~\cite{CGN12, CGN13} showed that
for a \emph{deterministic} external field,
more precisely $\lambda_{\delta, x} \equiv 0$ and $h_{\delta, x} \equiv \hh \delta^{\frac{15}{8}}$
for fixed $\hh \in\R$, the Ising measure $\P^+_{\bbOmegadelta, \lambda_\delta, h_\delta}$
converges as $\delta \downarrow 0$ (in a suitable sense)
to a limiting distribution-valued process $\Phi^{\infty, \hh}_\Omega$.
Theorem~\ref{T:RFIM} can be regarded as a first step towards the extension
of this convergence to the case of a \emph{random} external field,
where $\bZ^{+, W}_{\Omega, \hlambda, \hh}$ plays the role of the partition function
of a {\em continuum Ising model
with random external field $(\hlambda(x) W(\dd x)+\hh(x) \dd x)$ and + boundary condition}.
The next step towards the construction of such a continuum model would be to identify
the joint distribution of the partition functions
$(\bZ^{{\rm b}, W}_{\Gamma, \hlambda, \hh})_{\Gamma, {\rm b}}$
(note that they are random variables which are functions of the disorder $W$)
for a large enough family of
sub-domains $\Gamma \subseteq \Omega$ and ``boundary conditions'' ${\rm b}$.
\end{remark}

\begin{remark}\rm
Consider the case when the disorder strength
and bias are constant, i.e.\ $\lambda_x \equiv \lambda \ge 0$
and $h_x \equiv h \in \R$.
The free energy $F(\lambda,h)$ of the
critical random field Ising model can be defined as
follows: setting $\Omega := (-\frac{1}{2},\frac{1}{2})^2$ and $\Lambda_N := (N\Omega) \cap \Z^2=
\{\lceil -\frac{N}{2} \rceil, \ldots, \lfloor \frac{N}{2}\rfloor \}^2$,
\begin{equation} \label{eq:IsingFE}
	F(\lambda,h) := \lim_{N\to\infty} \, \frac{1}{N^2|\Omega|} \,
	\bbE \big[ \log Z_{\Lambda_N,\lambda,h}^{+,\omega} \big]
	= \lim_{\delta \downarrow 0} \, \frac{\delta^2}{|\Omega|} \,
	\bbE \big[ \log Z_{\bbOmega_\delta,\lambda,h}^{+,\omega} \big] \,,
\end{equation}
where the limit exists by standard super-additivity arguments and is independent of the choice of $\Omega$.
Note that $F(0,0) = 0$, that is $F(\lambda,h)$ represents the
\emph{excess} free energy with respect to the critical Ising model, cf.\ \eqref{RFIMZ}.

It is natural to define a continuum free energy $\bF(\hlambda, \hh)$
for $\hlambda \ge 0$, $\hh \in \R$ by
\begin{equation*}
	\bF(\hlambda, \hh) := \lim_{\Omega \uparrow \R^2}
	\frac{1}{Leb(\Omega)} \bbE \Big[ \log \bZ_{\Omega,\hlambda,\hh}^{+,W} \Big] \,,
\end{equation*}
provided the limit exists (at least along sufficiently nice domains $\Omega \uparrow \R^2$),
where $\bbE$ denotes expectation with respect to the white noise $W(\cdot)$.
In analogy with Remark~\ref{rem:pinfe}, Theorem~\ref{T:RFIM}
suggests the following conjecture on the universal behavior of the free energy
$F(\lambda,h)$ in the weak disorder regime $\lambda, h \to 0$.

\begin{conjecture}The following asymptotic relation holds:
\begin{equation} \label{eq:conjIsing}
	\lim_{\delta \downarrow 0} \frac{F\big( \hlambda \delta^{\frac{7}{8}},
	\hh \delta^{\frac{15}{8}} \big)}{\delta^2} = \bF(\hlambda, \hh) \,, \qquad
	\forall \hlambda \ge 0, \ \hh \in \R \,.
\end{equation}
\end{conjecture}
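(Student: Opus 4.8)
As the heuristic \eqref{eq:co1}--\eqref{eq:co2} already indicates, relation \eqref{eq:conjIsing} is essentially an \emph{exchange of limits}, and this is where the real difficulty lies. A preliminary remark is in order: for the identity to hold literally, $F$ in \eqref{eq:IsingFE} must be interpreted as the free energy of the \emph{normalized} partition function $\exp(-\tfrac12\lambda^2|\Lambda_N|)\,Z^{+,\omega}_{\Lambda_N,\lambda,h}$, exactly as flagged after \eqref{eq:isingscale} and in the footnote to \eqref{eq:conjfe}; since $\Lambda(\lambda)=\tfrac12\lambda^2+O(\lambda^3)$, the precise normalization is immaterial after dividing by $\delta^2$, but without it the left-hand side of \eqref{eq:conjIsing} would diverge. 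Granting this, the first step is to use the standard super-additivity of $N\mapsto\bbE[\log(\exp(-\tfrac12\lambda^2 N^2)Z^{+,\omega}_{\Lambda_N,\lambda,h})]$ together with the rescaling $x\mapsto\delta x$ (which turns $\Lambda_N$ into $\Omega\cap(\delta\Z)^2$ with $\Omega=(-\tfrac{\delta N}{2},\tfrac{\delta N}{2})^2$ and relabels the disorder as in Theorem~\ref{T:RFIM}) to rewrite, for each fixed $\delta\in(0,1)$,
\[
\frac{F(\hlambda\,\delta^{7/8},\,\hh\,\delta^{15/8})}{\delta^2}
= \lim_{\Omega\uparrow\R^2}\frac{1}{Leb(\Omega)}\,
\bbE\!\left[\log\Big(e^{-\frac12\|\hlambda\|_{L^2(\Omega)}^2\delta^{-1/4}}\,
Z^{+,\omega_\delta}_{\bbOmegadelta,\lambda_\delta,h_\delta}\Big)\right],
\]
with $\lambda_\delta,h_\delta$ as in \eqref{lhdelta} (constant $\hlambda,\hh$), the outer limit taken through squares centred at the origin.

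Next, for each \emph{fixed} bounded square $\Omega$, Theorem~\ref{T:RFIM} shows that the normalized partition function inside the expectation converges in distribution as $\delta\downarrow0$ to $\bZ^{+,W}_{\Omega,\hlambda,\hh}$. To upgrade this to convergence of $\bbE[\log(\cdot)]$ one needs uniform integrability of $\{\log(\text{the normalized }Z^{+,\omega_\delta}_{\bbOmegadelta})\}_{\delta\in(0,1)}$. The upper tail is under control: since $\log x = O(x^p)$ and $\bbE[(\text{norm. }Z^{+,\omega_\delta}_{\bbOmegadelta})^{p}]$ stays bounded for $p<2$ by the $L^{2-}$ theory of Subsection~\ref{sec:beyond}, the positive parts are uniformly integrable. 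The lower tail --- i.e.\ a quantitative statement that $\text{norm. }Z^{+,\omega_\delta}_{\bbOmegadelta}$ is not too small, uniformly in $\delta$ --- is a genuine technical point (the naive Jensen lower bound is useless here) and will require a dedicated estimate on the left tail of the partition function; this same obstacle already appears in the simpler conjecture \eqref{eq:conjfe}. Granting it, one obtains, for each fixed $\Omega$,
\[
\lim_{\delta\downarrow0}\frac{1}{Leb(\Omega)}
\bbE\!\left[\log\Big(e^{-\frac12\|\hlambda\|^2_{L^2(\Omega)}\delta^{-1/4}}
Z^{+,\omega_\delta}_{\bbOmegadelta,\lambda_\delta,h_\delta}\Big)\right]
=\frac{1}{Leb(\Omega)}\,\bbE\big[\log\bZ^{+,W}_{\Omega,\hlambda,\hh}\big],
\]
the right-hand side being finite because $\log\bZ^{+,W}_{\Omega,\hlambda,\hh}\in L^p$ for $p<2$.

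It remains to let $\Omega\uparrow\R^2$. One must first establish the existence of the continuum free energy $\bF(\hlambda,\hh)$: this should follow from a super-additivity (or sub-additivity) property of $\Omega\mapsto\bbE[\log\bZ^{+,W}_{\Omega,\hlambda,\hh}]$ over disjoint unions, inherited from the monotonicity (GKS/FKG) of the Ising free energy with $+$ boundary condition via the discrete approximation provided by Theorem~\ref{T:RFIM}, together with a standard Fekete argument along squares. This gives $\lim_{\Omega\uparrow\R^2}\lim_{\delta\downarrow0}(\cdots)=\bF(\hlambda,\hh)$, while by the first display $\lim_{\delta\downarrow0}\lim_{\Omega\uparrow\R^2}(\cdots)$ is the left-hand side of \eqref{eq:conjIsing}.

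The crux --- and, I expect, the hardest part --- is therefore to \emph{interchange the limits $\delta\downarrow0$ and $\Omega\uparrow\R^2$}. I see two complementary routes. (i) \textbf{Uniform rate.} Upgrade the convergence in Theorem~\ref{T:RFIM} to a bound $\big|\tfrac{1}{Leb(\Omega)}\bbE[\log(\text{norm. }Z^{+,\omega_\delta}_{\bbOmegadelta})]-\tfrac{1}{Leb(\Omega)}\bbE[\log\bZ^{+,W}_{\Omega,\hlambda,\hh}]\big|\le r(\delta)$ with $r(\delta)\to0$ \emph{uniformly in $\Omega$}. Since the discrete and continuum partition functions are compared chaos order by chaos order via the quantitative Lindeberg estimates of Theorems~\ref{th:lindeberg}--\ref{C:lindeberg2}, and the chaos tails are controlled uniformly through the square-integrable majorant \eqref{psiOmegabd0}, the \emph{per-unit-area} error is naturally $\Omega$-independent; the delicate point is, once more, transferring a uniform estimate on $Z$ to one on $\log Z$, which forces one to prove the concentration of $\log Z^{+,\omega_\delta}_{\bbOmegadelta}$ with constants not depending on $\Omega$. (ii) \textbf{Coarse-graining.} Alternatively, tile $\R^2$ by mesoscopic boxes of side $R$ with $1\ll R\ll\delta^{-1}$, establish an approximate factorization of $Z^{+,\omega_\delta}_{\bbOmegadelta}$ over the tiles, and compare the per-area free energy of a single $R$-box with $\bF(\hlambda,\hh)$; this is the mechanism behind the rigorous finite-size-scaling relations for the copolymer \cite{BdH97,CG10}, but adapting it to the \emph{critical} planar Ising model, whose correlations decay only polynomially, will require quantitative decoupling/mixing estimates for critical Ising on $\Z^2$ that go beyond what is presently recorded in the literature. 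Carrying out either route, together with the left-tail and existence inputs above, is where the substantial work would lie.
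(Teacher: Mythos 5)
You should be aware that the paper does not prove this statement at all: it is deliberately stated as a \emph{Conjecture}, supported only by the heuristic interchange-of-limits computation \eqref{eq:co1}--\eqref{eq:co2} and by the convergence in distribution of the finite-volume partition functions (Theorem~\ref{T:RFIM}). So there is no proof in the paper to compare against, and the question is whether your proposal actually closes the gap. It does not: what you have written is a roadmap whose every genuinely hard step is deferred. The central difficulty, the interchange of $\delta\downarrow 0$ and $\Omega\uparrow\R^2$, is exactly the content of \eqref{eq:conjIsing}, and both of your routes are left unexecuted --- route (i) requires concentration of $\log Z^{+,\omega_\delta}_{\bbOmegadelta,\lambda_\delta,h_\delta}$ with constants uniform in $\Omega$, which you do not prove and which does not follow from the chaos-by-chaos Lindeberg estimates of Theorems~\ref{th:lindeberg}--\ref{C:lindeberg2} (those control $Z$, not $\log Z$); route (ii) requires quantitative decoupling for the \emph{critical} planar Ising model that you yourself note is not available. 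Likewise, the fixed-$\Omega$ step $\bbE[\log(\cdot)]\to\bbE[\log\bZ^{+,W}_{\Omega,\hlambda,\hh}]$ hinges on a lower-tail (anti-concentration) estimate for the normalized partition function, uniformly in $\delta$, which is acknowledged but not supplied; without it even the inner limit in \eqref{eq:co2} is not established.

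Two further points deserve scrutiny. First, your claim that the existence of $\bF(\hlambda,\hh)$ follows from super-additivity ``inherited from GKS/FKG via the discrete approximation'' is not justified: the continuum object $\bZ^{+,W}_{\Omega,\hlambda,\hh}$ is defined only through its Wiener chaos expansion \eqref{Z+Wexp}, its kernels $\bphi^+_\Omega$ depend on $\Omega$ through the $+$ boundary condition, and no factorization or correlation inequality over disjoint subdomains has been established for it (this is precisely why the paper lists the existence of the continuum free energy as an open issue). Second, your preliminary normalization remark is correct and consistent with the discussion after \eqref{eq:isingscale} and the footnote to \eqref{eq:conjfe}: with $F$ as literally defined in \eqref{eq:IsingFE}, the per-site term $\tfrac12\lambda_\delta^2=\tfrac12\hlambda^2\delta^{7/4}$ makes the left side of \eqref{eq:conjIsing} of order $\delta^{-1/4}$, so the statement must be read for the normalized partition function; this is a useful clarification, but it is an interpretive fix, not progress on the proof. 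In summary, the proposal is a reasonable plan aligned with the paper's heuristics, but the statement remains unproved: the left-tail/concentration input, the existence of $\bF$, and the uniform-in-$\Omega$ (or coarse-graining) mechanism for exchanging the two limits are all missing.
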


\noindent
One can go one step further: differentiating both sides of \eqref{eq:conjIsing}
with respect to $\hh$ and setting $\delta = h^{\frac{8}{15}}$,
for $\hh = 1$ one would obtain
\begin{equation} \label{eq:superconj}
	\lim_{h \to 0} \frac{\big\langle \sigma_0
	\big\rangle_{\hlambda h^{\frac{7}{15}}, h}}{h^{\frac{1}{15}}} =
	\frac{\partial\bF}{\partial \hh}(\hlambda, 1) \,,
\end{equation}
where $\langle \sigma_0 \rangle_{\lambda, h} :=
\frac{\partial F}{\partial h}(\lambda,h) = \lim_{N\to\infty}
\bbE \big[\E_{\Lambda_N, \lambda, h}^{+,\omega}[\sigma_0] \big]$
represents the average magnetization in the infinite-volume
random field critical Ising model, cf.\ \eqref{eq:IsingFE} and \eqref{RFIMP}.
Relation \eqref{eq:superconj} is supported by (non-matching) upper and lower bounds of the order
$h^{\frac{1}{15}}$ for $\langle \sigma_0 \rangle_{\lambda, h}$
in the non-disordered case $\lambda = 0$,
recently proved by Camia, Garban and Newman \cite{CGN12b}.
\end{remark}

In light of Theorem~\ref{T:RFIM}, and the fact that for the two-dimensional Ising
model below the critical temperature a disordered external field smoothens the phase
transition~\cite{AW90}, it is natural to conjecture that a similar smoothing effect occurs at the
critical temperature:
\begin{conjecture} For any fixed $\lambda>0$, the average magnetization in the
infinite-volume random field critical Ising model on $\Z^2$ satisfies
\begin{equation}
\langle \sigma_0\rangle_{\lambda, h} \sim  C h^\gamma \qquad \mbox{as } h\downarrow 0
\qquad \mbox{for some } \gamma>\frac{1}{15}.
\end{equation}
\end{conjecture}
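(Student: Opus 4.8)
\medskip
\noindent\emph{Towards a proof of the conjecture.}
The plan is to establish the sharper statement that, for every fixed $\lambda>0$ (with symmetric disorder, as is standard for the random field Ising model),
\begin{equation*}
	\langle\sigma_0\rangle_{\lambda,h}\ \sim\ \chi(\lambda)\,h\qquad\text{as }h\downarrow 0\,,
\end{equation*}
where $\chi(\lambda)\in(0,\infty)$ is the disorder-averaged susceptibility at zero field; since $1>\tfrac{1}{15}$, this proves the conjecture with $\gamma=1$. The starting point is the fluctuation--dissipation identity: in a finite box $\Lambda\subset\Z^2$, $\partial_h\,\bbE\big[\E^{+,\omega}_{\Lambda,\lambda,h}[\sigma_0]\big]=\sum_{x\in\Lambda}\bbE\big[\langle\sigma_0;\sigma_x\rangle^{+,\omega}_{\Lambda,\lambda,h}\big]$, where $\langle\sigma_0;\sigma_x\rangle^{+,\omega}_{\Lambda}:=\E^{+,\omega}_{\Lambda}[\sigma_0\sigma_x]-\E^{+,\omega}_{\Lambda}[\sigma_0]\,\E^{+,\omega}_{\Lambda}[\sigma_x]\ge 0$ by the FKG inequality (valid for the ferromagnetic Ising model with an arbitrary, possibly sign-changing, external field $\lambda\omega_x+h$). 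Using the uniqueness of the infinite-volume Gibbs measure of the two-dimensional random field Ising model for every $\lambda>0$ \cite{AW90} --- which also makes the limit independent of the boundary condition --- together with the uniform decay estimate described below to justify the interchange of limits, one passes to $\Lambda\uparrow\Z^2$ and obtains $\partial_h\langle\sigma_0\rangle_{\lambda,h}=\Xi(\lambda,h):=\sum_{x\in\Z^2}\bbE\big[\langle\sigma_0;\sigma_x\rangle^{\omega}_{\lambda,h}\big]\ge 0$. Combined with $\langle\sigma_0\rangle_{\lambda,0}=0$ --- which follows from the symmetry $\omega\mapsto-\omega$ together with Gibbs uniqueness --- this gives the representation $\langle\sigma_0\rangle_{\lambda,h}=\int_0^h\Xi(\lambda,s)\,\dd s$ for $h\ge 0$.

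The lower bound is then immediate: retaining only the $x=0$ term, $\Xi(\lambda,s)\ge\bbE\big[1-(\langle\sigma_0\rangle^{\omega}_{\lambda,s})^2\big]=1-\bbE\big[(\langle\sigma_0\rangle^{\omega}_{\lambda,s})^2\big]$, and since for finite $\lambda$ and $\beta=\beta_c$ the quenched magnetization lies strictly inside $(-1,1)$, dominated convergence gives $\Xi(\lambda,s)\ge c_0(\lambda)>0$ for all $s$ in a neighbourhood of $0$; hence $\langle\sigma_0\rangle_{\lambda,h}\ge c_0(\lambda)\,h$. For the upper bound one needs the \emph{summability of the disorder-averaged truncated two-point function, uniformly for small $h$ and all volumes}: if $\sup_{0\le s\le h_0}\sum_{x\in\Z^2}\bbE\big[\langle\sigma_0;\sigma_x\rangle^{\omega}_{\lambda,s}\big]\le C(\lambda)<\infty$, then $\langle\sigma_0\rangle_{\lambda,h}\le C(\lambda)\,h$. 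At $h=0$ this summability is precisely the recent exponential decay of correlations for the two-dimensional random field Ising model at any $\lambda>0$ (the power-law bound of Aizenman and Peled, sharpened to an exponential one by Ding and Xia). Finally, feeding the same uniform summable bound into dominated convergence --- the pointwise-in-$\omega$ continuity of $s\mapsto\langle\sigma_0;\sigma_x\rangle^{\omega}_{\lambda,s}$ being elementary --- shows that $\Xi(\lambda,\cdot)$ is continuous at $0$, so $h^{-1}\langle\sigma_0\rangle_{\lambda,h}=h^{-1}\int_0^h\Xi(\lambda,s)\,\dd s\to\Xi(\lambda,0)=:\chi(\lambda)\in(0,\infty)$, which is the asserted asymptotics.

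The one genuinely non-trivial input --- and the step I expect to be the main obstacle --- is the \emph{extension to $h>0$}, uniformly as $h\downarrow 0$, of the zero-field decay-of-correlations estimate. Heuristically a positive uniform field can only help (the disorder-driven mass gap present at $h=0$ should persist, and the correlation length should be non-increasing in $h\ge 0$), but the natural monotonicity tool, the GHS inequality, is not available here because the effective field $\lambda\omega_x+h$ changes sign. A robust proof therefore seems to require either running the Ding--Xia multiscale scheme directly at $h\ne 0$, checking that all constants remain controlled for $0\le h\le h_0(\lambda)$, or a stability argument comparing the $h>0$ model to the $h=0$ one. Even a power-law decay with exponent $>2$ uniform in small $h$ would suffice (yielding $\gamma=1$), while a weaker exponent $<2$ would still give a bound $\langle\sigma_0\rangle_{\lambda,h}\le C(\lambda)\,h^{\gamma_0}$ with some $\gamma_0\in(\tfrac{1}{15},1)$, enough to confirm the smoothing phenomenon though not the sharp exponent.

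We note, finally, that the continuum machinery of this paper is \emph{not} the right tool for this particular question: differentiating the conjectural free-energy relation \eqref{eq:conjIsing} yields \eqref{eq:superconj}, which pertains to the regime $\lambda=\hlambda\,h^{7/15}\to 0$ --- the intermediate-disorder scaling, in which the clean exponent $\tfrac{1}{15}$ survives with a $\hlambda$-dependent constant --- whereas keeping $\lambda>0$ fixed forces $\hlambda=\lambda\,\delta^{-7/8}\to\infty$ and would require understanding $\bF(\hlambda,\hh)$ as $\hlambda\to\infty$, a problem at least as hard as the original one. The elementary susceptibility route above bypasses the continuum model entirely.
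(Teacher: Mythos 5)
First, note that the statement you are addressing is stated in the paper only as a \emph{conjecture}: the paper offers no proof, just heuristic support via the scaling limit of Theorem~\ref{T:RFIM}, the free-energy relation \eqref{eq:conjIsing}--\eqref{eq:superconj}, and the Aizenman--Wehr smoothing phenomenon below criticality. So there is no proof in the paper to compare yours against, and your closing observation that the continuum machinery addresses the intermediate-disorder regime $\lambda = \hlambda h^{7/15}\to 0$ rather than fixed $\lambda>0$ is accurate and consistent with how the paper treats the statement.

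As for your proposal itself: the fluctuation--dissipation route (FKG positivity of truncated correlations, $\partial_h\langle\sigma_0\rangle_{\lambda,h}=\Xi(\lambda,h)$, lower bound from the diagonal term, upper bound from summability of the disorder-averaged truncated two-point function) is the natural strategy and, if completed, would indeed give the stronger statement $\gamma=1$. But it contains a genuine gap, which you yourself identify: the \emph{uniform-in-$h$ and uniform-in-volume} summable (or exponential) decay of $\bbE\big[\langle\sigma_0;\sigma_x\rangle^{\omega}_{\lambda,h}\big]$ for the critical two-dimensional RFIM. This is not a technical interchange-of-limits issue that can be deferred; it is the entire mathematical content of the conjecture. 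Without it you obtain only the lower bound $\langle\sigma_0\rangle_{\lambda,h}\ge c_0(\lambda)h$, which proves neither the asymptotic equivalence ``$\sim Ch^{\gamma}$'' nor even the smoothing inequality $\gamma>\tfrac1{15}$ (that requires an \emph{upper} bound beating $h^{1/15}$). Even granting the zero-field decay results you invoke, their extension to small $h\neq 0$ with controlled constants is exactly the hard step, and your fallback remark that a uniform power-law decay with exponent $<2$ ``would still give $h^{\gamma_0}$ with $\gamma_0\in(\tfrac1{15},1)$'' is itself an unproved finite-size-scaling argument, not a corollary of what you wrote. Two smaller points: the identity $\langle\sigma_0\rangle_{\lambda,0}=0$ uses symmetry of the disorder law, which is not part of the paper's assumptions \eqref{eq:disasspin} (zero mean, unit variance, exponential moments only), so either add that hypothesis explicitly or supply a separate argument; and the passage from finite volume to $\Z^2$ and the differentiation under $\bbE$ both lean on the same missing uniform bound, so they cannot be cited as independent justifications.
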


\section{Proof of the Lindeberg principle}
\label{S:lindeberg}

In this section, we prove Theorems~\ref{th:lindeberg} and~\ref{C:lindeberg2}
on the Lindeberg principle for polynomial chaos expansions. We first deduce
Theorem~\ref{C:lindeberg2} from Theorem~\ref{th:lindeberg}, starting with the
following lemma which controls how a multi-linear polynomial $\Psi(x) = \Psi((x_i)_{i\in \bbT})$
is affected by a shift $x \mapsto x + \mu$, where $\mu:=(\mu_i)_{i\in \bbT}$.

\begin{lemma}[Effect of adding a mean]\label{L:ExpMean} Let $\Psi(x) = \Psi((x_i)_{i\in \bbT})$
be a multi-linear polynomial as in (\ref{eq:Psi}), with kernel $\psi$,
and let $\mu:=(\mu_i)_{i\in \bbT}$ be a family of real numbers.
Then $\tilde \Psi(x) := \Psi(x+\mu)$ is a multi-linear polynomial,
i.e. $\tilde \Psi(x)= \sum_{J\in \cP^\fin(\bbT)} \tilde \psi(J) x^J$,
with kernel
\begin{equation}\label{ExpM1}
\tilde \psi(J) = \sum_{I\in \cP^\fin(\bbT):\, I \supseteq J} \psi(I) \mu^{I \setminus J} \,.
\end{equation}
For $\epsilon>0$, let $\Psi^{(\epsilon)}(x)$ be defined as in \eqref{eq:Psieps}, and recall
the definitions of $\sfC_\Psi$ from (\ref{eq:CPsi}) and $\Inf_j[\Psi]$ from  (\ref{eq:Inf}).
Then, setting $\sfc_\mu:=\sum_{i\in \bbT}\mu_i^2$, for any $\epsilon>0$ we have
\begin{align}\label{ExpM3}
\sfC_{\tilde \Psi}
\le e^{\sfc_\mu/\epsilon} \, \sfC_{\Psi^{(\epsilon)}} \,, \qquad
\Inf_j[\tilde \Psi]
\le e^{c_\mu/\epsilon} \, \Inf_j[\Psi^{(\epsilon)}] \,.
\end{align}
\end{lemma}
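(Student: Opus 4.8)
The plan is to prove \eqref{ExpM1} by a direct expansion of the shifted product and a regrouping of monomials, and then to derive \emph{both} inequalities in \eqref{ExpM3} from a single weighted Cauchy--Schwarz estimate, with the weight chosen as the geometric sequence $\epsilon^{|K|}$ so that a binomial identity collapses the resulting combinatorial sum to exactly $(1+\epsilon)^{|I|}$.

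\emph{Step 1 (kernel of the shifted polynomial).} For each finite $I\subseteq\bbT$ we have the elementary identity $\prod_{i\in I}(x_i+\mu_i)=\sum_{J\subseteq I}x^J\,\mu^{I\setminus J}$. Substituting this into $\tilde\Psi(x)=\sum_{I\in\cP^\fin(\bbT)}\psi(I)\prod_{i\in I}(x_i+\mu_i)$ and grouping terms according to the monomial $x^J$ gives $\tilde\Psi(x)=\sum_{J\in\cP^\fin(\bbT)}x^J\sum_{I\supseteq J}\psi(I)\,\mu^{I\setminus J}$, which is \eqref{ExpM1}. When $\bbT$ is infinite this is read in the sense of \eqref{eq:Xproba}; absolute convergence of the series defining $\tilde\psi(J)$, and hence the legitimacy of the rearrangement, is itself a consequence of the Cauchy--Schwarz bound of Step 2 (and if $\sfC_{\Psi^{(\epsilon)}}=\infty$ the estimates \eqref{ExpM3} hold trivially).

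\emph{Step 2 (weighted Cauchy--Schwarz).} Writing $K:=I\setminus J$, so that $I=J\cup K$ with $K$ disjoint from $J$, we have $\tilde\psi(J)=\sum_{K\subseteq\bbT\setminus J}\psi(J\cup K)\,\mu^K$. Splitting each summand as $\bigl(\psi(J\cup K)\,\epsilon^{|K|/2}\bigr)\cdot\bigl(\mu^K\,\epsilon^{-|K|/2}\bigr)$ and applying Cauchy--Schwarz,
\[
\tilde\psi(J)^2\le\Bigl(\sum_{K\subseteq\bbT\setminus J}\epsilon^{|K|}\psi(J\cup K)^2\Bigr)\Bigl(\sum_{K\subseteq\bbT\setminus J}\epsilon^{-|K|}(\mu^K)^2\Bigr),
\]
and the second factor factorizes over coordinates: $\sum_{K}\epsilon^{-|K|}(\mu^K)^2=\prod_{i\in\bbT\setminus J}\bigl(1+\mu_i^2/\epsilon\bigr)\le\exp\bigl(\sum_{i\in\bbT}\mu_i^2/\epsilon\bigr)=e^{\sfc_\mu/\epsilon}$. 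Hence $\tilde\psi(J)^2\le e^{\sfc_\mu/\epsilon}\sum_{K\subseteq\bbT\setminus J}\epsilon^{|K|}\psi(J\cup K)^2$. Summing over all $J$ and re-indexing by $I:=J\cup K$ (so that $J$ runs over subsets of $I$ and $K=I\setminus J$) gives
\[
\sfC_{\tilde\Psi}=\sum_{J}\tilde\psi(J)^2\le e^{\sfc_\mu/\epsilon}\sum_{I}\psi(I)^2\sum_{J\subseteq I}\epsilon^{|I\setminus J|}=e^{\sfc_\mu/\epsilon}\sum_{I}(1+\epsilon)^{|I|}\psi(I)^2=e^{\sfc_\mu/\epsilon}\,\sfC_{\Psi^{(\epsilon)}},
\]
where we used $\sum_{J\subseteq I}\epsilon^{|I\setminus J|}=\sum_{K\subseteq I}\epsilon^{|K|}=(1+\epsilon)^{|I|}$ by the binomial theorem. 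For the influence bound one repeats the identical computation but keeps the constraint $j\in J$ throughout; after re-indexing, the inner sum becomes $\sum_{K\subseteq I\setminus\{j\}}\epsilon^{|K|}=(1+\epsilon)^{|I|-1}\le(1+\epsilon)^{|I|}$ (valid as $\epsilon>0$), which yields $\Inf_j[\tilde\Psi]\le e^{\sfc_\mu/\epsilon}\,\Inf_j[\Psi^{(\epsilon)}]$.

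There is essentially no deep obstacle in this lemma. The only delicate point is the choice of the Cauchy--Schwarz weight: using $\epsilon^{|K|}$ (equivalently, distributing a factor $(1+\epsilon)^{|J\cup K|}$ in a way that puts $\epsilon^{|K|}$, not $(1+\epsilon)^{|K|}$, on the $\psi$-side) is exactly what makes $\sum_{K\subseteq I}\epsilon^{|K|}$ equal to $(1+\epsilon)^{|I|}$ with no spurious $2^{|I|}$ factor; any other natural splitting produces an exponentially worse constant. A secondary, routine point is justifying the interchange of summations and the absolute convergence of $\tilde\psi(J)$ when $\bbT$ is infinite, which, as noted, follows a posteriori from the estimate of Step 2 under the standing hypotheses $\sfc_\mu<\infty$ and $\sfC_{\Psi^{(\epsilon)}}<\infty$.
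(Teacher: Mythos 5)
Your proof is correct and follows essentially the same route as the paper: the same expansion of $(x+\mu)^I$ to get \eqref{ExpM1}, the same Cauchy--Schwarz splitting with weights $\epsilon^{|I\setminus J|}$ on the $\psi$-side and $\epsilon^{-|I\setminus J|}$ on the $\mu$-side, the bound $(1+\epsilon^{-1}\mu^2)^{\bbT\setminus J}\le e^{\sfc_\mu/\epsilon}$, and the binomial collapse $\sum_{J\subseteq I}\epsilon^{|I\setminus J|}=(1+\epsilon)^{|I|}$ after exchanging the order of summation. Your explicit remark that the influence case gives $(1+\epsilon)^{|I|-1}\le(1+\epsilon)^{|I|}$, and your aside on absolute convergence when $\bbT$ is infinite, are just slightly more detailed versions of what the paper does.
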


\begin{proof} Note that (\ref{ExpM1}) follows from the expansion
\begin{equation*}
	\tilde\Psi (x) = \Psi(x+\mu)  = \sum_I \psi(I) (x+\mu)^I
	= \sum_I \psi(I) \sum_{J \subseteq I} \mu^{I \setminus J} \, x^J
	= \sum_J \bigg( \sum_{I \supseteq J} \psi(I) \mu^{I \setminus J} \bigg)  x^J.
\end{equation*}
For any $\epsilon>0$, we can apply Cauchy-Schwarz to write
\begin{equation}
\begin{split}
	\tilde \psi(J)^2 & \le
	\bigg( \sum_{I \supseteq J} (\epsilon^{-1}\mu^2)^{I \setminus J} \bigg)
	\bigg( \sum_{I \supseteq J} \epsilon^{|I \setminus J|} \psi(I)^2 \bigg)
	= (1 + \epsilon^{-1} \mu^2)^{\bbT \setminus J}
	\bigg( \sum_{I \supseteq J} \epsilon^{|I \setminus J|} \psi(I)^2 \bigg)\,,
\end{split}
\end{equation}
and note that
\begin{equation*}
\begin{split}
	(1 + \epsilon^{-1} \mu^2)^{\bbT \setminus J}
	\le (1 + \epsilon^{-1} \mu^2)^{\bbT}
	\le  e^{c_\mu/\epsilon} \,.
\end{split}
\end{equation*}
Therefore
\begin{equation*}	
\sfC_{\tilde \Psi} := \sum_J   \tilde  \psi(J)^2 \le
	 e^{\sfc_\mu/\epsilon} \sum_I \bigg( \sum_{J \subseteq I}
	\epsilon^{|I \setminus J|} \bigg)   \psi(I)^2
	= e^{\sfc_\mu/\epsilon} \sum_I (1+\epsilon)^{|I|} \psi(I)^2
=: e^{\sfc_\mu/\epsilon}\, \sfC_{\Psi^{(\epsilon)}}  \,,
\end{equation*}
proving the first relation in \eqref{ExpM3}. The second relation is obtained similarly:
\begin{equation} \label{eq:InfPsutilde}
\begin{split}
	\Inf_j[\tilde \Psi] := \sum_{J \ni j}  \tilde \psi(J)^2
	& \le  e^{\sfc_\mu/\epsilon} \sum_{I \ni j} \Big(\sum_{J: j\in J \subseteq I}
	\epsilon^{|I \setminus J|}\Big) \psi(I)^2  \\
& \leq e^{\sfc_\mu/\epsilon} \sum_{I \ni j} (1+\epsilon)^{|I|} \psi(I)^2
=: e^{c_\mu/\epsilon} \, \Inf_j[\Psi^{(\epsilon)}] \,.
\end{split}
\end{equation}
This concludes the proof of the lemma.
\end{proof}

\medskip

\begin{proof}[Proof of Theorem~\ref{C:lindeberg2}]
Since $\Psi(\tilde\zeta) = \Psi(\zeta +\mu) =: \tilde\Psi(\zeta)$,
by Lemma~\ref{L:ExpMean} the conditions
$\sfc_\mu <\infty$ and $\sfC_{\Psi^{(\epsilon)}} <\infty$ ensure that
$\sfC_{\tilde\Psi} <\infty$. The polynomial chaos expansion
$\Psi(\tilde\zeta)$ is then a well-defined
$L^2$ random variable, by Remark~\ref{rem:L2easy},
and the same holds for $\Psi(\tilde\xi)$.

To prove (\ref{eq:lindeberg0}),
we are first going to truncate $\Psi$ to degree $\ell$,
i.e.\ we consider $\Psi^{\le \ell}$ and $\Psi^{> \ell}$, defined in (\ref{eq:Psid}).
Note that
\begin{equation} \label{lindtrunc1}
	\big| \bbE[f(\Psi(\tilde \zeta)) - f(\Psi^{\le \ell}(\tilde \zeta))] \big| \le
	\|f'\|_\infty \, \bbE[ |\Psi(\tilde \zeta) - \Psi^{\le \ell}(\tilde \zeta) | ] \le
	\|f'\|_\infty \, \bbE[ |\Psi^{> \ell}(\tilde \zeta)|^2]^{\frac{1}{2}},
\end{equation}
and the same bound holds when $\tilde \zeta$ is replaced by $\tilde \xi$, therefore
\begin{equation}\label{lindtrunc2}
\begin{split}
	\big| \bbE[f(\Psi(\tilde \zeta))] - \bbE[f(\Psi(\tilde \xi))] \big| & \le
	2 \|f'\|_\infty \bbE[ |\Psi^{> \ell}(\tilde \zeta)|^2]^{\frac{1}{2}}
	+ \big| \bbE[f(\Psi^{\le \ell}(\tilde \zeta))] - \bbE[f(\Psi^{\le \ell}(\tilde \xi))] \big|,
\end{split}
\end{equation}
where we use the fact that, since $\zeta$ and $\xi$ are independent with zero mean,
$$
\bbE[ |\Psi^{> \ell}(\zeta)|^2] = \bbE[ |\Psi^{> \ell}(\xi)|^2] = \sum_{|I|>\ell}
\psi(I)^2 = \sfC_{\Psi^{>\ell}}.
$$

To bound the first term in (\ref{lindtrunc2}), we write $\tilde \Psi^{>\ell}(x) := \Psi^{>\ell}(x+\mu)$,
which has kernel $\tilde \psi^{>\ell}$
(note that first we truncate the kernel and then we shift $x \mapsto x+\mu$).
Since $\zeta$ are independent with zero mean
and variance one, by Lemma~\ref{L:ExpMean} we have
$$
\begin{aligned}
\bbE[|\Psi^{> \ell}(\tilde \zeta)|^2] = \bbE[ |\tilde \Psi^{>\ell}(\zeta)|^2]
= \sum_J \tilde \psi^{>\ell}(J)^2 &
=: \sfC_{\tilde \Psi^{>\ell}} \le e^{\sfc_\mu/\epsilon} \, \sfC_{\Psi^{(\epsilon), >\ell}} \,.
\end{aligned}
$$
Substituting this bound into (\ref{lindtrunc2}) then leads to the first term in (\ref{eq:lindeberg0}).

To bound the second term in (\ref{lindtrunc2}), we write $\tilde \Psi^{\le \ell}(x) := \Psi^{\le \ell}(x+\mu)$,
and then apply Theorem~\ref{th:lindeberg} to obtain
$$
\begin{aligned}
\big| \bbE[f(\Psi^{\le \ell}(\tilde \zeta))] - \bbE[f(\Psi^{\le \ell}(\tilde \xi))] \big| & = \big| \bbE[f(\tilde \Psi^{\le \ell}(\zeta))] - \bbE[f(\tilde \Psi^{\le \ell}(\xi))] \big| \\
& \leq C_f \, \sfC_{\tilde \Psi^{\le \ell}} \Big( 16 \ell^2 \,  m_{2}^{>M} \,+\,
	70^{\ell+1} \, \big(m^{\le M}_{3} \big)^\ell
	\max_{i \in \bbT} \sqrt{\Inf_i\big[ \tilde \Psi^{\le \ell} \big]}\,\Big).
\end{aligned}
$$
Applying the bounds in Lemma~\ref{L:ExpMean} to $\Psi^{\le \ell}$ then gives the remaining terms in (\ref{eq:lindeberg0}), where we have combined and upper bounded factors of $e^{\sfc_\mu/\epsilon}$.
\end{proof}

\medskip

\begin{proof}[Proof of Theorem~\ref{th:lindeberg}]
We note that it is sufficient to prove the theorem in the case $|\bbT| < \infty$, because the general case follows by considering finite $\Lambda_N \uparrow \bbT$. For notational simplicity, we assume that $\bbT = [N] := \{1,\ldots, N\}$.
\medskip

\noindent
\textbf{Step 1.} We first truncate $\Psi$ to a degree $\ell$ polynomial $\Psi^{\le \ell}$. By the same calculations as in (\ref{lindtrunc1}) and (\ref{lindtrunc2}), we have
\begin{equation}\label{eq:startingpoint}
\begin{split}
	\big| \bbE[f(\Psi(\zeta))] - \bbE[f(\Psi(\xi))] \big| & \le
2 \|f'\|_\infty \bbE[ |\Psi^{> \ell}(\zeta)|^2]^{\frac{1}{2}}
+ \big| \bbE[f(\Psi^{\le \ell}(\zeta))] - \bbE[f(\Psi^{\le \ell}(\xi))] \big| .
\end{split}
\end{equation}
This leads to the first term in the right hand side of \eqref{eq:lindeberg}.

\medskip

\noindent
\textbf{Step 2.}
For a fixed $f\in C^3_b(\bbR)$, we denote
$$
f(\Psi^{\le \ell}(x)) =: g(x_{1}, x_{2}, \ldots, x_N).
$$
For a vector $x\in \bbR^N$ and $y\in \bbR$, we also set
$$h_{j}^x(y)=g(x_1, \ldots ,x_{j-1},y,x_{j+1}, \ldots, x_N).
$$
Using this notation, we can write
\begin{eqnarray}\label{Lind-tele}
f(\Psi^{\le \ell}(\zeta)) - f(\Psi^{\le \ell}(\xi))
&=&   \sum_{j=1}^N g(\gz_{1},\ldots,\gz_{j},\xi_{j+1},\ldots, \xi_N)
       -g(\gz_{1},\ldots, \zeta_{j-1}, \xi_{j},\ldots, \xi_N)\nonumber\\
 &=&   \sum_{j=1}^N  \big(h^{X^j}_{j}(\gz_{j}) - h^{X^j}_{j}(\xi_{j})\big)  ,
\end{eqnarray}
where we have used the notation
\begin{equation} \label{eq:Xj}
	X^j := (X^j_1, \ldots, X^j_N):= (\gz_{1},\ldots,\gz_{j},\xi_{j+1},\ldots, \xi_N) \,.
\end{equation}
Next, we will be Taylor expanding each term in \eqref{Lind-tele}.
For this we note that for $y\in \bbR$
$$ h^x_{j}(y)=h^x_{j}(0) +y \frac{\dd h^x_{j}}{\dd y}(0) +
\frac{y^2}{2}\frac{\dd^2h^x_{j}}{\dd y^2}(0)+ R^x_{j}(y) \,,
$$
where the remainder term has the form
$$R^x_{j}(y) =\frac{1}{2}\int_0^y \frac{\dd^3h^x_{j}}{\dd y^3}(t) (y-t)^2 \dd t
=\frac{1}{2} \frac{\dd^2 h^x_{j}}{\dd y^2}(0) \, y^2
+ \int_0^y \frac{\dd^2 h^x_{j}}{\dd y^2}(t) \, (y-t) \, \dd t ,
$$
from which it follows that
\begin{eqnarray}\label{rem-est}
|\,R^x_{j}(y)\, |\leq \min \left\{ \frac{|y|^3}{6}\left\|
\frac{\dd^3h^x_{j}}{\dd y^3} \right\|_{\infty}, y^2 \left\|
\frac{\dd^2h^x_{j}}{\dd y^2} \right\|_{\infty}  \right\}.
\end{eqnarray}
Inserting the Taylor expansion into \eqref{Lind-tele} we obtain
\begin{equation*}
\begin{split}
f(\Psi^{\le \ell}(\zeta)) - f(\Psi^{\le \ell}(\xi)) = \sum_{j=1}^N \bigg\{ & h^{X^j}_{j}(0) +
\gz_{j}\frac{\dd h^{X^j}_{j}}{\dd y}(0) +
\frac{\gz^2_{j}}{2} \frac{\dd^2h^{X^j}_{j}}{\dd y^2}(0)
+R^{X^j}_{j}(\gz_{j})\\
& -  h^{X^j}_{j}(0) - \xi_{j}\frac{\dd h^{X^j}_{j}}{\dd y}(0)
-  \frac{\xi^2_{j}}{2}\frac{\dd^2h^{X^j}_{j}}{\dd y^2}(0)
-R^{X^j}_{j}(\xi_{j}) \bigg\} \,.
\end{split}
\end{equation*}
Since $\gz_i$ and $\xi_i$ both have zero mean and unit variance, taking expectation, we get
\begin{equation}\label{eq:starting}
\begin{split}
	\Big| \bbE[f(\Psi^{\le \ell}(\zeta))] - \bbE[f(\Psi^{\le \ell}(\xi))] \Big| &=
	\Bigg| \sum_{j=1}^N \bbE\Big[ R^{X^j}_{j}(\gz_{j}) - R^{X^j}_{j}(\xi_{j}) \Big] \Bigg| \\
	& \leq \sum_{j=1}^N \bbE\Big[ |R^{X^j}_{j}(\gz_{j})| \Big]  +
	\sum_{j=1}^N\bbE\Big[ | R^{X^j}_{j}(\xi_{j})| \Big].
\end{split}
\end{equation}
Since the estimates for both sums are identical, we will focus on the first one.

\medskip

\noindent
\textbf{Step 3.}
The derivatives of $h^x_{j}(\cdot)$ are easily computed:
\begin{equation*}
\begin{split}
	\frac{\dd^m h^x_{j}}{\dd y^m}(y)
	&= f^{(m)}\Big( \! \Psi^{\le \ell}(x_1,\ldots,x_{j-1},y,x_{j+1},\ldots, x_N) \!\Big)
	\Big(\!\frac{\partial \Psi^{\le \ell}(x_1,\ldots,x_{j-1},y,x_{j+1},\ldots, x_N)}{\partial y} \!\Big)^m \\
	&= f^{(m)}\Big( \Psi^{\le \ell}(x_1,\ldots,x_{j-1},y,x_{j+1},\ldots, x_N) \Big)
	\Big( \sum_{I\ni j, \, |I| \le \ell} \psi(I)
	x^{I \setminus\{j\}}  \Big)^m \,.
\end{split}
\end{equation*}
Then, setting $C_f =\max\{ \|f'\|_\infty, \|f^{(2)}\|_\infty, \|f^{(3)}\|_\infty\}$,
we can apply \eqref{rem-est} to bound
\begin{equation}\label{eq:basicbound}
\sum_{j=1}^N \bbE\Big[ |R^{X^j}_{j}(\gz_{j})| \Big]
  \leq C_f \sum_{j=1}^N \bbE \left[
	\varphi(L_j(X^j)) \right]  ,
\end{equation}
where $X^j$ was introduced in \eqref{eq:Xj}, and we define
\begin{equation}\label{eq:varphiL}
	\varphi(x) := \min\bigg\{\frac{|x|^3}{6}, |x|^2 \bigg\} \,, \qquad
	L_{j} (x) := \sum_{I \ni j, \, |I| \le \ell} \psi(I) x^I .
\end{equation}
Notice that $L_{j} (x)$ includes the variable $x_j$ in the product as a result of absorbing the powers of $y$ in \eqref{rem-est}.
Also note that $\varphi(x) = \varphi(|x|)$ and
\begin{equation*}
	\varphi(a+b) \le \varphi(2\max\{|a|, |b|\})
	\le \varphi(2 |a|) + \varphi(2 |b|) \le 4 |a|^2  + \frac{4}{3}  |b|^3 \,.
\end{equation*}
Writing $L_j(X^j) = (L_j(X^j)-L_j(X^{j-})) + L_j(X^{j-})$, where $X^{j-}:=(X^{j-}_1, \ldots, X^{j-}_N)$ is a suitably truncated version of $X^j$, we then obtain
\begin{equation} \label{eq:newde}
\begin{split}
	\bbE \big[ \varphi(L_j(X^j)) \big] & \le
	4\, \bbE \Big[ \big( L_j(X^j) - L_j(X^{j-}) \big)^2 \Big] +
	\frac{4}{3} \, \bbE \big[ |L_j(X^{j-})|^3 \big] \,.
\end{split}
\end{equation}
The two terms in the right hand side will give rise respectively to the two terms in the
right hand side of \eqref{eq:lindeberg}.

\medskip

\noindent
\textbf{Step 4.}
We now describe the truncation procedure.
This new ingredient, with respect to
\cite{MOO10}, is tailored to control random variables with finite second moments
and uniformly integrable squares.
Fix $M \in (0,\infty)$.
We want to decompose any real-valued random variable $Y$ with {\em zero mean} and finite variance in the following way:
\begin{equation} \label{eq:decomp}
	Y = Y^- + Y^+ \,,
\end{equation}
where $Y^-, Y^+$ are functions of $Y$ and possibly of some
extra randomness, such that
\begin{equation} \label{eq:propY}
\begin{split}
	& \bbE[Y^-] = \bbE[Y^+] = 0 \,,
	\qquad Y^- Y^+ = 0 \,, \\
	& |Y^-| \le |Y| \, \ind_{\{|Y| \le M\}} \,, \qquad
	\bbE[(Y^+)^2] \le 2 \, \bbE[Y^2 \ind_{\{|Y| > M\}}] \,.
\end{split}
\end{equation}
If $\bbE[Y \ind_{\{-M \le Y \le M\}}] = 0$ we are done: just
choose $Y^- := Y \ind_{\{-M \le Y \le M\}}$ and $Y^+ := Y - Y^-$. If, on the other hand,
$\bbE[Y \ind_{\{-M \le Y \le M\}}] > 0$ (the strictly negative case is analogous), we set
\begin{equation*}
	\overline T := \sup \{T \in [0,M]: \ \bbE[Y \ind_{\{-M \le Y \le T\}}] \le 0 \}
	\in [0,M] \,.
\end{equation*}
Note that $\bbE[Y \ind_{\{-M \le Y \le \overline T\}}] \ge 0$,
because $T \mapsto \bbE[Y \ind_{\{-M \le Y \le T\}}]$
is right-continuous. If
$\bbE[Y \ind_{\{-M \le Y \le \overline T\}}] = 0$,
defining $Y^- := Y \ind_{\{-M \le Y \le \overline T\}}$ and $Y^+ := Y - Y^-$,
all the properties in \eqref{eq:propY} are clearly satisfied,
except the last one that will be checked below.
Finally, we consider the
case $\bbE[Y \ind_{\{-M \le Y \le \overline T\}}] > 0$ (then necessarily
$\overline T > 0$). Since
$\bbE[Y \ind_{\{-M \le Y < \overline T\}}] \le 0$
by definition of $\overline T$, we must have $\bbP(Y = \overline T) > 0$.
Then take a random variable $U$ uniformly distributed in $(0,1)$ and independent of $Y$,
and define
\begin{equation*}
	Y^- := Y \big( \ind_{\{-M \le Y < \overline T\}} + \ind_{\{Y = \overline T,
	\, U \le \rho\}} \big) \,, \qquad \text{where} \qquad
	\rho := \frac{-\bbE[Y \ind_{\{-M \le Y < \overline T\}}]}{\overline T\,
	\bbP(Y = \overline T)} \in (0,1) \,.
\end{equation*}
Setting $Y^+ := Y - Y^-$, all the properties \eqref{eq:propY} but the last one are
clearly satisfied.

For the last property, we write
\begin{equation*}
\begin{split}
	\bbE[(Y^+)^2] & = \bbE[(Y^+)^2 \ind_{\{|Y| > M\}}] +
	\bbE[(Y^+)^2 \ind_{\{|Y| \le M\}} ]
	= \bbE[Y^2 \ind_{\{|Y| > M\}}] +
	\bbE[(Y^+)^2 \ind_{\{|Y| \le M\}} ] \,,
\end{split}
\end{equation*}
because $Y^+ = Y$ on the event $\{|Y| > M\}$. For the second term, since
$0 \le Y^+ \le M$ on the event $\{|Y| \le M\}$, we can write
$(Y^+)^2 \le M Y^+$ (no absolute value needed). Since $Y^- = Y^- \ind_{\{|Y| \le M\}}$
has zero mean by \eqref{eq:propY}, we obtain
\begin{equation*}
\begin{split}
	\bbE[(Y^+)^2 \ind_{\{|Y| \le M\}} ] & \le
	M\, \bbE[Y^+ \ind_{\{|Y| \le M\}} ] = M\, \bbE[(Y^+ + Y^-) \ind_{\{|Y| \le M\}} ] \\
	& = M\, \bbE[Y \ind_{\{|Y| \le M\}} ] = M\, (- \bbE[Y \ind_{\{|Y| > M\}} ])
	\le \bbE[Y^2 \ind_{\{|Y| > M\}}] \,,
\end{split}
\end{equation*}
where we have used the fact that $\bbE[Y] = 0$ by assumption, and Markov's inequality.
The last relation in \eqref{eq:propY} is proved.

\medskip

\noindent
\textbf{Step 5.}
We apply the decomposition \eqref{eq:decomp} to the random variables $(X^j_i)_{i\in [N]}$,
defined in \eqref{eq:Xj},
where the extra randomness used in the construction is taken independently for each variable:
then we write
 \begin{equation*}
 X^j_{i} = X^{j-}_{i} + X^{j+}_{i} ,
 \end{equation*}
and the properties in \eqref{eq:propY} are satisfied.
Note that by \eqref{eq:varphiL} we can write
\begin{equation*}
	L_j(X^j) - L_j(X^{j-}) = \sum_{I \ni j,\, |I| \le \ell} \psi(I)
	\sum_{\Gamma \subseteq I,\, |\Gamma| \ge 1} \, (X^{j+})^\Gamma
	(X^{j-})^{I \setminus \Gamma} \,.
\end{equation*}
Since the random variables $X^{j-}_1, X^{j+}_1, X^{j-}_2, X^{j+}_2, \ldots$
are orthogonal in $L^2$ by construction, setting $\sigma_{\pm, i}^2 :=
\bbE[(X^{j\pm}_i)^2]$ and observing that $\sigma_{-,i}^2 +
\sigma_{+,i}^2 = \bbvar(X^j_i) = 1$, we obtain
\begin{equation*}
\begin{split}
	\bbE \big[ \big( L_j(X^j) - L_j(X^{j-}) \big)^2 \big] & =
	\sum_{I \ni j,\, |I| \le \ell} \psi(I)^2 \sum_{\Gamma \subseteq I,\, |\Gamma| \ge 1}
	(\sigma_{+}^2)^\Gamma (\sigma_{-}^2)^{I \setminus \Gamma} \\
	& = \sum_{I \ni j,\, |I| \le \ell} \psi(I)^2 \big( 1 - (\sigma_-^2)^I \big) \le
	\sum_{I \ni j,\, |I| \le \ell} \psi(I)^2 \big( 1- (1 - \overline\sigma_+^2)^{|I|} \big) \,,
\end{split}
\end{equation*}
where
\begin{equation*}
	\overline\sigma_+^2 := \max_{i=1,\ldots, N} \sigma_{+,i}^2
	= \max_{i=1,\ldots, N} \bbE[(X^{j+}_i)^2] \le
	2 \max_{i=1,\ldots, N} \bbE[(X^{j}_i)^2 \,
	\ind_{\{|X^{j}_i|>M\}}] \le 2 \, m_{2}^{>M} \,,
\end{equation*}
having used \eqref{eq:propY} and the definition of $m_2^{>M}$ in \eqref{eq:mtrunc} (recall \eqref{eq:Xj}).
Since $1- (1-p)^n \leq np$, we obtain
\begin{eqnarray*}
	\sum_{j=1}^N \bbE \big[ \big( L_j(X^j) - L_j(X^{j-}) \big)^2 \big] &\le&
	2 \,  m_{2}^{>M}
	\sum_{j=1}^N \Bigg( \sum_{I \ni j,\, |I| \le \ell} |I| \,\psi(I)^2 \Bigg)\\
	&\le& 2  m_{2}^{>M}\, \ell^2 \sum_{|I|\leq \ell}  \psi(I)^2 \,.
\end{eqnarray*}
Tracing back through \eqref{eq:startingpoint}, \eqref{eq:starting}, \eqref{eq:basicbound} and \eqref{eq:newde},
we note that this gives the first term in the right hand side of \eqref{eq:lindeberg}.

\medskip

\noindent
\textbf{Step 6.}
We finally consider the contribution of the second term in \eqref{eq:newde}.
We apply the hypercontractivity results in~\cite{MOO10}: by Propositions~3.16 and~3.12 therein,
denoting by $\|Y\|_q := \bbE[|Y|^q]^{1/q}$ the usual $L^q$ norm,
for every $q > 2$ we have
\begin{equation} \label{eq:hyper}
	\| L_{j}(X^{j-}) \|_q
	\le (B_q)^\ell \, \| L_{j}(X^{j-}) \|_2 \,,
\end{equation}
where
\begin{equation*}
	B_q := 2 \sqrt{q-1} \,
	\max_{i\in [N]} \frac{\|X^{j-}_i\|_q}{\|X^{j-}_i\|_2} \,.
\end{equation*}
Let us set $Y := X^{j}_i$ for short and choose $q=3$. Since $|Y^-| \le M$, by \eqref{eq:propY},
we have
\begin{equation*}
	\|Y^-\|_3 \le \bbE[ |Y|^3 \ind_{\{|Y| \le M\}} ]^{1/3}
	\le \big(m^{\le M}_3 \big)^{1/3} \,,
\end{equation*}
where we recall \eqref{eq:Xj} and the definition of $m^{\le M}_3$ from (\ref{eq:mtrunc}). On the other hand, again by \eqref{eq:propY},
\begin{equation*}
\begin{split}
	\|Y^-\|_2^2 & =
	\|Y\|_2^2 - \|Y^+\|_2^2
	= \bbE[Y^2] - \bbE[(Y^+)^2] \ge \bbE[Y^2] - 2 \bbE[Y^2 \ind_{\{|Y| > M\}}] \\
	& = 1 - 2 \bbE[ (X^{j}_i)^2
	\ind_{\{|X^{j}_i |>M\}}] \ge 1 - 2 m^{>M}_{2} \,,
\end{split}
\end{equation*}
hence
\begin{equation*}
	B_3 \le 2 \sqrt{2} \frac{\big(m^{\le M}_3 \big)^{1/3}}
	{\sqrt{1 - 2 m^{>M}_{2}}} \le 4 \big(m^{\le M}_3 \big)^{1/3} \,,
\end{equation*}
provided $m_{>M}^{(2)} \le \frac{1}{4}$, as in the assumptions of Theorem~\ref{th:lindeberg}.
Therefore, \eqref{eq:hyper} for $q=3$ yields
\begin{equation*}
	\bbE \left[ | L_{j}(X^{j-})|^3 \right] \le
	64^\ell \, \big(m^{\le M}_{3} \big)^\ell \,
	\bbE \left[ L_{j} (X^{j-})^2 \right] ^{3/2} \,.
\end{equation*}
Note that, since $\bbE[(X_i^{j-})^2] \le \bbE[(X_i^{j})^2] = 1$, we have
\begin{equation*}
\begin{split}
	\bbE \left[ L_{j} (X^{j-})^2 \right] & =
	\sum_{I \ni j,\, |I| \le \ell} \psi(I)^2 \prod_{i\in I} \bbE[(X_i^{j-})^2]
	\le \sum_{I \ni j,\, |I| \le \ell}  \psi(I)^2 = \Inf_j[\Psi^{\le \ell}] \,.
\end{split}
\end{equation*}
Therefore
\begin{equation} \label{eq:reallyquasilast}
\begin{split}
	\sum_{j=1}^N \bbE \left[ | L_{j}(X^{j-})|^3 \right]
	& \le 64^\ell \, \big( m^{\le M}_{3} \big)^\ell\,
	\Big( \max_{i \in [N]} \sqrt{\Inf_i[\Psi^{\le \ell}]} \Big)
	\sum_{j=1}^N \sum_{I \ni j,\, |I| \le \ell} \psi(I)^2 \\
	& \le \ell \, 64^\ell \, \big(m^{\le M}_{3} \big)^\ell \,
	\Big( \max_{i \in [N]} \sqrt{\Inf_i[\Psi^{\le \ell}]} \Big)
	\sum_{|I|\le \ell}  \psi(I)^2 \,.
\end{split}
\end{equation}
This gives the third term in the right hand side of \eqref{eq:lindeberg}
(because $\frac{8}{3} \ell \, 64^\ell \le 70^{\ell+1}$),
which completes the proof.
\end{proof}

\section{Proof of the convergence to Wiener chaos}
\label{sec:genproof}

In this section, we prove Theorems~\ref{th:general} and \ref{th:general2} on the convergence of
polynomial chaos expansions to Wiener chaos expansions.

\begin{proof}[Proof of Theorem~\ref{th:general}]
Let $W(\cdot)$ be the $d$-dimensional white noise used to define the Wiener chaos expansion
for $\bPsi_{0}$ in Theorem~\ref{th:general}. Given the tessellation $\cC_\delta$
indexed by $\bbT_\delta$, where each cell $\cC_\delta(x)$ has the same volume $v_\delta$,
for each $x\in \bbT_\delta$, we define
\begin{equation}
\xi_{\delta, x} := \mu_\delta(x) + v_\delta^{-1/2} \int_{\cC_\delta(x)}
\sigma_\delta W(\dd y) = v_\delta^{-1/2} \int_{\cC_\delta(x)}
\big( \sigma_\delta W(\dd y) + \bar\mu_\delta(y)\dd y\big),
\end{equation}
where we recall that $\bar\mu_\delta := v_\delta^{-1/2}\mu_\delta$ by \eqref{eq:barmu}.
Note that $\xi_\delta := (\xi_{\delta, x})_{x\in \bbT}$ is a family of independent
Gaussian random variables with the same mean and variance as
$\zeta_\delta=(\zeta_{\delta, x})_{x\in \bbT_\delta}$.

We recall that our goal is to show that $\Psi_\delta(\zeta_\delta) \to \bPsi_0$ in distribution.
The strategy is first
to focus on $\Psi_\delta(\xi_\delta)$ instead of $\Psi_\delta(\zeta_\delta)$:
we can write
$\Psi_\delta(\xi_\delta)$ as a Wiener chaos expansion with respect to $W(\cdot)$,
like $\bPsi_0$,
and show that $\bbE[|\Psi_\delta(\xi_\delta) - \bPsi_{0}|^2] \to 0$ as $\delta\downarrow 0$;
then we use the Lindeberg principle, Theorem~\ref{C:lindeberg2},
to replace $\Psi_\delta(\xi_\delta)$ by $\Psi_\delta(\zeta_\delta)$.

\medskip

\noindent
{\bf Step 1.} We first show that for each degree $k \in \N_0$,
\begin{equation}\label{deglconv}
\lim_{\delta\downarrow 0} \bbE[ |\Psi^{(k)}_\delta(\xi_\delta) - \bPsi_{0}^{(k)}|^2] = 0,
\end{equation}
where $\Psi_\delta^{(k)}$ is a polynomial with kernel
$\psi_\delta^{(k)}(I) := \psi_\delta(I) \ind_{\{ |I|=k\}}$, and similarly, $\bPsi_{0}^{(k)}$
is defined as $\bPsi_{0}$ in (\ref{eq:general}), except the kernel $\bpsi_{0}$ therein is
replaced by $\bpsi^{(k)}_{0}(I) := \bpsi_{0}(I) \ind_{\{|I|=k\}}$
(that is, we take the $k$-th term in the sum).
Recalling from \eqref{eq:barpsi} that
$\bar \psi_\delta(I) := v_\delta^{-|I|/2} \psi_\delta(I)$,
and extending $\bar \psi_\delta$ to a function
defined on $\R^k$, as discussed before
Theorem~\ref{th:general}, we can write
\begin{eqnarray*}
\Psi_\delta^{(k)}(\xi_\delta) &=& \!\!\!\!\!\!\!\!\!\! \sum_{I\in \cP^\fin(\bbT_\delta), |I|=k}
\!\!\!\!\!\!\!\!\!\! \psi_\delta(I) \xi_\delta^I = \frac{1}{k!} \idotsint_{(\R^d)^k}
\bar \psi_\delta(y_1, \ldots, y_k) \prod_{i=1}^k \big( \sigma_\delta W(\dd y_i)
+ \bar \mu_\delta(y_i) \dd y_i\big) \\
&=& \frac{1}{k!} \sum_{I \subset [k]:=\{1, \ldots, k\}}
\int_{(\R^d)^{k-|I|}} \Big(\int_{(\R^d)^{|I|}}
\!\!\sigma_\delta^{k-|I|} \bar \psi_\delta(y_1, \ldots, y_k) \prod_{i\in I}
\bar\mu_\delta(y_i) \dd y_i\Big) \prod_{j\in [k]\setminus I} W(\dd y_j).
\end{eqnarray*}
A similar expansion holds for $\bPsi_{0}^{(k)}$ with $\bar \psi_\delta$, $\bar \mu_\delta$,
$\sigma_\delta$
replaced respectively by $\bpsi_{0}$, $\bmu_{0}$,
$\bsigma_0$. Comparing the two expansions term by term
for each $I\subset [k]$, we then obtain, by the triangle inequality and the
Ito isometry \eqref{eq:WI},
\begin{eqnarray}
&& \bbE[ |\Psi^{(k)}_\delta(\xi_\delta) - \bPsi_{0}^{(k)}|^2]^{\frac{1}{2}} \label{deglconv2}\\
&\leq&  \frac{1}{k!} \sum_{I\subset [k]} \Bigg(\int_{(\R^d)^{k-|I|}}
\Bigg\{\int_{(\R^d)^{|I|}} \Big(\sigma_\delta^{k-|I|}
\bar\psi_\delta \prod_{i\in I} \bar\mu_\delta(y_i)
- \bsigma_0^{k-|I|}\bpsi_{0} \prod_{i\in I}\bmu_{0}(y_i)\Big) \prod_{i\in I}\dd y_i\Bigg\}^2
\prod_{j\in [k]\setminus I } \dd y_j\Bigg)^{\frac{1}{2}}. \nonumber
\end{eqnarray}
To see that each term above tends to $0$ as $\delta\downarrow 0$, let us assume w.l.o.g.\ that either $I=\emptyset$ or $I=[n]$ for some $1\leq n\leq k$. We then write in a telescopic sum
\begin{equation}\label{deglconv3}
\begin{aligned}
& \sigma_\delta^{k-n}
\bar\psi_\delta \prod_{i=1}^n \bar\mu_\delta(y_i) -
\bsigma_0^{k-n}\bpsi_{0} \prod_{i=1}^n \bmu_{0}(y_i) \\
=\ & \Delta_0
\prod_{i=1}^n \bar\mu_\delta(y_i) +\bsigma_0^{k-n} \bpsi_{0}
\Delta_1 \prod_{i=2}^n \bar\mu_\delta(y_i) + \cdots +
\bsigma_0^{k-n}\bpsi_{0} \prod_{i=1}^{n-1}
\bmu_{0}(y_i) \Delta_n,
\end{aligned}
\end{equation}
where $\Delta_0 :=
\bsigma_\delta^{k-n} \bar\psi_\delta - \bsigma_0^{k-n} \bpsi_{0}$ and
$\Delta_i := \bar\mu_\delta(y_i) - \bmu_{0}(y_i)$ for $i=1,\ldots,n$.
The contribution of each term from \eqref{deglconv3}
to the integrals in (\ref{deglconv2}) can be bounded by applying Cauchy-Schwarz to the
inner integral, in such a way that the $\psi$ term is separated from the product of the $\mu$'s.
It is then easily seen that all terms tend to $0$ as $\delta\downarrow 0$
by the assumptions in Theorem~\ref{th:general} that $\bar\mu_\delta\to\bmu_{0}$
and $\bar\psi_\delta \to \bpsi_{0}$ in $L^2$,
together with $\sigma_\delta\to\bsigma_0 \in (0,\infty)$.
This implies (\ref{deglconv}).
\medskip

\noindent
{\bf Step 2.} We next give a uniform $L^2$ bound on the tail of the series for $\Psi_\delta(\zeta_\delta)$ and $\bPsi_{0}$. More precisely, for any $\ell <N$, let $\Psi_\delta^{(\ell, N)}:= \sum_{\ell<k<N} \Psi_\delta^{(k)}$ and $\bPsi_{0}^{(\ell, N)}:= \sum_{\ell<k<N} \bPsi_{0}^{(k)}$. Denote $\Psi_\delta^{>\ell}$ and $\bPsi_{0}^{>\ell}$ for the case $N=\infty$. We will show that
\begin{equation}\label{truncest1}
\lim_{\ell\to\infty} \limsup_{\delta \downarrow 0} \bbE[|\Psi_\delta^{>\ell}(\zeta_\delta)|^2] = 0, \qquad\mbox{and} \qquad
\bPsi_{0} = \sum_{k=0}^\infty \bPsi_{0}^{(k)} \quad \mbox{converges in }  L^2.
\end{equation}
Together with (\ref{deglconv}) and the fact that $\bbE[\Psi_\delta^{(\ell, N)}(\xi_\delta)^2] = \bbE[\Psi_\delta^{(\ell, N)}(\zeta_\delta)^2]$ for all $0\leq \ell <N\leq \infty$, it follows that
$$
\bbE[\Psi_\delta(\zeta_\delta)^2] = \bbE[\Psi_\delta(\xi_\delta)^2] \to \bbE[\bPsi_{0}^2] \qquad \mbox{as } \delta\downarrow 0,
$$
which is one of the claims in Theorem~\ref{th:general}.

If we denote $\xi_{\delta, x} = \mu_\delta(x) + \tilde \xi_{\delta, x}$ and let
$\tilde \Psi^{(\ell, N)}_\delta(x) := \Psi^{(\ell, N)}_\delta(x+\mu_\delta)$, then for $\epsilon>0$
as specified in Theorem~\ref{th:general}~\eqref{it:3}
we can apply Lemma~\ref{L:ExpMean} (actually a modification of it, where we take into account
that the random variables do not have normalized variance) to obtain
\begin{equation}\label{truncest2}
\bbE[|\Psi_\delta^{(\ell, N)}(\xi_\delta)|^2] = \bbE[|\tilde \Psi_\delta^{(\ell, N)}
(\tilde \xi_\delta)|^2]
\ \leq\  e^{\frac{1}{\epsilon\sigma_\delta^2} \sum_{x\in \bbT_\delta} \mu_\delta(x)^2}
\!\!\!\!\!\!\sumtwo{I\in \cP^\fin(\bbT_\delta)}{\ell<|I|<N}
\!\!\!\! (1+\epsilon)^{|I|} (\sigma_\delta^2)^{|I|}\, \psi_\delta(I)^2.
\end{equation}
Similar relation as \eqref{truncest2} holds for $\zeta_\delta$ replacing $\xi_\delta$.
Since $\sum_{x\in \bbT_\delta} \mu_\delta(x)^2 = \Vert \bar\mu_\delta\Vert^2_{L^2(\R^d)}$ (recall
the extension of $f : \bbT_\delta \to \R$ to $f: \R^d \to \R$ as specified before
Theorem~\ref{th:general}), the assumptions in Theorem~\ref{th:general}~\eqref{it:1}
and~\eqref{it:3}
immediately imply the first limit in (\ref{truncest1}) if we let $N\uparrow \infty$
in (\ref{truncest2}). It also shows that $\Psi_\delta(\xi_\delta)
=\sum_{k=0}^\infty \Psi^{(k)}_\delta(\xi_\delta)$, as well as $\Psi_\delta(\zeta_\delta)$,
are $L^2$ convergent series.

By (\ref{deglconv}), we can take the limit $\delta\downarrow 0$ in (\ref{truncest2})
to obtain
$$
\bbE[|\bPsi_{0}^{(\ell, N)}|^2] = \lim_{\delta \downarrow 0}
\bbE[|\Psi_\delta^{(\ell, N)}(\xi_\delta)|^2]
\leq e^{\frac{1}{\epsilon \bsigma_0^2}
\Vert \bmu_{0}\Vert^2_{L^2(\R^d)}} \limsup_{\delta \downarrow 0} \!\!
\sumtwo{I\in \cP^\fin(\bbT_\delta)}{|I|>\ell}\!\!\!\!\! (1+\epsilon)^{|I|}
(\sigma_\delta^2)^{|I|}\,\psi_\delta(I)^2.
$$
By assumptions~\eqref{it:1} and~\eqref{it:3} of Theorem~\ref{th:general},
this implies that $\bPsi_{0}=\sum_{k=0}^\infty \bPsi_{0}^{(k)}$ is an $L^2$ convergent
series, proving the second relation in \eqref{truncest1}.

We also observe that if $\mu_\delta(x) \equiv 0$ there is no need to
apply Lemma~\ref{L:ExpMean}: relation \eqref{truncest2} holds without the exponential
pre-factor and with $\epsilon = 0$, because $\xi_\delta(x)$, $x\in \bbT_\delta$
are random variables
with zero mean and finite variance $\sigma_\delta$ (cf. Remark~\ref{truncest1}).

\medskip

\noindent
{\bf Step 3.} We now use the Lindeberg principle, Theorem~\ref{C:lindeberg2}, to show that for
each $\ell\in \N_0$,
$\Psi_\delta^{\le \ell}(\zeta_\delta):=\sum_{k=0}^\ell \Psi^{(k)}_\delta(\zeta_\delta)$
has the same limiting distribution as $\Psi_\delta^{\le \ell}(\xi_\delta)$ as $\delta\downarrow 0$.
Together with the $L^2$ convergence of $\Psi_\delta^{\le\ell}(\xi_\delta)$ to
$\bPsi_{0}^{\le \ell} := \sum_{k=0}^\ell \bPsi_{0}^{(k)}$
proved in {\em Step 1}, cf.\ \eqref{deglconv},
as well as the uniform $L^2$ bound on $\Psi_\delta^{>\ell}(\zeta_\delta)$
and $\bPsi_{0}^{>\ell}$ shown in {\em Step 2}, cf.\ \eqref{truncest1},
this implies that $\Psi_\delta(\zeta_\delta)$
converges in distribution to $\bPsi_{0}$ as $\delta \downarrow 0$.

It suffices to show that for all $f\in {\mathscr C^3}$ with
$C_f := \max\{\|f'\|_\infty, \|f''\|_\infty, \|f'''\|_\infty\} <\infty$,
\begin{equation}\label{fdiffzero}
\lim_{\delta\downarrow 0} \big|\bbE\big[f(\Psi_\delta^{\le \ell}(\zeta_\delta))
- f(\Psi_\delta^{\le \ell}(\xi_\delta))\big]\big|=0.
\end{equation}
With $\epsilon$ as specified in Theorem~\ref{th:general}~\eqref{it:3},
we can apply Theorem~\ref{C:lindeberg2}
(actually a slight modification of it, taking into account the non normalized variance $\sigma_\delta$
of the variables used here): the
absolute value in the left hand side of \eqref{fdiffzero} is bounded by
\begin{equation}\label{eq:isbounded}
e^{\frac{2}{\epsilon\sigma_\delta^2}\Vert \bar\mu_\delta\Vert^2_{L^2(\R^d)}} \, C_f
\, \widehat\sfC_{\Psi_\delta^{(\epsilon),\le\ell}} \, \Big( 16 \ell^2
        \frac{m_{2}^{>M}}{\sigma_\delta^2} +
	70^{\ell+1} \bigg(\frac{m^{\le M}_{3}}{\sigma_\delta^3} \bigg)^\ell \max_{i \in \bbT_\delta} \sqrt{\widehat\Inf_i\big[\Psi_\delta^{(\epsilon),\le\ell} \big]}\Big),
\end{equation}
where
\begin{gather*}
\widehat\sfC_{\Psi_\delta^{(\epsilon),\le\ell}}
:= \sum_{|I|\le \ell} (1+\epsilon)^{|I|} (\sigma_\delta^2)^{|I|}\, \psi_\delta(I)^2
= \sum_{k=0}^{\ell} (1+\epsilon)^{k} \frac{1}{k!} \big\Vert \sigma_\delta\,
\bar\psi_\delta\big\Vert^2_{L^2((\R^d)^{k})} \\
\widehat\Inf_i\big[\Psi_\delta^{(\epsilon),\le\ell} \big] :=\  \!\!\!\!\!
\sum_{I: I\ni i, |I|\leq \ell} (1+\epsilon)^{|I|} (\sigma_\delta^2)^{|I|}\, \psi_\delta(I)^2
=  \sum_{k=1}^\ell \frac{(1+\epsilon)^k}{(k-1)!} \, (\sigma_\delta^2)^k
\big\Vert
\bar\psi_\delta \ind_{\{x_1\in \cC_\delta(i)\}} \big\Vert_{L^2((\R^d)^{k})}^2 \,,
\end{gather*}
and we recall that
$\cC_\delta(i)\subset \R^d$ is the cell indexed by $i\in \bbT_\delta$
in the tessellation $\cC_\delta$.
We are left with showing that \eqref{eq:isbounded} vanishes as $\delta \downarrow 0$.
Note that
\begin{itemize}
\item $\widehat\sfC_{\Psi_\delta^{(\epsilon),\le\ell}} $
is uniformly bounded by Theorem~\ref{th:general}~\eqref{it:2}-\eqref{it:3};
\item $\Vert \bar\mu_\delta\Vert^2_{L^2(\R^d)}\to \Vert \bmu_{0}\Vert^2_{L^2(\R^d)}$
and $\sigma_\delta \to \bsigma_0 > 0$ by Theorem~\ref{th:general}~\eqref{it:1};
\item
$m_3^{\le M} \le M^3$
and $m_2^{>M}$ can be made arbitrarily small by choosing $M$ large,
by the definition \eqref{eq:mtrunc}
(to be applied to the centered variables $\zeta_{\delta,x} - \bbE[\zeta_{\delta,x}]$)
and the fact that
$((\zeta_{\delta,x} - \bbE[\zeta_{\delta,x}])^2)_{\delta \in (0,1),\, x \in \bbT_\delta}$
are uniformly integrable by assumption.
\end{itemize}
It only remains to verify that $\widehat\Inf_i\big[\Psi_\delta^{(\epsilon),\le\ell} \big]$
vanishes as $\delta \downarrow 0$, uniformly in $i \in \bbT_\delta$.
Since
$$
\begin{aligned}
\widehat\Inf_i\big[\Psi_\delta^{(\epsilon),\le\ell} \big]
\leq \  2 \sum_{k=1}^\ell \frac{(1+\epsilon)^k}{(k-1)!} (\sigma_\delta^2)^k
\Big( \big\Vert
\bar\psi_\delta- \bpsi_{0} \big\Vert_{L^2((\R^d)^{k})}^2
+ \Vert \bpsi_{0}\ind_{\{x_1\in \cC_\delta(i)\}} \Vert_{L^2((\R^d)^{k})}^2\Big),
\end{aligned}
$$
one has $\big\Vert \bar \psi_\delta- \bpsi_{0} \big\Vert_{L^2((\R^d)^{k})}^2\to 0$ by
Theorem~\ref{th:general}~\eqref{it:2}, and
$\Vert \bpsi_{0}\ind_{\{x_1\in \cC(i)\}} \Vert_{L^2((\R^d)^{k})}^2\to 0$ uniformly in
$i\in \bbT_\delta$ because $Leb(\cC_\delta(i))=v_\delta\downarrow 0$ uniformly in $i\in\bbT_\delta$.
This completes {\em Step 3} and establishes the convergence of $\Psi_\delta(\zeta_\delta)$
to $\bPsi_{0}$ in distribution.

\medskip

\noindent
{\bf Step 4.}
Lastly, to show that the convergence of $\Psi_\delta(\zeta_\delta)$ to $\bPsi_{0}$ in
distribution extends to the joint distribution of a finite collection of polynomial chaos expansions
$(\Psi_{i, \delta}(\zeta_\delta))_{1\leq i\leq M}$, we note that by the Cram\'er-Wold device,
it suffices to show the convergence of $\sum_{i=1}^M c_i \Psi_{i, \delta}(\zeta_\delta)$ to
$\sum_{i=1}^M c_i \bPsi_{i, 0}$ for any $c_1, \ldots, c_m\in\R$. This follows from what we have
proved so far, since
$\sum_{i=1}^M c_i \Psi_{i, \delta}(x)$ is also a polynomial that satisfies all the
required conditions.
\end{proof}

\medskip

\begin{proof}[Proof of Theorem~\ref{th:general2}]
Instead of performing an $L^2$ estimate on the tail series $\Psi_\delta^{>\ell}(\zeta_\delta)$ as
done in {\em Step 2} in the proof of Theorem~\ref{th:general}, we shall give an $L^p$ estimate for
any $p\in (0,2)$. More precisely, we replace relation
\eqref{truncest1} by the following one: for any $p\in (0,2)$,
\begin{equation}\label{truncestLp}
\bPsi_{0} = \sum_{k=0}^\infty \bPsi_{0}^{(k)} \quad \mbox{converges in }  L^p, \qquad\mbox{and} \qquad
\lim_{\ell\to\infty} \limsup_{\delta \downarrow 0} \bbE[|\Psi_\delta^{>\ell}(\zeta_\delta)|^p] = 0,
\end{equation}
and we show that this holds under either condition \eqref{it:a} or \eqref{it:b} in
Theorem~\ref{th:general2}.
The rest of the proof of Theorem~\ref{th:general} then carries over without change.

The key to proving (\ref{truncestLp}) is a change of measure argument. For $\ell<N$,
let $\bPsi_{0}^{(\ell, N)}$ and $\Psi_\delta^{(\ell, N)}$ be defined as in {\em Step 2}
in the proof of Theorem~\ref{th:general}.
Note that $\bPsi_{0}^{(\ell, N)}$ is
a finite sum of stochastic integrals with
respect to the \emph{biased} white noise $\bsigma_0 W(\dd x) + \bmu_{0}(x) \dd x$.
By the discussion in Subsection~\ref{sec:beyond}, cf.\ \eqref{eq:RN},
the joint distribution of these stochastic integrals
is absolutely continuous with respect to the unbiased case $\bmu_0(x) \equiv 0$, with
Radon-Nikodym derivative
\begin{equation}\label{muWdensity}
{\sf f}(W) := \exp\Big\{\frac{1}{\bsigma_0} \int \bmu_{0}(y) W(\dd y) - \frac{1}
{2\bsigma_0^2} \int \bmu_{0}^2 (y) \dd y  \Big\}.
\end{equation}
Therefore, using $\bPsi_{0, \, \bmu_0 \equiv 0}^{(\ell, N)}$ to denote
$\bPsi_{0}^{(\ell, N)}$ with $\bmu_0(x) \equiv 0$,
for any $p\in (0,2)$ we have
\begin{equation}\label{eq:bass}
\begin{aligned}
\bbE\big[|\bPsi_{0}^{(\ell, N)}|^p\big] & =
\bbE\big[{\sf f}(W) |\bPsi_{0, \, \bmu_0 \equiv 0}^{(\ell, N)}|^p\big]
\leq \bbE\big[{\sf f}(W)^{\frac{2}{2-p}}\big]^{\frac{2-p}{2}}
\bbE\big[|\bPsi_{0, \, \bmu_0 \equiv 0}^{(\ell, N)}|^2\big]^{\frac{p}{2}} \\
& = e^{\frac{p}{2(2-p)} \Vert \bmu_{0} / \bsigma_0 \Vert^2_{L^2(\R^d)}}
\bbE\big[|\bPsi_{0, \, \bmu_0 \equiv 0}^{(\ell, N)}|^2\big]^{\frac{p}{2}},
\end{aligned}
\end{equation}
by H\"older's inequality. By Theorem~\ref{th:general},
when $\mu_\delta =\bmu_{0}\equiv 0$ it is enough to assume
that condition~\eqref{it:3} therein holds with $\epsilon=0$ to guarantee that
$\bPsi_{0, \, \bmu_0 \equiv 0} =
\sum_{k=0}^\infty \bPsi_{0, \, \bmu_0 \equiv 0}^{(k)}$ is an $L^2$ convergent series.
Therefore $\bPsi_{0} = \sum_{k=0}^\infty \bPsi_{0}^{(k)}$
is convergent in $L^p$, by \eqref{eq:bass}.

To control $\bbE[|\Psi_\delta^{>\ell}(\zeta_\delta)|^p]$ via a change of measure for $\zeta_\delta$
is more subtle, since $(\zeta_{\delta,i})_{i\in \bbT_\delta}$ are not assumed to have
finite exponential moments. We will instead perform an exponential change of measure on a
bounded subset of the support of $\zeta_{\delta, i}$.
Since by assumption $\Vert\mu_\delta\Vert_\infty \to 0$ and
$((\zeta_{\delta, i} - \mu_\delta)^2)_{i\in\bbT_\delta}$ are uniformly integrable,
also $(\zeta_{\delta, i}^2)_{i\in\bbT_\delta}$ are uniformly integrable.
We can then apply Lemma~\ref{L:tilt} in Appendix~\ref{sec:appB}:
there exist independent random variables
$\tilde \zeta_{\delta, i}$, whose law is absolutely continuous with respect to the law
of $\zeta_{\delta, i}$, with density $f_{\delta, i}(x)$,
which satisfy \eqref{A:tiltbd1}-\eqref{A:tiltbd2}. We can then write
\begin{eqnarray}
\bbE\big[ |\Psi^{(\ell, N)}_\delta (\zeta_\delta)|^p \big] &=& \bbE\Big[ \prod_{i\in \bbT_\delta} f_{\delta, i}(\zeta_{\delta, i})^{\frac{p}{2}} |\Psi^{(\ell, N)}_\delta (\zeta_\delta)|^p \prod_{i\in \bbT_\delta} f_{\delta, i}(\zeta_{\delta, i})^{-\frac{p}{2}} \Big] \nonumber \\
&\leq& \bbE\Big[ \prod_{i\in \bbT_\delta} f_{\delta, i}(\zeta_{\delta, i}) |\Psi^{(\ell, N)}_\delta (\zeta_\delta)|^2\Big]^{\frac{p}{2}} \bbE\Big[\prod_{i\in \bbT_\delta} f_{\delta, i}(\zeta_{\delta, i})^{-\frac{p}{2-p}} \Big]^{\frac{2-p}{2}} \nonumber\\
&=& \bbE\big[ |\Psi^{(\ell, N)}_\delta (\tilde \zeta_\delta)|^2\big]^{\frac{p}{2}} \prod_{i\in \bbT_\delta} \bbE\big[ f_{\delta, i}(\zeta_{\delta, i})^{-\frac{p}{2-p}} \big]^{\frac{2-p}{2}}. \label{Lpest1}
\end{eqnarray}
Applying \eqref{A:tiltbd1}, we have
$$
\prod_{i\in \bbT_\delta} \bbE\big[ f_{\delta, i}(\zeta_{\delta, i})^{-\frac{p}{2-p}}
\big]^{\frac{2-p}{2}} \leq e^{C_p \frac{2-p}{2} \sum_{i\in \bbT_\delta} \mu_\delta(i)^2}
= e^{C_p \frac{2-p}{2} \Vert \bar\mu_\delta\Vert_{L^2(\R^d)}^2},
$$
which is uniformly bounded for $\delta$ close to $0$ (recall that
$\bar\mu_\delta \to \bmu_0$ in $L^2$).
To bound the first factor in (\ref{Lpest1}), we use the fact that
$(\tilde\zeta_{\delta, i})_{i\in \bbT_\delta}$ are independent with zero mean to obtain
\begin{equation}\label{Lpest2}
\begin{aligned}
\bbE\big[ |\Psi^{(\ell, N)}_\delta (\tilde \zeta_\delta)|^2\big] & = \sum_{\ell<|I|<N}
\Big(\prod_{i\in I} \bbE[\tilde \zeta_{\delta, i}^2]\Big) \psi_\delta(I)^2
\leq \sum_{\ell<|I|<N} e^{C' \sum_{i\in \bbT_\delta} \mu_\delta(i)^2} (\sigma_\delta^2)^{|I|}
\psi_\delta(I)^2,
\end{aligned}
\end{equation}
where in the inequality we applied (\ref{A:tiltbd2}), provided $\zeta$ satisfies condition
\eqref{it:a} in Theorem~\ref{th:general2}.
Combined with the assumption in Theorem~\ref{th:general2} that (\ref{psidtail1}) holds with
$\epsilon=0$, and tracing back to (\ref{Lpest1}), we then obtain the desired $L^p$ bound on
$\Psi^{>\ell}_\delta(\zeta_\delta)$ in (\ref{truncestLp}).

If we assume instead condition \eqref{it:b}
in Theorem~\ref{th:general2}, then we can modify the calculation in (\ref{Lpest2}) by applying
the bound  $\bbE[\tilde \zeta_{\delta, i}^2] \le \sigma_\delta^2(1 + C \mu_\delta(i))$,
stated in (\ref{A:tiltbd1}), to obtain
\begin{equation}\label{Lpest3}
\bbE\big[ |\Psi^{(\ell, N)}_\delta (\tilde \zeta_\delta)|^2\big]
\leq \sum_{\ell <|I|<N} e^{C \Vert \mu_\delta\Vert_\infty
|I|} (\sigma_\delta^2)^{|I|} \psi_\delta(I)^2.
\end{equation}
Theorem~\ref{th:general2}~\eqref{it:b} and condition (\ref{psidtail1}) with $\epsilon=0$
then give the desired bound in (\ref{truncestLp}).
\end{proof}

\section{Proof for disordered pinning model}
\label{sec:pinproof}

In this section we prove Theorem \ref{pinning scaling}.
We recall that $\tau = (\tau_k)_{k\ge 0}$ is an aperiodic renewal
process such that either $\e[\tau_1] < \infty$, or relation \eqref{eq:taualpha}
holds for some $\alpha \in (\frac{1}{2}, 1)$.
Note that
\begin{equation}\label{eq:unpin}
	u(n) := \P(n\in\tau) \sim
	\begin{cases}
	\displaystyle\frac{1}{\e[\tau_1]} & \text{if } \e[\tau_1] < \infty \\
	\displaystyle\rule{0pt}{2em}\frac{C_\alpha}{L(n) \, n^{1-\alpha}}
	& \text{if } \frac{1}{2} < \alpha < 1 \quad
	(\text{where } C_\alpha := \tfrac{\alpha \sin(\pi\alpha)}{\pi}) \,,
	\end{cases}
\end{equation}
where
the first asymptotic relation is the classical renewal theorem, while
the second one is due to Doney~\cite[Thm.~B]{D97} (see also~\cite[\S A.5]{G07}).
We also recall that $\omega = (\omega_n)_{n\in\N}$,
representing the disorder, is an i.i.d. sequence
of random variables satisfying \eqref{eq:disasspin}.

\begin{proof}[Proof of Theorem~\ref{pinning scaling}]

It suffices to rewrite the partition function
as a polynomial chaos expansion and then to check that
all the conditions of Theorem~\ref{th:general} are satisfied.
We only consider the conditioned partition function
$Z_{Nt,\beta_N,h_N}^{\omega,c}$, as the proof for the free one
follows the same lines. We also set $t=1$, to lighten notation.

\medskip
\noindent
\textbf{Step 1.}
Consider for $N\in\N$ the lattice $\bbT_N := \frac{1}{N}\N$.
Note that in Section~\ref{ConvergencePoly}
we used the notation $\bbT_\delta$ (where $\delta$ would equal $\frac{1}{N}$):
here we prefer $\bbT_N$, as
it indicates the size of the polymer.
Consequently, all the quantities in this section will be indexed by $N$ instead of $\delta$.
For each $t\in \bbT_N$ we define the cell $\mathcal{C}_{N}(t):=(t-\frac{1}{N}, t]$,
which has volume $v_N = \frac{1}{N}$.

\medskip
\noindent
{\bf Step 2.}
We now rewrite the conditioned partition function
$Z_{N,\beta,h}^{\omega,c}$, cf.\ \eqref{eq:Zpinc}, as a polynomial chaos expansion.
This was already done in \eqref{eq:Zstart}-\eqref{eq:Zstart2},
in terms of the random variables $\epsilon_i = e^{\beta\omega_i - \Lambda(\beta) + h} - 1$.
It is actually convenient to rescale the $\epsilon_i$ so that their variance
is of order one, in order to apply Theorem~\ref{th:general}.
Since $\bbvar(\epsilon_i) \sim \beta^2$ as $\beta \downarrow 0$,
recalling \eqref{eq:scalingbetah} we set
\begin{equation} \label{eq:aN}
	    a_N = \begin{cases}
    \displaystyle\frac{1}{\sqrt{N}} & \text{if } \ \E[\tau_1] < \infty \\
    \displaystyle\rule{0em}{1.8em}
    \frac{L(N)}{N^{\alpha - 1/2}} & \text{if } \ \frac{1}{2} < \alpha < 1
    \end{cases} \,,
\end{equation}
so that $\beta_N = \hbeta a_N$,
and we define the random variables $\zeta_{N} = (\zeta_{N,t})_{t \in \bbT_N}$ by
\begin{equation}\label{eq:zetapinn}
	\zeta_{N,t} :=
	\frac{1}{a_N} \big( e^{\beta_N \omega_{Nt} - \Lambda(\beta_N) + h_N} - 1 \big) \,.
\end{equation}
In this way, arguing as in \eqref{eq:Zstart}-\eqref{eq:Zstart2}, we can write
\begin{equation}\label{eq:Zpindec}
	Z_{N,\beta_N,h_N}^{\omega,c} = \Psi_N(\zeta_N) :=
	1 + \sum_{k=1}^N \frac{1}{k!}
	\sum_{(t_1, \ldots, t_k) \in (\bbT_{N})^k}
	\psi^c_{N} \big( t_1, \ldots, t_k \big)
	\prod_{i=1}^k \zeta_{N,t_i} \,,
\end{equation}
where the kernel $\psi^c_N(t_1, \ldots, t_k)$ is a
\emph{symmetric function,
which vanishes when $t_i = t_j$ for some $i \ne j$ or when
some $t_i \not\in (0,1]$}, and for
$0 =: t_0 < t_1<\cdots< t_k\leq1$ is defined by
\begin{equation} \label{eq:dikec}
\begin{split}
	\psi^c_{N} \big( t_1, \ldots, t_k \big) & :=
	a_N^k \, \p(\{Nt_1, \ldots, Nt_k\} \subseteq \tau | N\in\tau) \\
	& = \frac{u(N (1 - t_{k}))}{u(N)}\,
	\prod_{i=1}^k  a_N u(N (t_i - t_{i-1})) \,,
\end{split}
\end{equation}
recall \eqref{eq:unpin}.
We extend $\psi^c_N(t_1, \ldots, t_k)$
from $(\bbT_N)^k$ to $\R^k$ in the usual way,
as a piecewise constant function on products of cells.
The same is done for $(s,t) \mapsto u(N(t-s))$.

\medskip
\noindent
\textbf{Step 3.}
We now verify that the conditions of Theorem~\ref{th:general} are satisfied.
By our assumptions \eqref{eq:disasspin} on the disorder,
for every fixed $N\in\N$ the random variables $(\zeta_{N,t})_{t \in \bbT_N}$
are i.i.d. with mean and variance given by
\begin{equation}	\label{eq:meanpin}
\begin{split}
	\mu_N & := \bbE(\zeta_{N,t}) = \frac{1}{\sigma_N}
	\big( e^{h_N} - 1 \big) \sim \frac{\hh}{\sqrt{N}} \,, \\
	\sigma_N^2 & := \bbvar(\zeta_{N,t}) =
	\frac{1}{a_N^2} \big( e^{\Lambda(2\beta_N) - 2 \Lambda(\beta_N)} - 1 \big) e^{2 h_N}
	\sim \frac{\beta_N^2}{a_N^2} \longrightarrow \hbeta^2 \,,
	\qquad \text{as } N \to \infty \,.
\end{split}
\end{equation}
Since $v_N = \frac{1}{N}$,
condition~\eqref{it:1} of Theorem~\ref{th:general} is satisfied
with $\bsigma_0 = \hbeta$ and $\bmu_0(t) = \hh \ind_{(0,1]}(t)$.
(More precisely, redefining $\zeta_{N,t}$ as $(\zeta_{N,t} - \bbE[\zeta_{N,t}])$
when $t \not\in (0,1]$ ---which is harmless, since such values of $t$ do not
contribute to \eqref{eq:Zpindec}--- one has
$\mu_N(t) = \mu_N \ind_{(0,1]}(t) \to \bmu_0(t)$
in $L^2(\R)$.)

To prove that the random variables
$((\zeta_{N,t} - \mu_N)^2)_{N\in\N,t\in\bbT_N}$ are
uniformly integrable,
we show that the moments $\bbE [ (\zeta_{N,t} - \mu_N)^4 ]$
are uniformly bounded.
Since $\Lambda(\beta) = O(\beta)$ as $\beta \downarrow 0$,
by \eqref{eq:disasspin}, for every $k\in\N$ we can estimate
\begin{align}\label{uniform2pin}
\bbE\Big[\big(e^{\beta\omega-\Lambda(\beta)}-1\big)^{2k} \Big]
 & \leq \, 2^{2k} e^{-2k\Lambda(\beta)} \bbE\Big[\big(e^{\beta\omega}-1\big)^{2k} \Big]
                                +2^{2k} \big( 1-e^{-\Lambda(\beta)} \big)^{2k} \\
  & = \, 2^{2k} e^{-2k\Lambda(\beta)} \bbE\bigg[\bigg( \int_0^\beta \omega e^{t\omega} \dd t
  \,\bigg)^{2k}  \bigg] + 2^{2k} \big( 1-e^{-\Lambda(\beta)} \big)^{2k}\nonumber \\
  & \leq \,  2^{2k} e^{-2k\Lambda(\beta)} \beta^{2k} \int_0^\beta
  \bbE[ \omega^{2k} e^{2kt\omega} ]\,\frac{\dd t}{\beta} + 2^{2k}
  \big( 1-e^{-\Lambda(\beta)} \big)^{2k} =O(\beta^{2k}) \,.\nonumber
\end{align}
Recalling that $\beta_N = \hbeta a_N$ and $h_N = o(1)$, we obtain the desired bound:
\begin{equation*}
	\bbE \big[ (\zeta_{N,t} - \mu_N)^4 \big] \le \frac{e^{4h_N}}{a_N^4} O(\beta_N^4) = O(1) \,.
\end{equation*}

\smallskip

Let us check condition~\eqref{it:2} of Theorem~\ref{th:general}.
The renewal estimates in \eqref{eq:unpin} imply that, for fixed $0<s<t$,
\begin{equation}\label{urenew}
\lim_{N\to\infty}
\sqrt{N} \, a_N \, u \big(N(t-s) \big)
= \begin{cases}
\displaystyle
\frac{1}{\e[\tau_1]}\quad \quad & \text{if  } \E[\tau_1] < \infty \\
\displaystyle\rule{0em}{1.8em}
\frac{C_\alpha }{(t-s)^{1-\alpha}}\quad & \text{if  } \frac{1}{2} < \alpha  < 1
\end{cases} \,.
\end{equation}
Recalling the definitions \eqref{eq:dikec}, \eqref{eq:cokec} of the discrete
and continuum kernels $\psi^c_N$, $\bpsi^c_{t}$ (for $t=1$),
as well as the fact that $v_N = \frac{1}{N}$, it follows that for every
fixed $k\in\N$ the convergence
\begin{equation} \label{eq:almostL2}
	v_N^{-k/2} \, \psi_N^c(t_1, \ldots, t_k) \xrightarrow[N\to\infty]{} \bpsi_1^c(t_1, \ldots, t_k)
\end{equation}
holds \emph{pointwise}, for distinct points $t_1, \ldots, t_k$.
To obtain the required $L^2$ convergence, it suffices to exhibit an $L^2$ domination.
The case $\E[\tau_1] < \infty$ is easy:
by \eqref{eq:unpin} there exists $A \in (0,\infty)$ such that
$\frac{1}{A} \le u(n) \le A$ for every $n\in\N$, and since $v_N^{-1/2} a_N = 1$ it follows that
\begin{equation}\label{eq:psiNuniform0}
	v_N^{-k/2} \, \psi_N^c(t_1,\ldots, t_k) \le A^{k+2} \,
	\ind_{(0,1]^k}(t_1,\ldots, t_k)  \,.
\end{equation}
We now focus on the case $\frac{1}{2} < \alpha < 1$.
By Karamata's representation theorem for slowly varying functions
\cite[Theorem 1.3.1]{BGT87},
we can write $L(n)=c(n) \exp(\int_1^n \frac{\epsilon(u)}{u}\dd u)$
for some functions $c(x)\to c>0$ and $\epsilon(x)\to 0$ as $x\to\infty$.
It follows that for any $\eta > 0$ there exits a constant $A' = A'_\eta \in (0, \infty)$
such that
\begin{equation*}
	\frac{1}{A'} \bigg( \frac{n}{m} \bigg)^{-\eta} \le
	\frac{L(n)}{L(m)} \le A' \bigg( \frac{n}{m} \bigg)^{\eta} \,,
	\qquad \forall n,m \in \N \text{ with } m \le n \,.
\end{equation*}
Recalling \eqref{eq:unpin} and \eqref{eq:aN}, for possibly a larger constant $A \in (0,\infty)$
we have
\begin{equation}\label{uNuniform}
\frac{C_\alpha }{A \, (t-s)^{1-\alpha-\eta}} \le
\sqrt{N} \, a_N \, u\big(N(t-s)\big) \le
\frac{A \, C_\alpha }{(t-s)^{1-\alpha+\eta}} \,,
\end{equation}
which plugged into \eqref{eq:dikec} yields that for $0 < t_1 < \ldots < t_k \le 1$
\begin{equation}\label{eq:psiNuniform}
	v_N^{-k/2} \, \psi_N^c(t_1,\ldots, t_k) \le
	\frac{A^{k+2} \, C_\alpha^k}{t_1^{1-\alpha'} \cdots
	(t_k - t_{k-1})^{1-\alpha'} (1-t_k)^{1-\alpha'}} \,,
\end{equation}
where we set $\alpha' := \alpha - \eta$ for short.
If we choose $\eta > 0$ sufficiently small,
so that $\alpha' > \frac{1}{2}$ (recall that $\alpha > \frac{1}{2}$),
we have obtained the required $L^2$ domination.

\smallskip

We finally check condition~\eqref{it:3} of Theorem~\ref{th:general}, that is
relation \eqref{psidtail1}.
Since $\sigma_N^2$ is bounded, cf.\ \eqref{eq:meanpin}, we let $B \in (0,\infty)$
denote a constant such that $(1+\epsilon) \sigma_N^2 \le B$, so that
\begin{equation} \label{eq:assd}
\sum_{I\subseteq \bbT_N, \, |I| > \ell}
(1+\epsilon)^{|I|} (\sigma_N^2)^{|I|} \, \psi^{c}_N(I)^2
\le \sum_{k>\ell} B^{k}
\sum_{(t_1,\ldots,t_k)\in (\bbT_{N})^k \atop 0<t_1<\cdots<t_k\leq 1 }
         \psi^{c}_N(t_1,\ldots,t_k)^2 \,.
\end{equation}
If $\E[\tau_1] < \infty$, applying \eqref{eq:psiNuniform0}
and recalling that $v_N = \frac{1}{N}$, this expression is bounded by
\begin{equation*}
	\sum_{k > \ell} B^{k} \, A^{2(k+2)} \, (v_N)^k \, \frac{N^k}{k!}
	= \sum_{k> \ell} \frac{A^{2(k+2)}\, B^{k}}{k!} \,,
\end{equation*}
which is arbitrarily small for $\ell$ large, proving \eqref{psidtail1}.
If $\alpha\in(\frac{1}{2},1)$
we apply \eqref{eq:psiNuniform}: setting $\chi := 2(1-\alpha') < 1$ for short
and bounding the sums by integrals, we estimate \eqref{eq:assd} by
\begin{equation*}
\begin{split}
	\sum_{k>\ell} B^k A^{2(k+2)} C_\alpha^{2k}
\idotsint\limits_{0<t_1<\cdots<t_k<1} \frac{\dd t_1\cdots \dd t_k}{t_1^{\chi}
\cdots (t_k-t_{k-1})^{\chi} (1-t_{k})^{\chi}}
	& \le \sum_{k>\ell} B^k  A^{2(k+2)} C_\alpha^{2k} \, c_1 \, e^{-c_2 k \log k} \,,
\end{split}
\end{equation*}
where for the last inequality we have applied Lemma~\ref{L:integralbd1} below
(we recall that $\chi < 1$). Again, the sum can be made arbitrarily
small by choosing $\ell$ large, proving \eqref{psidtail1}.

\medskip
\noindent
{\bf Step 4.}
Lastly, we prove formula \eqref{pinWCag1}, when $\E[\tau_1] < \infty$.
Since $\bpsi^c_t(t_1, \ldots, t_k) = (\frac{1}{\E[\tau_1]})^k$ in this case,
formula \eqref{eq:Zlimpin} for $\hh = 0$ yields
\begin{equation}\label{eq:acds}
	\bZ_{t,\hat\beta, 0}^{W, c} = 1+\sum_{k=1}^\infty
\frac{1}{k!} \bigg(\frac{\hat\beta^k}{\bE[\tau_1]}\bigg)^k
\idotsint\limits_{[0,t]^k}  \,W(\dd t_1) \cdots  W(\dd t_k)
=e^{ \frac{\hat\beta}{\bE[\tau_1]} W([0,t]) -\frac{1}{2} \big(\frac{\hat\beta}{\bE[\tau_1]}\big)^2 t},
\end{equation}
where the second equality follows by \cite[Theorem 3.33 and Example 7.12]{J97}.
This shows that \eqref{pinWCag1} holds for $\hh = 0$, because
$W([0,t]) \sim \cN(0,t)$.
In the general case, we introduce the
tilted law $\tilde\bbP$ defined by
$\dd \tilde\bbP/\dd \bbP = \exp\{(\frac{\hh}{\hbeta}) W([0,t]) - \frac{1}{2}
(\frac{\hh}{\hbeta})^2 t\}$ and note that $\bZ_{t,\hat\beta, \hh}^{W, c}$ under $\bbP$
has the same law as $\bZ_{t,\hat\beta, 0}^{W, c}$ under $\tilde \bbP$,
cf.\ \eqref{eq:RN} and \eqref{eq:general3}.
Since $W([0,t]) \sim \cN(\frac{\hh}{\hbeta} t, t)$ under $\tilde\bbP$,
formula \eqref{pinWCag1} is proved also when $\hh \ne 0$.
\end{proof}

\section{Proof for directed polymer model}
\label{sec:dpreproof}

In this section we prove Theorem \ref{DP scaling}.
We recall that $S = (S_n)_{n\ge 0}$ is a random walk
on $\Z$ satisfying Assumption~\ref{ass:rwdp}. We denote by $p\in\N$
the period of the random walk,
so that $\P(S_1 \in p\Z + r) = 1$ for some $r \in \{0,\ldots, p-1\}$.
Introducing the lattice
\begin{equation} \label{eq:T0ha}
	\bbT := \{(n,k) \in \bbZ^2: \ k \in p \bbZ + r n\} ,
\end{equation}
we have $\P( S = (S_n)_{n \ge 0} \in \bbT) = 1$. Defining
\begin{equation} \label{eq:qnk}
	q_n(k) := \P(S_n = k) \,, \qquad \forall n \ge 0,\ k \in \Z \,,
\end{equation}
Gnedenko's local limit theorem \cite[Theorem 8.4.1]{BGT87} yields
\begin{equation} \label{eq:Gnedenko}
	\sup_{k \in \Z: \, (n,k) \in \bbT} \big| n^{1/\alpha} q_n \big( k \big) -
	p \, g\big( k/n^{1/\alpha} \big)
	\big| \xrightarrow[n\to\infty]{} 0 \,,
\end{equation}
where $g(\cdot)$ denotes the density of the stable law to which $S$ is attracted.
We also recall that $\omega = (\omega(n,k))_{n\in\N,\, k\in\Z}$ is an i.i.d. sequence
of random variables satisfying \eqref{eq:disasspin}.

\begin{proof}[Proof of Theorem \ref{DP scaling}]
As in the proof of Theorem~\ref{pinning scaling},
the strategy is to rewrite the partition function as a polynomial
chaos expansion and then to apply Theorem~\ref{th:general}.
We focus on the conditioned point-to-point partition function
$Z_{Nt,\beta_N}^{\omega,c}(N^{1/\alpha}x)$,
as the free one follows the same lines.
For notational simplicity, we set $t=1$.

\medskip
\noindent
{\bf Step 1.}
We introduce for $N\in\N$ the rescaled lattice
\begin{equation*}
	\bbT_N:=\{(N^{-1}n, N^{-1/\alpha}k) \colon (n,k)\in \bbT \} \subseteq \R^2 \,,
\end{equation*}
cf.\ \eqref{eq:T0ha}.
Note that we use $N$ instead of $\delta := \frac{1}{N}$ as a subscript,
as it indicates the ``length'' of the polymer.
For each $(t,x)\in \bbT_N$, we define the cell
$\mathcal{C}_N((t,x)):=(t-\frac{1}{N},t]
\times (x-\frac{p}{N^{1/\alpha}}, x]$,
which has volume $v_N = p N^{-(1+1/\alpha)}$.

\medskip
\noindent
{\bf Step 2.}
We rewrite the conditioned partition function
$Z_{N,\beta_N}^{\omega,c}(N^{1/\alpha}x)$,
defined in \eqref{eq:Zpoint2point}, as a polynomial chaos expansion,
using the random variables $\zeta_{N} = (\zeta_N(s,y))_{(s,y) \in \bbT_N}$ given by
\begin{equation} \label{eq:zetamu}
\begin{split}
	\zeta_{N}(s,y) := N^{\frac{\alpha-1}{2\alpha}}
	\Big( e^{\beta_N \omega(Ns, N^{1/\alpha} y) -\Lambda(\beta_N)}
	- 1 \Big)  \,,
\end{split}
\end{equation}
where the prefactor has been chosen so that $\bbvar(\zeta_{N}(s,y)) = O(1)$,
see below.
Arguing as in \eqref{eq:Zstart}-\eqref{eq:Zstart2},
we can write
\begin{equation*}
	Z_{N,\beta_N}^{\omega,c}(N^{1/\alpha}x) = \Psi_N(\zeta_N) := 1 + \sum_{k=1}^N
	\frac{1}{k!}
	\sum_{(z_1, \ldots, z_k) \in (\bbT_N)^k}
	\psi_{N,(1,x)}^{c} (z_1, \ldots, z_k)
	\prod_{i=1}^k \zeta_{N}(z_i) \,,
\end{equation*}
where $\psi_{N,(1,x)}^{c}$ is a \emph{symmetric function of $(z_1, \ldots, z_k) \in (\bbT_N)^k$,
which vanishes when $z_i = z_j$ for some $i \ne j$ or when
some $z_i \not\in (0,1] \times \R$}, and for distinct
$z_1 = (t_1, x_1)$, \ldots, $z_k = (t_k, x_k)$, say with
$0 < t_1<\cdots< t_k\leq1$, is defined by (recall \eqref{eq:qnk})
\begin{equation}\label{psikDP}
\begin{split}
	& \psi_{N,(1,x)}^{c} \big( (t_1, x_1), \ldots, (t_k,x_k) \big) :=
	\frac{\P\big( S_{N t_1} = N^{1/\alpha} x_1, \ldots, S_{N t_k} = N^{1/\alpha} x_k
	\big| S_N = x \big)}{(N^{\frac{\alpha-1}{2\alpha}})^k} \\
	& \qquad \qquad =
	\frac{q_{N (1 - t_{k})} \big(
	N^{1/\alpha}(x - x_{k}) \big)}{q_{N } \big(
	N^{1/\alpha}x \big)}
	\prod_{i=1}^k  \Big( N^{-\frac{\alpha-1}{2\alpha}} \, q_{N (t_i - t_{i-1})} \big(
	N^{1/\alpha}(x_i - x_{i-1}) \big)\Big) \,,
\end{split}
\end{equation}
where $(t_0,x_0) := (0,0)$.
The kernel $\psi_{N,(1,x)}^{c} (z_1, \ldots, z_k)$
is extended from $(\bbT_N)^k$ to $(\R^2)^k$
in the usual way, as a piecewise constant function
which is constant on every product of cells.
The same extension is done for the function $((s,x), (t,y)) \mapsto q_{N(t-s)}(N^{1/\alpha}(y-x))$.

\medskip
\noindent
{\bf Step 3.}
We now check the assumptions of Theorem~\ref{th:general}.
Recalling \eqref{eq:disasspin} and the fact that
$\beta_N = \hbeta N^{-\frac{\alpha-1}{2 \alpha}}$,
cf. Theorem~\ref{DP scaling},
for every fixed $N\in\N$ the random variables $(\zeta_{N}(z))_{z \in \bbT_N}$
are i.i.d. with zero mean $\mu_N(z) \equiv 0$ and variance given by
\begin{equation}\label{eq:varzetadp}
	\sigma_N^2 = \bbvar(\zeta_N(z)) =
	N^{\frac{\alpha-1}{\alpha}} \, e^{\Lambda(2\beta_N) - 2 \Lambda(\beta_N)}
	\sim N^{\frac{\alpha-1}{\alpha}} \, \beta_N^2 \xrightarrow[N\to\infty]{}
	\hbeta^2 \,.
\end{equation}
Condition \eqref{it:1} of Theorem~\ref{th:general} is thus
satisfied with $\bmu_0(z) \equiv 0$ and $\bsigma_0 = \hbeta$.
The uniform integrability of $(\zeta_{N}(z)^2)_{N\in\N, z\in\bbT_N}$
is easily checked as for the pinning model, cf.\ \eqref{uniform2pin}.

\smallskip

Let us check condition \eqref{it:2}.
Recalling the definition \eqref{eq:stable transition} of the function $g_t(\cdot)$,
we observe that by \eqref{eq:Gnedenko}, for fixed $0 < s < t$ and $x,y \in \R$,
\begin{equation*}
	\lim_{N\to\infty} N^{1/\alpha} \, q_{N(t-s)}\big( N^{1/\alpha}(y - x) \big)
	= p \, g_{t-s}(y-x)  \,.
\end{equation*}
Recalling the definition
\eqref{eq:contkerDP} of the continuum kernel $\bpsi_{t,x}^c$ (for $t=1$),
since $v_N = p N^{-1-1/\alpha}$, it follows by  \eqref{psikDP} that
for every $k\in\N$ one has the \emph{pointwise} convergence
\begin{equation} \label{eq:alL2}
	\lim_{N\to\infty}
	v_N^{-k/2} \, \psi_{N,(1,x)}^{c} \big( (t_1, x_1), \ldots, (t_k,x_k) \big)
	= \bpsi_{1,x}^{c} \big( (t_1, x_1), \ldots, (t_k,x_k) \big) \,.
\end{equation}
We need to show that this convergence also holds in $L^2$.
Since the density $g(\cdot)$ is bounded, relation \eqref{eq:Gnedenko}
yields that for some constant $A \in (0,\infty)$
\begin{equation} \label{eq:easb}
	q_n(k) \le A n^{-1/\alpha} \,, \qquad \forall n \in \N_0, \ k \in \Z \,.
\end{equation}
For fixed $x \in \R$, one has $q_N(N^{1/\alpha} x) \ge \frac{p}{2} g(x) / N^{1/\alpha}$ for large $N$,
again by \eqref{eq:Gnedenko}, hence the prefactor in \eqref{psikDP}
is upper bounded as
$$
	\frac{q_{N (1 - t_{k})} \big(
	N^{1/\alpha}(x - x_{k}) \big)}{q_{N } \big(
	N^{1/\alpha}x \big)} \leq \frac{C_x}{(1-t_k)^{1/\alpha}}, \qquad
	 \text{where}\qquad C_x := \frac{2 A}{ (p g(x))}.
$$
Applying \eqref{eq:easb} to each term
in the product in \eqref{psikDP} and recalling that $v_N = p N^{-1-1/\alpha}$,
for $0 < t_1 < \ldots < t_k \le 1$ we get 
\begin{equation} \label{eq:keyboun}
\begin{split}
	& \big[ v_N^{-k/2}
	\, \psi_{N,(1,x)}^{c} \big( (t_1, x_1), \ldots, (t_k,x_k) \big) \big]^{2} 
	 \ \le \frac{C_x}{(1-t_k)^{1/\alpha}} \, \frac{q_{N (1 - t_{k})} \big(
	N^{1/\alpha}(x - x_{k}) \big)}{q_{N } \big(
	N^{1/\alpha}x \big)} \\
	& \qquad\quad \times \prod_{i=1}^k \big( v_N^{-1/2} \, N^{-\frac{\alpha-1}{2\alpha}} \big)^{2}
	\bigg( \frac{A}{N^{1/\alpha}(t_i - t_{i-1})^{1/\alpha}} \bigg)
	q_{N (t_i - t_{i-1})} \big( N^{1/\alpha}(x_i - x_{i-1}) \big) \\
	& \ = C'_{k,x} \, \big(N^{1/\alpha}\big)^k \,
	\frac{\P(S_{N t_1} = N^{1/\alpha} x_1, \ldots, S_{N t_k} = N^{1/\alpha} x_k\, \big| S_N=N^{1/\alpha} x)}
	{t_1^{1/\alpha} (t_2-t_1)^{1/\alpha} \cdots (t_k - t_{k-1})^{1/\alpha} \,(1-t_k)^{1/\alpha}} \,,
\end{split}
\end{equation}
where we set $C'_{k,x} := A^k p^{-k} C_x$.
A further application of \eqref{eq:easb} also yields 
\begin{equation} \label{eq:keyboun2}
	\big[ v_N^{-k/2} \, \psi_{N,(1,x)}^{c} \big( (t_1, x_1), \ldots, (t_k,x_k) \big) \big]^{2}
	\le 	\frac{A^k\, C'_{k,x}}
	{t_1^{2/\alpha} (t_2-t_1)^{2/\alpha} \cdots (t_k - t_{k-1})^{2/\alpha}\, (1-t_k)^{2/\alpha}} \,.
\end{equation}
We now decompose the domain
$\{0 < t_1 < \ldots < t_k < 1\}\times \R^k$  as $D_1 \cup D_2 \cup D_3$, where
\begin{align*}
	&\qquad \quad D_1 := \bigcap_{i=1}^k \{t_i - t_{i-1} > \eta, \ |x_i| < M \}\cap \{1-t_k>\eta \}\,, \quad \\
	&D_2 := \bigcup_{i=1}^k \{t_i - t_{i-1} \le \eta \}
	\cup \{1-t_k\leq \eta \} \,, \quad
	D_3 := \bigcup_{i=1}^k \{|x_i| \ge M \} \,,
\end{align*}%
for fixed $\eta, M \in (0,\infty)$. Relation \eqref{eq:keyboun2}
shows that the rescaled kernel $v_N^{-k/2} \psi_{N,(1,x)}^c(\cdot)$
is uniformly bounded on the (bounded) set $D_1$, hence the convergence \eqref{eq:alL2}
holds in $L^2$ on $D_1$. If we show that the integrals of
$[v_N^{-k/2} \psi_{N,(1,x)}^c(\cdot)]^2$ and of $[\bpsi_{1,x}^c(\cdot)]^2$
over the sets $D_2$ and $D_3$ can be made arbitrarily small, for suitable
$\eta, M$, we are done. Since $\bpsi_{1,x} \in L^2([0,1]^k \times \R^k)$,
by \eqref{eq:stable transition}, there is nothing to prove for $\bpsi_{1,x}^c(\cdot)$
and we may focus on $\psi_{N,(1,x)}^c(\cdot)$. By \eqref{eq:keyboun}
\begin{align}
	\int_{D_2} |v_N^{-k/2}
	\, \psi_{N,(1,x)}^{c}(\cdot) |^2
	& = \sum_{(t_1, x_1), \ldots, (t_k,x_k) \in (\bbT_N)^k}
	\!\!\!\!\! (v_N)^k
	\big[ v_N^{-k/2}
	\, \psi_{N,(1,x)}^{c} \big( (t_1, x_1), \ldots, (t_k,x_k) \big) \big]^{2} \nonumber \\
	& \le C'_{k,x} \, p^k \,
	\frac{1}{N^k} \sumthree{(t_1, \ldots, t_k) \in \frac{1}{N}\N}{0 < t_1 < \ldots < t_k \le 1}
	{t_i - t_{i-1} \le \eta \text{ for some } i}
	\frac{1}{t_1^{1/\alpha}\cdots (t_k-t_{k-1})^{1/\alpha} \, (1-t_k)^{1/\alpha}},
	\label{eq:L2+e} \\
	& \le C'_{k,x} \, p^k
	\ \ \idotsint\limits_{\substack{0<t_1<\cdots<t_k<1 \\
	t_i - t_{i-1} \le \eta \text{ for some } i}}
	\ \
	\frac{\dd t_1\cdots \dd t_k}{t_1^{1/\alpha}\cdots (t_k-t_{k-1})^{1/\alpha}\,
	(1-t_k)^{1/\alpha}}
	\nonumber  \,,
\end{align}
which vanishes as $\eta \downarrow 0$ (recall that $\alpha > 1$). Analogously,
using again \eqref{eq:keyboun}, we get
\begin{equation} \label{eq:D3}
\begin{split}
	 \int_{D_3} |v_N^{-k/2}
	\, \psi_{N,(1,x)}^{c}(\cdot) |^2 
         & \le C'_{k,x} \, p^k \Bigg( \ \	\idotsint\limits_{0<t_1<\cdots<t_k<1} \ \
	\frac{\dd t_1\cdots \dd t_k}{t_1^{1/\alpha}\cdots (t_k-t_{k-1})^{1/\alpha}\,
	(1-t_k)^{1/\alpha}}\Bigg)\\
	&\hskip 1cm \times \P\Big( \max_{0 \le n \le N} 
	| S_{n} | \ge N^{1/\alpha} M \,\big| \,S_N=N^{1/\alpha} x \Big) \,.
\end{split}
\end{equation}
As $N\to\infty$, the last probability converges to
$\P(\sup_{0 \le t \le 1} |X_t| \ge M \,|\, X_1 = x)$, where
$X = (X_t)_{t\ge 0}$ is the stable L\'evy process
to which the random walk is attracted, 
cf.~\cite{Lig68}, hence it can be made as small as one
wishes, uniformly in $N\in\N$, by choosing $M$ large. This completes
the verification of condition \eqref{it:2} in Theorem~\ref{th:general}.

\smallskip

Finally, we check condition \eqref{it:3}.
Since $\sigma_N^2$ is bounded, cf.\ \eqref{eq:varzetadp},
we have $(1+\epsilon) \sigma_N^2 \le B$
for some $B\in (0,\infty)$
(we can even set $\epsilon = 0$, because $\mu_N \equiv 0$ in this case).
Applying \eqref{eq:L2+e} for $\eta=1$, i.e. with no restriction on $t_i - t_{i-1}$
(equivalently, \eqref{eq:D3} with $M=0$), we obtain
\begin{align*}
	& \sum_{I\subset \bbT_N, \, |I|>\ell} (1+\epsilon)^{|I|} (\sigma_N^2)^{|I|}
	\psi^{c}_{N,(1,x)}(I)^2
	\le \sum_{k > \ell} B^k \frac{1}{k!}
	\sum_{(z_1, \ldots, z_k) \in (\bbT_N)^k} \psi^{c}_{N,(1,x)}(z_1, \ldots, z_k)^2 \\
	& \qquad = \sum_{k > \ell} B^k \frac{1}{k!} \|v_N^{-k/2} \psi^{c}_{N,(1,x)}\|_{L^2((\R^2)^k)}^2 \\
	& \qquad \le \sum_{k > \ell} \tilde B_x^k
	\ \ \idotsint\limits_{0<t_1<\cdots<t_k<1} \ \
	\frac{\dd t_1\cdots \dd t_k}{t_1^{1/\alpha}\cdots (t_k-t_{k-1})^{1/\alpha}\,
	(1-t_{k})^{1/\alpha}}
	\le \sum_{k > \ell} \tilde B_x^k \, c_1 \, e^{-c_2 k \log k} \,,
\end{align*}
where, recalling that $C'_{k,x} = A^k p^{-k} C_x$, we have set
$\tilde B_x := B \, A \, \sqrt{p} \, \max\{1,C_x\}^{2}$, which
is a finite constant for every fixed $x\in\R$, and we have
applied Lemma~\ref{L:integralbd1} for the last inequality.
This shows that \eqref{psidtail1} holds, hence
condition \eqref{it:3} in Theorem~\ref{th:general} is verified.
\end{proof}

\section{Proof for random field Ising model}
\label{sec:Isingproof}

In this section we prove Theorem~\ref{T:RFIM} and Corollary~\ref{C:coco}.
We recall that the disordered partition function
$Z^{+,\omega_\delta}_{\bbOmegadelta, \lambda_\delta, h_\delta}$ is defined as in
\eqref{RFIMZ}, where $\bbOmegadelta := \Omega \cap (\delta\Z)^2$
(with $\Omega \subseteq \R^2$ being a fixed bounded, simply connected open set
with piecewise smooth boundary) and where:
\begin{itemize}
\item $\P_{\bbOmegadelta}^+$ (with expectation $\E_{\bbOmegadelta}^+$) denotes the critical Ising model on $\Z^2$,
defined as in \eqref{eq:Isingbasic};

\item $\omega_\delta = (\omega_{\delta,x})_{x\in\bbOmegadelta}$ is an
i.i.d.\ family of random variables satisfying \eqref{eq:disasspin};

\item $\lambda_\delta = (\lambda_{\delta,x})_{x\in\bbOmegadelta}$,
$h_\delta = (h_{\delta,x})_{x\in\bbOmegadelta}$ are defined by
\begin{equation}\label{lhdelta2}
	\lambda_{\delta,x} := \hlambda(x) \,\delta^{\frac{7}{8}} \,, \qquad
	h_{\delta, x} := \hh(x) \,\delta^\frac{15}{8}\,,
\end{equation}
cf.\ \eqref{lhdelta}, where
$\hlambda: \overline{\Omega} \to (0,\infty)$ and $\hh: \overline{\Omega} \to \R$
are fixed continuous functions.
\end{itemize}
The heart of our proof are pointwise and $L^2$ estimates for the critical Ising correlation
functions, in particular near the diagonals (see
Lemmas~\ref{L:correlationbd}--\ref{L:fOmega} below). Complementary $L^1$ estimates
have been recently established in \cite[Prop.3.9]{CGN12}.

\begin{proof}[Proof of Theorem~\ref{T:RFIM}]
We are going to apply Theorem~\ref{th:general}, with $v_\delta=\delta^2$,
rewriting the partition function in terms of a polynomial chaos expansion.

\medskip
\noindent
{\bf Step 1.} By relation \eqref{RFIMZ}, we can write
\begin{equation}\label{eq:probi}
\begin{split}
Z^{+, \omega_\delta}_{\bbOmegadelta, \lambda_\delta, h_\delta}
& = \E^{+}_{\bbOmegadelta} \Bigg[ \prod_{x\in \bbOmegadelta}
\big(\cosh(\xi_{\delta, x}) + \sigma_x \sinh (\xi_{\delta, x})\big)\Bigg] \,, \\
&\text{where} \quad \xi_{\delta, x}:= \lambda_{\delta, x}\omega_{\delta, x} +h_{\delta, x} \,.
\end{split}
\end{equation}
Recalling the notation $\alpha^I := \prod_{x\in I} \alpha_x$,
a binomial expansion of the product yields
\begin{align}
e^{-\frac{1}{2} \| \hlambda \|_{L^2(\Omega)}^2 \, \delta^{-\frac{1}{4}}}
Z^{+, \omega_\delta}_{\bbOmegadelta, \lambda_\delta, h_\delta}
&= e^{-\frac{1}{2} \| \hlambda \|_{L^2(\Omega)}^2 \, \delta^{-\frac{1}{4}}}
\sum_{I\subseteq \bbOmegadelta}
\cosh(\xi_{\delta,\cdot})^{\bbOmegadelta\backslash I} \,
\E^+_{\bbOmegadelta} [ \sigma_\cdot^I ]  \, \sinh(\xi_{\delta,\cdot})^I \nonumber \\
&= e^{-\frac{1}{2} \| \hlambda \|_{L^2(\Omega)}^2 \, \delta^{-\frac{1}{4}}}
\cosh(\xi_{\delta,\cdot})^{\bbOmegadelta} \sum_{I\subseteq \bbOmegadelta}
\E^+_{\bbOmegadelta} [ \sigma_\cdot^I ] \, \tanh(\xi_{\delta,\cdot})^I \,. \label{RFIMZexp}
\end{align}
We first show that the pre-factor before the sum
converges to $1$ in probability as $\delta\downarrow 0$.

Recalling the definition \eqref{eq:probi} of $\xi_{\delta,x}$
and the fact that $\omega_{\delta,x}$ have zero mean,
unit variance and locally finite exponential
moments, cf.\ \eqref{eq:disasspin}, a Taylor expansion yields
\begin{equation}\label{cdelta}
\bbE[\log \cosh(\xi_{\delta, x})]
= \frac{\lambda_{\delta, x}^2}{2} + O(h_{\delta, x}^2 + \lambda_{\delta, x}^4)
= \frac{\hlambda(x)^2}{2} \delta^{\frac{7}{4}} + O(\delta^{\frac{7}{2}}),
\end{equation}
where the term $O(\delta^{\frac{7}{2}})$ is uniform over $x\in\bbOmegadelta$,
by the continuity of $\hlambda, \hh$.
Therefore, as $\delta \downarrow 0$,
\begin{equation} \label{eq:cnorm}
	\sum_{x\in \bbOmegadelta} \bbE[\log \cosh(\xi_{\delta, x})]  =
	\frac{1}{2} \| \hlambda \|_{L^2(\Omega)}^2 \, \delta^{-\frac{1}{4}}
	\,+\, o(1) \,,
\end{equation}
and the pre-factor in \eqref{RFIMZexp} can be rewritten as
\begin{equation*}
	\exp\bigg\{
	\sum_{x\in \bbOmegadelta}
	\Big( \log \cosh(\xi_{\delta, x}) - \bbE[\log \cosh(\xi_{\delta, x})] \Big)
	\bigg\} \big(1 + o(1) \big) \,.
\end{equation*}
The sum is over $|\bbOmegadelta| = O(\delta^{-2})$
i.i.d. centered random variables, hence it converges to zero in probability
provided $\bbvar[\log \cosh(\xi_{\delta, x})] = o(\delta^2)$. This is checked by
a Taylor expansion:
\begin{equation*}
	\bbvar[\log \cosh(\xi_{\delta, x})] \le
	\bbE[(\log \cosh(\xi_{\delta, x}))^2] = O(\lambda_{\delta,x}^4) = O(\delta^{\frac{7}{2}})
	= o(\delta^2) \,.
\end{equation*}

\medskip
\noindent
{\bf Step 2.} It remains to verify that the sum in \eqref{RFIMZexp}
converges to the desired Wiener chaos expansion, namely \eqref{Z+Wexp}.
Defining the family $\zeta_\delta = (\zeta_{\delta,x})_{x\in\bbOmegadelta}$ by
\begin{equation*}
	\zeta_{\delta,x} :=
	\frac{\tanh(\xi_{\delta,x})}{\bbvar(\tanh(\xi_{\delta,x}))^{1/2}} \,,
\end{equation*}
the sum in \eqref{RFIMZexp} can be written as a polynomial chaos expansion
$\Psi_\delta(\zeta_\delta) := \sum_{I \subseteq \bbOmegadelta}
\psi_\delta(I) \, \zeta_{\delta,\cdot}^I$,
where
\begin{equation}\label{eq:psideltaI}
	\psi_\delta (I) :=
\bbvar(\tanh(\xi_{\delta,\cdot}))^{\frac{|I|}{2}}
\, \E^+_{\bbOmegadelta} [ \sigma_I ] \,, \qquad I\subseteq \bbOmegadelta \,.
\end{equation}
We are thus left with checking that the conditions in Theorem~\ref{th:general}
are satisfied.

By a Taylor expansion, as $\delta\downarrow 0$ one has
\begin{equation}\label{eq:ee2}
\begin{split}
	\bbE[\tanh(\xi_{\delta,x})] & = h_{\delta,x} + O(h_{\delta,x}^3 + \lambda_{\delta,x}^3)
	= \hh(x) \, \delta^{\frac{15}{8}} + O(\delta^{\frac{21}{8}}) \,, \\
	\bbE[\tanh(\xi_{\delta,x})^2] & = \lambda_{\delta,x}^2 + O(h_{\delta,x}^2 + \lambda_{\delta,x}^4)
	= \hlambda(x)^2 \, \delta^{\frac{7}{4}} + O(\delta^{\frac{7}{2}}) \,,
\end{split}
\end{equation}
where the $O(\cdot)$ terms are uniform in $x$, by the continuity of $\hlambda, \hh$.
Therefore, uniformly in $x$,
\begin{equation*}
	\mu_\delta(x) := \bbE[\zeta_{\delta,x}] = \frac{\hh(x)}{\hlambda(x)} \, \delta + o(\delta) \,,
	\qquad \sigma_\delta^2 := \bbvar[\zeta_{\delta,x}] = 1 \,.
\end{equation*}
Recalling that $v_\delta = \delta^2$ and
$\hlambda: \overline\Omega \to (0,\infty)$ is continuous
(hence uniformly bounded away from zero),
condition~\eqref{it:1} of~Theorem~\ref{th:general} is satisfied, with
$\bsigma_0 = 1$ and $\bmu_0(x) := \hh(x)/\hlambda(x)$.

The uniform integrability of
$((\zeta_{\delta,x} - \mu_\delta(x))^2)_{\delta \in (0,1), x \in \bbOmegadelta}$
holds because the moments $\bbE [ (\zeta_{\delta,x} - \mu_\delta(x))^4 ]$
are uniformly bounded, as for the disordered pinning model,
cf.\ \eqref{uniform2pin}.

\medskip
\noindent
{\bf Step 3.}
It remains to check conditions~\eqref{it:2}
and~\eqref{it:3} of~Theorem~\ref{th:general}. By \eqref{eq:ee2}
\begin{equation*}
	\bbvar[\tanh(\xi_{\delta,x})] = \hlambda(x)^2 \, \delta^{\frac{7}{4}} +
	O(\delta^{\frac{7}{2}}) \,,
\end{equation*}
hence for fixed $I = \{x_1 ,\ldots, x_n\} \subseteq \Omega$,  by \eqref{eq:psideltaI},
\begin{equation*}
	\psi_\delta (I) =
	\hlambda(\cdot)^{|I|} \, \delta^{\frac{7}{8} |I| } \, \E^+_{\bbOmegadelta} [ \sigma_\cdot^I ]
	= \hlambda(x_1) \cdots \hlambda(x_n) \, \delta^{\frac{7}{8} n} \,	\E^+_{\bbOmegadelta} [
	\sigma_{x_1} \cdots \sigma_{x_n} ] \,.
\end{equation*}
Recalling that $v_\delta = \delta^2$,
relation \eqref{CHI1.30}, that was recently proved by Chelkak, Hongler and Izyurov~\cite{CHI12},
yields immediately that for every
$n\in\N$ and distinct $x_1, \ldots, x_n \in \Omega$
\begin{equation} \label{eq:pointI}
	\lim_{\delta \downarrow 0} \, v_\delta^{-\frac{n}{2}}
	\psi_\delta (\{x_1, \ldots, x_n\})
	= \hlambda(x_1) \cdots \hlambda(x_n)
	\, \cC^n \, \bphi_\Omega^+(x_1,\ldots, x_n)
	=: \bpsi_0(\{x_1, \ldots, x_n\})  \,.
\end{equation}
(Incidentally, since $\bsigma_0 = 1$ and $\bmu_0(x) := \hh(x)/\hlambda(x)$,
the Wiener chaos expansion of Theorem~\ref{th:general}, cf.\ \eqref{eq:general},
matches with the one of Theorem~\ref{T:RFIM}, cf.\ \eqref{Z+Wexp}.)

To extend the pointwise convergence \eqref{eq:pointI}
to $L^2$ convergence, we need uniform bounds on
$\psi_\delta(I)$. By Lemma~\ref{L:correlationbd} below, we have the following bound uniformly in $\delta\in (0,1)$:
\begin{equation}\label{Iineq4}
 v_\delta^{-\frac{|I|}{2}}
\psi_\delta (I) \leq (C \|\hlambda\|_\infty)^{|I|}
\prod_{i=1}^{|I|} \frac{1}{d(x_i, \partial\Omega\cup I\backslash\{x_i\})^{\frac{1}{8}}} =: (C \|\hlambda\|_\infty)^{|I|} f_\Omega(x_1,\ldots, x_{|I|}),
\end{equation}
where by Lemma~\ref{L:fOmega} below, given $|I|=n$ for any $n\in\N$,
\begin{equation}\label{Iineq12}
\frac{1}{n!}\Vert f_\Omega\Vert^2_{L^2(\Omega^n)} \leq C^n (n!)^{-\frac{3}{4}}.
\end{equation}
Combined with \eqref{eq:pointI}, it follows that conditions~\eqref{it:2} and~\eqref{it:3} of~Theorem~\ref{th:general} are satisfied and this completes the proof of Theorem~\ref{T:RFIM}.
\end{proof}

We next state and prove the lemmas needed to establish \eqref{Iineq4} and (\ref{Iineq12}).

\begin{lemma}\label{L:correlationbd}
Let $\Omega$, $\bbOmegadelta$ for $\delta>0$, and $\E_{\bbOmegadelta}^+$
be as introduced at the beginning of this section. Then there exists
$C = C(\Omega) \in (0,\infty)$ such that for any
$I=\{x_1, \ldots, x_n\}\subseteq \Omega$ with $|I|=n$
\begin{equation}\label{Iineq3}
0\leq \E^+_{\bbOmegadelta} [ \sigma_{x_1} \cdots \sigma_{x_n}] \leq
C^n \delta^{\frac{n}{8}} \prod_{i=1}^n
\frac{1}{d(x_i, \partial\Omega\cup I\backslash\{x_i\})^{\frac{1}{8}}},
\end{equation}
where for any $x\in \Omega$ we define
$\sigma_x:=\sigma_{x_\delta}$, with $x_\delta$ being the point in $\bbOmegadelta$
closest to $x$, and we set $d(x,A) := \inf_{y \in A} \|x-y\|$.
\end{lemma}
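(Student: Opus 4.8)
\textbf{Proof strategy for Lemma~\ref{L:correlationbd}.}
The plan is to reduce the finite-volume correlation bound to the known scaling result \eqref{CHI1.30} by a two-regime argument, separating the ``macroscopic'' configurations (where all relevant distances are bounded below by a fixed constant) from the ``microscopic'' ones (where some pair of points, or some point and the boundary, are at distance of order $\delta$). The lower bound $\E^+_{\bbOmegadelta}[\sigma_{x_1}\cdots\sigma_{x_n}]\ge 0$ is immediate from Griffiths' first inequality (the critical Ising model with $+$ boundary condition is a ferromagnet with nonnegative couplings and nonnegative external field coming from the boundary), so only the upper bound requires work.

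First I would establish the \emph{two-point} case $n=2$ as the basic building block: one knows that $\E^+_{\bbOmegadelta}[\sigma_x\sigma_y]\le C\,(\delta/\|x-y\|)^{1/4}$ when $\|x-y\|$ is not too small, and $\E^+_{\bbOmegadelta}[\sigma_x\sigma_y]\le 1$ trivially, so interpolating gives a bound of the form $C\,\delta^{1/4}/\|x-y\|^{1/4}$ uniformly, with the boundary contributing an analogous factor $\delta^{1/8}/d(x,\partial\Omega)^{1/8}$ for the single-spin expectation $\E^+_{\bbOmegadelta}[\sigma_x]$ (this is exactly the $n=1$ instance of \eqref{CHI1.30} plus the trivial bound $\le 1$). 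Then for general $n$ I would invoke the Ising \emph{GHS}/\emph{Griffiths second} inequality, or more efficiently the \emph{Simon--Lieb} type or FKG monotonicity, to dominate the $n$-point function by a product over a suitable pairing: specifically, order the points so that each $x_i$ is paired with its nearest neighbour among $\partial\Omega\cup\{x_1,\dots,x_n\}$, and use $\E^+_{\bbOmegadelta}[\sigma_{x_1}\cdots\sigma_{x_n}]\le \prod_{i}\big(\E^+_{\bbOmegadelta}[\sigma_{x_i}\sigma_{y_i}]\big)^{1/2}$-style submultiplicativity, where $y_i$ realizes the distance $d(x_i,\partial\Omega\cup I\setminus\{x_i\})$ (with the convention that a point paired to the boundary contributes $\E^+_{\bbOmegadelta}[\sigma_{x_i}]$). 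Combining with the $n\le 2$ bounds and absorbing constants into $C^n$ yields \eqref{Iineq3}.

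The cleanest route to the submultiplicative bound is probably via the \emph{random current} representation or, alternatively, via the conformal-covariance input of \cite{CHI12}: the explicit form of $\bphi^+_\Omega$, together with the uniform bound on the discrete correlations that \cite{CHI12} provides on compact subsets of $\Omega^n$ away from diagonals, handles the macroscopic regime directly. For the microscopic regime one rescales: if the minimal distance among the configuration and the boundary is $r\lesssim\delta$, then after dividing by $\delta^{n/8}$ the quantity $\delta^{-n/8}\E^+_{\bbOmegadelta}[\sigma_{x_1}\cdots\sigma_{x_n}]$ is compared with $r^{-n/8}$ times a correlation on a lattice of mesh $\delta/r\ge 1$, which is bounded by the corresponding quantity on $\Z^2$ itself; the latter admits the classical a priori bound $\E^+_{\Z^2}[\sigma_0\sigma_x]\le C\|x\|^{-1/4}$ at criticality (Onsager's formula, or the RSW-based bounds). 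Iterating this ``peeling off the closest pair'' argument $n$ times gives the product structure on the right-hand side of \eqref{Iineq3}.

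\textbf{Main obstacle.} I expect the delicate point to be the uniformity of the constant $C$ in $n$ \emph{and} in $\delta$ simultaneously, near the diagonals and near $\partial\Omega$. The bound \eqref{CHI1.30} is stated for fixed distinct points, so upgrading it to a bound that is uniform over all of $\Omega^n$ (including near-diagonal and near-boundary configurations) and that degrades only like a product of inverse $\tfrac18$-powers of the nearest-neighbour distances is the crux; this is where one genuinely needs a correlation inequality (random currents, or the tree-graph / Griffiths inequalities) to reduce the $n$-point estimate to two-point estimates, rather than any conformal-field-theory input. A secondary technical nuisance is making the ``nearest-neighbour pairing'' argument rigorous when several points cluster together: one must choose the pairing greedily (repeatedly extract the globally closest pair, remove one of its members, recurse) and check that the distances $d(x_i,\partial\Omega\cup I\setminus\{x_i\})$ appearing in \eqref{Iineq3} indeed dominate (up to constants) the distances used in the pairing — a combinatorial lemma that is routine but needs to be stated carefully so that the final product bound holds with the distances as written in the statement.
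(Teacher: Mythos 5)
The key step of your argument --- dominating the $n$-point function by a product over a nearest-neighbour pairing of \emph{two}-point functions, via ``$\E^+_{\bbOmegadelta}[\sigma_{x_1}\cdots\sigma_{x_n}]\le\prod_i\big(\E^+_{\bbOmegadelta}[\sigma_{x_i}\sigma_{y_i}]\big)^{1/2}$-style submultiplicativity'' attributed to GHS/Griffiths~II/Simon--Lieb/FKG --- is a genuine gap. None of the inequalities you name yields an upper bound of this form: Griffiths' second inequality and FKG give \emph{lower} bounds of product type (positive association), and the Lebowitz-type Gaussian inequality would at best give a \emph{sum} over all pairings at zero external field, which is not what you use and does not obviously accommodate the $+$ boundary condition. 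The alternative you gesture at (random currents) is not developed, so the reduction of the $n$-point estimate to pair estimates --- which you yourself identify as the crux --- is unsupported. A secondary error: in your ``microscopic regime'' you compare the finite-domain correlation with the whole-plane one in the wrong direction. With $+$ boundary conditions, correlations are \emph{decreasing} in the domain (by Griffiths/FKG, enlarging the domain moves the $+$ boundary away), so $\E^+_{\bbOmegadelta}[\cdots]\ge\E_{\Z^2}[\cdots]$; the infinite-volume Onsager bound is therefore a lower bound here, not the upper bound you need. (In fact, configurations where the minimal distance is $\lesssim\delta$ are handled by the trivial bound $\le 1$, since the right-hand side of \eqref{Iineq3} is then $\ge C^n$; the real difficulty is uniformity over intermediate scales.)

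For comparison, the paper's proof avoids two-point functions entirely. It reduces to \emph{one}-point functions via a decoupling lemma proved with the GKS inequalities: $\E^{\rm free}_{\bbOmega,h}[\sigma_I]$ is increasing in each external field $h_y$, so raising $h_y\to+\infty$ on the boundaries of disjoint balls $B\big(x_i;\tfrac14 d(x_i,\partial\Omega\cup I\setminus\{x_i\})\big)$ imposes $+$ boundary conditions there and factorizes the measure, giving $\E^+_{\bbOmegadelta}[\sigma_{x_1}\cdots\sigma_{x_n}]\le\prod_i\E^+_{\bbOmega_i}[\sigma_{x_i}]$ with no combinatorial pairing factors. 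Each one-point function is then bounded using your correct ingredients but in the correct direction: by FKG, $\E^+_{\bbOmegadelta}[\sigma_x]\le\E^+_{B(x;r)_\delta}[\sigma_x]$ for the \emph{smaller} ball $r=d(x,\partial\Omega_i\text{-type distance})$, and by exact lattice scaling this equals $\E^+_{B(0;1)_{\delta/r}}[\sigma_0]$, which is $\le C(\delta/r)^{1/8}$ uniformly by the $n=1$ case of \eqref{CHI1.30} on the unit ball. This also resolves the uniformity issue you flag: only a single one-point sequence on the unit ball needs to be uniformly bounded, rather than \eqref{CHI1.30} upgraded to uniformity over near-diagonal $n$-point configurations. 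If you want to salvage your pairing strategy you would need an actual monotonicity/decoupling mechanism of this kind (or a random-current argument worked out in detail); as written, the central inequality does not exist in the stated form.
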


\begin{proof}
If $B(x;r)$ denotes a ball of radius $r$ centered at $x$,
and $\bbB(x;r)_\delta := B(x;r) \cap (\delta\Z)^2$,
then (\ref{CHI1.30}) with $\Omega=B(0;1)$, $n=1$ and $x_1=0$ implies that
for some $C \in (0,\infty)$
\begin{equation}\label{Iineq1}
\E^+_{\bbB(0;1)_\delta}[\sigma_0]
\leq C \delta^{\frac{1}{8}} \qquad \mbox{for all } \delta \in (0,1).
\end{equation}
Then, for any $x\in \Omega$, by imposing $+$ boundary condition on the ball $B(x; r)$
with radius $r:=d(x, \partial \Omega)$ and applying the FKG inequality
\cite[Chapter 2]{G06}, we obtain
\begin{equation}\label{Iineq2}
\E^+_{\bbOmegadelta} [\sigma_x] \leq \E^+_{\bbB(x;r)_\delta}[\sigma_x]
= \E^+_{\bbB(0;r)_\delta}[\sigma_0] =
\E^+_{\bbB(0;1)_{\delta/r}}[\sigma_0] \leq C \frac{\delta^{\frac{1}{8}}}{r^{\frac{1}{8}}}
=  \frac{C\, \delta^{\frac{1}{8}}}{d(x, \partial \Omega)^{\frac{1}{8}}} ,
\end{equation}
where in the last inequality we applied \eqref{Iineq1}.

Relation \eqref{Iineq3} follows by applying
\eqref{Iineq2} and Lemma~\ref{L:decoupling} below, choosing $\bbOmega_i$
therein to be $\Omega_i\cap (\delta\Z)^2$,
where $\Omega_i$ is the ball centered at $x_i$ with
radius $\frac{1}{4} d(x_i, \partial\Omega\cup I\backslash\{x_i\})$.
\end{proof}

\begin{lemma}\label{L:decoupling}
Let $x_1, \ldots, x_n \in \bbOmega \subseteq \Z^2$ and suppose that
$x_i\in \bbOmega_i \subseteq \bbOmega$, with
$\bbOmega_i \cap (\bbOmega_j\cup\partial \bbOmega_j) =\emptyset$ for all $i\neq j$.
Then
\begin{equation}\label{decineq}
0\leq \E^+_{\bbOmega} [ \sigma_{x_1}\cdots \sigma_{x_n} ]
\leq \prod_{i=1}^n \E^+_{\bbOmega_i} [ \sigma_i].
\end{equation}
\end{lemma}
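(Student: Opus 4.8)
The plan is to prove both inequalities via the Markov (Gibbs) property of the Ising model together with the comparison of boundary conditions. The lower bound $0\le \E^+_\bbOmega[\sigma_{x_1}\cdots\sigma_{x_n}]$ is simply Griffiths' first inequality: the measure $\P^+_\bbOmega$ is the Gibbs measure of a ferromagnetic Ising model on $\bbOmega$ with a \emph{nonnegative} external field (the field at $x\in\bbOmega$ being $\beta$ times the number of neighbours of $x$ lying in $\partial\bbOmega$), so every spin correlation $\E^+_\bbOmega[\sigma_A]$ is nonnegative; see the Griffiths/GKS inequalities as recalled e.g.\ in \cite{G06}. In particular each factor $\E^+_{\bbOmega_i}[\sigma_i]$ on the right-hand side of \eqref{decineq} is also nonnegative.

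For the upper bound, set $R:=\bbOmega\setminus\bigcup_{i=1}^n\bbOmega_i$ and condition on the spins $(\sigma_z)_{z\in R}$. The key structural point is that the buffer hypothesis $\bbOmega_i\cap(\bbOmega_j\cup\partial\bbOmega_j)=\emptyset$ for $i\neq j$ guarantees two things: first, there is no edge of $\Z^2$ joining $\bbOmega_i$ to $\bbOmega_j$ for $i\neq j$ (such an edge would force a vertex of $\bbOmega_j$ to belong to $\partial\bbOmega_i$); second, $\partial\bbOmega_i\subseteq R\cup\partial\bbOmega$ for every $i$, so all boundary spins of $\bbOmega_i$ are determined by the conditioning (those in $R$ are conditioned on, those in $\partial\bbOmega$ are frozen to $+1$). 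By the Markov property of the Ising model it follows that, given $(\sigma_z)_{z\in R}$, the blocks $(\sigma_z)_{z\in\bbOmega_i}$, $i=1,\dots,n$, are conditionally independent, with block $i$ distributed as the Ising model on $\bbOmega_i$ with boundary condition $\eta^{(i)}:=(\sigma_y)_{y\in\partial\bbOmega_i}$. Since $x_i\in\bbOmega_i$ and the $\bbOmega_i$ are disjoint, writing $m^{\eta}_{V}(x):=\E^{\eta}_{V}[\sigma_x]$ for the Ising model on a finite $V\subseteq\Z^2$ with boundary condition $\eta$, this yields the identity
\[
	\E^+_\bbOmega\Big[\prod_{i=1}^n\sigma_{x_i}\Big]
	= \E^+_\bbOmega\Big[\prod_{i=1}^n m^{\eta^{(i)}}_{\bbOmega_i}(x_i)\Big].
\]

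It then remains to bound $|m^{\eta}_{\bbOmega_i}(x)|\le \E^+_{\bbOmega_i}[\sigma_x]$ uniformly over boundary conditions $\eta$. This is standard: monotonicity of the Ising model in the boundary condition (Holley/FKG), applied to the increasing observable $\sigma_x$, gives $m^-_{\bbOmega_i}(x)\le m^{\eta}_{\bbOmega_i}(x)\le m^+_{\bbOmega_i}(x)$, while the spin-flip symmetry $\P^-_V(\cdot)=\P^+_V(-\,\cdot\,)$ gives $m^-_{\bbOmega_i}(x)=-\,m^+_{\bbOmega_i}(x)$; hence $|m^{\eta}_{\bbOmega_i}(x)|\le m^+_{\bbOmega_i}(x)=\E^+_{\bbOmega_i}[\sigma_x]$. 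Taking absolute values in the displayed identity,
\[
	\Big|\E^+_\bbOmega\Big[\prod_{i=1}^n\sigma_{x_i}\Big]\Big|
	\le \E^+_\bbOmega\Big[\prod_{i=1}^n \big|m^{\eta^{(i)}}_{\bbOmega_i}(x_i)\big|\Big]
	\le \prod_{i=1}^n \E^+_{\bbOmega_i}[\sigma_{x_i}],
\]
which together with the lower bound is exactly \eqref{decineq}. The only mildly delicate step is the verification of the no-edge/conditional-independence statement, i.e.\ checking that the buffer condition on the $\bbOmega_i$ is precisely what makes the Markov decomposition factor over the blocks; the rest consists of direct appeals to Griffiths' inequality, FKG monotonicity in boundary conditions, and spin-flip symmetry. (I would also note that $\beta=\beta_c$ plays no role here: the argument works for every $\beta\ge 0$.)
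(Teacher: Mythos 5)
Your proof is correct, but it follows a genuinely different route from the paper's. The paper argues entirely via the GKS inequalities: it rewrites $\P^+_{\bbOmega}$ as an Ising measure with free boundary and nonnegative external field $h^+$, gets the lower bound from the first GKS inequality (as you do), and for the upper bound uses the second GKS inequality to show $\E^{\mathrm{free}}_{\bbOmega,h}[\sigma_I]$ is increasing in each $h_y$, then sends $h_y \uparrow +\infty$ for $y \in \bigcup_i \partial\bbOmega_i$; this imposes $+$ boundary conditions on the separating boundaries, under which the subdomains decouple and the product bound falls out directly. You instead condition on the spins in $R = \bbOmega\setminus\bigcup_i\bbOmega_i$, invoke the spatial Markov (DLR) property — your verification that the buffer hypothesis rules out edges between distinct $\bbOmega_i$ and forces $\partial\bbOmega_i \subseteq R\cup\partial\bbOmega$ is exactly the point that needs checking, and it is right — and then dominate each conditional magnetization by $\E^+_{\bbOmega_i}[\sigma_{x_i}]$ via FKG monotonicity in the boundary condition combined with spin-flip symmetry. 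The trade-off: your argument yields the slightly stronger statement with an absolute value on the left-hand side and uses only monotonicity (no GKS II), whereas the paper's GKS argument is shorter once the Griffiths inequalities are taken for granted and, unlike your spin-flip step (which requires zero bulk field so that the $-$ and $+$ magnetizations are negatives of each other), would extend unchanged to a nonnegative external field inside $\bbOmega$. Your closing remark that $\beta=\beta_c$ is irrelevant agrees with the paper, which makes the same observation.
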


\begin{proof}
Relation \eqref{decineq} is a consequence of the Griffiths-Kelly-Sherman (GKS)
inequalities (see e.g.~\cite[Chapter V.3]{E06}).
We recall that $\P^+_{\bbOmega}$ denotes the Ising measure on $\{\pm 1 \}^{\Omega}$ with
inverse temperature $\beta \in (0,\infty)$ and zero external field, cf.\ \eqref{eq:Isingbasic}.
(The fact that $\beta=\beta_c$ is immaterial for this proof,
and we could even include a positive
external field in $\P^+_{\bbOmega}$.) Given $h = (h_x)_{x\in\bbOmega}$, let
$\P^{\rm free}_{\bbOmega, h}$ denote the Ising measure with external
(site-dependent) field $h$, i.e.
$$
\P^{\rm free}_{\bbOmega, h}(\sigma) =
\frac{1}{Z^{\rm free}_{\bbOmega, \beta, h}} \exp\Big\{ \sum_{x\sim y\in \bbOmega}
\beta\sigma_x \sigma_y + \sum_{x\in \bbOmega} h_x \sigma_x\Big\} \,.
$$
Since $\P^+_{\bbOmega}=\P^{\rm free}_{\bbOmega, h^+}$ with the choice
$h^+_x := \beta |\{y\in \partial \Lambda: y\sim x\}|$,
we may focus on $\P^{\rm free}_{\bbOmega, h}$.

Let $I:=\{x_1, \ldots, x_n\}$ and $\sigma_I:=\sigma_{x_1}\cdots \sigma_{x_n}$.
If $h \ge 0$ (that is, $h_x\geq 0$ for all $x\in \bbOmega$),
$$
\E^{\rm free}_{\bbOmega, h}[\sigma_I] \geq 0,
$$
by the first GKS inequality, proving the first bound in \eqref{decineq}.
Always for $h\ge 0$,
$$
\frac{\partial \E^{\rm free}_{\bbOmega, h}[\sigma_I]}{\partial h_y}
= \E^{\rm free}_{\bbOmega, h}[\sigma_I \sigma_y] -
\E^{\rm free}_{\bbOmega, h}[\sigma_I]\E^{\rm free}_{\bbOmega, h}[\sigma_y] \ge 0,
\qquad \forall y\in\bbOmega ,
$$
by the second GKS inequality. Therefore
$\E^{\rm free}_{\bbOmega, h}[\sigma_I]$ is increasing in
$h_y$ for every $y\in\bbOmega$. Starting with
$h=h^+$ and increasing $h_y$ to $+\infty$ for each $y\in \cup_{i=1}^n \partial \bbOmega_i$,
the resulting Ising measure is equivalent to imposing $+$ boundary condition on
$\cup_{i=1}^n \partial \bbOmega_i$.
Under this limiting measure, the distribution of the spin configurations on the disjoint
subdomains $(\bbOmega_i)_{1\leq i\leq n}$ factorizes, leading to
the second bound in  \eqref{decineq}.
\end{proof}

\begin{lemma}\label{L:fOmega}
For any $n\in\N$ and distinct $x_1, \ldots, x_n\in \Omega$ ,
set $I:=\{x_1, \ldots, x_n\}$
and define
\begin{equation}\label{fOmegadef}
f_\Omega(x_1,\ldots, x_n) := \prod_{i=1}^{n} 
\frac{1}{d(x_i, \partial\Omega\cup I\backslash\{x_i\})^{\frac{1}{8}}}.
\end{equation}
Then there exists $C=C(\Omega) <\infty$, such that for all $n\in\N$,
\begin{equation}\label{Iineq11}
\Vert f_\Omega\Vert^2_{L^2(\Omega^n)} \leq C^n (n!)^{\frac{1}{4}} \,.
\end{equation}
\end{lemma}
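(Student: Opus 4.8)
The plan is to first reduce to a ``boundary-free'' estimate and then prove that estimate by a nearest-neighbour argument combined with an elementary packing bound. For the reduction, use that $d(x_i,A\cup B)^{-\frac18}=\max\{d(x_i,A)^{-\frac18},d(x_i,B)^{-\frac18}\}\le d(x_i,A)^{-\frac18}+d(x_i,B)^{-\frac18}$, applied with $A=\partial\Omega$ and $B=I\setminus\{x_i\}$ (where $I=\{x_1,\dots,x_n\}$). Expanding the product gives $f_\Omega(x_1,\dots,x_n)\le\sum_{J\subseteq\{1,\dots,n\}}\prod_{i\in J}d(x_i,\partial\Omega)^{-\frac18}\prod_{i\notin J}d(x_i,I\setminus\{x_i\})^{-\frac18}$, and since a smaller point set yields a larger distance, $d(x_i,I\setminus\{x_i\})^{-\frac18}\le d(x_i,\{x_k:k\in(\{1,\dots,n\}\setminus J)\setminus\{i\}\})^{-\frac18}$. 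Hence the $L^2(\Omega^n)$-norm of the $J$-summand factorizes as $\big(\int_\Omega d(x,\partial\Omega)^{-\frac14}\,\mathrm dx\big)^{|J|/2}$, which is finite because $\tfrac14<1$ and $\partial\Omega$ is piecewise smooth, times $\|g_{n-|J|}\|_{L^2(\Omega^{n-|J|})}$, where $g_m(y_1,\dots,y_m):=\prod_{i=1}^m d(y_i,\{y_k:k\ne i\})^{-\frac18}$ carries no boundary term. By the triangle inequality and the binomial factor from the choice of $J$, it therefore suffices to prove $\|g_m\|_{L^2(\Omega^m)}^2\le C^m(m!)^{1/4}$ (in fact $(m!)^{1/8}$ is true), and then $\|f_\Omega\|_{L^2(\Omega^n)}^2\le C^n(n!)^{1/4}$ follows by summing over $J$.

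For the boundary-free bound, write $r_i=r_i(\mathbf y):=\min_{k\ne i}|y_i-y_k|$, so that $\|g_m\|_{L^2(\Omega^m)}^2=\int_{\Omega^m}\prod_{i=1}^m r_i^{-1/4}\,\mathrm d\mathbf y$. The elementary input is: for any finite $P\subset\R^2$ and $0<\alpha<2$ one has $\int_\Omega d(y,P)^{-\alpha}\,\mathrm dy\le C_{\alpha,\Omega}\,(1+|P|^{\,1-\alpha/2})$, proved by the layer-cake formula using $Leb\{y\in\Omega:d(y,P)<t\}\le\min\{Leb(\Omega),\,|P|\pi t^2\}$ and optimizing the cut-off; with $\alpha=\tfrac14$ this gives the exponent $\tfrac18$. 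The second input is the packing estimate $\sum_{i=1}^m r_i^2\le 4\,Leb(\Omega)/\pi$, valid for \emph{every} configuration because the balls $B(y_i,r_i/2)$ are pairwise disjoint and contained in a fixed bounded enlargement of $\Omega$. One then dyadically decomposes each $r_i$ into scales $2^{-\ell_i}$ (constrained by the packing bound) and integrates the variables out in an order adapted to the configuration: following the nearest-neighbour forest and peeling leaves, so that each point's nearest neighbour is still present when that point is integrated; using the first input, each integration contributes a factor whose sum over the admissible scales collapses to $O(m^{1/8})$, whence $\int_{\Omega^m}\prod_i r_i^{-1/4}\,\mathrm d\mathbf y\le C^m m^{m/8}\le C^m(m!)^{1/4}$.

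The \emph{main obstacle} is the combinatorial bookkeeping in this last step. A naive union bound over the possible nearest-neighbour structures (functional graphs, or minimal spanning trees, of the configuration) would cost a factor of order $m!$ — since there are essentially that many ``path-like'' structures — and this would completely overwhelm the desired bound; so the integration must genuinely be carried out in a configuration-dependent order rather than structure by structure. A related technical point is the treatment of mutual nearest-neighbour pairs, i.e. the two-cycles of the nearest-neighbour map, where $r_a=r_b=|y_a-y_b|$ and the leaf-peeling stalls: these must be integrated as units, each contributing only a bounded constant since $\int_{\Omega^2}|y_a-y_b|^{-1/2}\,\mathrm dy_a\,\mathrm dy_b<\infty$ (as $\tfrac12<2$), before the peeling of the attached trees resumes.
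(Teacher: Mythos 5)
Your proposal has two genuine gaps. First, the reduction to a boundary-free estimate rests on a reversed inequality: since $\{x_k:k\in(\{1,\dots,n\}\setminus J)\setminus\{i\}\}\subseteq I\setminus\{x_i\}$, the distance to the \emph{smaller} set is \emph{larger}, so in fact $d(x_i,I\setminus\{x_i\})^{-\frac18}\ \ge\ d(x_i,\{x_k:k\notin J,\,k\ne i\})^{-\frac18}$, not $\le$ (in the extreme case $J=\{1,\dots,n\}\setminus\{i\}$ your inequality would read $d(x_i,I\setminus\{x_i\})^{-\frac18}\le 0$). Hence the $L^2$ norm of the $J$-summand does not factorize as claimed and the reduction to $\Vert g_m\Vert$ does not go through; an honest decoupling (e.g.\ bounding the max over $k$ by a sum over $k$) reintroduces a sum over $\sim(n-1)^{n-|J|}$ functional graphs, i.e.\ exactly the combinatorial explosion you are trying to avoid. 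Second, and more seriously, the core of the boundary-free estimate is not proved: you correctly observe that a union bound over nearest-neighbour structures costs a factor of order $m!$, and you then assert that the integration ``must genuinely be carried out in a configuration-dependent order'' — but Fubini does not allow a configuration-dependent order of integration. To use the leaf-peeling order you must partition $\Omega^m$ according to the nearest-neighbour forest (or the peeling order), which is precisely the union bound you ruled out; no mechanism is offered to escape this, so the decisive step is missing rather than merely technical. (A smaller slip: the layer-cake bound should be $\int_\Omega d(y,P)^{-\alpha}\,\mathrm{d}y\le C_{\alpha,\Omega}\,(1+|P|^{\alpha/2})$; the exponent $1-\tfrac{\alpha}{2}=\tfrac78$ you wrote would give $m^{7m/8}$ and destroy the bound, though your later use of $m^{1/8}$ shows you intended $\alpha/2$.)

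For comparison, the paper avoids both problems with a deterministic induction on the \emph{last} coordinate: one integrates out $x_n$ (not a configuration-dependent leaf), partitioning $\Omega$ only into the $n$ cells $\Omega_0,\dots,\Omega_{n-1}$ according to which of $\partial\Omega,x_1,\dots,x_{n-1}$ is nearest to $x_n$; the effect of deleting $x_n$ on the remaining factors is controlled by the geometric fact that at most $6$ of the points $x_1,\dots,x_{n-1}$ can have $x_n$ as their (near-)nearest object (the $\pi/3$-angle argument), each such factor changing by at most a factor $2$; and the $n^{1/4}$ per step comes from the Hardy--Littlewood rearrangement inequality on each cell, $\int_{\Omega_i}d(x_n,x_i)^{-1/2}\,\mathrm{d}x_n\le C\,|\Omega_i|^{3/4}$, combined with Jensen's inequality over the cell areas. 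If you want to salvage your scheme, you need an analogue of this ``at most six neighbours, factor two'' control so that one point can be removed in a \emph{fixed} order while the change in the other factors stays bounded — that is the missing idea.
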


\begin{proof} To prove (\ref{Iineq11}), it suffices to show that for all $n\in\N$
\begin{equation}\label{Iineq5}
\Vert f_\Omega\Vert^2_{L^2(\Omega^n)} \leq C \, n^{\frac{1}{4}} 
\Vert f_\Omega\Vert^2_{L^2(\Omega^{n-1})},
\end{equation}
where $\Vert f_\Omega\Vert^2_{L^2(\Omega^0)}:=1$.

Note that in
\begin{equation}\label{Iineq6}
\Vert f_\Omega\Vert^2_{L^2(\Omega^n)} = \idotsint\limits_{\Omega^n}
\prod_{i=1}^n \frac{1}{d(x_i, \partial\Omega\cup I\backslash\{x_i\})^{\frac{1}{4}}}
{\rm d}x_1 \cdots {\rm d}x_n,
\end{equation}
we can divide the domain of integration $\Omega$ for $x_n$ into disjoint
open sets $\Omega_0, \ldots, \Omega_{n-1}$ (modulo a set of measure $0$), such that
\begin{equation}\label{eq:casesI}
\begin{aligned}
x_n\in \Omega_0 \quad  & \mbox{if and only if} \quad &d(x_n, \partial \Omega\cup I\backslash \{x_n\}) &= d(x_n, \partial \Omega),   \\
x_n\in \Omega_i \quad  & \mbox{if and only if} \quad &d(x_n, \partial \Omega\cup I\backslash \{x_n\}) &= d(x_n, x_i),  \quad 1\leq i\leq n-1.
\end{aligned}
\end{equation}

We next bound $f_\Omega(x_1, \ldots, x_n)$ in terms of
$f_\Omega(x_1, \ldots, x_{n-1})$. First consider the case
$x_n\in \Omega_0$. For each $1\leq i\leq n-1$,
either
\begin{equation}\label{eq:Iin7}
	d(x_i, \partial \Omega \cup I\backslash \{x_i\})
	= d(x_i, \partial \Omega \cup I'\backslash \{x_i\}),
\end{equation}
where $I' :=\{x_1, \ldots, x_{n-1}\}$, or
\begin{equation}\label{Iineq7}
d(x_i, \partial \Omega \cup I\backslash \{x_i\}) = d(x_i, x_n).
\end{equation}
In the later case, by the triangle inequality and the assumption $x_n\in \Omega_0$, we find that
\begin{equation}\label{eq:leadsto}
	d(x_i, \partial \Omega) \leq d(x_i, x_n) + d(x_n, \partial \Omega) \leq 2 d(x_i, x_n),
\end{equation}
and hence
\begin{equation}\label{eq:uffI}
	\frac{1}{d(x_i, \partial \Omega \cup I\backslash \{x_i\})} = \frac{1}{d(x_i, x_n)}
	\leq \frac{2}{d(x_i, \partial \Omega)} \leq \frac{2}{d(x_i, \partial \Omega
	\cup I'\backslash \{x_i\})}.
\end{equation}
If $N_0$ denotes the number of points among $\{x_1, \ldots, x_{n-1}\}$ such that (\ref{Iineq7}) holds, then
\begin{equation}\label{eq:analI}
f^2_\Omega(x_1, \ldots, x_n) \leq 2^{\frac{N_0}{4}} f^2_\Omega(x_1, \ldots, x_{n-1})
\frac{1}{d(x_n, \partial \Omega)^{\frac{1}{4}}} .	
\end{equation}
We claim that $N_0\leq 6$, which would then imply
\begin{equation}\label{Iineq8}
\idotsint\limits_{\Omega^{n-1} \times \Omega_0} f_\Omega^2(x_1, \ldots, x_n) {\rm d}x_1\cdots {\rm d}x_n \leq 2^{\frac{3}{2}} \Vert f_\Omega\Vert^2_{L^2(\Omega^{n-1})} \int_\Omega \frac{{\rm d}x_n}{d(x_n, \partial \Omega)^{\frac{1}{4}}},
\end{equation}
where the last integral is bounded by some constant $C_3(\Omega) < \infty$,
because $\Omega$ is assumed to be a bounded simply connected domain with a
piecewise smooth boundary.

To verify the claim that $N_0\leq 6$, assume without loss of generality that $x_1, \ldots, x_k$ are the points which satisfy (\ref{Iineq7}). In particular,
$d(x_i, x_n)\leq d(x_i, x_j)$ for all $1\leq i\neq j\leq k$. We may shift the origin to $x_n$ and assume without loss of generality that $x_i\in\R^2$ has polar coordinates $(r_i, \theta_i)$, and the directional vectors $e^{i\theta_1}, \ldots, e^{i\theta_k}$ are ordered counter clockwise on the unit circle. For any two adjacent $e^{i\theta_j}$ and $e^{i\theta_{j+1}}$ on the unit circle, in order to satisfy
$$
\max\{d(x_j, 0), d(x_{j+1}, 0)\} \leq d(x_j, x_{j+1}),
$$
it is necessary that the angle between $e^{i\theta_j}$ and $e^{i\theta_{j+1}}$ is at least $\frac{\pi}{3}$. It then follows that there can be at most $6$ such points.

We now consider the case $x_n\in \Omega_i$ for $1\leq i\leq n-1$
(recall \eqref{eq:casesI}). Without loss of generality, assume that $x_n\in \Omega_1$.
By the same reasoning as above, for each $1 \le i \le n-1$, either
relation \eqref{eq:Iin7} or relation \eqref{Iineq7} holds; in the latter case
we can replace \eqref{eq:leadsto} by
\begin{equation*}
	d(x_i, x_1) \le d(x_i, x_n) + d(x_n, x_1) \le 2 d(x_i, x_n) \,,
\end{equation*}
because $x_n \in \Omega_1$. Thus for $i \ge 2$
relation \eqref{eq:uffI} still holds
if we replace $d(x_i, \partial\Omega)$ by $d(x_i, x_1)$ therein.
The case $i=1$ needs to be dealt with separately: for this we simply bound
$$
\begin{aligned}
\frac{1}{d(x_1, \partial \Omega \cup I \backslash \{x_1\})^{\frac{1}{4}}} & \leq \frac{1}{d(x_1, \partial \Omega \cup I' \backslash \{x_1\})^{\frac{1}{4}}} + \frac{1}{d(x_1, x_n)^{\frac{1}{4}}}  \\
& =
\frac{1}{d(x_1, \partial \Omega \cup I' \backslash \{x_1\})^{\frac{1}{4}}}
\bigg(1+ \frac{d(x_1, \partial \Omega \cup I' \backslash \{x_1\})^{\frac{1}{4}}}
{d(x_1, x_n)^{\frac{1}{4}}}\bigg).
\end{aligned}
$$
We thus obtain the following analogue of \eqref{eq:analI}
(with $N_0 \le 6$) when $x_n \in \Omega_1$:
$$
f^2_\Omega(x_1, \ldots, x_n) \leq 2^{\frac{3}{2}} f^2_\Omega(x_1, \ldots, x_{n-1})
\Bigg\{\bigg(1+\frac{d(x_1, \partial \Omega\cup I'\backslash\{x_1\})^{\frac{1}{4}}}
{d(x_1, x_n)^{\frac{1}{4}}}\bigg) \frac{1}{d(x_n, x_1)^{\frac{1}{4}}} \Bigg\} .
$$
Bounding the term in brackets by
$C_4(\Omega) / d(x_1, x_n)^{\frac{1}{2}}$, for some $C_4(\Omega) < \infty$
(recall that $\Omega$ is a bounded set), we obtain
\begin{align}
\idotsint\limits_{\Omega^{n-1} \times \Omega_1} f_\Omega^2(x_1, \ldots, x_n)
{\rm d}x_1\cdots {\rm d}x_n & \leq 2^{\frac{3}{2}} \, C_4(\Omega)
\idotsint\limits_{\Omega^{n-1}\times \Omega_1}  \frac{f^2_\Omega(x_1, \ldots, x_{n-1})}
{d(x_n, x_1)^{\frac{1}{2}}} {\rm d}x_1\cdots {\rm d}x_{n} \nonumber\\
& \leq C_5(\Omega) \, |\Omega_1|^{\frac{3}{4}} \Vert f_\Omega\Vert^2_{L^2(\Omega^{n-1})},
\label{Iineq9}
\end{align}
for some $C_5(\Omega) < \infty$, where we applied the Hardy-Littlewood
rearrangement inequality (see e.g.~\cite[Theorem~3.4]{LL01})
to bound
$$
\int_{\Omega_1} \frac{ {\rm d}x_n}{d(x_n, x_1)^{\frac{1}{2}}} \leq \int_{\Omega_1^*} \frac{{\rm d}x_n}{d(x_n, 0)^{\frac{1}{2}}} = \int_0^{r_*} \frac{2\pi r }{r^{\frac{1}{2}}} {\rm d}r = \frac{4}{3}\pi^{\frac{1}{4}} |\Omega_1|^{\frac{3}{4}},
$$
where $\Omega_1^*$ is the ball centered at the origin with the same area $|\Omega_1^*|= \pi r_*^2$ as $\Omega_1$.

Combining (\ref{Iineq8}) and (\ref{Iineq9}), and the analogue for
$x_n\in\Omega_i$ with $2\leq i\leq n-1$, we obtain
\begin{align}
\Vert f_\Omega\Vert^2_{L^2(\Omega^n)} &\leq
C_6(\Omega)\big(1+ |\Omega_1|^{\frac{3}{4}}+\cdots + |\Omega_{n-1}|^{\frac{3}{4}}\big)
\Vert f_\Omega\Vert^2_{L^2(\Omega^{n-1})} \nonumber \\
&\leq C_6(\Omega) \Big(1+ (n-1)\Big( \frac{|\Omega_1|+\cdots |\Omega_{n-1}|}{n-1}\Big)^{\frac{3}{4}}
\Big) \Vert f_\Omega\Vert^2_{L^2(\Omega^{n-1})} \nonumber\\
&\leq C_2(\Omega) \, n^{\frac{1}{4}} \, \Vert f_\Omega\Vert^2_{L^2(\Omega^{n-1})}, \label{Iineq10}
\end{align}
where we applied Jensen's inequality to the function $g(x)=x^{\frac{3}{4}}$. This establishes (\ref{Iineq5}) and concludes the proof of Theorem~\ref{T:RFIM}.
\end{proof}

\medskip

\begin{proof}[Proof of Corollary~\ref{C:coco}]
Let $\varphi'(\cdot)$ denote the complex derivative of the conformal map $\varphi: \tilde\Omega
\to \Omega$. Since $|\varphi'(z)|^2$ equals the Jacobian determinant of $\varphi$,
for all $f, g \in L^2(\Omega)$ we have
\begin{equation} \label{eq:covgau}
	\int_\Omega f(x) g(x) \dd x = \int_{\tilde\Omega}
	f(\varphi(z)) g(\varphi(z)) |\varphi'(z)|^2 \dd z \,,
\end{equation}
by the change of variables formula. As a consequence, if $W(\cdot)$ denotes white noise on $\R^2$,
the processes $(\int_\Omega f(x) W(\dd x))_{f \in L^2(\Omega)}$ and
$(\int_{\tilde\Omega} f(\varphi(z)) |\varphi'(z)| \, W(\dd z))_{f \in L^2(\Omega)}$
have the same distribution: they are both centered Gaussian processes with
the same covariance \eqref{eq:covgau}.
This extends to an equality in distribution for multiple integrals (recall Subsection~\ref{sec:wnn}):
\begin{equation*}
	\idotsint_{\Omega^n}\!\! f(x_1, \ldots, x_n)
	\prod_{i=1}^n  W(\dd x_i) \overset{d}{=}
	\idotsint_{\tilde\Omega^n}\!\! f(\varphi(z_1), \ldots, \varphi(z_n))
	\prod_{i=1}^n \big[ |\varphi'(z_i)| \, W(\dd z_i) \big] \,,
\end{equation*}
jointly for $n\in\N$ and symmetric $f \in L^2(\Omega^n)$.
Informally, we have $W(\dd \varphi(z)) \overset{d}{=}
|\varphi'(z)| W(\dd z)$, which is the stochastic analogue of
$\dd \varphi(z) = |\varphi'(z)|^2 \dd z$.
Recalling \eqref{Z+Wexp}, it follows that
\begin{equation} \label{eq:plugin}
\begin{split}
\bZ^{+, W}_{\Omega, \hlambda, \hh} \overset{d}{=}
1 + \sum_{n=1}^\infty \frac{\cC^n}{n!} \idotsint_{\tilde\Omega^n}
\bphi_\Omega^+\big(\varphi(z_1), \ldots, \varphi(z_n)\big)
\prod_{i=1}^n \, \big[& \hlambda\big(\varphi(z_i)\big) \,  |\varphi'(z_i)| \, W({\rm d} z_i) \\
& \, + \hh\big(\varphi(z_i)\big) \, |\varphi'(z_i)|^2 \, {\rm d}z_i\big].
\end{split}
\end{equation}
By~\cite[Theorem 1.3]{CHI12}, the function $\bphi_\Omega^+$ is conformally covariant with
\begin{equation}
\bphi_\Omega^+\big(\varphi(z_1), \ldots, \varphi(z_n)\big) =
\bphi_{\tilde \Omega}^+(z_1, \ldots, z_n) \prod_{i=1}^n |\varphi'(z_i)|^{-\frac{1}{8}},
\end{equation}
hence $\bZ^{+, W}_{\Omega, \hlambda, \hh} \overset{d}{=}
\bZ^{+, W}_{\tilde \Omega, \tilde \lambda, \tilde h}$, with
$\tilde \lambda(z) := |\varphi'(z)|^{\frac{7}{8}} \, \hlambda (\varphi(z)) $
and $\tilde h(z) := |\varphi'(z)|^{\frac{15}{8}} \, \hh (\varphi(z)) $.
\end{proof}
\appendix

\section{The Cameron-Martin shift}

\label{sec:wnapp}

Recalling Subsection~\ref{sec:wnn},
let $W = (W(f))_{f \in L^2(\R^d)}$ be a white noise on $\R^d$
defined on the probability space $(\Omega_W, \cA, \bbP)$.
We denote by $L^0 := L^0(\Omega_W, \sigma(W), \bbP)$ the
space of (equivalence classes of) a.s.\ finite random variables that are measurable with respect
to the $\sigma$-algebra generated by $W$, equipped with the
topology of convergence in probability. Note that all the multi-dimensional
stochastic integrals $W^{\otimes k}(f)$ belong to $L^0$.

Let us now fix $\bnu \in L^2(\R^d)$, representing the \emph{bias}.
Given $k \in \N$ and a symmetric square-integrable function $f: (\R^d)^k \to \R$,
the ``biased stochastic integral''
\begin{equation} \label{eq:biased}
	W_{\bnu}^{\otimes k}(f) = \idotsint_{(\R^d)^k} f(x_1, \ldots, x_k) \,
	\prod_{i=1}^k \Big( W(\dd x_i) + \bnu(x_i) \dd x_i \Big)
\end{equation}
was defined in Remark~\ref{rem:properdef} by expanding the product
and integrating out the deterministic variables corresponding to $\bnu(x_i) \dd x_i$,
thus reducing to a sum of lower-dimensional ordinary (unbiased) stochastic integrals.
In particular, for $k=1$ we can write
$W_{\bnu}(f) = W_{\bnu}^{\otimes 1}(f)$ as
\begin{equation}\label{eq:k=1}
	W_{\bnu}(f) := W(f) + \int_{\R^d} f(x) \bnu(x) \dd x = W(f) +
\bbE[W(f) \, \xi] \,, \qquad \text{with} \quad \xi := W(\bnu) \,,
\end{equation}
by the It\^o isometry \eqref{eq:WI}.

Thus $W_{\bnu}(f) = \rho_\xi(W(f))$, where
we define the map $\rho_\xi(X) := X + \bbE[X\xi]$ for every
one-dimensional stochastic integral $X$.
By \cite[Theorem 14.1]{J97}, such a map admits
a unique extension $\rho_\xi: L^0 \to L^0$, called the \emph{Cameron-Martin shift},
which is continuous, linear and satisfies
\begin{equation} \label{eq:homo}
	\rho_\xi(1) = 1 \,, \qquad
	\rho_\xi(XY) = \rho_\xi(X) \rho_\xi(Y) \quad \forall X,Y \in L^0 \,.
\end{equation}
As a consequence, the multi-dimensional biased stochastic integrals
\eqref{eq:biased} correspond to
\begin{equation}\label{eq:kgen}
	W_{\bnu}^{\otimes k}(f) = \rho_\xi(W^{\otimes k}(f)) \,.
\end{equation}
This is easily checked for ``special simple functions'' $f$ (recall
Subsection~\ref{sec:wnn})
using \eqref{eq:k=1}-\eqref{eq:homo}, and
is extended to general symmetric $f \in L^2((\R^d)^k)$ using the continuity of $\rho_\xi$.

For any $X\in L^0$, the random variable $\rho_\xi(X)$ has
the same distribution as $X$ under the probability $\bbP_{\bnu}$
defined in \eqref{eq:RN},
by \cite[Theorem 14.1 (iii)-(iv)]{J97}. In particular, choosing for $X$
the series in \eqref{eq:general2}, whenever it converges in probability,
one obtains relation \eqref{eq:general3}.

\section{Technical lemmas}
\label{sec:appB}

The following lemma is used in the proof of Theorem~\ref{th:general2}.

\begin{lemma}[Exponential tilting] \label{L:tilt}
Let $(\zeta_{\delta,i})_{\delta \in (0,1), i\in\bbT_\delta}$ be a family of independent
random variables in $L^2$, with mean
$\mu_{\delta}(i)$ and variance $\sigma_\delta^2$, with
$\sigma_\delta \to\bsigma_0 \in (0,\infty)$
and $\|\mu_\delta\|_\infty := \sup_{i\in\bbT_\delta} |\mu_\delta(i)| \to 0$
as $\delta \downarrow 0$.
Assume further that $(\zeta_{\delta,i}^2)_{\delta\in (0,1),i \in\bbT_\delta}$
are uniformly integrable.
Then one can construct independent random variables
$(\tilde \zeta_{\delta,i})_{\delta \in (0,1), i\in\bbT_\delta}$ such that
\begin{equation*}
	\bbP(\tilde \zeta_{\delta,i}\in \dd x) = f_{\delta,i}(x) \bbP(\zeta_{\delta,i}\in \dd x) \,,
\end{equation*}
and there
exist $\delta_0,C \in (0,\infty)$, and $C_p  \in (0,\infty)$
for every $p\in\R$, such that
\begin{equation}\label{A:tiltbd1}
\bbE[\tilde \zeta_{\delta,i}]  =0, \quad \bbE[\tilde \zeta^2_{\delta,i}] \leq
\sigma_\delta^2 \big(1 + C|\mu_{\delta}(i)|\big), \quad
\mbox{and}\quad \bbE[f_{\delta,i}(\zeta_{\delta,i})^p]  \leq 1+ C_p \,\mu_{\delta}(i)^2 \,,
\end{equation}
for all $\delta \in (0,\delta_0)$ and $i\in\bbT_\delta$.
Furthermore, if
\begin{equation}\label{zetaassum}
\inf\limits_{\delta\in (0,1), i\in\bbT_\delta} \min\big\{\bbP(\zeta_{\delta,i}>0),
\bbP(\zeta_{\delta,i}<0), {\rm \bbV ar}(\zeta_{\delta, i}|\zeta_{\delta, i}>0),
{\rm \bbV ar}(\zeta_{\delta, i}|\zeta_{\delta, i}<0) \big\} >0,
\end{equation}
then there exists $C' \in (0,\infty)$ such that
the following improved bound holds:
\begin{equation}\label{A:tiltbd2}
\bbE[\tilde \zeta^2_{\delta,i}]\leq \sigma_\delta^2\big(1 + C' \mu_\delta(i)^2\big) .
\end{equation}
\end{lemma}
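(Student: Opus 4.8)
The strategy is to obtain $\tilde\zeta_{\delta,i}$ from $\zeta_{\delta,i}$ by an \emph{additive perturbation of the law, supported on a fixed bounded interval}: since the $\zeta_{\delta,i}$ need not possess exponential moments, one cannot use a genuine exponential tilt, and one works only with the assumed uniform integrability of $(\zeta_{\delta,i}^2)$. First I would fix $M\in(0,\infty)$, large but fixed, such that $\bbE[\zeta_{\delta,i}^2\ind_{\{|\zeta_{\delta,i}|>M\}}]\le\epsilon_0$ for all $\delta\in(0,1)$ and $i\in\bbT_\delta$, where $\epsilon_0$ is a small absolute constant (e.g.\ $\epsilon_0\le\bsigma_0^2/100$); this is possible by uniform integrability. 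Abbreviating $T_k=T_k(\delta,i):=\bbE[\zeta_{\delta,i}^k\ind_{\{|\zeta_{\delta,i}|\le M\}}]$, the hypotheses $\sigma_\delta\to\bsigma_0\in(0,\infty)$ and $\|\mu_\delta\|_\infty\to0$ give, for $\delta$ small, $T_0\in[\tfrac12,1]$, $|T_1|\le C$, $T_2\ge\bsigma_0^2/4$, and $\big|\,\bbE[\zeta_{\delta,i}]-T_1\,\big|\le\epsilon_0/M$.

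Next I would define $f_{\delta,i}(x):=1+q_{\delta,i}(x)\,\ind_{\{|x|\le M\}}$, where $q_{\delta,i}(x)=a_0+a_1x$ in general, and $q_{\delta,i}(x)=a_0+a_1x+a_2x^2$ when \eqref{zetaassum} holds, with the coefficients chosen so that $\bbE[f_{\delta,i}(\zeta_{\delta,i})]=1$, $\bbE[\zeta_{\delta,i}f_{\delta,i}(\zeta_{\delta,i})]=0$, and --- in the quadratic case --- additionally $\bbE[\zeta_{\delta,i}^2 f_{\delta,i}(\zeta_{\delta,i})]=\sigma_\delta^2$. These requirements form a linear system $G_{\delta,i}\mathbf a=\mathbf b_{\delta,i}$, where $G_{\delta,i}=(T_{j+k})_{j,k}$ is the Gram matrix of the monomials $1,x$ (resp.\ $1,x,x^2$) in $L^2$ of the truncated law of $\zeta_{\delta,i}$, and $\mathbf b_{\delta,i}=(0,-\mu_\delta(i))$ (resp.\ $(0,-\mu_\delta(i),-\mu_\delta(i)^2)$; here one uses $\bbE[\zeta_{\delta,i}^2]=\sigma_\delta^2+\mu_\delta(i)^2$). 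The key point is to show that $\lambda_{\min}(G_{\delta,i})$ is bounded below by a positive constant, uniformly in $\delta,i$. In the \emph{linear} case one computes $\det G_{\delta,i}=T_0^2\,\mathrm{Var}(\zeta_{\delta,i}\mid|\zeta_{\delta,i}|\le M)$, and the first step gives $\mathrm{Var}(\zeta_{\delta,i}\mid|\zeta_{\delta,i}|\le M)\ge\bsigma_0^2/8$ for $\delta$ small, so no extra hypothesis is needed. In the \emph{quadratic} case this uniform invertibility is precisely where assumption \eqref{zetaassum} enters (see below). Granting it, $\|\mathbf a\|\le\|G_{\delta,i}^{-1}\|\,\|\mathbf b_{\delta,i}\|\le C|\mu_\delta(i)|$, hence $|q_{\delta,i}(x)|\le C_M|\mu_\delta(i)|$ on $[-M,M]$; so for $\delta<\delta_0$ one has $f_{\delta,i}\in[\tfrac12,\tfrac32]$, a valid probability density, and one constructs the independent $\tilde\zeta_{\delta,i}$ with $\bbP(\tilde\zeta_{\delta,i}\in\mathrm dx)=f_{\delta,i}(x)\,\bbP(\zeta_{\delta,i}\in\mathrm dx)$.

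Finally I would read off the three bounds. By construction $\bbE[\tilde\zeta_{\delta,i}]=\bbE[\zeta_{\delta,i}]+\bbE[\zeta_{\delta,i}q_{\delta,i}(\zeta_{\delta,i})\ind_{\{|\zeta_{\delta,i}|\le M\}}]=0$. For the second moment: in the quadratic case $\bbE[\tilde\zeta_{\delta,i}^2]=\sigma_\delta^2$ \emph{exactly}, which is \eqref{A:tiltbd2}; in general, $\bbE[\tilde\zeta_{\delta,i}^2]=\bbE[\zeta_{\delta,i}^2]+\bbE[\zeta_{\delta,i}^2q_{\delta,i}(\zeta_{\delta,i})\ind_{\{|\zeta_{\delta,i}|\le M\}}]\le\sigma_\delta^2+\mu_\delta(i)^2+C_M\|\mathbf a\|\le\sigma_\delta^2+C|\mu_\delta(i)|\le\sigma_\delta^2(1+C'|\mu_\delta(i)|)$, using that $\sigma_\delta^2$ is bounded below, which is the second bound in \eqref{A:tiltbd1}. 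For the density moments, since $|q_{\delta,i}(\zeta_{\delta,i})\ind_{\{|\zeta_{\delta,i}|\le M\}}|\le\tfrac12$ and $\bbE[q_{\delta,i}(\zeta_{\delta,i})\ind_{\{|\zeta_{\delta,i}|\le M\}}]=0$ by the first defining equation, the elementary inequality $(1+t)^p\le1+pt+C_p t^2$ for $|t|\le\tfrac12$ yields $\bbE[f_{\delta,i}(\zeta_{\delta,i})^p]\le1+C_p\,\bbE[q_{\delta,i}(\zeta_{\delta,i})^2\ind_{\{|\zeta_{\delta,i}|\le M\}}]\le1+C_pC_M^2\,\mu_\delta(i)^2$, the last bound in \eqref{A:tiltbd1}. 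The constant $\delta_0$ is chosen once and for all so that $\|\mathbf a\|$ stays below an absolute threshold, and only $C_p$ depends on $p$.

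\textbf{Main obstacle.} The one substantial step is the uniform lower bound on $\lambda_{\min}(G_{\delta,i})$ in the quadratic case, i.e.\ the claim that under \eqref{zetaassum} every quadratic $q(x)=a_0+a_1x+a_2x^2$ with $\|(a_0,a_1,a_2)\|=1$ satisfies $\bbE[q(\zeta_{\delta,i})^2\ind_{\{|\zeta_{\delta,i}|\le M\}}]\ge c>0$ uniformly in $\delta,i$. Such a $q$ vanishes at most twice, whereas the four nondegeneracy conditions in \eqref{zetaassum} --- together with the truncation from the first step --- force the truncated law of $\zeta_{\delta,i}$ to charge, with mass bounded below, two well-separated points in $(0,M]$ and two well-separated points in $[-M,0)$; this is an elementary consequence of a conditional variance being bounded below over a bounded range. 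Since a nonzero quadratic cannot be simultaneously small at four such points, a compactness argument over the (finite-dimensional, compact) set of unit coefficient vectors and admissible point configurations yields the required uniform constant $c$. I expect this linear-algebraic / measure-theoretic estimate to be the most delicate part of the proof.
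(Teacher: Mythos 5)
Your construction is correct, but it is genuinely different from the paper's. The paper also truncates to a bounded window (unavoidable without exponential moments), but then performs an \emph{exponential} tilt of the conditional law on that window: the density is $e^{\tilde\lambda x-F(\tilde\lambda)}$ on an interval $I$ and $1$ outside, with $\tilde\lambda$ chosen (via monotonicity of $\lambda\mapsto F'(\lambda)$) to kill the mean. In the general case $I=[-A,A]$, and a first-order bound $|G'(\lambda)|\le 2A^3$ on the tilted second moment gives the linear-in-$|\mu_\delta(i)|$ estimate of \eqref{A:tiltbd1}; under \eqref{zetaassum} the paper tilts \emph{one-sidedly} on $I=[0,A]$ (or $[-A,0]$), and the improvement \eqref{A:tiltbd2} comes from a sign argument: for a nonnegative variable $Y$, $Y$ and $Y^2$ are positively correlated, so the first-order term in the variance change has the favorable sign and only a term quadratic in $\tilde\lambda$ (hence in $\mu_\delta(i)$) survives; assumption \eqref{zetaassum} is used only to make the one-sided tilting conditions hold uniformly. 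You instead perturb the density additively by a polynomial $1+q(x)\ind_{\{|x|\le M\}}$, with $q$ linear in general and quadratic under \eqref{zetaassum}, solving a Gram/Hankel system; this buys you an \emph{exact} matching of the second moment in the improved case and replaces the implicit analysis of the tilt parameter by linear algebra, at the price of the uniform invertibility of the $3\times3$ moment matrix, which is where you spend \eqref{zetaassum}. Both routes yield all three bounds, including $\bbE[f^p]\le 1+C_p\mu^2$ for all $p\in\R$ since in both cases the density is pinned in a fixed interval around $1$ and the linear term has zero expectation.

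One small tightening in your ``main obstacle'' step: the two positive and two negative mass clusters you extract need not be \emph{pairwise} separated, since the inner cluster in $(0,M]$ and the inner cluster in $[-M,0)$ can both sit arbitrarily close to $0$. What your hypotheses do guarantee (conditional variances bounded below on $(0,M]$ and $[-M,0)$ after discarding the small-second-moment tail) is two clusters in each half-line separated by some $d>0$ within that half-line, and hence at least \emph{three} pairwise $d$-separated regions of mass bounded below (the two outer clusters plus either inner one). Three such regions suffice: a quadratic with unit coefficient vector that is small at one point of each of three $d$-separated points of $[-M,M]$ has small coefficients by Lagrange interpolation, so $\lambda_{\min}(G_{\delta,i})\ge c(M,d,m)>0$ uniformly. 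With that rephrasing your argument closes; note also that the choice of $M$ (equivalently $\epsilon_0$) must then be made small relative to the constant in \eqref{zetaassum} as well as to $\bsigma_0^2$, which is harmless since both are fixed.
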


The proof of Lemma~\ref{L:tilt} is an easy corollary of the following general result,
which concerns exponential tilting of a single random variable in order
to shift its mean to zero (since the random variables are not assumed to have
finite exponential moments, the tilting is performed on a bounded
subset). The assumptions in Lemma~\ref{L:tilt} guarantee
that conditions \eqref{eq:assEX} and \eqref{eq:assEXbis} are fulfilled,
and the constants in \eqref{eq:RN}, \eqref{eq:uffest},
\eqref{eq:improvedbound} are uniformly bounded.

\begin{theorem} \label{th:new}
Let $X$ be a square-integrable random variable
and let $A  > 0$ be such that
\begin{equation} \label{eq:defA}
	\bbE[X^2 \ind_{\{|X| > A\}}] \le \frac{1}{4} \bbE[X^2]  \,.
\end{equation}
Assume that $\bbE[X]$ is sufficiently small, more precisely,
\begin{equation}\label{eq:assEX}
	|\bbE(X)| \le \epsilon := \frac{\bbE[X^2]^2}{144\,A^3} \,.
\end{equation}
Then one can define a random variable $\tilde X$, such that
$\bbP(\tilde X \in \dd x) = f(x) \, \bbP(X \in \dd x)$, satisfying
\begin{gather}	\label{eq:RNbis}
	\quad \ \bbE\big[ f(X)^p \big] \le
	1 + C_p \, \bbE[X]^2 \, \quad \forall p \in \R \,,
	\qquad \text{with } \ C_p := \frac{4 e^{|p|}}{A \epsilon}\,, \\
	\label{eq:uffest}
	\bbE[\tilde X] = 0 \,, \qquad
	\bbE[\tilde X^2] \le \bbE[X^2] + C \, |\bbE[X]| \,,
	\qquad
	\text{with } \ C := \frac{A^{3/2}}{\sqrt{\epsilon}} \,.
\end{gather}

If $\bbE[X] \ge 0$, and $A$ is chosen such that
\begin{equation} \label{eq:defAbis}
	\bbE[X^2 \ind_{\{X > A\}}] \le \frac{1}{4} \bbE[X^2 \ind_{\{X \ge 0\}}] \,,
\end{equation}
$($replace $X$ by $-X$ if $\bbE[X] \le 0$$)$, and further assume that
\begin{equation}\label{eq:assEXbis}
	|\bbE[X|X\ge 0]| \le \epsilon' := \frac{\bbE[X^2|X \ge 0]^2}{144\,A^3} \,,
\end{equation}
then we can define $\tilde X$ such that \eqref{eq:RNbis} holds with $\epsilon$ replaced
by $\epsilon'$, while \eqref{eq:uffest} is improved to
\begin{equation}\label{eq:improvedbound}
	\bbE[\tilde X] = 0 \,, \qquad
	\bbE[\tilde X^2] \le \bbE[X^2] + C'\, \bbE[X]^2 \,,
	\qquad \text{with } \ C':= \frac{A}{2 \bbP(X \ge 0)\epsilon'} \,.
\end{equation}
\end{theorem}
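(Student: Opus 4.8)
The plan is to realize $\tilde X$ as an exponential tilt of $X$ restricted to the truncation window $[-A,A]$, with the tilt parameter pinned down by the requirement $\bbE[\tilde X]=0$. For $\theta\in\R$ set $Z(\theta):=\bbE[e^{-\theta X\ind_{\{|X|\le A\}}}]$ and let $\tilde X_\theta$ be the variable with law $f_\theta(x)\,\bbP(X\in\dd x)$, where $f_\theta(x):=e^{-\theta x\ind_{\{|x|\le A\}}}/Z(\theta)$. Writing $h(\theta):=Z(\theta)\bbE[\tilde X_\theta]=\bbE[X e^{-\theta X\ind_{\{|X|\le A\}}}]$, one has $h(0)=\bbE[X]$ and $h'(\theta)=-\bbE[X^2\ind_{\{|X|\le A\}}e^{-\theta X\ind_{\{|X|\le A\}}}]<0$, so the problem reduces to locating the zero $\theta^*$ of $h$ and reading off \eqref{eq:RNbis}--\eqref{eq:uffest} from Taylor expansions of $Z$ around it.

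The crux is the quantitative control of $\theta^*$. On the range $|\theta|A\le 1$ one has $e^{-\theta X\ind_{\{|X|\le A\}}}\in[e^{-1},e]$, so hypothesis \eqref{eq:defA} yields $|h'(\theta)|\ge e^{-1}\bbE[X^2\ind_{\{|X|\le A\}}]\ge\tfrac{3}{4e}\bbE[X^2]=:c_0$; the same hypothesis gives $\bbE[X^2]\le\tfrac43 A^2$ (since $\tfrac34\bbE[X^2]\le\bbE[X^2\ind_{\{|X|\le A\}}]\le A^2$), and combined with \eqref{eq:assEX} this forces $|\bbE[X]|\le\epsilon\le c_0/A$. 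Hence $h$ is a strictly decreasing bijection of $[-1/A,1/A]$ onto an interval containing $0$, there is a unique $\theta^*\in(-1/A,1/A)$ with $h(\theta^*)=0$, and $|\theta^*|\le|\bbE[X]|/c_0\le C_1|\bbE[X]|/\bbE[X^2]$. Set $\tilde X:=\tilde X_{\theta^*}$, so $\bbE[\tilde X]=0$ by construction. For the $p$-th moment, $f(X)^p=e^{-p\theta^* X\ind_{\{|X|\le A\}}}/Z(\theta^*)^p$ gives $\bbE[f(X)^p]=Z(p\theta^*)/Z(\theta^*)^p$; writing $\log Z(s)=-s\,\bbE[X\ind_{\{|X|\le A\}}]+R(s)$ with $R$ the Taylor remainder of the (convex) cumulant generating function, a telescoping cancels the linear terms and leaves $\log[Z(p\theta^*)/Z(\theta^*)^p]=R(p\theta^*)-pR(\theta^*)$, which is of order $(p^2+|p|)(\theta^*)^2\bbE[X^2]$ up to constants, hence $\le C_p\bbE[X]^2$ after exponentiating (the factor $e^{|p|}$ in $C_p$ is exactly the slack absorbing the $p$-dependence of the remainders). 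Similarly $\bbE[\tilde X^2]=\bbE[X^2 f(X)]\le\bbE[X^2]\,e^{|\theta^*|A}/Z(\theta^*)$, since the tail $\{|X|>A\}$ carries density $1/Z(\theta^*)$ and the window density $\le e^{|\theta^*|A}/Z(\theta^*)$; expanding the two factors to first order in $\theta^*$ produces the error $O(A|\bbE[X]|)$, which is \eqref{eq:uffest}.

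For the improved bound \eqref{eq:improvedbound}, assume $\bbE[X]\ge0$ (the opposite case being symmetric) and run the same tilting mechanism \emph{only on the positive part} of $X$: put $f(x):=1$ for $x<0$ and $f(x):=\bbP(X\ge0)\,e^{-\theta\min(x,A)}/N(\theta)$ for $x\ge0$, with $N(\theta):=\bbE[e^{-\theta\min(X,A)}\ind_{\{X\ge0\}}]$, and choose $\theta^*\ge0$ so that $\bbE[Xf(X)]=0$. Existence and the bound $\theta^*\le C\,\bbE[X]/(\bbP(X\ge0)\bbE[X^2\mid X\ge0])$ follow as in the previous step, now with \eqref{eq:defAbis} providing the lower bound on the relevant conditional ``variance'' and \eqref{eq:assEXbis} keeping $\theta^*A$ in the perturbative regime. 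The gain comes from the negative part being untouched: one has the exact identity $\bbE[\tilde X^2]-\bbE[X^2]=\bbP(X\ge0)(s_+(\theta^*)-s_+(0))$, where $s_+(\theta)=\bbE[X^2 e^{-\theta\min(X,A)}\ind_{\{X\ge0\}}]/N(\theta)$ is the conditional second moment under the tilt; since on $\{X\ge0\}$ both $x\mapsto x^2$ and $x\mapsto\min(x,A)$ are nondecreasing, Chebyshev's association inequality gives $s_+'(\theta)\le0$, so $s_+(\theta^*)\le s_+(0)$ and the second moment does not increase --- which is even stronger than \eqref{eq:improvedbound}, the $C'\bbE[X]^2$ slack absorbing the normalization of $N(\theta^*)$ and the far tail $\{X>A\}$. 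The companion estimate $\bbE[f(X)^p]\le1+C_p\bbE[X]^2$ is proved exactly as before, the point being that $\bbE[f(X)^p]-1$ is now of order $(\theta^*)^2\bbE[X^2\mid X\ge0]$, quadratic in $\bbE[X]$.

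The step I expect to be the main obstacle is the quantitative one: showing that the ``truncated variance'' $\bbE[X^2\ind_{\{|X|\le A\}}e^{-\theta X\ind_{\{|X|\le A\}}}]$ stays comparable to $\bbE[X^2]$ on the whole search interval, and that \eqref{eq:assEX} (resp.\ \eqref{eq:assEXbis}) is precisely strong enough to place $\theta^*$ in the window $|\theta^*|A\lesssim1$ where all the exponential Taylor remainders are uniformly controlled; recovering the explicit constants $C_p,C,C'$ in the stated form requires carrying these chains of inequalities with care, and the one-sided version additionally needs the association inequality together with a careful split of the positive tail at level $A$.
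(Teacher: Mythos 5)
Your construction is essentially the paper's: an exponential tilt of $X$ supported on the truncation window, with the tilt parameter pinned by the mean-zero requirement, and with existence plus the bound $|\theta^*|\lesssim |\bbE[X]|/\bbE[X^2]$ coming from a lower bound on the tilted truncated second moment (your global normalization, versus the paper's renormalization within the window only, is immaterial). The localization of $\theta^*$ and the two-sided second-moment estimate are sound at the level of constants you indicate, and your one-sided argument via the association (Chebyshev/FKG) inequality is a nice improvement: it gives $\bbE[\tilde X^2]\le\bbE[X^2]$ outright, stronger than \eqref{eq:improvedbound}, whereas the paper only exploits the sign of the linear Taylor term and a crude bound on the second derivative.

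The genuine gap is in \eqref{eq:RNbis}. You write $\bbE[f(X)^p]=Z(p\theta^*)/Z(\theta^*)^p$ and control $R(p\theta^*)-pR(\theta^*)$ by second-order remainders of the cumulant generating function, asserting the result is of order $(p^2+|p|)(\theta^*)^2\bbE[X^2]$ and that the factor $e^{|p|}$ in $C_p$ absorbs the $p$-dependence ``after exponentiating''. Two things fail uniformly in $p$: the remainder constant involves $\sup_{|s|\le|p\theta^*|}(\log Z)''(s)$, which carries a factor $e^{c|p||\theta^*|A}$ because the tilt $p\theta^*$ itself scales with $p$; and, more seriously, passing from $\log\bbE[f(X)^p]\le E_p$ to $\bbE[f(X)^p]\le 1+C_p\bbE[X]^2$ requires $E_p=O(1)$, while $E_p\asymp p^2\,\bbE[X]^2/\bbE[X^2]$ exceeds $1$ for large $|p|$ when $|\bbE[X]|$ is near the allowed threshold $\epsilon$, and then $e^{E_p}-1$ is no longer quadratic in $\bbE[X]$. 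So the stated bound with $C_p=4e^{|p|}/(A\epsilon)$, claimed for all $p\in\R$, does not follow from the argument as written. The paper's device is designed exactly to avoid this: it Taylor-expands $y\mapsto y^p$ pointwise on the fixed interval $[e^{-2/27},e^{2/27}]$ containing the values of $f$, so that $p$ enters only through the scalar coefficient $|p(p-1)|e^{\frac{2}{27}|p-2|}\le 2e^{|p|}$, while the quadratic quantity $\bbE[(f(X)-1)^2]$ is controlled by $F(2\tilde\lambda)-2F(\tilde\lambda)\le 4A^2\tilde\lambda^2$ — a tilt by $2\tilde\lambda$, never by $p\tilde\lambda$. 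The same trick applies verbatim to your globally normalized $f$ (which also takes values in $[e^{-c},e^{c}]$ for a small absolute constant $c$), so the gap is repairable, but as it stands your estimate for \eqref{eq:RNbis} is incomplete.
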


\begin{proof}
Without loss of generality, we assume that $\E[X] \ge 0$ (otherwise consider $-X$).

\medskip
\noindent
\textbf{Step 1 (Strategy).}
We will fix $I \subseteq \R$, which is either $[-A,A]$ (assuming \eqref{eq:defA}-\eqref{eq:assEX}), or $[0,A]$ (assuming \eqref{eq:defAbis}-\eqref{eq:assEXbis}), and we define random variables $Y$, $Z$ with laws
\begin{equation} \label{eq:YZ}
	\bbP(Y \in \cdot) := \bbP(X \in \cdot \,|\, X \in I) \,, \qquad
	\bbP(Z \in \cdot) := \bbP(X \in \cdot \,|\, X \not\in I) \,.
\end{equation}
Taking independent copies of $X,Y, Z$, we have the following equality in distribution:
\begin{equation} \label{eq:Xdist0}
	X \overset{\mathrm{dist}}{=} \ind_{\{X \in I\}} \, Y
	\,+\, \ind_{\{X \not\in I\}} \, Z \,,
	\qquad \text{(note that}  \quad |Y| \le A \text{)}.
\end{equation}
We then exponentially tilt $Y$, defining for $\lambda\in\R$ a random variable $Y_\lambda$ with law
\begin{equation}\label{FNlambda0}
	\bbP(Y_\lambda \in \dd x) := e^{\lambda x - F(\lambda)} \,
	\bbP(Y \in \dd x) \,, \qquad \text{where} \qquad
	F(\lambda):= \log \bbE[e^{\lambda Y}] \,.
\end{equation}
As we will show at the end of the proof,
we can choose $\lambda = \tilde\lambda \in \R$ such that
\begin{equation}\label{eq:constru0}
	\bbE[Y_{\tilde\lambda}] = -\frac{\bbP(X\not\in I)}{\bbP(X\in I)} \bbE[Z] \,.
\end{equation}
If we define $\tilde X$ replacing $Y$ by $Y_{\tilde\lambda}$ in the definition \eqref{eq:Xdist0}, that is
\begin{equation} \label{eq:Xdist0new}
	\tilde X := \ind_{\{X \in I\}} \, Y_{\tilde\lambda}
	\,+\, \ind_{\{X \not\in I\}} \, Z \,,
\end{equation}
then $\bbE[\tilde X] = 0$ by construction.
Moreover $\bbP(\tilde X \in \dd x) = f(x) \bbP(X \in \dd x)$ with density
\begin{equation} \label{eq:effe}
	f(x) = e^{\tilde\lambda x - F(\tilde\lambda)} \, \ind_{\{x\in I\}} \,+\,
	\ind_{\{x \in \R \setminus I\}}
	= 1 + \big(e^{\tilde\lambda x - F(\tilde\lambda)} - 1 \big)\ind_{\{x\in I\}} \,.
\end{equation}

The rest of the proof is devoted to estimating $\bbE[\tilde X^2]$ and $\bbE[f(X)^p]$.
We are going to use the following bounds on $\tilde\lambda$, which will be proved at the very end:
\begin{align} \label{eq:mart1}
	&\text{$I := [-A,A]$, assuming \eqref{eq:defA}-\eqref{eq:assEX}:} \ \
	& |\tilde\lambda| &
	\le  \frac{|\bbE[X]|}{2 A^{3/2} \sqrt{\epsilon}} \, ; \\
	\label{eq:mart2}
	&\text{$I := [0,A]$, assuming \eqref{eq:defAbis}-\eqref{eq:assEXbis}:} \ \
	&|\tilde\lambda| &
	\le \frac{|\bbE[X]|}{\sqrt{6} A^{3/2}
	\sqrt{\epsilon'}\sqrt{\bbP(X\in I) \bbP(X\ge 0)}} \,; \\
	\label{eq:lambdabound}
	&\text{In either case:} \ \
	&|\tilde\lambda| & \le \frac{1}{27 A} \,.
\end{align}

\medskip
\noindent
\textbf{Step 2 (Bounds on $\bbE[\tilde X^2]$).}
Denote $G(\lambda) := \bbE[Y_\lambda^2] =
\bbE[Y^2 \, e^{\lambda Y}]/\bbE[e^{\lambda Y}]$.
Recalling \eqref{eq:Xdist0new} and \eqref{eq:Xdist0}, we can write
\begin{equation}\label{eq:resu}
\begin{split}
	\bbE[\tilde X^2] & = \bbE[X^2] + \bbP(X\in I)
	\big( \bbE[Y_{\tilde\lambda}^2] - \bbE[Y^2] \big)
	= \bbE[X^2] + \bbP(X\in I) \int_0^{\tilde\lambda} G'(\lambda) \, \dd \lambda \,.
\end{split}
\end{equation}
Since $G'(\lambda) = \bbE[(Y_\lambda)^3]
- \bbE[(Y_\lambda)^2] \bbE[Y_\lambda]$ and $|Y_\lambda| \le A$,
we have $|G'(\lambda)| \le 2 A^3$, and hence
\begin{equation} \label{eq:eheh}
	\bbE[\tilde X^2] \le \bbE[X^2] + 2A^3 \bbP(X\in I) |\tilde\lambda| .
\end{equation}
Applying \eqref{eq:mart1}, we obtain precisely the second bound in \eqref{eq:uffest}.

To prove \eqref{eq:improvedbound}, let us assume \eqref{eq:defAbis}-\eqref{eq:assEXbis} and set $I := [0,A]$.
By \eqref{eq:Xdist0}--\eqref{eq:constru0}, we have
\begin{equation} \label{eq:equivconstru0}
	\bbE[Y_{\tilde\lambda}] = \bbE[Y] - \frac{\bbE[X]}{\bbP(X\in I)} \,,
\end{equation}
and hence $\bbE[Y_{\tilde\lambda}]  \le \bbE[Y]$.
Since $\bbE[Y_\lambda] = F'(\lambda)$ is increasing in $\lambda$
(because $F''(\lambda) = \bbvar[Y_\lambda] \ge 0$), it follows that
$\tilde\lambda \le 0$.
We then refine \eqref{eq:resu} as follows:
\begin{equation} \label{eq:daje}
\begin{split}
	\bbE[\tilde X^2]  = \bbE[X^2] + \bbP(X\in I) \tilde\lambda G'(0)
	+ \bbP(X\in I)
	\int_0^{\tilde\lambda} \bigg( \int_0^\lambda G''(s) \, \dd s \bigg) \, \dd \lambda \, \,.
\end{split}
\end{equation}
Note that $G'(0) = \bbE[Y^3] - \bbE[Y^2] \bbE[Y] \ge 0$, because $Y \in [0,A]$
and hence $Y^2$ and $Y$ are positively correlated. Therefore the second term in 
\eqref{eq:daje} is bounded by $0$. Also note that
\begin{equation*}
	G''(\lambda)
	= \bbE[(Y_\lambda)^4] - 2 \bbE[(Y_\lambda)^3] \bbE[Y_\lambda]
	+ 2 \bbE[(Y_\lambda)^2] \bbE[Y_\lambda]^2
	- \bbE[(Y_\lambda)^2]^2 \,,
\end{equation*}
and hence $|G''(\lambda) | \le 6 A^4$. Substituting into \eqref{eq:daje} then yields
\begin{equation} \label{eq:concl2}
	\bbE[\tilde X^2]  \le \bbE[X^2]
	+ 3 A^4 \bbP(X\in I)  \tilde\lambda^2 \,.
\end{equation}
Applying \eqref{eq:mart2}, we obtain precisely \eqref{eq:improvedbound}.

\medskip
\noindent
\textbf{Step 3 (Bounds on $\bbE[f(X)^p]$).}
Recall $f$ from \eqref{eq:effe} and $F(\lambda)$ from \eqref{FNlambda0}.
Since $F(0)=0$ and $|F'(\lambda)| = |\bbE[Y_\lambda]| \le A$,
cf. \eqref{FNlambda0},
we have $|F(\tilde\lambda)| \le A |\tilde\lambda|$ and
hence $|\tilde\lambda x - F(\tilde\lambda)| \le 2A|\tilde\lambda|$
for every $x\in I \subseteq [-A,A]$.
Applying \eqref{eq:lambdabound}, we obtain
\begin{equation} \label{eq:ab}
	 e^{-2/27} \le f(x) \le
	 e^{2/27} \,, \qquad \forall x \in \R \,.
\end{equation}
For any $p\in\R$, Taylor expansion gives $y^p \le 1 + p(y-1) + C'_p(y-1)^2$
for all $y \in [e^{-2/27},e^{2/27}]$, with
\begin{equation} \label{eq:Cp}
	C'_p := \max_{y \in [e^{-2/27},e^{2/27}]} |p(p-1)y^{p-2}| =
	|p(p-1)| e^{\frac{2}{27}|p-2|} \le 2 \,e^{|p|} \,.
\end{equation}
Therefore
\begin{equation*}
\begin{split}
	f(x)^p & \le 1 + p(f(x)-1) + C'_p(f(x)-1)^2 \\
	& = 1 + (p-2C'_p)(f(x)-1)
	+ C'_p \big(e^{2\tilde\lambda x - 2F(\tilde\lambda)} - 1 \big)\ind_{\{x\in I\}} \,.
\end{split}
\end{equation*}
Since $f$ is a probability density, recalling the definition of $F$ from \eqref{FNlambda0}, we obtain
\begin{equation} \label{eq:Efp}
	\bbE\big[ f(X)^p \big] \le 1 + C'_p \, \bbP(X \in I)
	\big(e^{F(2\tilde\lambda) - 2 F(\tilde\lambda)} - 1 \big) \,.
\end{equation}
Since $|F''(\lambda)| = |\bbE[Y_\lambda^2] - \bbE[Y_\lambda]^2| \le 2A^2$,
the Mean Value Theorem and \eqref{eq:lambdabound} yield
\begin{equation} \label{eq:F22F}
	0 \le F(2\tilde\lambda) - 2F(\tilde\lambda)
	= \big(F(2\tilde\lambda) - F(\tilde\lambda)\big)
	- \big(F(\tilde\lambda)- F(0)\big) \le 4 A^2 \tilde\lambda^2
	\le \frac{4}{27^2} \le 1 \,,
\end{equation}
where the first inequality holds by convexity of $F$
(note that $F''(\lambda) = \bbvar[Y_\lambda] \ge 0$).

Consider first the case $I=[-A,A]$, assuming
\eqref{eq:defA}-\eqref{eq:assEX}: since
$e^x-1 \le 2x$ for $0 \le x \le 1$, applying \eqref{eq:F22F}, \eqref{eq:mart1}
and \eqref{eq:Cp} we obtain
\begin{equation} \label{eq:RNfi}
\begin{split}
	\bbE\big[ f(X)^p \big] &
	\le 1 + 2 C'_p \big( F(2\tilde\lambda) - 2F(\tilde\lambda) \big)
	\le 1 + C'_p \, 8 A^2 \tilde\lambda^2
	\le 1 + \frac{4 e^{|p|}}{A \epsilon} \bbE[X]^2 \,,
\end{split}
\end{equation}
proving \eqref{eq:RNbis}.

The case $I=[0,A]$, assuming \eqref{eq:defAbis}-\eqref{eq:assEXbis}, is similar:
we keep the term $\bbP(X\in I)$ in \eqref{eq:Efp} when writing \eqref{eq:RNfi},
so that applying \eqref{eq:mart2} gives 
\begin{equation*}
	\bbE\big[ f(X)^p \big]
	\le 1 + C'_p \, \bbP(X \in I) \,
	8 A^2 \tilde\lambda^2 \le
	1 + \frac{16 e^{|p|}}{6 A \epsilon'} \bbE[X]^2
	\le 1 + \frac{4 e^{|p|}}{A \epsilon'} \bbE[X]^2 \,,
\end{equation*}
which coincides with \eqref{eq:RNbis}, where $\epsilon$ is replaced by $\epsilon'$.

\medskip
\noindent
\textbf{Step 4 (Bounds on $\tilde\lambda$).}
We finally show that one can choose $\lambda = \tilde \lambda$
so that \eqref{eq:constru0} holds and
the bounds \eqref{eq:mart1}-\eqref{eq:mart2}-\eqref{eq:lambdabound} are satisfied.

Since $F'(\lambda) = \bbE[Y_\lambda]$, cf.\ \eqref{FNlambda0},
we can rewrite \eqref{eq:constru0}, cf.\ \eqref{eq:equivconstru0}, as
\begin{equation} \label{eq:uao}
	F'(\tilde\lambda) - F'(0) = \tilde x \,, \qquad
	\text{where} \qquad \tilde x := - \frac{\bbE[X]}{\bbP(X \in I)} \,.
\end{equation}
Since $F'''(\lambda) = \bbE[Y_\lambda^3]
-3\bbE[Y_\lambda] \bbE[Y_\lambda^2] + 2 \bbE[Y_\lambda]^3$ and
$|Y_\lambda| \le A$, we have $|F'''(\lambda)| \le 6 A^3$. Therefore
\begin{equation*} 
	F''(\lambda)\geq F''(0) - 6 A^3 |\lambda| \ge \frac{F''(0)}{2}
	\qquad \text{for all} \qquad
	|\lambda| \le c := \frac{F''(0)}{12 A^3} = \frac{\bbvar(Y)}{12 A^3}  \,.
\end{equation*}
In particular, equation \eqref{eq:uao}
has exactly one solution $\tilde\lambda \in [-c,c]$, provided
\begin{equation} \label{eq:tocheckxtilde}
	|\tilde x| \le \frac{F''(0)}{2} c \,, \qquad
	\text{that is} \qquad
	|\bbE[X]| \le \frac{\bbP(X\in I)\bbvar(Y)^2}{24 A^3} \,,
\end{equation}
in which case  $\tilde\lambda$ satisfies
\begin{equation}\label{lambdabar}
|\tilde\lambda| \leq \frac{|\tilde x|}{\frac{1}{2}F''(0)} =
\frac{2|\bbE[X]|}{\bbP(X\in I) \bbvar(Y)} \,.
\end{equation}
It only remains to check that condition \eqref{eq:tocheckxtilde}
is indeed satisfied, under either assumptions \eqref{eq:defA}-\eqref{eq:assEX}
or \eqref{eq:defAbis}-\eqref{eq:assEXbis}, and to show that
\eqref{lambdabar} yields
the bounds \eqref{eq:mart1}-\eqref{eq:mart2}-\eqref{eq:lambdabound}.
For this we need to estimate $\bbP(X\in I)$ and $\bbvar(Y)$.

In order to avoid repetitions, let $\bbP^*$ denote the
original law $\bbP$ when we assume
\eqref{eq:defA}-\eqref{eq:assEX} or the conditional law $\bbP(\,\cdot\,|\,X \ge 0)$
when we assume \eqref{eq:defAbis}-\eqref{eq:assEXbis}.
In either case $I= [-A,A]$ or $I = [0,A]$ we can write
$\bbP(Y \in \cdot\, ) := \bbP(X \in \cdot \,|\, X \in I)
= \bbP^*(X \in \cdot\, |\, |X| \le A)$, therefore
\begin{equation}\label{eq:varY}
	\bbvar(Y) = \bbvar^*(X \,|\, |X| \le A) \,.
\end{equation}
Note that assumptions \eqref{eq:defA}-\eqref{eq:assEX} and \eqref{eq:defAbis}-\eqref{eq:assEXbis}
can be written as follows:
\begin{equation} \label{eq:defAnew}
	\bbE^*[X^2 \ind_{\{|X| > A\}}] \le \frac{1}{4} \bbE^*[X^2] \,,
	\qquad |\bbE^*(X)| \le
	\frac{\bbE^*[X^2]^2}{144\,A^3} \,.
\end{equation}
Since $\bbE^*[X^2 \ind_{\{|X| \le A\}}] \le
A^2 \bbP^*(|X| \le A)$, it follows that
\begin{equation}\label{eq:easyboundA}
	A^2 \ge \frac{3}{4} \frac{\bbE^*[X^2]}{\bbP^*(|X| \le A)}
	\ge \frac{3}{4} \bbE^*[X^2] \,.
\end{equation}
We thus get
\begin{gather} \label{eq:p23}
	\bbP^*(|X| \le A) = 1 - \bbP^*(|X|> A)
	\ge 1- \frac{\bbE^*[X^2 \ind_{\{|X| > A\}}]}{A^2}
	\ge 1- \frac{\bbE^*[X^2]}{4A^2} \ge \frac{2}{3} \,,
	\\ \nonumber
	\big| \bbE^*[X \ind_{\{|X| > A\}}] \big|
	\le \frac{\bbE^*[X^2 \ind_{\{|X| > A\}}] }{A}
	\le \frac{\bbE^*[X^2] }{4A}
	\le \frac{\sqrt{\bbE^*[X^2]}}{2\sqrt{3}} \,.
\end{gather}
Together with \eqref{eq:defAnew} and \eqref{eq:easyboundA}, this gives 
\begin{equation*}
	\big| \bbE^*[X \ind_{\{|X| \le A\}}] \big|
	\le \big| \bbE^*[X] \big|
	+ \big| \bbE^*[X \ind_{\{|X| > A\}}] \big|
	\le \bigg( \frac{(\frac{4}{3})^{3/2}}{144} + \frac{1}{2\sqrt{3}} \bigg) \sqrt{\bbE^*[X^2]}
	\le \frac{1}{3} \sqrt{\bbE^*[X^2]} \,,
\end{equation*}
which yields
\begin{gather*}
	\big| \bbE^*[X \,|\, |X| \le A] \big| = \frac{\big| \bbE^*[X \ind_{\{|X| \le A\}}] \big|}
	{\bbP^*(|X| \le A)}
	\le \frac{1}{2} \sqrt{\bbE^*[X^2]}.
\end{gather*}
Applying one more time \eqref{eq:defAnew} we get
\begin{equation*}
	\bbE^*[X^2 \,|\, |X| \le A]
	= \frac{\big| \bbE^*[X^2 \ind_{\{|X| \le A\}}] \big|}
	{\bbP^*(|X| \le A)} \ge \frac{3}{4} \bbE^*[X^2]  \,,
\end{equation*}
which finally yields, cf.\ \eqref{eq:varY},
\begin{equation} \label{eq:varYfin}
	\bbvar(Y) = \bbE^*[X^2 \,|\, |X| \le A]
	- \bbE^*[X \,|\, |X| \le A]^2 \ge \frac{1}{2} \bbE^*[X^2] \,.
\end{equation}

By \eqref{eq:assEX} and \eqref{eq:assEXbis}, and applying \eqref{eq:varYfin} and \eqref{eq:p23}, 
we obtain
\begin{equation} \label{eq:analog}
	| \bbE^*[X] | \le \frac{\bbE^*[X^2]^2}{144\,A^3}
	\le \frac{\bbvar(Y)^2}{36 \, A^3}
	\le \frac{\bbP^*(|X| \le A) \, \bbvar(Y)^2}{24 \, A^3} \,.
\end{equation}
Consider first the case $I = [-A,A]$,
assuming \eqref{eq:defA}-\eqref{eq:assEX}: since
$\bbP^*(|X| \le A) = \bbP(X \in I)$, relation \eqref{eq:analog}
concides precisely with the condition \eqref{eq:tocheckxtilde} to be checked.
Next we consider the case $I = [0,A]$,
assuming \eqref{eq:defAbis}-\eqref{eq:assEXbis}, where we recall that
$\bbP^*(\,\cdot\,) = \bbP(\,\cdot \,|\, X \ge 0)$.
By assumption $\bbE[X] \ge 0$, we have $|\bbE[X]| \le |\bbE[X \ind_{\{X \ge 0\}}]| =
\bbP(X\ge 0) |\bbE^*[X]|$.
Since we can write $\bbP^*(|X| \le A) = \bbP(X \in I) / \bbP(X \ge 0)$,
relation \eqref{eq:analog} again yields \eqref{eq:tocheckxtilde}.

To conclude, for the  case $I = [-A,A]$,
applying \eqref{eq:p23} and \eqref{eq:varYfin}
to \eqref{lambdabar} and recalling the definition of $\epsilon$
in \eqref{eq:assEX} gives \eqref{eq:mart1}.
For the case $I=[0,A]$, the bound \eqref{eq:mart2} follows similarly,
recalling the definition of $\epsilon'$ in \eqref{eq:assEXbis} and
observing that $\bbP(X \in I) \ge \frac{2}{3} \bbP(X \ge 0)$ by \eqref{eq:p23}.
Finally, to obtain \eqref{eq:lambdabound} from \eqref{eq:mart1}-\eqref{eq:mart2}, 
apply \eqref{eq:easyboundA}
and the assumptions \eqref{eq:defA}, \eqref{eq:defAbis}.
\end{proof}

Finally, we prove the following bound on iterated integrals.
\begin{lemma}\label{L:integralbd1}
Let $\chi \in [0,1)$. Then there exist $c_1, c_2>0$ such that for all $k\in\bbN$,
\begin{equation}\label{eq:intgbd1}
\idotsint\limits_{0<t_1<\cdots<t_k<1} \frac{\dd t_1\cdots \dd t_k}{t_1^{\chi}
\cdots (t_k-t_{k-1})^{\chi}} \leq c_1 e^{-c_2k\log k},
\end{equation}
and
\begin{equation}\label{eq:intgbd2}
\idotsint\limits_{0<t_1<\cdots<t_k<1} \frac{\dd t_1\cdots \dd t_k}{t_1^{\chi}\cdots 
(t_k-t_{k-1})^{\chi}(1-t_k)^{\chi}} \leq c_1 e^{-c_2k\log k}.
\end{equation}
\end{lemma}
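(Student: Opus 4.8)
The plan is to reduce both integrals to a single Dirichlet-type integral over a simplex and then invoke Stirling's formula. First I would perform the change of variables $s_i := t_i - t_{i-1}$ for $i=1,\ldots,k$ (with $t_0:=0$), which has unit Jacobian and maps the region $\{0<t_1<\cdots<t_k<1\}$ bijectively onto the open simplex $\Delta_k := \{(s_1,\ldots,s_k):\ s_i>0,\ s_1+\cdots+s_k<1\}$. Under this substitution the left-hand side of \eqref{eq:intgbd1} becomes $J_k := \int_{\Delta_k}\prod_{i=1}^k s_i^{-\chi}\,\dd s$, while the left-hand side of \eqref{eq:intgbd2}, after additionally writing $1-t_k = 1-\sum_{i=1}^k s_i$, becomes $\int_{\Delta_k}\big(1-\sum_{i=1}^k s_i\big)^{-\chi}\prod_{i=1}^k s_i^{-\chi}\,\dd s$, which is an integral of exactly the same type with $k$ replaced by $k+1$ (introduce the extra coordinate $s_{k+1}:=1-\sum_{i=1}^k s_i$).

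Next I would evaluate $J_k$ explicitly. Integrating out the last variable $s_k$ and rescaling the remaining $(k-1)$-dimensional simplex by the factor $1-s_k$ gives the recursion
$J_k = J_{k-1}\int_0^1 s_k^{-\chi}(1-s_k)^{(k-1)(1-\chi)}\,\dd s_k = J_{k-1}\,B\big(1-\chi,\,(k-1)(1-\chi)+1\big)$,
where $B(\cdot,\cdot)$ is the Euler Beta function (all parameters are strictly positive since $\chi\in[0,1)$). Since $J_1 = \int_0^1 s^{-\chi}\,\dd s = \tfrac{1}{1-\chi}$, the Beta factors telescope and one obtains the closed form $J_k = \Gamma(1-\chi)^k/\Gamma\big(1+k(1-\chi)\big)$. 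Equivalently, one may simply quote the standard Dirichlet integral formula $\int_{\Delta_k}\prod_{i=1}^k s_i^{a_i-1}\,\dd s = \prod_{i=1}^k\Gamma(a_i)/\Gamma\big(1+\sum_{i=1}^k a_i\big)$ with $a_i = 1-\chi$.

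Finally I would extract the claimed decay. By Stirling's formula, $\log\Gamma\big(1+k(1-\chi)\big) = (1-\chi)\,k\log k + O(k)$ as $k\to\infty$, whereas $\log\big(\Gamma(1-\chi)^k\big) = O(k)$; hence $\log J_k \le -(1-\chi)\,k\log k + O(k)$. Choosing any $c_2\in(0,1-\chi)$ absorbs the $O(k)$ term for large $k$, and taking $c_1$ large enough also covers the finitely many small values of $k$, which proves \eqref{eq:intgbd1}. Applying the same bound to $J_{k+1}$ and using $(k+1)\log(k+1)\ge k\log k$ proves \eqref{eq:intgbd2}. The argument is entirely elementary; the only point that needs a line of care is the justification of the rescaling step in the Beta recursion (or, alternatively, a correct invocation of the Dirichlet formula), so there is no substantial obstacle here.
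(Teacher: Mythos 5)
Your argument is correct and follows essentially the same route as the paper: an exact Dirichlet-type evaluation of the simplex integral followed by standard Gamma-function (Stirling) asymptotics; the paper simply evaluates \eqref{eq:intgbd2} directly, recognizing the Dirichlet density with $k+1$ parameters all equal to $1-\chi$, and obtains \eqref{eq:intgbd1} for free since $(1-t_k)^{-\chi}\ge 1$ on the domain. One small correction to your reduction: after the substitution $s_i=t_i-t_{i-1}$, the integral in \eqref{eq:intgbd2} is \emph{not} $J_{k+1}$ but rather
$\int_{\{s_i>0,\,s_1+\cdots+s_k<1\}}\big(1-\sum_{i\le k}s_i\big)^{-\chi}\prod_{i\le k}s_i^{-\chi}\,\dd s
=\Gamma(1-\chi)^{k+1}/\Gamma\big((k+1)(1-\chi)\big)=(k+1)(1-\chi)\,J_{k+1}$,
because the slack variable $s_{k+1}$ carries the exponent $-\chi$ on the \emph{closed} simplex, whereas in $J_{k+1}$ it enters with exponent $0$ (equivalently, use the Liouville formula with $b=1-\chi$ instead of $b=1$). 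Since the discrepancy is only a factor polynomial in $k$, your final bound survives after slightly decreasing $c_2$ (or enlarging $c_1$), but the step ``same integral with $k$ replaced by $k+1$'' should be replaced by this exact identity.
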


\begin{proof}
It is enough to prove \eqref{eq:intgbd2}, since the integral therein bounds \eqref{eq:intgbd1}.
Recognizing the density of the Dirichlet distribution (with parameters $k+1$ and $1-\chi$)
allows to evaluate
$$
\idotsint\limits_{0<t_1<\cdots<t_k<1} \frac{\dd t_1\cdots \dd t_k}{t_1^{\chi}
\cdots (t_k-t_{k-1})^{\chi}(1-t_k)^{\chi}} =\frac{\Gamma(1-\chi)^{k+1}}
{\Gamma\big((k+1)(1-\chi)\big)} \,,
$$
and \eqref{eq:intgbd2} follows by standard properties of the gamma function $\Gamma(\cdot)$.
\end{proof}

\section*{Acknowledgements}

We are grateful to Tom Spencer for fruitful discussions
and we thank the referee for helpful comments.
We also thank Niccol\`o Torri and Ran Wei for spotting 
some mistakes in the draft.
We acknowledge support of NUS grant R-146-000-148-112 (R.S.),
a Marie Curie International Reintegration Grant within the 7th European
Community Framework Programme IRG-246809 and EPSRC grant EP/L012154/1 (N.Z.),
and ERC Advanced Grant 267356 VARIS (F.C.).
We also thank the kind hospitality of the Institute of Mathematics at the University of Leiden,
of Academia Sinica in Taipei and of the Department of Mathematics at Hokkaido University
in Sapporo, where parts of this work were completed.


\end{document}